\newcommand\blfootnote[1]{%
  \begingroup
  \renewcommand\thefootnote{}\footnote{#1}%
  \addtocounter{footnote}{-1}%
  \endgroup
}
\newcommand{\cb}[1]{\textcolor{purple}{#1}}
\newcommand{\R}{\mathbb{R}}
\newcommand{\N}{\mathbb{N}}
\newcommand{\Z}{\mathbb{Z}}
\newcommand{\di}{{m_i}}
\newcommand{\lp}{{m}}
\renewcommand{\geq}{\geqslant}
\renewcommand{\leq}{\leqslant}
\DeclareMathOperator\supp{supp}
\DeclareMathOperator\conv{Conv} %cb, added, not sure it is the expected form
\newtheorem{thm}{Theorem}
\newtheorem{assump}{Assumption}
\newtheorem{defi}[thm]{Definition}
\newtheorem{lemma}[thm]{Lemma}
\newtheorem{prop}[thm]{Proposition}
\newtheorem{model}{Model}
\DeclareMathOperator{\Tr}{Tr}
\title{Limit shape of the leaky Abelian sandpile model with multiple layers}
\author[1]{Théo Ballu}
\author[2,3]{Cédric Boutillier}
\author[4]{Sevak Mkrtchyan}
\author[1,5]{Kilian Raschel}
\affil[1]{Univ Angers, CNRS, LAREMA, SFR MATHSTIC, F-49000 Angers, France}
\affil[2]{Sorbonne Université, CNRS, Laboratoire de Probabilités, Statistique et Modélisation,
LPSM, UMR 8001, F-75005 Paris, France}
\affil[3]{Institut Universitaire de France}
\affil[4]{University of Rochester, Department of Mathematics, Rochester, NY, United States}
\affil[5]{CNRS, International Research Laboratory France-Vietnam in Mathematics and its Applications, Vietnam Institute for Advanced Study in Mathematics, Hanoï, Vietnam}
\begin{document}

\date{\today}

\maketitle

\begin{abstract}
In this paper we study a triple generalization of the Leaky Abelian Sandpile Model (LASM)\ of Alevy and Mkrtchyan, originally analyzed in the case of the square lattice in dimension two. First, we work in any dimension. Second, each site can hold several different stacks of sand, one for each of a certain given number of different layers or colors. Third, when a stack of one color at a site topples, it can send sand not only to its nearest neighbors in equal amounts, but to all possible locations and colors, according to a fixed but arbitrary mass distribution. Stacks of different colors can topple according to different distributions and different leakiness parameters, however the toppling rule should be site-independent. We obtain three main results. First, in this generality, when the LASM is started with $N$ grains of sand in one color at the origin, the final stable configuration, after scaling down by $\log N$, converges to a limit shape as $N$ goes to infinity. Second, when the leakiness parameter converges to infinity and the toppling distribution has finite range, the limit shape converges to a polytope. Third, when the leakiness parameters converge to one, which means the leakiness disappears, the limit shape of the sandpile converges to an ellipsoid. From a technical point of view, we rely on a strong relation between the Green function for random walk and the shape of the sandpile. Finally, the limit shape exhibits interesting duality properties, which we also investigate.\blfootnote{\textit{Keywords:} Abelian sandpile model, killed random walk, Green function, limit shape}
\blfootnote{\textit{2020 Mathematics Subject Classification:} 60G50; 60K35; 82B41}
\end{abstract}

\section{Introduction and main results}

The Abelian Sandpile Model (ASM)\ is a model introduced by Bak, Tang and Wiesenfeld
in 1987~\cite{BTW-ASM1987} as a ``toy model'' for studying self-organized criticality. In its basic form it can be thought of as a cellular automaton evolving on an arbitrary graph $G=(V,E)$, where $V$ is the set of vertices and $E$ the set of edges. A sandpile configuration on $G$ is a function $s:V\to\mathbb{Z}_{\geq 0}$. Given a configuration $s$, it evolves in discrete time according to the following principle: if the size $s(x)$ of the sandpile at a site $x$ is at least as large as the vertex degree of $x$, then $x$ ``topples'', sending one grain of sand to each neighboring vertex. It was observed by Bak, Tang and Wiesenfeld that this model organizes itself into a critical state in the following sense. If the model is started with a random configuration on a bounded portion of the square lattice $\mathbb{Z}^2$ and evolved until it stabilizes, the final stable configuration is critical, which means: if a small random addition of sand is made at a location, this change propagates throughout the lattice at all scales. 

It is thus of interest to understand what can be said of this final, critical configuration. More precisely, what can be said about the final configuration as $N$ grows
if the sandpile is started with the delta initial condition of $N$ grains of sand in one location? Simulations suggest that the region visited by the sandpile converges to a limit shape, and the final configuration has intricate local structure; see Figure~\ref{fig:three_examples}. While there is an understanding of the local structure in terms of Apollonian circle packings \cite{lps2016,ps2020}, few results about the limit shape have been obtained. It was shown in \cite{lp2009} that the final configuration is bounded between two circles. The existence of a limit shape was established in \cite{ps2013} while in \cite{as2019} it was shown that the shape is Lipschitz.

A modified, dissipative or leaky version of the model was considered by Richard Kenyon and Lionel Levine. On the square lattice $\mathbb{Z}^2$ the leaky model was studied in \cite{AlMk-22}. A new parameter $1\leq\lp\in\mathbb{R}$ was introduced, which controls how leaky the system is. The function $s$ now takes nonnegative real values and if a site $x$ has at least $4\lp$ grains of sand, then it topples, sending $1$ grain of sand to each neighbor, with the remaining $4(\lp-1)$ grains disappearing from the system. The quantity $1-\frac{1}{\lp}$ can naturally be interpreted as a mass (for an underlying massive Laplacian operator), so the case $\lp=1$ is sometimes called massless. In the massless case $\lp=1$ this specializes to the original ASM. In \cite{AlMk-22} the precise limit shape of the uniform nearest neighbor Leaky Abelian Sandpile Model (LASM) on the $2$-dimensional square lattice was computed for any $\lp>1$. It was also shown, that the limit shape converges to an $L_1$ ball as $\lp\to\infty$ and to a circle as $\lp\to 1$. It is worth noting, that even though the limit shape as $\lp\to 1$ converges to a circle, the limit shape when $\lp=1$, i.e.\ in the case of the original ASM, is not expected to be a circle, so the limit shape, as a function of $\lp$ should be discontinuous at $1$. 

In the present paper we study the LASM in a vastly more general setup. First, instead of working in dimension $2$, we work in arbitrary dimension. Second, each site can hold several different stacks of sand, one for each of $p$ different layers or colors. Third, when a stack of one color at a site topples, it can send sand not only to the nearest neighbors in equal amounts, but to all possible locations and colors, and according to a fixed but arbitrary mass distribution. Stacks of different colors can topple according to different distributions and different leakiness parameters, however the toppling rule should be site-independent. The only restrictions we place are some irreducibility and aperiodicity conditions, to ensure that all sites can be reached in an aperiodic way, and a moment condition to ensure mass doesn't quickly escape to infinity (see Assumption~\ref{assump:main_assumptions}).

Our main result is a proof that, in this generality, when the LASM is started with $N$ grains of sand in one color at the origin, the final stable configuration, after scaling down by $\log N$, converges to a limit shape (Theorem~\ref{thm:limit_shape_d_fixed}). To state the result, we need to introduce the function $\Gamma^{-1}:\mathbb S^{d-1}\to\mathbb R^d$ as in Proposition~\ref{prop:homeo}; this function actually parametrizes the exponential decay of a Green function at infinity, see Proposition~\ref{prop:asymptotics_green}.
\begin{thm}[Theorem~\ref{thm:limit_shape_d_fixed}]\label{thm:limit_shape_d_fixed_intro}
In direction $u\in\mathbb S^{d-1}$, for fixed $N$, the shape of the final configuration of the sandpile lies between two radii called $r_{N, { u}}$ and $R_{N, { u}}$. We have \[ \lim_{N \to +\infty}\frac{r_{N,u}}{\log N} = \lim_{N \to +\infty} \frac{R_{N,u}}{\log N} = \frac{1}{\Gamma^{-1}(u) \cdot u}\] and these limits are uniform in $u$.
    This means that the limit shape of the LASM is delimited by the curve 
    \begin{equation}\label{eq:limit_shape_in_spheric_coordinates_intro}
        \mathcal C = \left\{ \frac{u}{\Gamma^{-1}(u) \cdot u}~|~u \in \mathbb{S}^{d-1}  \right\},
    \end{equation}
    and this limit holds for the Hausdorff distance (and in a stronger uniform sense, to be introduced in Definition~\ref{def:cv_sets_limit_shape}).
\end{thm}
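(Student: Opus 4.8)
The plan is to pass from the sandpile to its odometer and then to read off the shape from the asymptotics of a Green function. Write $\mathcal A=M-P$ for the toppling operator attached to the rule ($M=\diag(m_i)$, and $P$ the stochastic redistribution operator built from the $\mu_i$), let $G=\mathcal A^{-1}$ be the associated Green function of the killed walk, and set $G_0:=G((0,i_0),\cdot)$ — the object whose exponential decay is governed by $\Gamma^{-1}$ in Propositions~\ref{prop:homeo} and~\ref{prop:asymptotics_green}. If $u_N\geq0$ is the odometer of the stabilization started from $N$ grains of colour $i_0$ at the origin and $s_N$ the resulting stable configuration ($0\leq s_N(x,i)<m_i$ everywhere), the mass-balance equations at every site give $\mathcal A^{\top}u_N=N\delta_0-s_N$, whence
\[
u_N \;=\; N\,G_0-h_N,\qquad h_N:=G^{\top}s_N\geq 0 .
\]
I will sandwich the toppled region $\{u_N>0\}$ — which agrees with the support of $s_N$ up to a boundary layer of width the range of the toppling distribution, hence negligibly after division by $\log N$, and whose colour index only enters lower-order terms — between two super-level sets of $G_0$, and then use the Green asymptotics.

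\emph{Inner bound.} The moment and irreducibility conditions of Assumption~\ref{assump:main_assumptions}, together with the $\mathbb Z^d$-translation invariance of the rule, yield $\sup_x\sum_y G(y,x)<\infty$, and therefore $h_N\leq\big(\max_i m_i\big)\sup_x\sum_y G(y,x)=:C$ uniformly in $x$ and $N$. At any site where the odometer vanishes we then have $N\,G_0=h_N\leq C$, i.e.\ $\{u_N=0\}\subseteq\{G_0\leq C/N\}$; equivalently every site with $G_0>C/N$ topples, hence lies in the sandpile region. This gives the inner radius.

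\emph{Outer bound} (the main obstacle). I would invoke the least action principle for the LASM: if $v\geq0$ and $N\delta_0-\mathcal A^{\top}v$ is a stable configuration, then $u_N\leq v$. Fix a small $\varepsilon>0$ (depending only on the $m_i$, the decay data $\Gamma^{-1}$ and the tails of the $\mu_i$), put $\Omega=\Omega_{N,\varepsilon}:=\{x:G_0(x)>\varepsilon/N\}$, and take $v:=N\,G_0^{\Omega}$, the Green function of the walk killed upon leaving $\Omega$, extended by $0$ off $\Omega$. Then $N\delta_0-\mathcal A^{\top}v$ vanishes on $\Omega$ and equals $(P^{\top}v)(x)\geq0$ off $\Omega$; for $x\notin\Omega$ this quantity only weights the values $v(y)=N\,G_0^{\Omega}(y)\leq N\,G_0(y)$ against $\mu$, and the exponential decay of $G_0$ (Proposition~\ref{prop:asymptotics_green}) together with the moment condition forces it below $\min_i m_i$ once $\varepsilon$ is small enough — this is the delicate step: for finite range it is a short quasi-multiplicativity (Harnack) comparison of $G_0$ at neighbouring points, and in general a summation of $G_0$ against the tail of $\mu_i$. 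Hence $u_N\leq v$, so $\{u_N>0\}\subseteq\{v>0\}\subseteq\Omega=\{G_0>\varepsilon/N\}$, and the final configuration is contained, up to its boundary layer, in $\{G_0>\varepsilon'/N\}$ for some fixed $\varepsilon'>0$ (and in particular in a ball of radius $O(\log N)$, by the exponential decay).

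\emph{Conclusion.} The sandpile is thus pinched between $\{G_0>C/N\}$ and $\{G_0>\varepsilon'/N\}$, two super-level sets of $G_0$ at heights of order $1/N$. By Proposition~\ref{prop:asymptotics_green}, $-\log G_0(x)=\Gamma^{-1}(x/|x|)\cdot x+O(\log|x|)$ as $|x|\to\infty$, uniformly in the direction, so $\{G_0>c/N\}$ reaches in direction $u$ out to the radius $\rho$ solving $\rho\,(\Gamma^{-1}(u)\cdot u)+O(\log\rho)=\log N+O(1)$, i.e.\ $\rho=\log N/(\Gamma^{-1}(u)\cdot u)+o(\log N)$, uniformly in $u$ since $\Gamma^{-1}(u)\cdot u$ is continuous and bounded away from $0$ on $\mathbb S^{d-1}$. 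Therefore $r_{N,u}/\log N$ and $R_{N,u}/\log N$ both converge, uniformly in $u$, to $1/(\Gamma^{-1}(u)\cdot u)$; after rescaling by $\log N$ the final configuration is squeezed between two sets both converging to $\{ru:u\in\mathbb S^{d-1},\ 0\leq r\leq 1/(\Gamma^{-1}(u)\cdot u)\}$, the region bounded by $\mathcal C$, and the squeeze being uniform in $u$ we obtain convergence in the strong sense of Definition~\ref{def:cv_sets_limit_shape}, hence for the Hausdorff distance. Beyond the boundary estimate in the outer bound, the recurring difficulty is uniformity in $u$: every Green asymptotic and every comparison above must be uniform over the sphere, which is exactly why the uniform, sharp form of Proposition~\ref{prop:asymptotics_green} is needed.
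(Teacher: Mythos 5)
Your overall strategy coincides with the paper's: express the odometer through the inverse of the toppling operator (the Green function), pinch the toppled region between two super-level sets of $G_0$ at heights of order $1/N$, and convert the uniform exponential asymptotics of Proposition~\ref{prop:asymptotics_green} into uniform asymptotics of the two radii. Your inner bound is exactly the paper's (Proposition~\ref{prop:thresholds}): $h_N=G^{\intercal}s_N$ is bounded uniformly in $x$ and $N$, so every site with $G_0>C/N$ has positive odometer.

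Where you diverge is the outer bound, and the detour both contains the only real gap in your write-up and is unnecessary. The identity you already have, $u_N=N\,G_0-h_N$ with $h_N=G^{\intercal}s_N\ge 0$, gives $u_N\le N\,G_0$ pointwise. A site $(x,i)$ in the shape has toppled at least once, so its odometer is at least one full emission, $u_N(x,i)\ge m_i\sum_{(y,j)}c(y,i,j)\ge\beta>0$, whence $G_0\ge\beta/N$ on the shape. That is the paper's outer threshold, obtained in one line with no least action principle, no Green function killed outside $\Omega$, and no Harnack comparison. As written, your version leaves the ``delicate step'' unproved — that $(P^{\intercal}v)(x)$ stays below the stability threshold off $\Omega$, uniformly in $x$ and in the direction, for toppling distributions that may have unbounded support — and it would additionally require stating and proving the least action principle for this generalized leaky multicolour model. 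You should replace that paragraph by the one-line argument above.

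One further technical point you gloss over: $r\mapsto G_0(ru)$ need not be monotone (think of long jumps with no short ones), so ``the radius $\rho$ solving $G_0=c/N$'' is not well defined. The paper first interpolates $G_0$ continuously to $\R^d$, then defines $r_{N,u}$ and $R_{N,u}$ through two \emph{different} generalized inverses (the infimum of the sublevel set for the inner radius, the supremum of the superlevel set for the outer one), and checks via the Lambert $W$ expansion and Lemma~\ref{lem:conv_infty_uniform} that both satisfy $\frac{1}{\gamma_u}\log N-\frac{d-1}{2\gamma_u}\log\log N+O(1)$ uniformly in $u$. Your final step is correct in substance — uniform pinching plus uniform asymptotics with $\gamma_u=\Gamma^{-1}(u)\cdot u$ bounded away from $0$ — but it needs this care to actually produce the two radii the statement asserts.
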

See Figure~\ref{fig:three_examples} for various illustrations of Theorem~\ref{thm:limit_shape_d_fixed_intro}. Moreover, in Theorem~\ref{thm:limit_shape_d_fixed} we prove that the deviation of the unscaled final configuration from the limit shape scaled up by $\log N$ is only of constant order. However, the exact formula \eqref{eq:limit_shape_in_spheric_coordinates_intro} for the limit shape is not easy to compute explicitly, since it involves inverting the gradient of a given spectral radius, see Proposition~\ref{prop:homeo}. As a further result (Proposition~\ref{prop:dual_curves}), we identify the limit curve with the dual of a curve which is the level set of the spectral radius mentioned above. For example, if there is only one color ($p=1$), then the spectral radius reduces to the Laplace transform of the increments of a random walk, whose level sets are well understood. Back to the multicolor setting, as the boundary of a convex shape, it follows that the limit shape itself bounds a convex region. 

Note, that the fact that the sandpile is started with $N$ grains of sand in one location and one color is not essential. If the sandpile is started from finitely many locations, with different number of grains in each of those location and in any colors, and $N$ is the largest number of grains of sand in any one color and location, then the limit shape will be exactly the same as that in Theorem \ref{thm:limit_shape_d_fixed_intro}.

We study two limiting regimes. First, we allow the leakiness parameter to converge to infinity at the same rate in all colors and study the case when the toppling distribution has finite range. In this regime we show that the limit shape converges to a polytope (Theorem~\ref{thm:limit_shape_as_convex_hull_of_returning_walks}), which depends only on the support of the toppling distribution, and not the distribution itself. Let $\mathcal X$ be the polytope defined in \eqref{prop:polytopeBase}.
\begin{thm}[Theorem \ref{thm:limit_shape_as_convex_hull_of_returning_walks}] \label{thm:limit_shape_as_convex_hull_of_returning_walks_intro}
The rescaled shape of the sandpile, $(\log \lp)\cdot \mathcal{C}$, where $\mathcal{C}$ is defined in~\eqref{eq:limit_shape_in_spheric_coordinates_intro}, converges to the convex hull of $\mathcal{X}$ as $\lp$ goes to $+\infty$. The convergence is uniform in the sense of Definition~\ref{def:cv_sets_limit_shape}. In particular, in the uncolored case, the limit shape of the sandpile when $\lp\to\infty$ is the convex hull of the support of the toppling distribution. 
\end{thm}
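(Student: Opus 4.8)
The plan is to pass to the limit $\lp\to\infty$ directly in the variational/spectral description of the limit shape furnished by Proposition~\ref{prop:homeo} and Proposition~\ref{prop:dual_curves}. Recall that $\mathcal C$ is the dual of a level set of a spectral radius $\rho$ (in the uncolored case, the Laplace transform $\sum_x p(x) e^{\xi\cdot x}$ of the toppling increments, suitably normalized by $\lp$), and that $\Gamma^{-1}(u)$ is obtained by inverting the gradient of $\log\rho$. So the first step is to make the $\lp$-dependence explicit: the toppling rule at level $\lp$ has a site-fire threshold proportional to $\lp$, which enters $\rho$ through a factor $1/\lp$ (equivalently, the relevant equation is $\rho_\lp(\xi)=1$ with $\rho_\lp=\frac1\lp\rho_1$, or the killed-walk Green function has killing rate $1-1/\lp$). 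I would write the defining relation for the exponential decay rate of the Green function (Proposition~\ref{prop:asymptotics_green}) as an equation $F(\xi,\lp)=0$ and track how its solution set scales as $\lp\to\infty$.

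Second, I would show that after multiplying by $\log\lp$ the correct scaling emerges. Heuristically, the Green function of the killed walk decays like $(1/\lp)^{|x|_{\text{word length}}}$ where the word length counts the minimal number of toppling steps needed to move mass from the origin to $x$; hence $-\log G(x)\approx |x|_{\text{word}}\log\lp$, and the sandpile boundary in direction $u$ sits where $-\log G \approx \log N$, giving radius $\approx (\log N)/(\log\lp)\cdot(\text{something})$. More precisely, the normalized "distance" $\frac{1}{\log\lp}(-\log G(x))$ should converge, as $\lp\to\infty$, to the gauge (Minkowski functional) of $\conv(\mathcal X)$, where $\mathcal X$ is the set of points reachable in one toppling step scaled appropriately — i.e.\ the support of the toppling distribution in the uncolored case, and its colored analogue \eqref{prop:polytopeBase} in general. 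The key computation is a saddle-point/Laplace analysis: as $\lp\to\infty$, $\frac1\lp\sum_x p(x)e^{\xi\cdot x}=1$ forces $\xi$ to grow like $\log\lp$, and writing $\xi=(\log\lp)\eta$ one finds $\frac1\lp\sum_x p(x)e^{(\log\lp)\eta\cdot x} = \sum_x p(x)\lp^{\eta\cdot x-1}\to 0$ or $\infty$ according to whether $\max_{x\in\supp p}\eta\cdot x<1$ or $>1$; so the rescaled level set $\{\eta : \lim = 1\}$ collapses onto the boundary of the polar $\{\eta : \max_{x\in\supp p}\eta\cdot x\le 1\}=\conv(\supp p)^\circ$. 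Dualizing (using Proposition~\ref{prop:dual_curves}), the rescaled limit shape $(\log\lp)\mathcal C$ converges to $\conv(\supp p)$ itself, and in the colored case to $\conv(\mathcal X)$.

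Third, I would upgrade the pointwise convergence of the rescaled curves to uniform convergence in the sense of Definition~\ref{def:cv_sets_limit_shape}. For this I would use that all the objects are boundaries of convex bodies, and that for convex bodies pointwise convergence of support functions (or radial functions) on the sphere, together with uniform bounds away from $0$ and $\infty$, upgrades automatically to uniform convergence and to Hausdorff convergence; the uniform bounds come from the finite-range hypothesis (so $\conv(\supp p)$ is a bounded polytope with nonempty interior, once irreducibility and aperiodicity from Assumption~\ref{assump:main_assumptions} are invoked to guarantee full dimension). One also needs equicontinuity of the rescaled $\Gamma^{-1}_\lp$, which follows from convexity and the uniform estimates, letting one interchange $\lim_{\lp}$ with $\sup_u$.

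The main obstacle I anticipate is the colored case: when $p>1$, $\rho$ is a genuine Perron--Frobenius eigenvalue of a $p\times p$ matrix of Laplace transforms rather than a scalar Laplace transform, and the asymptotics of this eigenvalue as $\xi\sim(\log\lp)\eta\to\infty$ are governed by a max-plus/tropical limit: the dominant eigenvalue behaves like $\lp^{\,\max_{\text{cycles}}(\text{average of }\eta\cdot x\text{ along the cycle})}$, so one must show that the tropical eigenvalue is exactly the gauge of $\conv(\mathcal X)$ with $\mathcal X$ as in \eqref{prop:polytopeBase}. Making this tropical degeneration rigorous — controlling subexponential prefactors uniformly and verifying it produces precisely the advertised polytope, independent of the toppling weights — is the technical heart of the argument; the rest is convex geometry bookkeeping and appeals to the already-established Theorem~\ref{thm:limit_shape_d_fixed_intro}.
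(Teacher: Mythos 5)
Your strategy is the same as the paper's: rescale the level set $\{\rho\le 1\}$ by $\frac{1}{\log\lp}$, show it degenerates onto the polar polytope of $\conv(\mathcal X)$, and then transfer the convergence through duality (Propositions~\ref{prop:dual_curves}, \ref{prop:dual_homotheties_rotations} and \ref{prop:convergence_duality}). Your uncolored computation, including the change of variable $t=(\log\lp)s$ and the dichotomy on $\max_{x\in\supp\mu}s\cdot x$, is exactly Proposition~\ref{prop:cv_level_set_p=1}; the paper just replaces the pointwise ``$\to0$ or $\to\infty$'' argument by an explicit sandwich between $\bigcap_x\{s\cdot x\le 1\}$ and $\bigcap_x\{s\cdot x\le 1-\frac{\log\widetilde\mu(x)}{\log\lp}\}$, which delivers the uniformity you would otherwise have to recover from convexity.

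The one step you flag as the ``technical heart'' --- making the tropical/max-plus degeneration of the Perron--Frobenius eigenvalue rigorous in the colored case --- is precisely what the paper's Proposition~\ref{prop:level_set_d_to_infinity} and Lemma~\ref{lem:control_spectral_gap} do, and it may help to know the mechanism. Instead of estimating the eigenvalue directly, the paper works with the traces: $\rho(t)\le 1$ forces $\Tr\bigl((L\mu)^q\bigr)\le p$ for $q=1,\dots,p$, and since every term in the expansion of $\Tr\bigl((L\mu)^q\bigr)$ is nonnegative, each cycle contribution $e^{\log\lp\,(s\cdot(x_{1,2}+\cdots+x_{q,1})-q)}\widetilde\mu_{i_1,i_2}(x_{1,2})\cdots\widetilde\mu_{i_q,i_1}(x_{q,1})$ is individually $\le p$, yielding the half-space conditions $s\cdot(x_{1,2}+\cdots+x_{q,1})\le q+O(1/\log\lp)$ with constants depending only on the weights (this is where independence from the toppling distribution comes out). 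Conversely, if $s$ satisfies these conditions with a small margin, all $p$ traces can be made arbitrarily small, and Newton's identities (Lemma~\ref{lem:control_spectral_gap}) then force $\rho$ below $1$; no direct control of ``subexponential prefactors'' of the eigenvalue is needed. The other point your sketch leaves implicit is why only cycles of length $q\le p$ need to be imposed: any longer color cycle decomposes into cycles of length at most $p$, which is what the paper uses both to keep $\mathcal P$ a finite intersection of half-spaces and to prove it is bounded. With those two ingredients supplied, your argument matches the paper's.
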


See the second row of Figure~\ref{fig:three_examples} for an illustration of Theorem~\ref{thm:limit_shape_as_convex_hull_of_returning_walks_intro}. The limit polytope can be described as the set of points which have the same first-passage-time for the underlying walk of the toppling distribution (Theorem~\ref{thm:limit_shape_as_first_passage_front}). This limiting polytope is the equivalent of the $L_1$ ball that was observed in \cite{AlMk-22}.

The second limit we study is when the leakiness parameters converge to $1$, which means the leakiness disappears. We show that in this limit the limit shape of the sandpile converges to an ellipsoid (Theorem~\ref{thm:limit_shape_ellipsoid}). 
\begin{thm}[Theorem~\ref{thm:limit_shape_ellipsoid}]
\label{thm:limit_shape_ellipsoid_intro}
    If the drift of the model is zero (in a sense to be defined in Section~\ref{subsec:zero_mass}), then the rescaled shape of the sandpile, $\sqrt{\lp-1}\cdot  \mathcal{C}$, where $\mathcal{C}$ is defined in \eqref{eq:limit_shape_in_spheric_coordinates_intro}, converges to the ellipsoid \[ \left\{ s \in \R^d ~|~ 2 s^\intercal \sigma^{-1} s \leq 1 \right\} \] as $\lp$ tends to $1$, where $\sigma^{-1}$ can be interpreted as the inverse of a covariance matrix. The convergence is uniform in the sense of Definition~\ref{def:cv_sets_limit_shape}.
\end{thm}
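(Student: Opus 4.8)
\emph{Proof plan.} I would deduce the statement from the duality description of $\mathcal C$ in Proposition~\ref{prop:dual_curves} combined with a local analysis of a Perron--Frobenius eigenvalue near the origin. Recall that $\mathcal C$ is the dual (in the sense of Proposition~\ref{prop:dual_curves}) of the level set $\mathcal L_\lp = \{\lambda\in\R^d : \rho(M_\lp(\lambda)) = 1\}$, equivalently the polar dual of the convex body $K_\lp = \{\lambda : \rho(M_\lp(\lambda))\leq 1\}$, where $M_\lp(\lambda)$ is the leakiness‑weighted matrix Laplace transform of the toppling distributions and $\rho$ its Perron root; this is the same object that encodes the Green‑function decay $\Gamma^{-1}$ of Propositions~\ref{prop:homeo} and~\ref{prop:asymptotics_green}. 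At $\lambda=0$ and $\lp=1$ this matrix is the stochastic transition matrix $P$ of the induced colour chain, with Perron root $1$, right eigenvector $\mathbf 1$ and left eigenvector the stationary law $\pi$; since $\rho(M_\lp(0))=1/\lp<1$ for $\lp>1$, the body $K_\lp$ contains $0$ in its interior and shrinks to $\{0\}$ as the leakiness disappears. Thus the whole question reduces to understanding $\rho(M_\lp(\lambda))$ for $\lambda$ near $0$, after the correct rescaling, together with continuity of polar duality.

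The key step is a second‑order expansion. By Assumption~\ref{assump:main_assumptions} the entries of the matrix Laplace transform are $C^2$ near $0$ and its Perron root is a simple eigenvalue there, so analytic perturbation theory gives, along $\lp\downarrow 1$,
\[
\rho(M_\lp(\lambda)) = 1 - (\lp-1) + \tfrac12\,\lambda^\intercal\sigma\lambda + o(\lp-1) + o(|\lambda|^2),
\]
where the linear term in $\lambda$ has been killed by the zero‑drift hypothesis of Section~\ref{subsec:zero_mass}: indeed the gradient at $0$ equals $\pi^\intercal(\nabla_\lambda M_1(0))\mathbf 1$, which is exactly the annealed drift of the model. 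Evaluating the second‑order perturbation formula (it involves the deviation/fundamental matrix of $P$) identifies $\sigma$ with the Markov‑chain central‑limit covariance of the colour random walk, i.e.\ the ``covariance matrix'' of the statement; the moment condition in Assumption~\ref{assump:main_assumptions} makes it finite and irreducibility/aperiodicity make it positive definite. Hence $\rho(M_\lp(\lambda))=1$ reads $\lambda^\intercal\sigma\lambda = 2(\lp-1)+o(\lp-1)+o(|\lambda|^2)$, and the rescaled bodies $\tfrac{1}{\sqrt{\lp-1}}K_\lp$ converge in Hausdorff distance to the ellipsoid $E = \{\mu\in\R^d : \tfrac12\,\mu^\intercal\sigma\mu\leq 1\}$.

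To conclude, note that polar duality commutes with dilations, so $\sqrt{\lp-1}\cdot\mathcal C$ is the polar dual of $\tfrac{1}{\sqrt{\lp-1}}K_\lp$; since these bodies contain $0$ in their interiors uniformly (thanks to $\sigma\succ 0$) and are uniformly bounded, polar duality is continuous for the Hausdorff distance here, and the limit is the polar dual of $E$, namely $\{s\in\R^d : 2\,s^\intercal\sigma^{-1}s\leq 1\}$, as claimed. To upgrade Hausdorff convergence to the uniform sense of Definition~\ref{def:cv_sets_limit_shape} one works at the level of the homeomorphism $\Gamma^{-1}$ of Proposition~\ref{prop:homeo}: since $\Gamma^{-1}(u)$ is the point of $\mathcal L_\lp$ with outer normal $u$, the uniform $C^2$‑expansion above together with the uniform invertibility of the Hessian $\sigma$ (via the implicit function theorem defining $\Gamma^{-1}$) shows that $\tfrac{1}{\sqrt{\lp-1}}\Gamma^{-1}(\cdot)$ converges uniformly on $\mathbb S^{d-1}$ to the normal parametrization of $\partial E$, whence the radial functions $u\mapsto \sqrt{\lp-1}\,/\,(\Gamma^{-1}(u)\cdot u)$ of $\sqrt{\lp-1}\cdot\mathcal C$ converge uniformly to that of the target ellipsoid.

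The main obstacle is the quantitative part of the second paragraph: one needs the expansion of $\rho(M_\lp(\lambda))$ with a remainder that is uniformly $o(|\lambda|^2)$ on a fixed neighbourhood of $0$, so that it is negligible against $\lp-1$ uniformly in the direction $\lambda/|\lambda|$, and one must establish that the limiting covariance $\sigma$ is genuinely nondegenerate in every direction — without this the rescaled level sets could fail to converge to a bounded ellipsoid and $\Gamma^{-1}$ could degenerate. Proving $\sigma\succ 0$ from the irreducibility, aperiodicity and moment assumptions, and then threading this through the implicit‑function description of $\Gamma^{-1}$ to obtain uniformity in $u$, is the real content; the passage from rescaled level sets to the rescaled shape is the soft continuity of polar duality.
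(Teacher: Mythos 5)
Your plan follows essentially the same route as the paper: identify $\sqrt{\lp-1}\cdot\mathcal{C}$ as the dual of the rescaled level set of the spectral radius, use the second-order expansion $\widetilde{\rho}(t)=1+\tfrac12 t^\intercal\sigma t+o(\|t\|^2)$ with $\sigma$ positive definite to show that the rescaled level sets converge to the ellipsoid $\{\tfrac12 s^\intercal\sigma s\leq 1\}$, and conclude by continuity of duality together with the computation of the dual of an ellipsoid. The only differences are organisational: the paper imports the expansion and the positive-definiteness of the energy matrix from \cite{Ba-24} rather than re-deriving them by eigenvalue perturbation theory, and it obtains the uniform control you flag as the ``real content'' via the explicit sandwich of Proposition~\ref{prop:level_set_d_to_1} and the duality-convergence statement of Proposition~\ref{prop:convergence_duality}, rather than through the implicit-function description of $\Gamma^{-1}$.
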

Note, however, that this does not identify the limit shape of the model when the leakiness parameters are in fact equal to $1$, which corresponds to the ordinary, non-leaky ASM. It follows from our work that the limit shape as a function of the leakiness parameters is continuous away from $1$, but at $1$ it is expected to be discontinuous.

All our limit shape results hold for uniform convergence in spherical coordinates (Definition~\ref{def:cv_sets_limit_shape}), which by our Proposition~\ref{prop:link_cv_hausdorff} implies convergence in Hausdorff distance (recalled in Definition~\ref{def:Hausdorff_distance}).

To obtain the limit shape results, we establish a link between the limit shape of the LASM and the Green function of a related killed random walk, showing that the limit shape can be bounded between two level curves of the Green function, thus reducing the study of the limit shape to the study of the corresponding Green function. See our Proposition~\ref{prop:thresholds}, which proves that if the Green function is above (resp.\ below) a threshold, then a given site is (resp.\ is not) in the shape of the final configuration. This connection between the sandpile and the Green function is crucial for at least two reasons. First, from a technical point of view, we can use existing results on the asymptotics of the Green function. Second, the Green function is ubiquitous in random walk theory, and more generally in probability theory, so it has been studied using various approaches. In Section~\ref{sec:Green_function} we review some of the literature and mention five interpretations of the exponential decay, all of which are relevant to the sandpile model. 

To give some more details, by applying a Doob transformation, the killed random walk is transformed into a non-killed random walk, the asymptotics of whose Green function is classical \cite{NeSp-66} when $p=1$ and was recently obtained in \cite{Du-20,Ba-24} in the multicolor setting. The exponential decay of the Green function established in \cite{Ba-24} implies that the distance between the two level curves of the Green function that bound the boundary of the region visited by the sandpile is of constant order, whereas their scale is of logarithmic order in the number of the initial grains of sand, establishing the limit shape. 

Note, that in the regime when the leakiness converges to $1$ fast enough, the distance between the two level curves grows, becoming comparable to their sizes, making it impossible to establish a limit shape result. This is partly the justification why the link between these Green functions and the limit shape breaks in that regime, making the discontinuity mentioned above possible.

Instead of multiple colors, the model we study can be equivalently described in terms of a single-color model on a larger graph with a site-dependent toppling distribution. More precisely, consider a graph $\mathcal{G}$ on which $\Z^d$ acts freely, but not transitively, and for which the quotient $F=\mathcal{G}/\Z^d$ has a finite number of vertices. On $\mathcal{G}$ consider a vertex-dependent toppling distribution which is invariant under the action of $\Z^d$. Such a model is equivalent to a colored model on $\Z^d$ with the number of colors equal to the size of the fundamental domain $F$. Note, that adding colors to the model on $\mathcal{G}$ would not change anything, as such a model could still be equivalently formulated as a colored model on $\Z^d$ by just adding extra colors.

The model with $p$ colors on $\mathbb{Z}^d$ can also equivalently be described as a single-color model on the vertices of the graph $\mathbb{Z}^d\times\{1,\ldots,p\}$.
Also note, that if a lattice can be embedded in the square lattice, then our results extend to such a lattice as well. For example, our results apply to the $2$-dimensional hexagonal lattice as well.

In Appendix~\ref{app:amoebas} we connect some of our results to the geometry of an amoeba, which is naturally associated with the model. In particular, we show that the limit shape \eqref{eq:limit_shape_in_spheric_coordinates_intro} is the dual of the connected component of the complement of the amoeba.

\begin{figure}[t!]
    \centering
    \includegraphics[width=2.1cm]{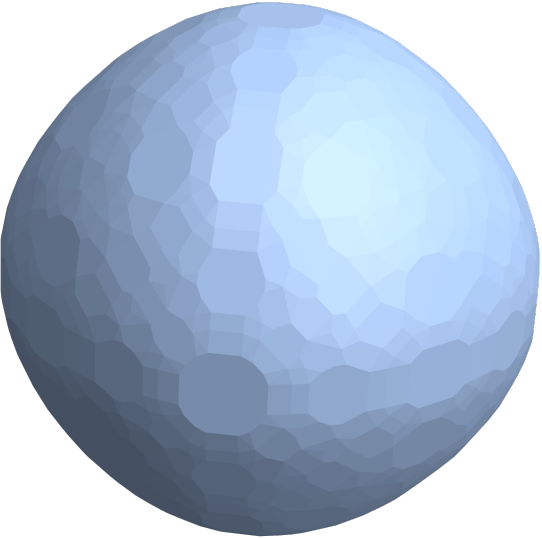}\qquad
    \includegraphics[width=2.1cm]{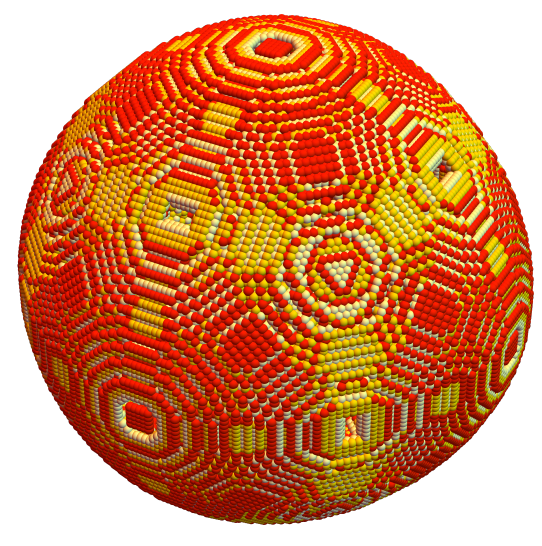}\qquad
    \includegraphics[width=8.4cm]{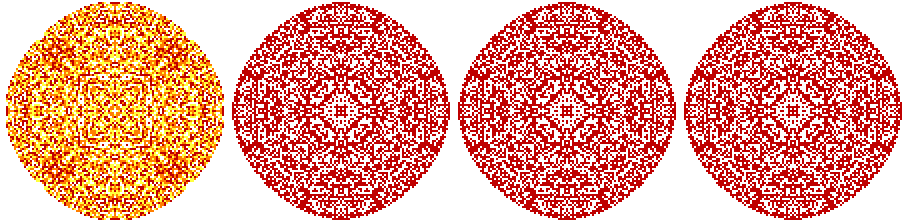}
    
    \includegraphics[width=2.1cm]{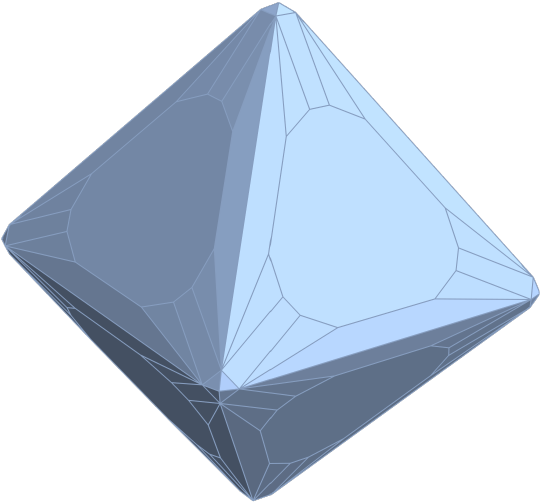}\qquad
    \includegraphics[width=2.1cm]{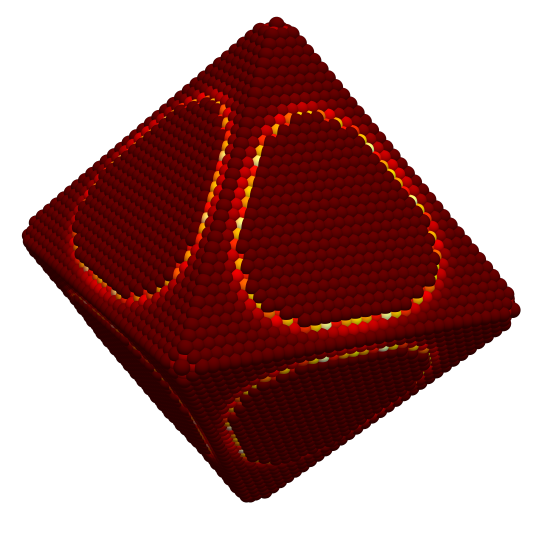}\qquad
    \includegraphics[width=8.4cm]{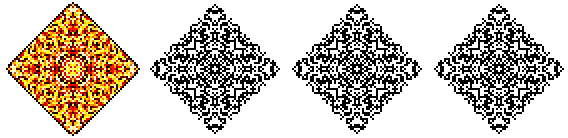}
    
    \includegraphics[width=2.1cm]{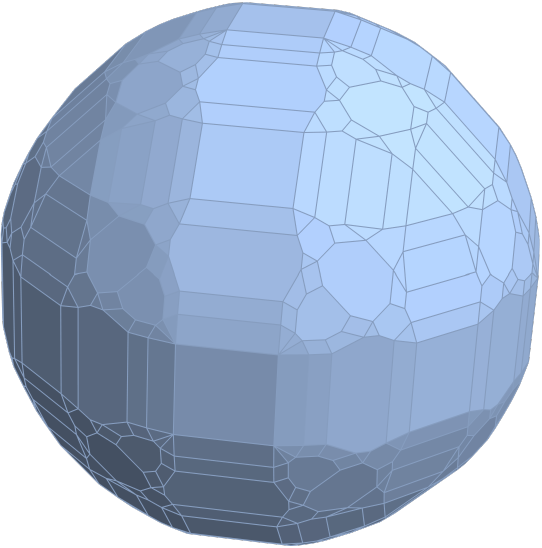}\qquad
    \includegraphics[width=2.1cm]{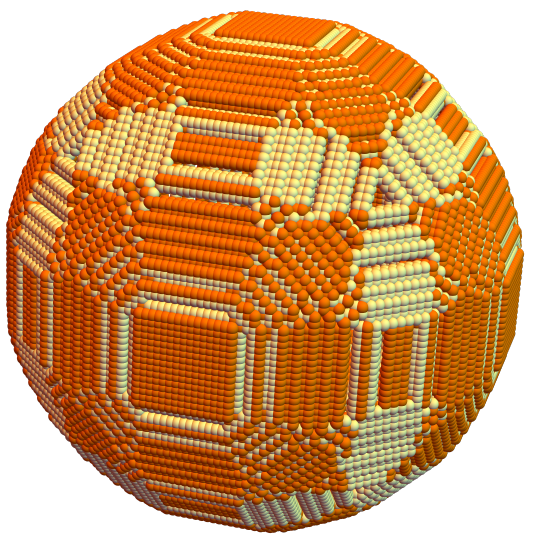}\qquad
    \includegraphics[width=8.4cm]{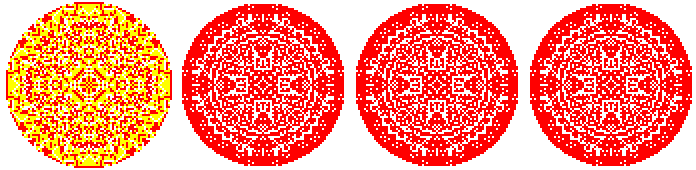}

    \caption{We consider the following sandpile model with four colors. Color $1$ sends $1$ chip to location $(0,0,0)$ to each of the colors $2,3,4$,
$m=4/3$ so $1$ chip is lost.
Color $2$ sends $2$ chips to color $1$, to the locations $(0,0,-1)$ and
$(0,0,1)$, $m=2$, so no chip is lost.
Color $3$ sends $2$ chips to color $1$, to the locations $(0,-1,0)$ and
$(0,1,0)$, $m=2$, so no chip is lost.
Color $4$ sends $2$ chips to color $1$, to the locations $(-1,0,0)$ and
$(1,0,0)$, $m=2$, so no chip is lost. For each of the three rows, the leftmost picture shows the convex hull of the final configuration; the second picture shows a unit sphere at each point of the boundary of the
visited region, the darker the sphere, the larger the height; the four right pictures show the middle horizontal slice of
the final configuration in each of the $4$ colors. 
The first row represents the above sandpile model with initial configuration of $10^{34}$ chips.
The second row represents the same sandpile with the exception that $m=10^8$ for
color $1$, and with initial configuration of $10^{300}$ chips. The last row represents the same sandpile with the exception that $m=3$ for color $1$,
so it is not a leaky model, and with initial configuration of $10^6$ chips.}
    \label{fig:three_examples}
\end{figure}

\section{Leaky sandpile model and the associated random walk}

In the following, $d,p,N \in \N := \{1,2,\ldots\}$. A \emph{sandpile configuration} is a function $s: \Z^d \times \{1, \ldots, p\} \to \R_{\geq 0}$.  To each $(x,i,j) \in \Z^d \times \{1, \ldots, p\}^2$, we associate $c(x,i,j) \geq 0$ and to each $i \in \{1, \ldots,p\}$, we associate a leakiness parameter $\di  \geq 1$. % with at least one $i \in \{1, \ldots, p\}$ such that $\di > 1$.

\begin{model}[Leaky Abelian Sandpile Model (LASM) on $\Z^d \times \{1, \ldots, p\}$]\label{model:sandpile}
    We assume that for all $i \in \{1, \ldots, p\}$, $\sum_{(y,j) \in \Z^d \times \{1, \ldots, p\}} c(y,i,j) < +\infty$.
    \begin{itemize}
        \item The initial configuration is $s_0 := N \delta_{\left(0,i_0\right)}$. It consists of $N$ grains of sand at the origin of $\Z^d \times \{1, \ldots,p\}$.
        \item A sandpile configuration $s$ is said to be \emph{stable} at site $(x,i)$ if: \[s(x,i) \leq \di \sum_{(y,j) \in \Z^d \times \{1, \ldots, p\}} c(y,i,j).\] We say a configuration is stable if it is stable at every site. Otherwise, we say it is unstable.
        \item As long as a configuration is unstable at site $(x,i)$, it can topple, which means:
        \begin{align*}
        &s(x,i) \leftarrow s(x,i) - \di \sum_{(y,j) \in \Z^d \times \{1, \ldots, p\}} c(y-x,i,j),\\
        &\forall (y,j) \in \Z^d \times \{1, \ldots,p\},\quad  s(y,j) \leftarrow s(y,j) + c(y-x, i, j).
        \end{align*}
        \item Sites topple until the configuration eventually becomes stable.
    \end{itemize}
\end{model}
Each time a site topples, $(\di-1) \sum_{(y,j) \in \Z^d \times \{1, \ldots, p\}} c(y,i,j)$ grains of sand disappear, hence the word \emph{leaky}. 
Starting from $s_0$, the configuration will eventually stabilize; in the case where all the $\di$ are strictly greater than $1$, there will be at most \begin{equation*}
   \frac{N}{ \min_{i \in \{1, \ldots,p\}}(\di-1)  \sum_{(y,j) \in \Z^d \times \{1, \ldots, p\}} c(y,i,j)}    
\end{equation*}
topplings. Besides, when several sites are unstable, the order in which they topple does not modify the stable configuration, hence the word \emph{Abelian}.

We define a killed random walk associated with the sandpile model. This random walk is a Markov-additive process on $\Z^d \times \{1, \ldots, p\}$. Markov-additive processes are Markov chains on the state-space $\Z^d \times E$ (here $E = \{1, \ldots,p\}$), whose jumps are invariant along the $\Z^d$ component, that is, a Markov chain $\left(X_n\right)_{n\geq 0}$ such that 
\begin{multline}\label{eq:Markov_additive_property}
\forall n \geq 0,\quad \forall i,j \in \{1, \ldots, p\},\quad  \forall x,y \in \Z^d,\\ \mathbb{P}\bigl( X_{n+1} = (y,j)| X_n = (x,i)  \bigr) = \mathbb{P}\bigl( X_{1} = (y-x,j)| X_0 = (0,i)  \bigr).
\end{multline}
Therefore, transitions of a Markov-additive process are fully described by the probabilities $\mu_{i,j}(x)$ to jump from $(0,i)$ to $(x,j)$ for $i,j \in \{1, \ldots, p\}$ and $x \in \Z^d$. In the case where $p=1$, the Markov-additive process is simply a sum of independent random variables with distribution $\mu = \mu_{1,1}$.
Generalities on Markov-additive processes and their applications to queuing theory can be found in Chapter~XI of \cite{As-03}.

\begin{model}[Killed Random Walk (KRW) associated with the sandpile model]\label{model:KRW}
    For $x \in \Z^d$ and $i,j \in \{1, \ldots, p\}$, let
    \begin{equation}
    \label{eq:def_mu_i,j}
        \mu_{i,j}(x) = \frac{c(x,i,j)}{\di \sum_{(y,j) \in \Z^d \times \{1, \ldots, p\}} c(y,i,j)}.
    \end{equation}
    We define a killed Markov-additive process $\left(X_n\right)_{n \geq 0}$ on $\left(\Z^d \times \{1, \ldots,p\}\right) \sqcup \{\mathsf{k}\}$ by the transitions
    \begin{align*}
    \mathbb{P}\big( (x,i) \to (y,j) \big) &= \mu_{i,j}(y-x) \\
    \mathbb{P}\big( (x,i) \to \mathsf{k} \big) &= 1- \sum_{(y,j) \in \Z^d \times \{1, \ldots,p\}} \mu_{i,j}(y) = 1 - \frac{1}{\di},
    \end{align*}
    where the state $\mathsf{k}$ is the cemetery state of the walk after it is killed.
\end{model}

The measure $\sum_{j=1}^p \mu_{i,j}$ is a probability measure if and only if $\di = 1$, that is, if there is no killing on color $i$. Otherwise, it is a sub-probability measure.

In order to apply previously known results on the Green function of Markov-additive processes, we will need the following assumptions on the Killed Random Walk (KRW).
\begin{assump}
\label{assump:main_assumptions}$ $
    \begin{enumerate}
        \item The KRW is killed at least on one color, that is, there exists $i \in \{1, \ldots, p\}$ such that $\di >1$.
        \item The KRW is irreducible, that is: 
        \begin{multline*}
\forall (x,i,j) \in \Z^d \times \{1, \ldots, p\}^2,\quad\exists n \in \N,\quad\exists (x_1,i_1), \ldots, \left(x_{n-1}, i_{n-1}\right) \in \Z^d \times \{1, \ldots, p\},\\ \mu_{i,i_1}(x_1) \mu_{i_1,i_2}(x_2 - x_1) \cdots \mu_{i_{n-1},j}(x-x_{n-1})> 0.
\end{multline*}
\item The KRW is aperiodic, that is:
\[\forall (x,i) \in \Z^d \times \{1, \ldots, p\},\quad \gcd \bigl\{ n\geq 1 ~|~ \mathbb{P}_{(x,i)}\bigl(X_n = (x,i)\bigr) >0 \bigr\} =1.\]
\item The KRW has finite exponential moments, that is:
\[\forall i,j \in \{1, \ldots, p\},\quad \forall \gamma >0,\quad \sum_{x \in \Z^d} e^{\gamma\|x\|} \mu_{i,j}(x) < +\infty.\]
    \end{enumerate}
\end{assump}

\section{Asymptotics of the Green function}
\label{sec:Green_function}

\subsection{The main result}

\begin{defi}
    The Green function of the KRW from Model~\ref{model:KRW} is 
    \begin{equation*}
        G\big( (x,i),(y,j) \big) = \sum_{n=0}^{+\infty} \mathbb{P}_{(x,i)} \big(X_n = (y,j) \big).
    \end{equation*}
\end{defi}

As a consequence of \eqref{eq:Markov_additive_property}, we have $G\big( (x,i),(y,j) \big) = G \big( (0,i), (y-x,j) \big)$, so in the following, we will only consider values of the Green function of the form $G\big( (0,i),(z,j) \big)$, where the space coordinate of the first variable is $0$.
%cb% Let $\Gamma$ be the function defined in Proposition~\ref{prop:homeo}.
The main objective of this section is to prove the following asymptotic result. 
For $x\in\mathbb R^d$ we denote by $\Vert x\Vert$ its Euclidean norm and by $\widehat x=\frac{x}{\Vert x\Vert}\in\mathbb S^{d-1}$ its direction.

\begin{prop}
\label{prop:asymptotics_green}
    There exist continuous functions $\chi_{i,j} : \mathbb{S}^{d-1} \to \R_{>0}$ such that, when $\|x\| \to + \infty$, \[ G\big( (0,i), (x,j)  \big) \sim \chi_{i,j}\left( \widehat x \right) e^{- \|x\| \Gamma^{-1} \left( \widehat x \right) \cdot \widehat x} \|x\|^{- \frac{d-1}{2}}, \]
    where $\Gamma$ is the function introduced in Proposition~\ref{prop:homeo}. 
\end{prop}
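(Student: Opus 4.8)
The plan is to reduce the Green function asymptotics for the killed Markov-additive process to a classical local limit / renewal-type estimate for a genuine (non-killed) Markov-additive random walk via a Doob $h$-transform, and then invoke the known results of \cite{NeSp-66} (for $p=1$) and \cite{Du-20,Ba-24} (for general $p$). First I would recall the structure of the Laplace transform of the increments: for $t\in\mathbb R^d$ let $\Phi(t)$ be the $p\times p$ matrix with entries $\Phi_{i,j}(t)=\sum_{x\in\Z^d}e^{t\cdot x}\mu_{i,j}(x)$, which is well-defined and analytic on all of $\mathbb R^d$ by Assumption~\ref{assump:main_assumptions}(4). By Perron--Frobenius (irreducibility, Assumption~\ref{assump:main_assumptions}(2)), $\Phi(t)$ has a simple positive dominant eigenvalue $\lambda(t)$ with positive left and right eigenvectors; the function $\lambda$ is real-analytic, strictly convex on the interior of the domain where $\lambda(t)\le 1$ (after taking logarithms), and the ``spectral curve'' $\{t:\lambda(t)=1\}$ is the boundary of a convex body. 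This is precisely the object whose gradient is inverted in Proposition~\ref{prop:homeo}, and $\Gamma^{-1}(\widehat x)$ is the point $t$ on that curve where the outer normal points in direction $\widehat x$, equivalently where $\nabla\lambda(t)$ is parallel to $\widehat x$.

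Next I would perform the exponential change of measure. Fix a direction $u=\widehat x\in\mathbb S^{d-1}$ and set $t^\ast=\Gamma^{-1}(u)$, so $\lambda(t^\ast)=1$ and $\nabla\lambda(t^\ast)\parallel u$. Let $\ell=(\ell_i)$ and $r=(r_i)$ be the positive left/right Perron eigenvectors of $\Phi(t^\ast)$ normalized by $\sum_i\ell_i r_i=1$. Define the tilted transition kernel
\[
\widetilde\mu_{i,j}(x)=e^{t^\ast\cdot x}\,\frac{r_j}{r_i}\,\mu_{i,j}(x),
\]
which, because $\lambda(t^\ast)=1$ and $r$ is a right eigenvector, is an honest (non-killed) Markov-additive transition kernel on $\Z^d\times\{1,\dots,p\}$. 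A direct computation shows
\[
G\bigl((0,i),(x,j)\bigr)=\sum_{n\ge 0}\mathbb P_{(0,i)}(X_n=(x,j))
=e^{-t^\ast\cdot x}\,\frac{r_i}{r_j}\,\widetilde G\bigl((0,i),(x,j)\bigr),
\]
where $\widetilde G$ is the Green function of the tilted walk. Since $\lambda(t^\ast)=1$ is the minimum of $s\mapsto\lambda(t^\ast+su)\ge 1$ is \emph{not} quite what we want; rather, by the choice $\nabla\lambda(t^\ast)\parallel u$, the tilted walk has mean increment (of the $\Z^d$-component, under its stationary background chain) equal to $\nabla\lambda(t^\ast)/(\text{const})$, which is a positive multiple of $u$. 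Thus under the tilted measure the walk drifts in direction $u$, and $\widetilde G((0,i),(x,j))$ for $x=\|x\|u\to\infty$ in the drift direction is governed by a renewal/local-limit theorem: it decays only polynomially, like $c_{i,j}(u)\,\|x\|^{-(d-1)/2}$, with the Gaussian $\|x\|^{-(d-1)/2}$ factor coming from the transverse fluctuations (central limit theorem for the $(d-1)$ directions orthogonal to $u$) and a renewal factor in the longitudinal direction. This is exactly the content of the classical Ney--Spitzer estimate \cite{NeSp-66} in the scalar case and of \cite{Du-20,Ba-24} in the matrix case; I would cite the relevant theorem there, identifying $\chi_{i,j}(u)=\frac{r_i}{r_j}\,c_{i,j}(u)$, and $\Gamma^{-1}(u)\cdot u=t^\ast\cdot(\|x\|u)/\|x\|$ gives the stated exponential rate. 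Aperiodicity (Assumption~\ref{assump:main_assumptions}(3)) is needed so that the local limit theorem holds without lattice corrections, and the finite exponential moments guarantee the tilt is legal and the tilted walk still has exponential (hence all polynomial) moments, so the CLT/renewal machinery applies.

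Finally I would address uniformity and continuity in the direction $u$. The eigenpair $(\lambda(t),\ell(t),r(t))$ depends real-analytically on $t$ (simple eigenvalue), and $t\mapsto\Gamma^{-1}(u)$ is a homeomorphism by Proposition~\ref{prop:homeo}, in fact as smooth as the strict convexity of $\lambda$ allows; composing, $u\mapsto t^\ast(u)$, $u\mapsto r_i(t^\ast(u))$, and the covariance/renewal constants entering $c_{i,j}(u)$ are all continuous on the compact sphere $\mathbb S^{d-1}$, whence $\chi_{i,j}$ is continuous and strictly positive (positivity of $r$ and irreducibility keep $c_{i,j}>0$). I expect the main obstacle to be the genuinely multicolor case: making sure the cited result of \cite{Ba-24} applies verbatim under Assumption~\ref{assump:main_assumptions} — in particular that the combination of irreducibility, aperiodicity, and finite exponential moments is exactly the hypothesis set there, and that the constant $\chi_{i,j}$ produced there matches the normalization I use — and handling the fact that $\Phi(t^\ast)$ is only guaranteed to have a \emph{dominant} eigenvalue (not that it is diagonalizable), so the spectral projection onto the Perron eigenspace must be used carefully throughout the tilting argument. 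The scalar subtleties (e.g.\ strict convexity of $\log\lambda$, non-degeneracy of the transverse covariance, which follows from genuine $d$-dimensional support via irreducibility) are routine but must be checked to ensure the $\|x\|^{-(d-1)/2}$ exponent is correct in every direction.
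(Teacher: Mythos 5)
Your overall strategy---Doob/exponential tilting at the Perron root, then citing the Green function asymptotics of \cite{NeSp-66} and \cite{Du-20,Ba-24} for the resulting non-killed drifted walk---is the same as the paper's, and the identification of the rate via $t^\ast=\Gamma^{-1}(\widehat x)$ and of the prefactor via the Perron eigenvectors is correct in each fixed direction. However, there is a genuine gap in how you obtain the statement as written: the asymptotic relation is claimed for $\|x\|\to+\infty$ with no restriction on the direction, i.e.\ it must be \emph{uniform} in $\widehat x$ (the paper stresses this immediately after the proposition, and the uniformity is what later makes the two level sets $r_{N,u}$ and $R_{N,u}$ converge uniformly). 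Because you tilt by the direction-dependent parameter $t^\ast(u)$ from the outset, the walk to which you apply the local limit / renewal theorem changes with $u$; the cited result \cite[Thm~1.5]{Ba-24} gives asymptotics that are uniform in direction for a \emph{single fixed} non-killed walk, so your argument only yields the equivalence along each fixed ray (or along sequences with converging directions), and continuity of $u\mapsto\chi_{i,j}(u)$ does not by itself upgrade this to uniform control of the $o(1)$ error across the whole family of tilted walks.

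The paper explicitly flags and circumvents exactly this issue: it first performs the Doob transform at one \emph{fixed} $t_0\in\partial\mathcal T$, applies \cite[Thm~1.5]{Ba-24} to that single non-killed walk to get an expansion
$G\bigl((0,i),(x,j)\bigr)\sim \mathrm{const}\cdot e^{-\|x\|\left(t_0+c(\widehat x)\right)\cdot\widehat x}\|x\|^{-\frac{d-1}{2}}$
that is already uniform in $\widehat x$, and only \emph{afterwards} uses the direction-dependent tilt $t_u=\Gamma^{-1}(u)$ (for which the transformed Green function decays only polynomially along $u$) to identify $t_0+c(u)=\Gamma^{-1}(u)$. To repair your proof you would either adopt this two-step order of operations, or else prove a uniform-in-$u$ version of the local limit theorem for the continuously varying family $\widetilde\mu^{(u)}$, which is not what the cited references provide off the shelf. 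The remaining points you raise (simplicity of the Perron eigenvalue, positivity and continuity of the eigenvectors, non-degeneracy of the transverse covariance) are handled by Propositions~\ref{prop:homeo} and~\ref{prop:pos_def} and are not where the difficulty lies.
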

An important feature of the previous proposition is that, although these asymptotics depend on the direction $\widehat x$, they are uniform in $\widehat x$.

First of all we recall some relevant and inspiring literature.

\subsection{Literature on the Green function}
To provide some context for the asymptotics given in Proposition~\ref{prop:asymptotics_green}, and to introduce natural ideas and tools, we briefly review the literature on the asymptotics of the Green function. Let us start with the following general asymptotic statement of the Green function $G(x,y)=G(0,y-x)$ for random walks with drift on $\mathbb Z^d$: as $\Vert x\Vert\to\infty$,
\begin{equation}
    \label{eq:asymptotics_GF}
    G(0,x) \sim \chi(\widehat x)\exp\bigl( -\Vert x\Vert\nu(\widehat x) \bigr)\Vert x\Vert^{-\frac{d-1}{2}}.
\end{equation}
The quantity $\chi(\widehat x)$ is the prefactor and $\nu(\widehat x)\geq 0$ can be interpreted as the exponential decay to zero of the Green function. To illustrate the numerous results in the probabilistic and theoretical physics literature, let us mention five different interpretations of the exponential decay $\nu(\widehat x)$ and the underlying questions raised by these interpretations. In this paper we take inspiration from all of these perspectives.
When the Green function has such asymptotics, it is said to have an Ornstein-Zernike decay (see e.g.\ \cite{AoOtVe-24}).

\paragraph{A homeomorphism proved by Hennequin.}
In \cite[Thm~2.2]{NeSp-66}, Ney and Spitzer derive the asymptotics \eqref{eq:asymptotics_GF}. See \cite[Thm~2.4]{Do-66} for a similar result when $\Vert x\Vert\to\infty$ in the direction of the drift. In particular, Ney and Spitzer compute in \cite{NeSp-66} the exponential decay of the Green function when $\Vert x\Vert\to\infty$ in terms of a homeomorphism between the sphere and level sets of the Laplace transform of the increment distribution found by Hennequin \cite{He-63}. Since then, several alternative proofs of this asymptotic result \eqref{eq:asymptotics_GF} have been given, for example in \cite[Thm~25.15]{Wo-00}. 

In our more general context of walks on $\mathbb Z^d$ with $p$ colors or layers, i.e.\ $\mathbb Z^d\times \{1,\ldots,p\}$, the asymptotics of the Green function were obtained independently in \cite[Prop.~3.27]{Du-20} and \cite[Thm~1.5]{Ba-24}. Again, the exponential rate of decay to zero of the Green function is expressed in terms of a homeomorphism between the unit sphere and the level set of a given spectral radius. In all the above references, the asymptotics are derived in the case of zero mass (i.e., non-killed random walks). As we shall see, the classical Doob transformation can transfer the results in the presence of a non-zero mass (random walks with killing).

\paragraph{Second rate function in large deviation theory.}
In \cite[Cor.~5.7]{BoMo-96} it was shown that the exponential decay $\nu(\widehat x)$ in \eqref{eq:asymptotics_GF} can be reformulated in terms of the second rate function from large deviation theory. The latter is defined as follows: if $\Lambda$ denotes the classical rate deviation function associated to the random walk, then the second rate function is, for $x\in\mathbb Z^d$,
\begin{equation*}
    D(x) = \inf_{s>0} \frac{\Lambda(s x)}{s}.
\end{equation*}
This gives a different formulation and an interesting connection with large deviation theory. In the particular case of the simple random walk in dimension $2$, the expression of the exponential decay in terms of the second large deviation function was independently obtained in \cite[Prop.~8]{Me-06}.

\paragraph{Dual curves and Wulff crystals.}
Another equivalent formulation of the exponential decay $\nu(\widehat x)$ in \eqref{eq:asymptotics_GF} is provided in the paper \cite{BoMo-96}, see Theorem~1 there. It is proved that the function $D(x)$ above describes actually the dual set of the convex level set of the increment Laplace transform.

In the physics literature, the above dual set is often called a Wulff crystal or a Wulff shape. For instance, in the paper \cite{Me-06}, the author studies the surface tension of the Ising model at criticality. In the setting of the killed two-dimensional simple random walk, he derives the logarithmic asymptotics of the two-point correlation function (the Green function), see \cite[Thm~1]{Me-06}. In particular, he analyses the variation of the exponential decay of the Green function as a function of the killing rate, and studies the associated Wulff shapes. Wulff shapes actually arise in a variety of contexts in mathematical physics, such as Ising model and percolation theory, see \cite{Ce-06}.

\paragraph{Inverse correlation length.} In the paper \cite{AoOtVe-24} the authors consider the Green function of a large class of ``self-repulsive in average'' models, including the killed random walk and other self-repulsive polymer models, including in particular the self-avoiding walk. Under certain assumptions they prove that the Green function has Ornstein-Zernike behaviour, which means that the asymptotics of the Green function are given by \eqref{eq:asymptotics_GF}. In particular, the quantities $\chi$ and $\nu$ in \eqref{eq:asymptotics_GF} can be studied as a function of mass. The exponential decay $\nu(\widehat x)$ is interpreted as the inverse correlation length. Note that Wulff shapes also appear in \cite{AoOtVe-24}, not to describe the rate of convergence to zero, but as a tool in the pathwise analysis of the Green function.

\paragraph{Anisotropic norms.}
In \cite{MiSl-22} the authors consider the simple random walk in any dimension and obtain four different asymptotic regimes for the associated Green functions, depending on how (simultaneously)\ the point $x$ goes to infinity and the mass tends to $0$ or to $\infty$. One of the regimes corresponds to Ornstein-Zernike behaviour; the authors show that other relevant regimes appear. In \cite{MiSl-22} the quantity $x\mapsto \nu(\widehat x) \Vert x\Vert$ is interpreted as an anisotropic norm. A particularly interesting feature of the paper is the analysis of this norm as the mass goes to zero and infinity. In the zero-mass asymptotics the norm recovers an isotropy; on the contrary, in the infinite mass regime it converges to a ``non-smooth'' norm.

In dimension $2$, similar regimes are obtained in \cite[Thm~1.4]{AlMk-22} for the Green function of the killed simple random walk.

\paragraph{Lattice Green functions and differential equations.}

Although less related to asymptotic estimates, let us mention here a line of research started in the seventies, see e.g.\ \cite{Jo-73}, the main objective of which is to compute certain lattice Green functions. Indeed, as shown in \cite{Jo-73}, it is possible, for example, to compute the simple cubic lattice Green function in terms of certain special functions called Heun functions. More generally, there are several works which compute Green functions in closed form, see \cite{Gu-93,Gu-10}. These expressions can be hypergeometric functions, elliptic functions, Heun functions, classical integrals, diagonals of rational functions, etc. Sometimes they are expressed using their differential operators, and the need for computer algebra techniques can be particularly useful \cite{HaKoMaZe-16}. This line of research is still active, see e.g.\ 
\cite{ChChJi-24}. Although this has not yet been done, the use of analytic combinatorics in several variables techniques \cite{PeWi-13} is promising to deduce from these expressions their asymptotic behaviour.

\subsection{Proof of the main asymptotics result on the Green function}

In this section, we derive the asymptotics of the Green function $G$ stated in Proposition~\ref{prop:asymptotics_green} from the asymptotics of a \emph{non-killed} walk studied in \cite{Ba-24,Du-20}. To do so, we will perform a Doob transform to turn the killed random walk into a non-killed one with a drift.

The (real)\ Laplace transform of the walk is the function $L\mu : \R^d \to \mathcal{M}_p(\R)$ defined by:
\[  \forall t \in \R^d,\quad(L\mu)_{i,j}(t) :=  \sum_{x \in \Z^d} e^{t \cdot x} \mu_{i,j}(x).\]
Assumption~\ref{assump:main_assumptions} on the finite exponential moments ensures that it is well defined on the whole of $\R^d$.

\begin{lemma}[Lem.~2.10 in \cite{Ba-24}]\label{lem:c_doob_transf}
    Let $t \in \R^d$. We assume that $1$ is an eigenvalue of the Laplace transform $L\mu(t)$ with an eigenvector $\varphi_t$ whose all entries are positive. We define the measures $(\mu_t)_{i,j}$  by \[\forall i,j \in \{1, \ldots, p \},\quad\forall x \in \Z^d,\quad\left(\mu_t\right)_{i,j} (x) = \frac{\left(\varphi_t\right)_j}{ \left(\varphi_t\right)_i}e^{t \cdot x} \mu_{i,j}(x).\] Then for all $i \in \{1, \ldots, p\}$, $\sum_{j=1}^p  {(\mu_t)}_{i,j}$ is a \emph{probability} measure on $\Z^d$. This means that the Markov-additive process with transitions $\mu_t$ is a non-killed process.
\end{lemma}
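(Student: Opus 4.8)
The plan is to recognize the transformation $\mu\mapsto\mu_t$ as a Doob $h$-transform and then reduce the claim to a one-line computation powered by the eigenvalue hypothesis. First I would record two preliminary points. Since $e^{t\cdot x}\le e^{\|t\|\,\|x\|}$, the finite exponential moment condition in Assumption~\ref{assump:main_assumptions} guarantees that every series $\sum_{x\in\Z^d}e^{t\cdot x}\mu_{i,j}(x)$ converges absolutely, so $L\mu(t)$ is well defined and all the double sums below may be interchanged freely (all summands are nonnegative in any case). And nonnegativity of $(\mu_t)_{i,j}(x)$ is immediate, because $\varphi_t$ has positive entries and $\mu_{i,j}(x)\ge 0$.

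Next I would write the hypothesis in coordinates: saying that $1$ is an eigenvalue of $L\mu(t)$ with (right) eigenvector $\varphi_t$ means that for every $i\in\{1,\dots,p\}$,
\[
\sum_{j=1}^p (L\mu)_{i,j}(t)\,(\varphi_t)_j=(\varphi_t)_i,
\qquad\text{i.e.}\qquad
\sum_{j=1}^p\sum_{x\in\Z^d} e^{t\cdot x}\mu_{i,j}(x)\,(\varphi_t)_j=(\varphi_t)_i .
\]
The core of the argument is then the computation, for fixed $i$,
\[
\sum_{j=1}^p\sum_{x\in\Z^d}(\mu_t)_{i,j}(x)
=\frac{1}{(\varphi_t)_i}\sum_{j=1}^p(\varphi_t)_j\sum_{x\in\Z^d}e^{t\cdot x}\mu_{i,j}(x)
=\frac{1}{(\varphi_t)_i}\sum_{j=1}^p (L\mu)_{i,j}(t)\,(\varphi_t)_j
=\frac{(\varphi_t)_i}{(\varphi_t)_i}=1 .
\]
Hence $\sum_{j=1}^p(\mu_t)_{i,j}$ is a probability measure on $\Z^d$, so there is no killing and the associated Markov-additive process is non-killed.

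To finish, I would note that $(\mu_t)_{i,j}(x)$ still depends only on the displacement $x$ and the colors $i,j$, so spatial homogeneity \eqref{eq:Markov_additive_property} is preserved and $\mu_t$ genuinely defines a Markov-additive process; optionally I would add the remark that this is precisely the Doob transform of the substochastic kernel $((x,i),(y,j))\mapsto\mu_{i,j}(y-x)$ by the positive harmonic function $(x,i)\mapsto e^{t\cdot x}(\varphi_t)_i$ (harmonicity being exactly the eigenvalue equation above), which conceptually explains why a bona fide Markov chain comes out. As for obstacles, there is essentially no hard step: the only points requiring care are getting the side of the eigenvector right (it is a \emph{right} eigenvector of $L\mu(t)$) and justifying the interchange of the sums over $x$ and $j$, which follows at once from nonnegativity together with the absolute convergence supplied by Assumption~\ref{assump:main_assumptions}.
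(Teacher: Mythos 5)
Your proposal is correct and follows essentially the same route as the paper: nonnegativity from the positivity of $\varphi_t$, then the computation $\sum_{j}\sum_{x}(\mu_t)_{i,j}(x)=\frac{1}{(\varphi_t)_i}\left(L\mu(t)\varphi_t\right)_i=1$ using the eigenvalue equation. The extra remarks on absolute convergence and the Doob-transform interpretation are accurate but not needed beyond what the paper records.
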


The proof of Lemma~\ref{lem:c_doob_transf} is the same as that of \cite[Lem.~2.10]{Ba-24}. We briefly recall the main idea below to keep our paper self-contained.
In the work \cite{Re-24}, the Doob transform technique is applied to statistical mechanics by relating two models, namely random rooted spanning forests and random spanning trees, and providing various applications.

\begin{proof}
    The positivity of $\varphi_t$ ensures that $\sum_{j=1}^p  {(\mu_t)}_{i,j}$ is non-negative, and
\begin{equation*}
    \sum_{x\in \Z^d} \sum_{j=1}^p \left(\mu_t\right)_{i,j}(x)  = \sum_{j=1}^p \sum_{x \in \Z^d} \frac{\left(\varphi_t\right)_j}{\left(\varphi_t\right)_i} e^{ t \cdot x } \mu_{i,j}(x) = \frac{1}{\left(\varphi_t \right)_i} \left(L\mu(t) \varphi_t\right)_i = \frac{1}{\left(\varphi_t\right)_i} \left(\varphi_t\right)_i = 1.\qedhere
\end{equation*}
\end{proof}

Assumption~\ref{assump:main_assumptions} on irreducibility and aperiodicity ensures that $L\mu(t)$ is a primitive matrix for every $t \in \R^d$. Therefore, if the spectral radius of $L\mu(t)$ is $1$, then the Perron-Frobenius theorem ensures that there exists an eigenvector $\varphi_t$ like in Lemma~\ref{lem:c_doob_transf}, which is unique up to multiplication. This leads to the following definitions.
%textcolor{magenta}{Could we clarify a bit the above paragraph: first, do we mean non-negative or positive? second, what do we mean by ``such'' an eigenvector?}
%\textcolor{blue}{``such an eigenvector'' meant ``like in the previous lemma'', I clarified it. As for non-negative vs positive, I think the most precise answer is actually \emph{primitive}, that is, a non-negative, irreducible matrix, which has a power with only (strictly) positive entries. It follows from irreducibility+aperiodicity. For the Perron-Frobenius theorem, the only difference between \{non-negative and irreducible\} and primitive is that in the primitive case, there is uniqueness of the leading eigenvalue.}

\begin{defi}
\label{def:rho_T}
    Let $t \in \R^d$. The spectral radius of $L\mu(t)$ is denoted $\rho(t)$. We define, in $\R^d$, $\mathcal{T} := \rho^{-1}([0,1])$ and $\partial \mathcal{T} := \rho^{-1}(\{1\})$.
\end{defi}

\begin{defi}\label{def:doob_transform}
    Let $t \in \partial \mathcal{T}$ and $\varphi_t$ as in Lemma~\ref{lem:c_doob_transf}. The \emph{Doob transform} of parameter $t$ of the process ${(X_n)}_{n \geq 0}$ is the Markov-additive process $(X^t_n)_{n \geq 0}$ whose jump matrix is $\mu_t$ defined by 
    \[\forall i,j \in \{1, \ldots, p \},\quad\forall x \in \Z^d,\quad\left(\mu_t\right)_{i,j} (x) = \frac{\left(\varphi_t\right)_j}{ \left(\varphi_t\right)_i}e^{t \cdot x} \mu_{i,j}(x).\] It is a \emph{non-killed} process because Lemma~\ref{lem:c_doob_transf} ensures that every $\sum_{j=1}^p \left(\mu_t\right)_{i,j}$ is a probability measure, and no longer a sub-probability measure.
\end{defi}

\begin{lemma}
We have $\rho(0) < 1$, that is, $0 \in \mathring{\mathcal{T}}$.
\end{lemma}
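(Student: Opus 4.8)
The plan is to evaluate the Laplace transform at $t=0$ and read off the structure of the resulting nonnegative matrix. At $t=0$ we have $(L\mu(0))_{i,j}=\sum_{x\in\Z^d}\mu_{i,j}(x)$, so the $i$-th row sum of $L\mu(0)$ is $\sum_{j=1}^p\sum_{x}\mu_{i,j}(x)=\frac1{\di}\leq 1$, with at least one strict inequality by item~1 of Assumption~\ref{assump:main_assumptions}. By the discussion preceding Definition~\ref{def:rho_T}, the matrix $L\mu(0)$ is primitive, hence irreducible, so Perron--Frobenius provides a strictly positive left eigenvector $v$ with $v^{\intercal}L\mu(0)=\rho(0)\,v^{\intercal}$.

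The key step is then to sum this identity over all coordinates: $\rho(0)\sum_{i}v_i=\sum_{j}\bigl(v^{\intercal}L\mu(0)\bigr)_j=\sum_{i}v_i\sum_{j}(L\mu(0))_{i,j}=\sum_{i}v_i\cdot\frac1{\di}$. This exhibits $\rho(0)$ as a weighted average, with the strictly positive weights $v_i/\sum_k v_k$, of the numbers $\frac1{\di}\in(0,1]$. Since at least one $\di>1$ and every weight is strictly positive, this forces $\rho(0)<1$. (Equivalently, one may simply cite the classical fact that the Perron eigenvalue of an irreducible nonnegative matrix lies strictly between its minimal and maximal row sums unless all row sums agree.)

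Finally, to get $0\in\mathring{\mathcal{T}}$, I would invoke continuity of $t\mapsto\rho(t)$: the matrix $L\mu(t)$ depends continuously on $t$ (it is in fact real-analytic by the finite exponential moment assumption) and is primitive, so its Perron eigenvalue is a continuous function of $t$; hence $\rho^{-1}([0,1))$ is open, contains $0$, and is contained in $\mathcal{T}=\rho^{-1}([0,1])$, which yields $0\in\mathring{\mathcal{T}}$. I do not anticipate any real obstacle; the only point needing a little care is obtaining the \emph{strict} inequality $\rho(0)<1$ rather than just $\rho(0)\leq 1$, which is precisely why the argument goes through the left eigenvector identity rather than merely bounding $\rho(0)$ by the maximal row sum.
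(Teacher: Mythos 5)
Your proof is correct and is essentially the same as the paper's: both pair a strictly positive left Perron--Frobenius eigenvector of $L\mu(0)$ against the row sums $\tfrac{1}{m_i}\leq 1$ (at least one strict) to force $\rho(0)<1$; the paper phrases this as a contradiction via $x^{\intercal}(M\mathds{1}-\mathds{1})$ while you phrase it as a weighted average, which is the same computation. Your added continuity remark for $0\in\mathring{\mathcal{T}}$ is a harmless elaboration of what the paper leaves implicit.
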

In the proof below, inequalities between vectors are meant coordinate by coordinate.
\begin{proof}
Let $M = L\mu(0) = \bigl( \sum_{x \in \Z^d} \mu_{i,j}(x) \bigr)_{1 \leq i,j \leq p}$. It is a sub-stochastic matrix, so $\rho(M) \leq 1$, which leaves us to prove that $\rho(M) \neq 1$. At least one of the rows of $M$ has a sum strictly smaller than $1$, because  our running assumption implies that there exists $i \in \{1, \ldots, p \}$ such that $\di > 1$. If we denote by $\mathds{1}$ the vector of $\R^p$ with all coordinates equal to $1$, it means that $M\mathds{1} \leq \mathds{1}$ and $M \mathds{1} \neq \mathds{1}$. Let $x \in \R^d$ be a left eigenvector of $M$ associated with the eigenvalue $\rho(M)$, that is $x^\intercal M = \rho(M) x^\intercal$. The Perron-Frobenius theorem applied to $M^\intercal$ ensures that $x>0$. If $\rho(M) = 1$, we would have \[x^\intercal \left(M \mathds{1} - \mathds{1}\right) = x^\intercal \mathds{1} - x^\intercal \mathds{1} = 0.\]
But $M \mathds{1} - \mathds{1} \leq 0$ with at least one non-zero coefficient and $x >0$, so $x^\intercal \left(M \mathds{1} - \mathds{1}\right) < 0$, a contradiction.
\end{proof}

The Green function $G$ of the KRW and the Green function $G_t$ of the Doob transform of parameter~$t$ are related as follows.

\begin{prop}[Prop.~2.13 in \cite{Ba-24}]
\label{prop:link_green_functions}
    Let $t \in \partial \mathcal{T}$, $x \in \Z^d$ and $i,j \in \{1, \ldots, p\}$. The Green function of the Doob transform is given by 
    \[G_t \big( (0,i), (x,j) \big) = \frac{ \left(\varphi_t\right)_j}{\left(\varphi_t\right)_i} e^{t \cdot x} G\big( (0,i), (x,j) \big),\] where $\varphi_t$ is the Perron-Frobenius eigenvector introduced in Lemma~\ref{lem:c_doob_transf}.
\end{prop}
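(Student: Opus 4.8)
The plan is to establish the identity first at the level of individual $n$-step transition probabilities and then sum over $n$. Fix $t\in\partial\mathcal T$; by Definition~\ref{def:rho_T} this means $\rho(t)=1$, and since Assumption~\ref{assump:main_assumptions} makes $L\mu(t)$ primitive, the Perron--Frobenius theorem supplies a positive eigenvector $\varphi_t$, so the Doob transform $\mu_t$ of Definition~\ref{def:doob_transform} is well defined and, by Lemma~\ref{lem:c_doob_transf}, is genuinely a (non-killed) Markov-additive process.

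The key claim is that for every $n\geq 0$,
\[
\mathbb P^t_{(0,i)}\bigl(X^t_n=(x,j)\bigr)=\frac{(\varphi_t)_j}{(\varphi_t)_i}\,e^{t\cdot x}\,\mathbb P_{(0,i)}\bigl(X_n=(x,j)\bigr),
\]
which I would prove by expanding both sides over paths. A path $\gamma=\bigl((0,i),(x_1,i_1),\ldots,(x_{n-1},i_{n-1}),(x,j)\bigr)$ has probability $(\mu_t)_{i,i_1}(x_1)(\mu_t)_{i_1,i_2}(x_2-x_1)\cdots(\mu_t)_{i_{n-1},j}(x-x_{n-1})$ under the Doob transform. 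Substituting $(\mu_t)_{a,b}(y)=\frac{(\varphi_t)_b}{(\varphi_t)_a}e^{t\cdot y}\mu_{a,b}(y)$ into each factor, the eigenvector ratios telescope to $(\varphi_t)_j/(\varphi_t)_i$ while the displacements $x_1,(x_2-x_1),\ldots,(x-x_{n-1})$ sum to $x$, so the exponentials combine to $e^{t\cdot x}$; hence the probability of $\gamma$ under $\mu_t$ is exactly $\frac{(\varphi_t)_j}{(\varphi_t)_i}e^{t\cdot x}$ times its probability under $\mu$. Summing over all length-$n$ paths from $(0,i)$ to $(x,j)$ gives the displayed relation — equivalently, one runs this as an immediate induction on $n$ through the Chapman--Kolmogorov equation, the inductive step being a single use of the telescoping identity. (Note that $G$ counts precisely the paths avoiding the cemetery $\mathsf k$, which are exactly the paths appearing above, so the killing probability $1-1/\di$ never enters the computation.)

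Finally I would sum over $n\geq 0$: since all terms are nonnegative, Tonelli's theorem justifies pulling the constant factor $\frac{(\varphi_t)_j}{(\varphi_t)_i}e^{t\cdot x}$ out of the sum, giving
\[
G_t\bigl((0,i),(x,j)\bigr)=\sum_{n\geq 0}\mathbb P^t_{(0,i)}\bigl(X^t_n=(x,j)\bigr)=\frac{(\varphi_t)_j}{(\varphi_t)_i}e^{t\cdot x}\sum_{n\geq 0}\mathbb P_{(0,i)}\bigl(X_n=(x,j)\bigr)=\frac{(\varphi_t)_j}{(\varphi_t)_i}e^{t\cdot x}G\bigl((0,i),(x,j)\bigr).
\]
I do not expect a serious obstacle: this is the standard path-reweighting behind any Doob $h$-transform. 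The only points needing (mild) care are that the Doob transform is legitimately defined — which is exactly the content of the Perron--Frobenius discussion and Lemma~\ref{lem:c_doob_transf} — and that the interchange of sums is valid, which follows from nonnegativity; the finiteness of $G$ itself is inherited from the killed walk and is irrelevant to the algebra, the identity holding a priori in $[0,+\infty]$.
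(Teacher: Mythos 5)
Your proof is correct and follows essentially the same route as the paper: expand the $n$-step transition probabilities over paths, observe that the eigenvector ratios telescope and the exponential weights combine to $e^{t\cdot x}$, and sum over $n$. The extra remarks on Tonelli and on the identity holding in $[0,+\infty]$ are fine but not needed beyond what the paper already does.
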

Proposition~\ref{prop:link_green_functions} is stated (and proved)\ in \cite{Ba-24} as Prop.~2.13. The main argument is briefly recalled below. 

\begin{proof}
Let $x \in \Z^d$, $i,j \in \{1, \ldots, p \}$ and $n\geq 0$.
    We have \[\mathbb{P}_{(0,i)}\bigl(X_n = (x,j)\bigr) = \sum_{(0,i) = (x_0, i_0) \to (x_1,i_1) \to \cdots \to (x_n,i_n) = (x,j)} \mu_{i,i_1}(x_1) \mu_{i_1, i_2}(x_2-x_1) \cdots \mu_{i_{n-1},j}(x-x_{n-1}),\] where the sum is taken on all paths from $(0,i)$ to $(x,j)$ of length $n$ in the graph associated with the Markov chain. Similarly, 
    \begin{align*}
     \mathbb{P}_{(0,i)}\bigl(X_n^t = (x,j)\bigr) &=  \sum_{  (0,i) \to (x_1,i_1) \to \cdots \to (x,j) } \frac{\left(\varphi_t\right)_{i_1}}{\left(\varphi_t\right)_{i}} e^{ t \cdot x_1 }\mu_{i,i_1}(x_1)   
     %\frac{\left(\varphi_t\right)_{i_2}}{\left(\varphi_t\right)_{i_1}} e^{ t \cdot (x_2 - x_1) }\mu_{i_1, i_2}(x_2-x_1) 
     \cdots  \frac{\left(\varphi_t\right)_{j}}{\left(\varphi_t\right)_{i_{ n-1 }}} e^{ t \cdot (x - x_{n-1}) }\mu_{i_{n-1},j}(x-x_{n-1}) \\
     &= \sum_{  (0,i) \to (x_1,i_1) \to \cdots \to (x,j) } \frac{\left(\varphi_t\right)_{j}}{\left(\varphi_t\right)_{i}} e^{ t \cdot x }  \mu_{i,i_1}(x_1) \mu_{i_1, i_2}(x_2-x_1) \cdots \mu_{i_{n-1},j}(x-x_{n-1})\\
     &= \frac{\left(\varphi_t\right)_{j}}{\left(\varphi_t\right)_{i}} e^{ t \cdot x } \mathbb{P}_{(0,i)}\bigl(X_n = (x,j)\bigr).
    \end{align*}
    Summing over $n \geq 0$ gives the result announced in Proposition~\ref{prop:link_green_functions}.
\end{proof}

In the following results we study the spectral radius $\rho$ in order to obtain useful properties of the set $\mathcal{T}$ introduced in Definition~\ref{def:rho_T}.

\begin{prop}[Prop.~2.14 in \cite{Ba-24}]\label{prop:usual_prop_spectral_radius}
The function $\rho$ is:
    \begin{itemize}
        \item convex (and even logarithmically convex);
        \item\label{it:prop:norm_coercivity} norm-coercive, i.e., $\rho(t) \xrightarrow[\|t\| \to + \infty]{}+\infty$.
    \end{itemize}
\end{prop}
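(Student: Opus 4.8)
Both statements follow from the explicit formula $(L\mu(t))_{i,j}=\sum_{x\in\Z^d}e^{t\cdot x}\mu_{i,j}(x)$ together with classical Perron--Frobenius facts; the convexity part is the matrix analogue of Kingman's convexity theorem, and the coercivity part is an elementary diagonal lower bound on the Perron root.

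For \emph{(logarithmic) convexity}, I would first record that powers of $L\mu(t)$ are again Laplace transforms: for every $n\geq 1$,
\[(L\mu(t)^n)_{i,j}=\sum_{x\in\Z^d}e^{t\cdot x}\,p^{(n)}_{i,j}(x),\qquad p^{(n)}_{i,j}(x):=\mathbb P_{(0,i)}\bigl(X_n=(x,j)\bigr)\geq 0,\]
these sums being finite by the finite-exponential-moment part of Assumption~\ref{assump:main_assumptions}. Each $t\mapsto e^{t\cdot x}$ is log-linear, hence log-convex, and a sum (with nonnegative coefficients) of log-convex functions is log-convex by Hölder's inequality; so every entry $(L\mu(t)^n)_{i,j}$, and therefore $a_n(t):=\sum_{i,j}(L\mu(t)^n)_{i,j}$, is log-convex in $t$. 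Since $B\mapsto\sum_{i,j}|B_{i,j}|$ is a submultiplicative norm on $\mathcal M_p(\R)$ and $L\mu(t)$ has nonnegative entries, Gelfand's spectral radius formula gives $\rho(t)=\lim_{n\to\infty}a_n(t)^{1/n}$; equivalently $\log\rho(t)=\lim_n\frac1n\log a_n(t)$ is a pointwise limit of convex functions (for $n$ large $a_n(t)>0$, using irreducibility and aperiodicity), hence convex. Thus $\rho$ is log-convex, and in particular convex since $\exp$ is convex and increasing.

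For \emph{norm-coercivity}, I would use irreducibility to produce, for each coordinate direction, a short loop displacing the walk by $\pm e_k$ while returning to the reference colour. Concretely: for every $k\in\{1,\dots,d\}$ and $\sigma\in\{-1,+1\}$, irreducibility of the KRW provides an integer $\ell_{k,\sigma}\geq 1$ and a path from $(0,i_0)$ to $(\sigma e_k,i_0)$ of positive weight $w_{k,\sigma}>0$, so $p^{(\ell_{k,\sigma})}_{i_0,i_0}(\sigma e_k)\geq w_{k,\sigma}$ and hence $(L\mu(t)^{\ell_{k,\sigma}})_{i_0,i_0}\geq w_{k,\sigma}\,e^{\sigma t_k}$. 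For a nonnegative matrix the Perron root dominates every diagonal entry (it dominates $\rho$ of the matrix obtained by keeping only that diagonal entry, by monotonicity of $\rho$ on nonnegative matrices), so $\rho(t)^{\ell_{k,\sigma}}=\rho(L\mu(t)^{\ell_{k,\sigma}})\geq (L\mu(t)^{\ell_{k,\sigma}})_{i_0,i_0}$, giving $\log\rho(t)\geq\frac1{\ell_{k,\sigma}}\bigl(\sigma t_k+\log w_{k,\sigma}\bigr)$ for all $k,\sigma$. Maximising over $k,\sigma$ and selecting the pair realising $\sigma t_k=\|t\|_\infty$, with $L:=\max_{k,\sigma}\ell_{k,\sigma}$ and $C:=\max_{k,\sigma}\frac{-\log w_{k,\sigma}}{\ell_{k,\sigma}}<\infty$, one obtains $\log\rho(t)\geq\frac1L\|t\|_\infty-C$ for all $t\in\R^d$; since $\|\cdot\|_\infty$ and $\|\cdot\|$ are equivalent, $\rho(t)\to+\infty$ as $\|t\|\to\infty$.

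The conceptual content sits in the convexity statement, where the points to get right are that each entry of $L\mu(t)^n$ is a genuinely log-convex function of $t$ and that this property is preserved both by the entrywise sum and by the limit $\rho=\lim_n a_n^{1/n}$ — exactly Kingman's argument, made self-contained via Gelfand's formula. The coercivity part is routine but one must invoke irreducibility in the precise form that yields paths \emph{returning to the fixed colour} $i_0$ and displacing the spatial coordinate by $\pm e_k$, so that the diagonal lower bound $(L\mu(t)^{\ell})_{i_0,i_0}\geq w\,e^{\pm t_k}$ holds simultaneously for all $2d$ coordinate directions with uniformly bounded path lengths; this uniformity is what turns the pointwise lower bounds into coercivity.
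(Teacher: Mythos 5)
Your proof is correct, but it follows a somewhat different route from the paper on both points, and it is worth noting what each buys. For convexity, the paper invokes Kingman's theorem \cite{Ki-61} directly, which forces it to deal with the caveat that some entries of $L\mu(t)$ may vanish identically (it does so by inspecting Kingman's proof and noting that $\Tr\bigl((L\mu)^n\bigr)>0$ for large $n$ suffices); you instead reprove Kingman's statement in this setting via Gelfand's formula applied to the submultiplicative entrywise $\ell^1$ norm, writing $\log\rho(t)=\lim_n\frac1n\log a_n(t)$ as a pointwise limit of convex functions. This is self-contained and sidesteps the zero-entry issue entirely, at the modest cost of re-deriving a cited result. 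For coercivity, the paper first reduces norm-coercivity to coercivity along each ray (using the already-established convexity), then for a fixed direction $u$ picks any $x$ with $x\cdot u>0$ and bounds $\rho(ru)^n$ below by the minimum row sum of $L\mu(ru)^n$; you instead bound $\rho$ below by diagonal entries of powers, using loops returning to a fixed colour with displacement $\pm e_k$, which yields the explicit global bound $\log\rho(t)\geq\frac1L\|t\|_\infty-C$ and hence norm-coercivity with no appeal to convexity. Your version is slightly stronger (a uniform linear lower bound rather than directional divergence) and logically independent of the first bullet; both arguments rest on the same two standard Perron--Frobenius monotonicity facts, so the difference is one of packaging rather than substance. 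All the steps you use (submultiplicativity of the entrywise sum, $\rho(A)\geq A_{ii}$ for nonnegative $A$, positivity of $a_n(t)$ from irreducibility, and the availability of colour-preserving paths to $\pm e_k$ from the irreducibility assumption) are justified, so there is no gap.
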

We recall the proof given in \cite{Ba-24}, as we think it gives an interesting insight into the model. 

\begin{proof}
    The proof of the convexity relies on the result of \cite{Ki-61}. If $\mu_{i,j}(x) >0$, then $t \mapsto e^{t \cdot x} \mu_{i,j}(x)$ is logarithmically convex. Therefore, the entries of $L\mu$ are $0$ if $\mu_{i,j} = 0$, or logarithmically convex functions, as sums of logarithmically convex functions. Besides, by irreducibility and aperiodicity, for $n$ large enough, all entries of $\left(L\mu\right) ^n$ are positive.
        If all entries are logarithmically convex, then the result of \cite{Ki-61} ensures that $\rho$ is logarithmically convex. To deal with the $0$ entries, we take a closer look at the proof of \cite{Ki-61} and notice that if $\Tr \left( (L\mu)^n \right) > 0$ for $n$ large enough, which is the case, then the result holds.
        
We move to the proof of norm-coercivity. When a function is convex, its norm-coercivity is equivalent to coercivity in every direction.  Hence we only need to prove that for every $u \in \mathbb{S}^{d-1}$, $\rho(r u) \to+\infty$ as $r\to+\infty$. 
         According to the Perron-Frobenius theorem, see for example \cite[Cor.~1]{Se-06},
        \begin{equation*}
            \rho(ru)^n \geq \min_{i \in \{1, \ldots, p \}} \sum_{j=1}^p \left(L\mu (ru)^n \right)_{i,j},
        \end{equation*}
        so it is enough to prove that for every $u \in  \mathbb{S}^{d-1}$, there exists $n \in \N$ such that  for every $i,j \in \{1, \ldots, p \}$, $\left(L\mu (ru)^n \right)_{i,j} \to+\infty$ as $r \to + \infty$.
        Let $u \in \mathbb{S}^{d-1}$ and $i,j \in \{1, \ldots,p\}$. Let $x \in \Z^d$ such that $x \cdot u >0$. Then as a consequence of aperiodicity and irreducibility, there exists $n_{i,j}\in \N$ such that for every $n \geq n_{i,j}$, $\mathbb{P}_{(0,i)}\bigl(X_n = (x,j)\bigr) > 0$. Let $n = \displaystyle \max_{i,j \in \{1, \ldots, p \}} n_{i,j}$. Then \[\left(L\mu(ru)^n\right)_{i,j} \geq e^{r x \cdot u} \mathbb{P}_{(0,i)}\bigl(X_n = (x,j)\bigr) \xrightarrow[r \to +\infty]{}+\infty,\] which proves the norm-coercivity.
\end{proof}

\begin{prop}\label{prop:doob_non-centered}
    For every $t \in \partial \mathcal{T}$, $\nabla \rho (t) \neq 0$.
\end{prop}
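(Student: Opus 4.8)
The plan is to show that $\rho$ cannot have a critical point on its level set $\partial\mathcal T=\rho^{-1}(\{1\})$, using a combination of the convexity of $\rho$ established in Proposition~\ref{prop:usual_prop_spectral_radius} and the strict sub-stochasticity baked into Assumption~\ref{assump:main_assumptions}. Since $\rho$ is convex, $\mathcal T=\rho^{-1}([0,1])$ is a convex set. We already know $0\in\mathring{\mathcal T}$ (the previous lemma) and that $\rho$ is norm-coercive, so $\mathcal T$ is compact with nonempty interior, and $\partial\mathcal T$ is exactly its topological boundary. A convex function restricted to the boundary of one of its sublevel sets attains its gradient-zero points only if that point is a global minimizer with value equal to the level; so the strategy is: first show $\rho$ attains a value strictly less than $1$ somewhere (which is immediate since $\rho(0)<1$), and then conclude that no point of the level set $\{\rho=1\}$ can be a minimizer, hence $\nabla\rho\neq 0$ there.

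More carefully, suppose for contradiction that $\nabla\rho(t_0)=0$ for some $t_0\in\partial\mathcal T$, so $\rho(t_0)=1$. By convexity, a point with vanishing gradient is a global minimum of $\rho$, so $\rho(t)\geq\rho(t_0)=1$ for all $t\in\R^d$. In particular $\rho(0)\geq 1$. But the preceding lemma shows $\rho(0)<1$, a contradiction. Hence $\nabla\rho(t)\neq 0$ for every $t\in\partial\mathcal T$.

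The one subtlety to address is that $\rho$ need not be differentiable a priori — it is only known to be convex (Proposition~\ref{prop:usual_prop_spectral_radius}), so "gradient" should, strictly speaking, be read in terms of the subdifferential. However, by the Perron-Frobenius theory, since $L\mu(t)$ is primitive for all $t$ (noted after Lemma~\ref{lem:c_doob_transf}), $\rho(t)$ is a simple eigenvalue depending analytically on $t$, hence $\rho$ is in fact smooth; so the statement $\nabla\rho(t)\neq 0$ is meaningful as written. Alternatively, working only with convexity: the subdifferential $\partial\rho(t_0)$ contains $0$ if and only if $t_0$ is a global minimizer, and the same contradiction with $\rho(0)<1$ applies; by smoothness $\partial\rho(t_0)=\{\nabla\rho(t_0)\}$, giving the claim. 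I expect the main (and only) obstacle to be purely expository: making the differentiability of $\rho$ explicit, or else phrasing the argument cleanly through subgradients so as not to rely on it.
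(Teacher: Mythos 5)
Your argument is exactly the paper's proof: convexity of $\rho$ forces a point with vanishing gradient to be a global minimizer, which contradicts $\rho(t)=1>\rho(0)$. The extra remark on differentiability (via simplicity of the Perron--Frobenius eigenvalue) is a reasonable expository addition that the paper leaves implicit, but the core argument is identical.
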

\begin{proof}
    The function $\rho$ is convex, so if $\nabla \rho(t) = 0$ for some $t \in \partial \mathcal{T}$, then $\rho$ has a global minimum in $t$. But $\rho(t) = 1$ and $\rho(0) < 1$, hence a contradiction.
\end{proof}

The gradient of $\rho(t)$ is the right notion for the drift of the Doob transform of parameter $t$, see \cite[Def.~1.3, Prop.~2.16]{Ba-24}. Therefore, Proposition~\ref{prop:doob_non-centered} means that the Doob transform of parameter $t \in \partial \mathcal{T}$ is non-centered, allowing to use previously known results on non-killed walks with a drift.

\begin{prop}[Prop.~2.16 in \cite{Ba-24}]
\label{prop:pos_def}
    For every $t \in \partial \mathcal{T}$, the Hessian $\mathbf{H}_\rho(t)$ of $\rho$ is positive-definite.
\end{prop}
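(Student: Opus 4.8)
Since $L\mu(t)$ is primitive for every $t$ and depends real-analytically on $t$, its spectral radius $\rho$ is a simple eigenvalue, hence real-analytic. Writing $\kappa:=\log\rho$, the identity $\rho=e^{\kappa}$ yields
\[\mathbf H_\rho(t)=\rho(t)\bigl(\mathbf H_\kappa(t)+\nabla\kappa(t)\,\nabla\kappa(t)^{\intercal}\bigr).\]
By the (logarithmic) convexity part of Proposition~\ref{prop:usual_prop_spectral_radius}, $\mathbf H_\kappa(t)\succeq0$; together with $\nabla\kappa(t)\nabla\kappa(t)^{\intercal}\succeq0$ and $\rho(t)=1$ this already gives $\mathbf H_\rho(t)\succeq0$. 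To upgrade to positive definiteness it is enough to prove the stronger statement $\mathbf H_\kappa(t)\succ0$, which is what the plan targets.

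My plan is to read $\mathbf H_\kappa(t)$ off the Doob transform of parameter $t$. By Lemma~\ref{lem:c_doob_transf} and Definition~\ref{def:doob_transform}, $(X^t_n)_{n\ge0}=(Y_n,A_n)_{n\ge0}$ is a genuine (non-killed) Markov-additive process, with colour $Y_n$ and additive part $A_n\in\Z^d$. For $\theta$ near $0$, the $\theta$-exponential tilt of the transition matrix $\mu_t$ is conjugate, through $\diag(\varphi_t)$, to $L\mu(t+\theta)$, hence has Perron--Frobenius eigenvalue $\rho(t+\theta)$; therefore $\tfrac1n\log\mathbb E_i^{X^t}\!\bigl[e^{\theta\cdot A_n}\bigr]\to\kappa(t+\theta)$ (recall $\rho(t)=1$), which exhibits $\kappa(t+\cdot)$ as the limiting normalised cumulant generating function of $A_n$. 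I would then invoke the standard fact from the spectral theory of Markov-additive processes (see \cite{NeSp-66,Ba-24} and \cite[Ch.~XI]{As-03}) that the Hessian of this function at $0$ is the asymptotic covariance matrix, i.e.\ $\mathbf H_\kappa(t)=\Sigma:=\lim_n\tfrac1n\operatorname{Cov}^{X^t}(A_n)$, so that everything reduces to showing $\Sigma\succ0$.

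To finish I would argue by contradiction: suppose $v^{\intercal}\Sigma v=0$ for some $v\neq0$, i.e.\ $\operatorname{Var}^{X^t}(v\cdot A_n)=o(n)$. As $v\cdot A_n$ is an additive functional of the finite, irreducible colour chain $Y$, a sublinear variance forces a coboundary relation: there are $c\in\R$ and $g\colon\{1,\dots,p\}\to\R$ with $v\cdot x=c+g(i)-g(j)$ for every transition $(0,i)\to(x,j)$ admissible for $X^t$, equivalently (since $\varphi_t>0$) for the KRW. Telescoping along a cycle from $(0,i)$ back to $(0,i)$ of length $n\ge1$, which exists by irreducibility (Assumption~\ref{assump:main_assumptions}), gives $0=nc$, hence $c=0$; telescoping along a path from $(0,i)$ to $(y,i)$, which by irreducibility exists for every $y\in\Z^d$, then forces $v\cdot y=0$ for all $y\in\Z^d$, so $v=0$, a contradiction. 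Hence $\Sigma\succ0$, and therefore $\mathbf H_\rho(t)\succ0$.

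The crux is the implication ``$v^{\intercal}\mathbf H_\rho(t)v=0\Rightarrow v=0$'': log-convexity of $\rho$ only delivers $\mathbf H_\rho(t)\succeq0$, and a convex real-analytic function may have a degenerate Hessian at an interior point without being affine there, so one genuinely needs the probabilistic reading of $\mathbf H_\kappa(t)$ as an asymptotic covariance, for which the vanishing of a quadratic form is the rigid ``coboundary'' condition rather than mere flatness; this rigidity, combined with irreducibility forcing the increments to span $\Z^d$, is what kills the degeneracy. A self-contained variant avoiding the CLT would expand $\rho(t+sv)$ to second order in $s$ by Perron--Frobenius perturbation theory and recognise $v^{\intercal}\mathbf H_\kappa(t)v$ as the variance rate $\lim_n\tfrac1n\operatorname{Var}^{X^t}(v\cdot A_n)$, after which the argument of the previous paragraph applies verbatim; the routine points to watch are the analyticity and simplicity of $\rho$, convergence of the second derivatives of the normalised cumulant generating functions (convexity plus analyticity of the limit), and that the coboundary characterisation used is the Markov-\emph{additive} one.
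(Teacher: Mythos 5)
The paper offers no internal proof of this proposition: it is imported verbatim from \cite{Ba-24} (Prop.~2.16), so there is nothing in the text to compare your argument against. Your proof is correct and is essentially the standard spectral/probabilistic argument for such statements about Markov-additive processes; it is also consistent with how the same object resurfaces later in the paper, since the energy matrix $\sigma$ of Section~\ref{subsec:zero_mass}, whose positive-definiteness is again cited from \cite{Ba-24}, is exactly the asymptotic covariance you construct. Two points need care and you handle both: (i) the reduction to $\mathbf H_\kappa(t)\succ0$ with $\kappa=\log\rho$ is the right move, since at $t\in\partial\mathcal T$ one has $\rho(t)=1$ and the rank-one term $\nabla\kappa(t)\nabla\kappa(t)^{\intercal}$ could never supply definiteness on its own for $d\geq 2$; (ii) the identification $\mathbf H_\kappa(t)=\Sigma$ genuinely requires the perturbation-theoretic justification you flag (simplicity and analyticity of the Perron root via primitivity), because pointwise convergence of the normalised cumulant generating functions alone does not give convergence of second derivatives. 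The crux — that $v^{\intercal}\Sigma v=0$ forces the coboundary identity $v\cdot x=c+g(i)-g(j)$ on $\supp\mu_{i,j}$, that cycles through $(0,i)$ (which exist with length $n\geq1$ by irreducibility) kill $c$, and that paths from $(0,i)$ to $(y,i)$ for arbitrary $y\in\Z^d$ then force $v=0$ — is exactly the rigidity needed, and your observation that the admissible transitions of $X^t$ coincide with those of the KRW (positivity of $\varphi_t$ and of $e^{t\cdot x}$) closes the loop.
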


The next result is crucial to get the exponential decay of the Green function in Proposition~\ref{prop:asymptotics_green}. With the interpretation of the gradient of $\rho$ as the drift of the process, it means that the Doob transform allows us to change the drift in any direction of $\R^d$.

\begin{prop}[Thm~2.17 in \cite{Ba-24}]
\label{prop:homeo}
    The function \[ \Gamma: \begin{array}[t]{ccl}
         \partial \mathcal{T}& \longrightarrow & \mathbb{S}^{d-1}  \\
         t& \longmapsto &  \frac{1}{\|\nabla \rho(t)\|} \nabla \rho(t)
    \end{array}\] is a homeomorphism between $\partial \mathcal{T}$ and the sphere $\mathbb{S}^{d-1}$ of $\R^d$. 
\end{prop}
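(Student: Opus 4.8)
The plan is to recognize $\mathcal{T}=\rho^{-1}([0,1])=\{t:\rho(t)\leq 1\}$ as a compact, strictly convex body with the origin in its interior, and to identify $\Gamma$ with a reparametrization of its Gauss map. By Proposition~\ref{prop:usual_prop_spectral_radius} the function $\rho$ is convex and norm-coercive, so $\mathcal{T}$ is closed, convex and bounded; it is nonempty (for any $u\in\mathbb{S}^{d-1}$ the map $r\mapsto\rho(ru)$ is continuous, equals $\rho(0)<1$ at $r=0$ and tends to $+\infty$, so it hits $1$) and $0\in\mathring{\mathcal{T}}$ since $\rho(0)<1$; consequently $\partial\mathcal{T}=\rho^{-1}(\{1\})$ is compact. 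Since $\nabla\rho$ is continuous and, by Proposition~\ref{prop:doob_non-centered}, nowhere vanishing on $\partial\mathcal{T}$, the map $\Gamma=\nabla\rho/\|\nabla\rho\|$ is a well-defined continuous map $\partial\mathcal{T}\to\mathbb{S}^{d-1}$. As a continuous bijection from a compact space to a Hausdorff space is automatically a homeomorphism, it suffices to prove that $\Gamma$ is bijective.

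First I would record the supporting-hyperplane identity: for $t\in\partial\mathcal{T}$ and $x\in\mathcal{T}$, the subgradient inequality for the convex function $\rho$ gives $\langle\nabla\rho(t),x-t\rangle\leq\rho(x)-\rho(t)\leq 0$, so $\nabla\rho(t)$ (and hence $\Gamma(t)$) is an outer normal to $\mathcal{T}$ at $t$. Surjectivity then follows by a standard compactness argument: given $u\in\mathbb{S}^{d-1}$, choose $t^\ast$ maximizing the linear functional $x\mapsto\langle u,x\rangle$ over the compact set $\mathcal{T}$. Since $0\in\mathring{\mathcal{T}}$, the maximum cannot be attained at an interior point (one could move a little in the direction $u$), so $t^\ast\in\partial\mathcal{T}$; the Lagrange condition — legitimate because $\nabla\rho(t^\ast)\neq 0$ — yields $u=\lambda\nabla\rho(t^\ast)$ with $\lambda>0$, whence $\Gamma(t^\ast)=u$.

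For injectivity the crucial input, and the main technical obstacle, is to upgrade convexity of $\mathcal{T}$ to \emph{strict} convexity using the positive-definiteness of the Hessian on $\partial\mathcal{T}$ from Proposition~\ref{prop:pos_def}. Given distinct $t_1,t_2\in\partial\mathcal{T}$, consider $\phi(s)=\rho\bigl((1-s)t_1+st_2\bigr)$ on $[0,1]$: it is convex with $\phi(0)=\phi(1)=1$, hence $\phi\leq 1$. If $\phi(s_0)=1$ for some $s_0\in(0,1)$, then (a convex function attaining the value of its chord at an interior point coincides with the chord) $\phi\equiv 1$, so $\phi''\equiv 0$; but $\phi''(0)=(t_2-t_1)^\intercal\mathbf{H}_\rho(t_1)(t_2-t_1)>0$ by positive-definiteness of $\mathbf{H}_\rho(t_1)$, a contradiction. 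Hence the open segment $(t_1,t_2)$ lies in $\mathring{\mathcal{T}}$, i.e.\ $\partial\mathcal{T}$ contains no line segment. Now if $\Gamma(t_1)=\Gamma(t_2)=u$ with $t_1\neq t_2$, then since each $\nabla\rho(t_i)$ is an outer normal parallel to $u$, the half-space inclusions $\mathcal{T}\subseteq\{\langle u,\cdot\rangle\leq\langle u,t_1\rangle\}$ and $\mathcal{T}\subseteq\{\langle u,\cdot\rangle\leq\langle u,t_2\rangle\}$ together with $t_1,t_2\in\mathcal{T}$ force $\langle u,t_1\rangle=\langle u,t_2\rangle$; thus the whole segment $[t_1,t_2]$ lies on a supporting hyperplane of $\mathcal{T}$, hence in $\partial\mathcal{T}$ — contradicting strict convexity. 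Therefore $\Gamma$ is injective, and the proof is complete.

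I expect everything except the strict-convexity step to be routine bookkeeping with convexity, compactness and the Perron–Frobenius regularity already established; the one genuinely substantial point is turning the \emph{local} information (positive-definite Hessian on the level set) into the \emph{global} geometric fact that $\partial\mathcal{T}$ carries no segment. An alternative route would be to note that positive-definiteness of $\mathbf{H}_\rho$ makes $\Gamma$ a local diffeomorphism, hence — being a proper map between connected $(d-1)$-dimensional manifolds — a covering map; but one would still need the strict-convexity argument above to conclude the covering is single-sheeted (injective), so the direct approach seems most economical and works uniformly in $d$.
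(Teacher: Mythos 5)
Your argument is correct, and it is worth noting that the paper itself offers no proof of this statement: Proposition~\ref{prop:homeo} is imported wholesale as Thm~2.17 of \cite{Ba-24}. What you have written is therefore a self-contained derivation from exactly the ingredients the paper does establish (convexity and norm-coercivity of $\rho$ from Proposition~\ref{prop:usual_prop_spectral_radius}, $\rho(0)<1$, the nonvanishing gradient of Proposition~\ref{prop:doob_non-centered}, and the positive-definite Hessian of Proposition~\ref{prop:pos_def}), and it follows the classical Hennequin/Ney--Spitzer template: identify $\mathcal{T}$ as a compact convex body with $0$ in its interior, realize $\Gamma$ as its Gauss map, get surjectivity from supporting hyperplanes and injectivity from strict convexity, then upgrade the continuous bijection to a homeomorphism by compactness. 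All the steps check out; in particular your reduction of strict convexity to the vanishing of $\phi''$ along a chord contained in the level set is the right way to globalize the Hessian information, and it is consistent with the fact that $\rho$ is smooth near $\partial\mathcal{T}$ (the Perron root of a primitive matrix is a simple eigenvalue depending analytically on $t$). The only place where you assert slightly more than you justify is the sign of the Lagrange multiplier: that $\lambda>0$ deserves one line, e.g.\ the subgradient inequality at $x=0$ gives $\langle\nabla\rho(t^\ast),t^\ast\rangle\geq 1-\rho(0)>0$, while $\langle u,t^\ast\rangle\geq\langle u,\varepsilon u\rangle>0$ because $\varepsilon u\in\mathcal{T}$ for small $\varepsilon$, so $\lambda=\langle u,t^\ast\rangle/\langle\nabla\rho(t^\ast),t^\ast\rangle>0$. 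With that line added, the proof is complete and arguably belongs in the paper rather than behind the citation.
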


\begin{prop}
\label{prop:vraiment_expo}
    Let $u \in \mathbb{S}^{d-1}$. Then $\Gamma^{-1}(u) \cdot u > 0$.
\end{prop}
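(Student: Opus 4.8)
Write $t := \Gamma^{-1}(u) \in \partial\mathcal T$, so that by Proposition~\ref{prop:homeo} we have $\nabla\rho(t) = \|\nabla\rho(t)\|\,u$, and by Proposition~\ref{prop:doob_non-centered} the scalar $\|\nabla\rho(t)\|$ is strictly positive. The plan is to exploit the convexity of $\rho$ established in Proposition~\ref{prop:usual_prop_spectral_radius}, together with the strict inequality $\rho(0) < 1$ from the lemma preceding Definition~\ref{def:doob_transform}, and to read off the sign of $t\cdot u$ from the supporting-hyperplane (first-order) inequality of convex functions evaluated at the two points $t$ and $0$.

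Concretely, the first step is to recall that for the differentiable convex function $\rho$ one has, for every $y\in\R^d$,
\[
\rho(y) \;\geq\; \rho(t) + \nabla\rho(t)\cdot(y-t).
\]
Taking $y=0$ and using $\rho(t)=1$ (since $t\in\partial\mathcal T = \rho^{-1}(\{1\})$) gives $\rho(0) \geq 1 - \nabla\rho(t)\cdot t$. The second step is to combine this with $\rho(0) < 1$: we obtain $1 - \nabla\rho(t)\cdot t \leq \rho(0) < 1$, hence $\nabla\rho(t)\cdot t > 0$. Substituting $\nabla\rho(t) = \|\nabla\rho(t)\|\,u$ and dividing by the positive number $\|\nabla\rho(t)\|$ yields $u\cdot t > 0$, i.e.\ $\Gamma^{-1}(u)\cdot u > 0$, which is the claim.

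I do not anticipate a genuine obstacle here: the statement is a direct consequence of convexity plus the strict membership $0\in\mathring{\mathcal T}$. The only point to be careful about is that the differentiability of $\rho$ used in the supporting-hyperplane inequality is legitimate — this is already implicit in the paper since $\nabla\rho(t)$ and $\mathbf H_\rho(t)$ appear in Propositions~\ref{prop:doob_non-centered}--\ref{prop:homeo} — and that $\|\nabla\rho(t)\|\neq 0$, which is exactly Proposition~\ref{prop:doob_non-centered}. One could also phrase the argument geometrically: $\partial\mathcal T$ is the boundary of the convex body $\mathcal T$, $\nabla\rho(t)$ is an outer normal to $\mathcal T$ at $t$, and $0$ lies in the interior of $\mathcal T$, so $0$ is strictly on the inner side of the supporting hyperplane at $t$, which is precisely the inequality $\nabla\rho(t)\cdot(0-t) < 0$.
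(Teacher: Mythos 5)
Your proof is correct, and it follows the same overall strategy as the paper (convexity of $\rho$ plus the fact that $0\in\mathring{\mathcal T}$, i.e.\ $\rho(0)<1$), but the mechanism by which you obtain the \emph{strict} inequality is different and slightly more economical. The paper only uses convexity to conclude that $\mathcal T$ lies in the closed half-space $H_t=\{x~|~(x-t)\cdot\nabla\rho(t)\leq 0\}$, which by itself yields only $t\cdot\nabla\rho(t)\geq 0$; to rule out equality it then invokes the positive-definiteness of the Hessian (Proposition~\ref{prop:pos_def}) to show $\partial H_t\cap\mathcal T=\{t\}$, so that $0\neq t$ forces $0$ into the open half-space. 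You instead plug $y=0$ into the first-order convexity inequality $\rho(y)\geq\rho(t)+\nabla\rho(t)\cdot(y-t)$ and use the strict bound $\rho(0)<1$ quantitatively, obtaining directly $\nabla\rho(t)\cdot t\geq 1-\rho(0)>0$. This bypasses Proposition~\ref{prop:pos_def} entirely and even gives the explicit lower bound $\Gamma^{-1}(u)\cdot u\geq(1-\rho(0))/\|\nabla\rho(\Gamma^{-1}(u))\|$; the paper's route, on the other hand, is the one that generalizes to settings where one only knows $0\in\mathring{\mathcal T}$ as a geometric fact rather than the strict functional inequality. Both arguments rely on the differentiability of $\rho$, which is legitimate here for the reasons you state.
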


\begin{proof}
    It comes down to proving that for every $t \in \partial \mathcal{T}$, $t \cdot \nabla \rho(t) >0$.
  By convexity of $\rho$, for every $t \in \partial \mathcal{T}$, $\mathcal{T}$ is included in the affine half-space 
  \begin{equation*}
      H_t = \{x \in \R^d~|~(x-t) \cdot \nabla \rho(t) \leq 0\}.
  \end{equation*}
  Besides, since the Hessian is positive-definite by Proposition~\ref{prop:pos_def}, $\partial H_t \cap  \mathcal{T} = \{t\}$. But $t \neq 0$ because $0 \in \mathcal{T} \setminus \partial \mathcal{T}$, so $0 \notin \partial H_t \cap \mathcal{T}$, hence $0 \in H_t \setminus \partial H_t$, i.e., $t \cdot \nabla \rho(t) >0$.
\end{proof}

The previous proposition ensures that the decay of Proposition~\ref{prop:asymptotics_green} is actually exponential in every direction. This is not true for non-killed walks studied in \cite{Ba-24}, where the decay in the direction of the drift is only polynomial.

Now we can complete the proof of Proposition~\ref{prop:asymptotics_green}.

\begin{proof}[Proof of Proposition~\ref{prop:asymptotics_green}]
Let us fix $t_0 \in \partial\mathcal{T}$.  We remind that $G_{t_0}$ is the Green function of a \emph{non-killed} walk, with a non-zero drift. Therefore, under Assumption~\ref{assump:main_assumptions}, \cite[Thm~1.5]{Ba-24} shows that there exist continuous functions $\widetilde{\chi}_{i,j} : \mathbb{S}^{d-1} \to \R_{>0}$ and ${c : \mathbb{S}^{d-1} \to \R^d}$ such that \[G_{t_0} \big( (0,i), (x,j) \big) \sim \widetilde{\chi}_{i,j}\left( \frac{x}{\|x\|} \right) e^{- \|x\| c\left( \frac{x}{\|x\|} \right) \cdot \frac{x}{\|x\|} } \|x\|^{- \frac{d-1}{2}}.\]
Hence using  Proposition~\ref{prop:link_green_functions}
\begin{equation}
\label{eq:asymptotics_green_fixed_t0}
G\big( (0,i), (x,j) \big)= \frac{ \left(\varphi_{t_0}\right)_i}{\left(\varphi_{t_0}\right)_j} e^{-t_0 \cdot x} G_{t_0} \big( (0,i), (x,j) \big) \sim \frac{ \left(\varphi_{t_0}\right)_i}{\left(\varphi_{t_0}\right)_j} \widetilde{\chi}_{i,j}\left( \frac{x}{\|x\|} \right) e^{-\|x\| \left( t_0 + c\left( \frac{x}{\|x\|} \right) \right) \cdot \frac{x}{\|x\|}} \|x\|^{\frac{d-1}{2}}.
\end{equation}
It remains to identify the exponential decay constant $t_0 + c\bigl( \frac{x}{\|x\|} \bigr)$ in \eqref{eq:asymptotics_green_fixed_t0}. To do so we fix a direction $u \in \mathbb{S}^{d-1}$ and work with positive multiples of $u$, that is $x = \|x\| u$. We choose $t_u= \Gamma^{-1}\left(u \right)$, so that 
\begin{equation}
\label{eq:asymptotics_green_t_right_direction}
G\big( (0,i), (\|x\|u,j) \big) = \frac{ \left(\varphi_t\right)_i}{\left(\varphi_t\right)_j} e^{-\|x\| \Gamma^{-1}\left( u \right) \cdot u} G_{t_u}\big( (0,i), (\|x\|u,j) \big).
\end{equation}
We apply once again \cite[Thm~1.5]{Ba-24} to get the asymptotics of $G_{t_u} \big( (0,i), (\|x\|u,j) \big)$ as $\|x\| \to +\infty$. This time, our choice of $t_u$ makes the drift of the Doob transform in the direction of $u$. Therefore, there is no exponential decay in $G_{t_u} \big( (0,i), (\|x\|u,j) \big)$, which only decreases polynomially.
Identifying the exponential decay in \eqref{eq:asymptotics_green_fixed_t0} and \eqref{eq:asymptotics_green_t_right_direction}, we get $t_0 + c\left( \frac{x}{\|x\|} \right) = \Gamma^{-1} \left( \frac{x}{\|x\|} \right)$, and thus \eqref{eq:asymptotics_green_fixed_t0} gives the result.
\end{proof}

Let us make two remarks on Proposition~\ref{prop:asymptotics_green}. First, in the proof, we could not directly choose $t = t_u$ instead of a fixed $t_0$, because in that case, the Green function $G_{t_u}$ changes with the direction of $x$. Therefore, \cite[Thm~1.5]{Ba-24} would only yield the asymptotics in a fixed direction $u$, while we want the asymptotics to be uniform in $u$.

Moreover, using Proposition~\ref{prop:vraiment_expo} and a continuity-compactness argument, we have
\begin{equation}\label{eq:greenfct_minimal_decay}
\min_{u \in \mathbb{S}^{d-1}} \Gamma^{-1}(u) \cdot u >0.
\end{equation}
Denoting this minimum by $\gamma$, we see that the Green function decreases faster than $e^{- \gamma \|x\|}$.

\section{Odometer function%, related operators
and thresholds of the Green function}
The main goal of this section is to provide a link between the Green function of the killed random walk and the shape of the sandpile. In Proposition~\ref{prop:thresholds}, generalizing the idea of \cite{AlMk-22}, we obtain two thresholds for the Green function $G\big( (0,i_0), (x,i) \big)$: one above which the point $(x,i)$ is inside the sandpile and one under which it is outside the sandpile.  To do so, we introduce the odometer function, a classical function in sandpiles, and related operators.

\begin{defi}
    We define the following operator $T: \ell^\infty \left( \Z^d \times \{1, \ldots, p\} \right) \to \ell^\infty \left( \Z^d \times \{1, \ldots, p\} \right)$, which can be interpreted as a massive Laplacian:
    \begin{align}\label{eq:definition_T}
        (T v)(x,i) &= \sum_{(y,j) \in \Z^d \times \{1, \ldots, p\}} \dfrac{c(x-y,j,i)}{\lp_j \sum_{(z,k)\in \Z^d \times \{1, \ldots, p\}} c(z,j,k) }v(y,j) - v(x,i) \\
       &= \sum_{(y,j) \in \Z^d \times \{1, \ldots, p\}} \mathbb{P}\big( (y,j) \to (x,i)\big) v(y,j) - v(x,i).\label{eq:T_bien_def}
    \end{align}
       If we see $T$ as a square matrix with rows and columns indexed by $\Z^d \times \{1, \ldots,p\}$, we have
    \begin{align*}
        T\big( (x,i), (y,j) \big) = T \mathds{1}_{(y,j)} (x,i) &= \frac{c(x-y,j,i)}{\lp_j \sum_{(z,k)\in \Z^d \times \{1, \ldots, p\}} c(z,j,k) } - \mathds{1}_{(x,i) = (y,j)} \\&= \mathbb{P}\big( (y,j) \to (x,i)\big) - \mathds{1}_{(x,i) = (y,j)}.
    \end{align*}
\end{defi}

Let us check that the formula \eqref{eq:definition_T} for $Tv$ does indeed define an operator $\ell^\infty \left( \Z^d \times \{1, \ldots, p\} \right) \to \ell^\infty \left( \Z^d \times \{1, \ldots, p\} \right)$.
One has 
\begin{equation*}
    \sum_{(y,j) \in \Z^d \times \{1, \ldots, p\}} \left| \mathbb{P}\big( (y,j) \to (x,i)\big) v(y,j) \right| %&= \sum_{j=1}^p \sum_{y \in \Z^d }  \mathbb{P}\big( (0,j) \to (x-y,i)\big) |v(y,j)|\\
     \leq \|v\|_\infty \sum_{j=1}^p \sum_{y \in \Z^d }  \mathbb{P}\big( (0,j) \to (x-y,i)\big) 
    %&= \|v\|_\infty \sum_{j=1}^p \underbrace{\sum_{z \in \Z^d }  \mathbb{P}\big( (0,j) \to (z,i)\big) }_{\leq 1}\\
     \leq p \|v\|_\infty,
\end{equation*}
therefore $Tv$ is well defined in $\ell^\infty \left( \Z^d \times \{1, \ldots, p\} \right)$ and $\|Tv\|_\infty \leq (p+1) \|v\|_\infty$.

    \begin{defi}
        The transpose %$G^\intercal$ 
        of the Green function defines an operator $G^\intercal : \ell^\infty \left( \Z^d \times \{1, \ldots, p\} \right) \to \ell^\infty \left( \Z^d \times \{1, \ldots, p\} \right)$ by
        \begin{equation*}
            G^\intercal v(x,i) = \sum_{(y,j) \in \Z^d \times \{1, \ldots, p\}} G\big( (y,j), (x,i) \big)v(y,j).
        \end{equation*}
    \end{defi}

Let us check that $G^\intercal$ does indeed define an operator $\ell^\infty \left( \Z^d \times \{1, \ldots, p\} \right) \to \ell^\infty \left( \Z^d \times \{1, \ldots, p\} \right)$:
    \[\sum_{(y,j) \in \Z^d \times \{1, \ldots, p\}} | G\big( (y,j), (x,i) \big)v(y,j) |  \leq \|v\|_\infty \sum_{j=1}^p \sum_{z \in \Z^d} G\big( (0,j), (z,i) \big).\]
According to Proposition~\ref{prop:asymptotics_green} and to \eqref{eq:greenfct_minimal_decay}, there exist constants $\gamma, C >0$ such that for every $(z,i,j)$, $G\big( (0,j), (z,i) \big) \leq C e^{- \gamma \|z\|}$. Therefore,
\[ \sum_{(y,j) \in \Z^d \times \{1, \ldots, p\}} | G\big( (y,j), (x,i) \big)v(y,j) | \leq C p \|v\|_\infty \sum_{z \in \Z^d} e^{- \gamma \|z\|} < +\infty. \]
This shows that $G^\intercal v \in \ell^\infty \left( \Z^d \times \{1, \ldots, p\} \right)$ and $\|G^\intercal v \|_\infty \leq C p \|v\|_\infty \sum_{z \in \Z^d} e^{-\gamma \|z\|}$.

The following proposition expresses $T$ as an inverse of the Green function $G$.

\begin{prop}\label{prop:inverse_T_G}
    The products $G^\intercal T$ and $T G^\intercal$ are well defined and equal to the negative identity $-I_{\Z^d \times \{1, \ldots, p\}}$.
\end{prop}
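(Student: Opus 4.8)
The plan is to prove the two operator identities $G^\intercal T = -I$ and $T G^\intercal = -I$ by interpreting both products as (transposes of) the classical identity $G(I - P) = (I - P)G = I$ for the killed walk, where $P$ denotes the substochastic transition operator $P\big((x,i),(y,j)\big) = \mathbb P\big((x,i)\to(y,j)\big)$. From the definition of $T$ one reads off $T = P^\intercal - I$, so $G^\intercal T = G^\intercal(P^\intercal - I) = \big((I-P)G\big)^\intercal \cdot(-1)$ once we know $G(I-P) = I$, and similarly $T G^\intercal = (P^\intercal - I)G^\intercal = \big(G(I-P)\big)^\intercal\cdot(-1) = -I$ using $(I-P)G = I$. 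So the mathematical content reduces to the standard resolvent identity for a transient Markov chain, $G = \sum_{n\ge 0} P^n$, together with careful justification that all the rearrangements (associativity of the triple products, interchanging the sum over $n$ with the matrix multiplications) are legitimate on $\ell^\infty\left(\Z^d\times\{1,\ldots,p\}\right)$.

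First I would record that $G\big((0,i),(z,j)\big)\le C e^{-\gamma\|z\|}$ for all $i,j$, which follows from Proposition~\ref{prop:asymptotics_green} and \eqref{eq:greenfct_minimal_decay}; this exponential bound is what makes $G^\intercal$ a bounded operator on $\ell^\infty$ (already checked above) and, more importantly, guarantees absolute summability of every double series appearing below. Next I would verify the telescoping identities at the level of matrix entries: for the killed walk, $\big(G(I-P)\big)\big((0,i),(x,j)\big) = \sum_{n\ge 0}\big(P^n - P^{n+1}\big)\big((0,i),(x,j)\big) = \mathds 1_{(0,i)=(x,j)}$, because $P^n\big((0,i),(x,j)\big)\to 0$ as $n\to\infty$ by transience (the walk is killed on at least one color and is irreducible, by Assumption~\ref{assump:main_assumptions}, so $G$ is finite and hence $P^n\to 0$ entrywise). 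The same computation with $P$ on the left gives $(I-P)G = I$. Transposing and using $T = P^\intercal - I$ then yields the two claimed identities.

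The step that needs the most care — and which I expect to be the main obstacle — is the bookkeeping for interchanging sums: when we write $G^\intercal T v(x,i)$ we have a double sum over $(y,j)$ and over $(z,k)$ (the latter hidden inside $T$), and we must show it equals the iterated sum defining $\big(G^\intercal T\big)v$ computed by first applying $T$ then $G^\intercal$, as well as $\big(G^\intercal\circ T\big)v = -v$. The clean way is a Fubini/Tonelli argument: bound $\big|G\big((y,j),(x,i)\big)\big|\le C e^{-\gamma\|x-y\|}$ and $\big|T\big((z,k),(y,j)\big)\big|\le \mathbb P\big((z,k)\to(y,j)\big) + \mathds 1_{(z,k)=(y,j)}$, note the row sums of $|T|$ are at most $p+1$, and conclude that $\sum_{(y,j)}\sum_{(z,k)} \big|G\big((y,j),(x,i)\big)\big|\,\big|T\big((z,k),(y,j)\big)\big|\,|v(z,k)| \le C'\|v\|_\infty\sum_{y} e^{-\gamma\|x-y\|} < \infty$, which licenses all rearrangements. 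Once associativity is justified, one writes $G^\intercal T = G^\intercal(P^\intercal - I) = (GP - G)^\intercal$ — wait, more carefully, $G^\intercal P^\intercal = (PG)^\intercal$, so $G^\intercal T = (PG)^\intercal - G^\intercal = (PG - G)^\intercal = -\big((I-P)G\big)^\intercal = -I$, and symmetrically $T G^\intercal = (P^\intercal - I)G^\intercal = (GP)^\intercal - G^\intercal = \big(GP - G\big)^\intercal = -\big(G(I-P)\big)^\intercal = -I$. I would present this as two short lemmas (the resolvent identity; the summability/associativity bound) followed by the one-line algebraic conclusion.
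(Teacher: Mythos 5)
Your proposal is correct and follows essentially the same route as the paper: the paper also writes out the entries of $TG^\intercal$ and $G^\intercal T$ and reduces them to the resolvent identities $G=I+GP$ and $G=I+PG$, which it obtains by conditioning on the last (resp.\ first) step of the walk rather than by your telescoping of $\sum_n(P^n-P^{n+1})$ — the same computation in different clothing. Your justification of the interchanges of summation via the exponential bound on $G$ matches the paper's (brief) treatment of well-definedness.
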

    In particular, $T^{-1} = - G^\intercal$ shows that $T^{-1}$ has negative coefficients. Therefore, if $v \geq 0$, then $T^{-1}v \leq 0$.

\begin{proof}
The products are well defined because the columns of these matrices are in $\ell^\infty \left( \Z^d  \times \{1,\ldots,p\} \right)$. Indeed, for $T$ we only need to bound probabilities by $1$, and for $G^\intercal$, we use the bound $G\big( (0,j), (z,i) \big) \leq C e^{- \gamma \|z\|} \leq C$ for constants $\gamma, C>0$, which is a consequence of Proposition~\ref{prop:asymptotics_green}.

    By definition of $T$ and $G$, 
        \begin{equation}\label{eq:produit_T_G}
        \left(TG^\intercal\right)_{(x,i),(y,j)} = \sum_{(z,k) \in \Z^d \times \{1, \ldots,p\}} \mathbb{P}\big( (z,k) \to (x,i) \big) G \big( (y,j),(z,k) \big) - G\big( (y,j), (x,i) \big).
         \end{equation}
      Besides, conditioning according to $X_{n-1}$, we get:
        \begin{align*}
           G\big( (y,j), (x,i) \big) &=  \mathds{1}_{(y,j) = (x,i)} + \sum_{n=1}^{+ \infty} \mathbb{P}_{(y,j)}\bigl(X_n = (x,i) \bigr)\\
            &= \mathds{1}_{(y,j) = (x,i)} + \sum_{n=1}^{+ \infty} \sum_{(z,k) \in \Z^d \times \{1, \ldots,p\}} \mathbb{P}_{(y,j)}\bigl(X_n = (x,i) ~|~ X_{n-1}=(z,k) \bigr) \mathbb{P}_{(y,j)} \bigl( X_{n-1}=(z,k) \bigr)\\
            &= \mathds{1}_{(y,j) = (x,i)} + \sum_{(z,k) \in \Z^d \times \{1, \ldots,p\}} \mathbb{P}\bigl( (z,k) \to (x,i) \bigr) \sum_{m=0}^{+\infty}\mathbb{P}_{(y,j)} \bigl( X_{m}=(z,k) \bigr)\\
            &= \mathds{1}_{(y,j) = (x,i)} + \sum_{(z,k) \in \Z^d \times \{1, \ldots,p\}} \mathbb{P}\bigl( (z,k) \to (x,i) \bigr) G \bigl( (y,j),(z,k) \bigr).
        \end{align*}
        Putting it into \eqref{eq:produit_T_G}, we get the expected $\left(TG^\intercal\right)_{(x,i),(y,j)} = - \mathds{1}_{(y,j) = (x,i)}$.

    Similarly, 
    \begin{equation}\label{eq:produit_G_T}
        \left(G^\intercal T \right)_{(x,i),(y,j)} = \sum_{(z,k) \in \Z^d \times \{1, \ldots,p\}}  G \bigl( (z,k),(x,i) \bigr) \mathbb{P} \bigl( (y,j) \to (z,k) \bigr) - G\bigl( (y,j), (x,i) \bigr).
         \end{equation}
    Besides, conditioning according to $X_{1}$, we get:
    \begin{align*}
        G\bigl( (y,j), (x,i) \bigr) &= \mathds{1}_{(y,j) = (x,i)} + \sum_{n=1}^{+ \infty} \mathbb{P}_{(y,j)}\bigl(X_n = (x,i) \bigr)\\
        &= \mathds{1}_{(y,j) = (x,i)} + \sum_{n=1}^{+ \infty} \sum_{(z,k) \in \Z^d \times \{1, \ldots,p\}} \mathbb{P}_{(y,j)}\bigl(X_n = (x,i) ~|~ X_1 = (z,k) \bigr) \mathbb{P}_{(y,j)}\bigl(X_1 = (z,k)\bigr)\\
        &= \mathds{1}_{(y,j) = (x,i)} + \sum_{(z,k) \in \Z^d \times \{1, \ldots,p\}} \mathbb{P}\bigl( (y,j) \to (z,k) \bigr) \sum_{m = 0}^{+\infty} \mathbb{P}_{(z,k)}\bigl(X_m= (x,i)\bigr)\\
        &= \mathds{1}_{(y,j) = (x,i)} + \sum_{(z,k) \in \Z^d \times \{1, \ldots,p\}} \mathbb{P}\bigl( (y,j) \to (z,k) \bigr) G \bigl( (z,k),(x,i) \bigr).
    \end{align*}
    Putting it into \eqref{eq:produit_G_T}, we get the expected $\left(G^\intercal T\right)_{(x,i),(y,j)} = - \mathds{1}_{(y,j) = (x,i)}$.
    \end{proof}

    \begin{defi}[\cite{AlMk-22}]
        The \emph{odometer} function is the function $u : \Z^d \times \{1, \ldots,p\} \to \R$ defined by \[u(x,i) = \text{total mass of sand emitted from } (x,i) \text{ before stabilization}.\]
        We say that a site $(x,i)$ is in the shape of the final configuration if $u(x,i) >0$.
    \end{defi}
    The odometer function counts the grains sent to the neighbours \emph{and} those that disappeared because of the leakiness. Note that it implicitly depends on the initial configuration.
    
    One might notice that in the shape of the final configuration, we do not count the sites that received grains of sand but did not emit any. Note, that those sites are at distance at most $1$ (in the graph associated with the Markov chain) of sites that were counted, so in the case when the support of $\mu$ is finite, adding them will not modify the limit shape of the sandpile as $N$ goes to infinity. Furthermore, it might happen that sites received sand during the stabilization process but sent it all and have an amount of sand equal to zero in the final configuration. With our definition, those sites have a positive odometer and thus belong to the final shape.

    The following proposition links the operator $T$ and the odometer function to the final configuration of the sandpile. It is a generalization of \cite[Eq.~(3.2)]{AlMk-22} to our context.
    \begin{prop}\label{prop:T_applied_to_odometer}
        Let $s_0$ be any initial configuration with a finite number of grains of sand, and denote by $f$ the final configuration obtained after stabilization.
        Then 
        \begin{equation*}
            Tu = f - s_0.
        \end{equation*}
        In particular, if we start the sandpile with $N$ grains at $(0,i_0)$, then
        \begin{equation*}
            Tu = f - N \delta_{(0,i_0)}.
        \end{equation*}
    \end{prop}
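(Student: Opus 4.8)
The plan is to use the per‑site toppling counts as the bridge between the odometer and the final configuration, and then to read the identity off a linear mass balance at each site.

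First I would fix the bookkeeping. By the Abelian property recalled after Model~\ref{model:sandpile}, stabilization produces a well-defined stable configuration $f$ together with a well-defined number $n(x,i)\in\Z_{\geq 0}$ of firings of each site $(x,i)$, not depending on the order in which unstable sites are toppled; and since $s_0$ has finitely many grains the total number of topplings is finite (this is the bound displayed after Model~\ref{model:sandpile} when every $\di>1$, and otherwise it is implicit in the statement that $f$ is \emph{the} final configuration). Each firing of $(x,i)$ emits exactly $\lp_i\sum_{(y,j)}c(y,i,j)$ units of sand — what is sent to other sites together with what leaks away — so directly from the definition of the odometer,
\[
   u(x,i)=n(x,i)\,\lp_i\!\!\sum_{(y,j)\in\Z^d\times\{1,\dots,p\}}\!c(y,i,j).
\]

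Next I would write the conservation law at an arbitrary site $(x,i)$, reading it off from Model~\ref{model:sandpile}: the pile at $(x,i)$ starts at $s_0(x,i)$, loses $\lp_i\sum_{(w,k)}c(w,i,k)$ at each of its $n(x,i)$ firings (the emitted mass is translation invariant, $\sum_{(y,j)}c(y-x,i,j)=\sum_{(w,k)}c(w,i,k)$), and gains $c(x-y,j,i)$ each time some site $(y,j)$ fires, whence
\[
   f(x,i)=s_0(x,i)-n(x,i)\,\lp_i\!\!\sum_{(w,k)}c(w,i,k)+\!\!\sum_{(y,j)\in\Z^d\times\{1,\dots,p\}}\!\!n(y,j)\,c(x-y,j,i).
\]
Solving the first display for $n(y,j)$ and substituting into the second, the middle term collapses to $-u(x,i)$ and the incoming term becomes $\sum_{(y,j)}\frac{c(x-y,j,i)}{\lp_j\sum_{(z,k)}c(z,j,k)}\,u(y,j)$, which by the definition~\eqref{eq:definition_T} of $T$ equals $(Tu)(x,i)+u(x,i)$. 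Thus $f(x,i)-s_0(x,i)=(Tu)(x,i)$ for all $(x,i)$, i.e.\ $Tu=f-s_0$, and the case $s_0=N\delta_{(0,i_0)}$ follows at once.

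The step I expect to require genuine care is the legitimacy of the rearrangements above when the toppling distributions $c(\cdot,i,j)$ have infinite spatial support, so that $u$ is supported on infinitely many sites. There I would argue that $u\in\ell^\infty(\Z^d\times\{1,\dots,p\})$ — which is in any case needed for the left-hand side $Tu$ to make sense, cf.\ the discussion of $T$ as a bounded operator on $\ell^\infty$ following \eqref{eq:definition_T} — together with $\sum_{y}c(x-y,j,i)<+\infty$ for every $i,j$, so that the series $\sum_{(y,j)}n(y,j)\,c(x-y,j,i)$ converges absolutely and the interchange of summations is justified. Everything else in the argument is a direct unwinding of the definitions.
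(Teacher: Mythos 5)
Your proof is correct and follows essentially the same route as the paper: a per-site mass balance (final $=$ initial $+$ received $-$ emitted) combined with the fact that each toppling of $(y,j)$ distributes its emitted mass in fixed proportions, so that the incoming mass at $(x,i)$ is the weighted sum $\sum_{(y,j)}\frac{c(x-y,j,i)}{\lp_j\sum_{(z,k)}c(z,j,k)}u(y,j)$. The only differences are presentational — you pass explicitly through the toppling counts $n(x,i)$, which the paper bypasses by speaking directly of proportions of the odometer, and you add a summability remark that the paper leaves implicit.
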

    \begin{proof}
        According to \eqref{eq:definition_T}, \[(T u)(x,i) = \sum_{(y,j) \in \Z^d \times \{1, \ldots, p\}} \dfrac{c(x-y,j,i)}{\lp_j \sum_{(z,k)\in \Z^d \times \{1, \ldots, p\}} c(z,j,k) } u(y,j) - u(x,i).\]
        The definition~\ref{model:sandpile} of the LASM shows that when the site $(y,j)$ topples, the proportion of sand leaving $(y,j)$ that is sent to $(x,i)$ is \begin{equation*}
            \dfrac{c(x-y,j,i)}{\lp_j \sum_{(z,k)\in \Z^d \times \{1, \ldots, p\}} c(z,j,k) }.
        \end{equation*}
        Therefore, the total mass received by $(x,i)$ is \begin{equation*}
            \sum_{(y,j) \in \Z^d \times \{1, \ldots, p\}} \dfrac{c(x-y,j,i)}{\lp_j \sum_{(z,k)\in \Z^d \times \{1, \ldots, p\}} c(z,j,k) } u(y,j).
        \end{equation*} Hence 
        \begin{align*}
        (Tu)(x,i) &= \text{total mass received by $(x,i)$}-\text{total mass emitted from $(x,i)$}\\
        &= f(x,i) - s_0(x,i).\qedhere
        \end{align*}
    \end{proof}

    The result below gives the connection between the level sets of the Green function and the final configuration. It is an adaptation of \cite[Lem.~3.3]{AlMk-22} to our general setting.
    \begin{prop}\label{prop:thresholds} There exist constants $\alpha, \beta > 0$ such that:
        \begin{itemize}
            \item if $G\big( (0,i_0), (x,i) \big) > \frac{\alpha}{N}$, then $(x,i)$ is in the shape of the final configuration;
            \item if $G\big( (0,i_0), (x,i) \big) < \frac{\beta}{N}$, then $(x,i)$ is not in the shape of the final configuration.
        \end{itemize}
    \end{prop}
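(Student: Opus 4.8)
The plan is to combine the identity $Tu = f - N\delta_{(0,i_0)}$ from Proposition~\ref{prop:T_applied_to_odometer} with the inversion $T^{-1}=-G^\intercal$ from Proposition~\ref{prop:inverse_T_G} to produce the closed formula
\[
u(x,i) = N\,G\big((0,i_0),(x,i)\big) - \big(G^\intercal f\big)(x,i),\qquad \big(G^\intercal f\big)(x,i)=\sum_{(y,j)}G\big((y,j),(x,i)\big)f(y,j),
\]
and then to read off the two thresholds. The only nontrivial input needed to justify the inversion is that the odometer lies in $\ell^\infty(\Z^d\times\{1,\ldots,p\})$, since Proposition~\ref{prop:inverse_T_G} is an identity of operators on that space. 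I would first establish the pointwise bound $0\le u(x,i)\le N\,G((0,i_0),(x,i))$ — the ``easy inclusion'' saying the sandpile sits inside a sublevel set of the Green function. This can be obtained either from the least action principle applied to the nonnegative test function $w:=N\,G((0,i_0),\cdot)$, which satisfies $s_0+Tw = N\delta_{(0,i_0)} + N\,TG^\intercal\delta_{(0,i_0)} = 0$, a stable configuration (using $TG^\intercal=-I$), or by coupling the flow of sand with $N$ killed‑random‑walk trajectories started at $(0,i_0)$, whose expected number of visits to $(x,i)$ is $G((0,i_0),(x,i))$. In particular $u\in\ell^\infty$, so applying $-G^\intercal$ to $Tu=f-N\delta_{(0,i_0)}$ and using $(G^\intercal\delta_{(0,i_0)})(x,i)=G((0,i_0),(x,i))$ gives the closed formula above.

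Next I would bound the correction term $G^\intercal f$ uniformly. Since $f$ is stable and nonnegative, $0\le f(y,j)\le B:=\max_{i}\di\sum_{(y,j)}c(y,i,j)$; combining this with the exponential decay $G((y,j),(x,i))=G((0,j),(x-y,i))\le C e^{-\gamma\|x-y\|}$ coming from Proposition~\ref{prop:asymptotics_green} together with \eqref{eq:greenfct_minimal_decay}, one obtains
\[
0\le \big(G^\intercal f\big)(x,i)\le B\sum_{(y,j)}G\big((y,j),(x,i)\big)\le B\,p\,C\sum_{z\in\Z^d}e^{-\gamma\|z\|}=:\alpha<\infty,
\]
a constant independent of $N$ and of $(x,i)$.

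Then I would conclude. For the first threshold: if $G((0,i_0),(x,i))>\alpha/N$, then $u(x,i)= N\,G((0,i_0),(x,i))-(G^\intercal f)(x,i)>\alpha-\alpha=0$, so $(x,i)$ is in the shape. For the second: a site can only topple ``fully'', emitting exactly $\di\sum_{(y,j)}c(y,i,j)\ge \beta:=\min_i \di\sum_{(y,j)}c(y,i,j)>0$ each time it topples (here $\beta>0$ since irreducibility forces $\sum_{(y,j)}c(y,i,j)>0$ for every $i$), so $(x,i)$ lies in the shape iff $u(x,i)\ge \beta$; since $(G^\intercal f)(x,i)\ge 0$ we have $u(x,i)\le N\,G((0,i_0),(x,i))$, hence $G((0,i_0),(x,i))<\beta/N$ forces $u(x,i)<\beta$, i.e.\ $(x,i)$ is not in the shape.

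The step I expect to be the main obstacle is establishing $u\in\ell^\infty$, equivalently the inclusion $u\le N\,G((0,i_0),\cdot)$: once this is in hand, everything else is a direct manipulation of the operator identities of Propositions~\ref{prop:inverse_T_G} and~\ref{prop:T_applied_to_odometer} together with the uniform Green‑function estimate. Making the least‑action‑principle argument (or the coupling argument) fully rigorous in the present leaky, multicolor, infinite‑range setting — in particular arguing that the stabilization terminates at every site so that $u$ is finite before one applies the bound — is the place that requires care.
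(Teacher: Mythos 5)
Your proposal is correct and follows essentially the same route as the paper: both rest on the identity $Tu=f-N\delta_{(0,i_0)}$, the inversion $T^{-1}=-G^\intercal$, the uniform bound on the stable configuration $f$, the summability of the Green function coming from its exponential decay, and the observation that a site in the shape has $u\geq\beta:=\min_i \di\sum_{(y,j)}c(y,i,j)$; your closed formula $u=N\,G((0,i_0),\cdot)-G^\intercal f$ is just a rearrangement of the paper's two-sided bound \eqref{eq:encadrement}. The one place you go beyond the paper is the explicit justification, via the least action principle with test function $N\,G((0,i_0),\cdot)$, that $u\in\ell^\infty$ so that the operator identity of Proposition~\ref{prop:inverse_T_G} may legitimately be applied — a point the paper's proof passes over silently — and your argument for it is sound.
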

    
    \begin{proof}
        By Propositions~\ref{prop:inverse_T_G} and~\ref{prop:T_applied_to_odometer},
        \[T\left( -G\big( (0,i_0), \cdot \big) + \frac{1}{N} u \right)= \frac{1}{N} f.\] The finite family $\left( \di \sum_{(y,j)} c(y,i,j) \right)_{i \in \{1, \ldots,p\}}$ is bounded by a constant $a$, and in the final configuration, no site has more than $a$ grains of sand. Therefore, 
        \[ 0 \leq T\left( -G\big( (0,i_0), \cdot \big) + \frac{1}{N} u \right) \leq \frac{a}{N} \mathds{1} \] where $\mathds{1}$ is the constant function equal to $1$.
Multiplying by $T^{-1} = -G^\intercal$, which has negative coefficients, we get \[0 \geq \frac{1}{N}u(x,i) - G\big( (0,i_0), (x,i) \big) \geq -\frac{a}{N}  G^\intercal \mathds{1} (x,i).\]
         Moreover, \[G^\intercal \mathds{1} (x,i) = \sum_{(y,j)} G\big( (y,j),(x,i) \big) = \sum_{j=1}^p \sum_{z \in \Z^d} G\big( (0,j),(z,i) \big),\]
        and this sum is finite because of the exponential decay of the Green function. Moreover, it does not depend on $x$ and there is a finite number of $i \in \{1, \ldots, p\}$, so it is bounded uniformly in $(x,i)$, hence the existence of a constant $\alpha$ such that:
        \begin{equation}\label{eq:encadrement}
            0 \geq \frac{1}{N}u(x,i) - G\big( (0,i_0), (x,i) \big) \geq -\frac{\alpha}{N}.
        \end{equation}
        If $(x,i)$ is not in the shape of the final configuration, then $u(x,i) = 0$ and from \eqref{eq:encadrement}, we get \[G\big( (0,i_0), (x,i) \big) \leq \frac{\alpha}{N},\]
        whereas if it is in the
        %pile
        \cb{shape}, then $(x,i)$ has emitted sand at least once, thus $u(x,i) \geq \di  \sum_{(y,j)} c(y,i,j)$ and \eqref{eq:encadrement} leads to \[G\big( (0,i_0), (x,i) \big) \geq \frac{1}{N}u(x,i) \geq \frac{1}{N} \min_{i \in \{1, \ldots,p\}} \di \sum_{(y,j)} c(y,i,j),\] so we can choose the constant $\beta := \displaystyle \min_{i \in \{1, \ldots,p\}} \di \sum_{(y,j)} c(y,i,j)$.
    \end{proof}

    \section{Limit shape %of the sandpile
    when the number of grains tends to infinity}

    In this section, we prove Theorem~\ref{thm:limit_shape_d_fixed_intro}, which is our main result. The existence of a limit shape for the stable configuration as the number of grains of sand $N$ goes to infinity, after normalization by $\log N$, is proved. We give an explicit formula for the limit shape that uses the homeomorphism $\Gamma$ of Proposition~\ref{prop:homeo} and also describe it as the dual curve of the set $\partial \mathcal{T}$ introduced in Definition~\ref{def:rho_T}.
    
    The general idea is to use the exponential decay of the Green function from Proposition~\ref{prop:asymptotics_green} to show that the two level sets of Proposition~\ref{prop:thresholds} are close enough, so that after rescaling by $\log N$, they converge to the same curve.
    
    \subsection{Notions of convergence for sets}
    
    We first define the notion of convergence of sets that we use. Given that the exponential decay of the Green function depends on the direction, we use convergence in spherical coordinates.
    
    \begin{defi}
    \label{def:cv_sets_limit_shape}
    Let $\left( E_N \right)_{N\geq 0}$ be a sequence of subsets of $\R^d$ and $\mathcal{C}$ a curve which can be described in spherical coordinates as \[\mathcal{C} = \left\{ r_u u ~|~u \in \mathbb{S}^{d-1} \right\},\]
    with $r\colon u \in\mathbb{S}^{d-1} \mapsto r_u\in\R_+$.
    We say that the limit shape of the sets $E_N$ is delimited by $\mathcal{C}$ if, for every $u \in \mathbb{S}^{d-1}$, there exist two sequences $\left( r_{u,N} \right)_{N\geq 0}$ and $\left( R_{u,N} \right)_{N\geq 0}$ such that, for every $N \in \N$, 
    \[\left\{ r u ~|~u \in \mathbb{S}^{d-1}, r \leq r_{u,N} \right\} \subset E_N \subset \left\{ R u ~|~u \in \mathbb{S}^{d-1}, R \leq R_{u,N} \right\}\]
    and for every $u \in \mathbb{S}^{d-1}$, 
    \begin{equation*}
        r_{u,N} \xrightarrow[N \to +\infty]{}r_u\quad \text{and}\quad
        R_{u,N} \xrightarrow[N \to +\infty]{}r_u.
    \end{equation*}
    If these two limits are uniform in $u \in \mathbb{S}^{d-1}$, that is
    \begin{equation*}
        \sup_{u \in \mathbb{S}^{d-1}} \left| r_{u,N} - r_u \right|\xrightarrow[N \to +\infty]{}0\quad \text{and}\quad
        \sup_{u \in \mathbb{S}^{d-1}} \left| R_{u,N} - r_u \right|\xrightarrow[N \to +\infty]{}0,
    \end{equation*}
    we say that the convergence is uniform.
    \end{defi}
    
    %In the special case where the sets $E_N$ are star-shaped at $0$, they have a form
    %\[E_N = \left\{ r u ~|~u \in \mathbb{S}^{d-1}, 0 \leq r \leq r_{u,N} \right\},\]
    %that is, they are delimited by the curves 
    %\[\mathcal{C}_N := \left\{ r_{u,N} u ~|~u \in \mathbb{S}^{d-1}\right\},\] the convergence of the sets $E_N$ comes down to proving that for every $u \in \mathbb{S}^{d-1}$, $r_{u,N} \xrightarrow[N \to +\infty]{}r_u$.

    We now relate the convergence of sets from Definition~\ref{def:cv_sets_limit_shape} to the classical Hausdorff distance convergence, which we first recall.
    
    \begin{defi}\label{def:Hausdorff_distance}
    Let $X,Y \subset \R^d$ be non-empty. The Hausdorff distance between $X$ and $Y$ is 
    \begin{equation*}
        d_H(X,Y) = \max \left( \sup_{x\in X}d(x,Y), \sup_{y \in Y}d(y,X) \right),
    \end{equation*}
    where $d(a,B) = \inf_{b \in B}\|a-b\|$.
    \end{defi}
    The Hausdorff distance restricted to closed, bounded, non-empty subsets of $\R^d$ defines a distance in the sense of metric spaces.
    
    \begin{prop}
    \label{prop:link_cv_hausdorff}$ $
        \begin{enumerate}
            \item \label{prop:link_cv_hausdorff:it:uniform} If a sequence of sets $\left(E_N\right)_{N\geq 0}$ converges \emph{uniformly} to
            \[\mathcal{C} = \left\{ r_u u~|~u \in \mathbb{S}^{d-1} \right\}\]
            in the sense of Definition~\ref{def:cv_sets_limit_shape}, then we have $d_H\left( E_N,E \right) \xrightarrow[N \to +\infty]{}0$ where \[E = \left\{ r u ~|~u \in \mathbb{S}^{d-1}, 0 \leq r \leq r_u \right\}\]is the set delimited by the curve $\mathcal{C}$.

            \item The conclusion is not necessarily true if the convergence to the limit shape is not uniform.
            
            \item The converse of the assertion~\ref{prop:link_cv_hausdorff:it:uniform} is false; Hausdorff convergence does not even imply convergence in the non-uniform sense of Definition~\ref{def:cv_sets_limit_shape}.
        \end{enumerate}
    \end{prop}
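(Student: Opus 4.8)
The plan is to treat the three assertions separately; none is technically deep. For assertion~\ref{prop:link_cv_hausdorff:it:uniform}, set $\varepsilon_N := \sup_{u \in \mathbb{S}^{d-1}} \max\bigl(|r_{u,N}-r_u|,\,|R_{u,N}-r_u|\bigr)$, which tends to $0$ by hypothesis, and I would establish the quantitative bound $d_H(E_N,E) \leq \varepsilon_N$. For the first half of the Hausdorff distance, any $x \in E_N$ is of the form $x = Ru$ with $R \leq R_{u,N} \leq r_u + \varepsilon_N$ by the right-hand inclusion of Definition~\ref{def:cv_sets_limit_shape}, so either $R \leq r_u$ and $x \in E$, or $r_u < R \leq r_u + \varepsilon_N$ and then $r_u u \in E$ with $\|x - r_u u\| = R - r_u \leq \varepsilon_N$; hence $\sup_{x \in E_N} d(x,E) \leq \varepsilon_N$. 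For the second half, any $y = ru \in E$ satisfies $r \leq r_u \leq r_{u,N} + \varepsilon_N$, so either $r \leq r_{u,N}$, in which case $y$ belongs to the star-shaped core $\{su : 0 \leq s \leq r_{u,N}\} \subseteq E_N$ and $d(y,E_N)=0$, or $r_{u,N} < r \leq r_u$ and then $r_{u,N} u \in E_N$ with $\|y - r_{u,N} u\| = r - r_{u,N} \leq r_u - r_{u,N} \leq \varepsilon_N$; hence $\sup_{y \in E} d(y,E_N) \leq \varepsilon_N$. Here one uses $r_{u,N}, R_{u,N} \geq 0$ (so the cores are nonempty and contain the origin) and that $r$ is bounded, which is automatic in all our applications since $r_u = 1/(\Gamma^{-1}(u)\cdot u)$ is continuous on the sphere by Proposition~\ref{prop:homeo}; then $E$ is compact, $d_H$ is finite, and letting $N \to \infty$ gives the claim.

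For assertion~2 I would exhibit a counterexample in $\R^2$. Let $\mathcal{C} = \mathbb{S}^{1}$, so $E = \overline{B(0,1)}$, and attach to the unit disk a thin outward spike in a slowly rotating direction: $E_N := \overline{B(0,1)} \cup \{t u_N : 1 \leq t \leq 2\}$, with the unit vectors $u_N$ pairwise distinct and converging to some $u_\infty$ (e.g.\ $u_N = (\cos\tfrac1N, \sin\tfrac1N)$, $u_\infty=(1,0)$). Choosing $r_{u,N} \equiv 1$ and $R_{u,N} := 2$ when $u = u_N$, $R_{u,N} := 1$ otherwise, the sandwich inclusions of Definition~\ref{def:cv_sets_limit_shape} hold, and since any fixed $u$ satisfies $u_N \neq u$ for all large $N$, the functions $r_{u,N}$ and $R_{u,N}$ converge pointwise to $1$; thus $E_N \to \mathcal{C}$ in the non-uniform sense. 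However $\sup_u |R_{u,N} - 1| = 1$, so the convergence is not uniform, and $d_H(E_N,E) \geq \|2u_N\| - 1 = 1$ for every $N$, so Hausdorff convergence fails.

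For assertion~3 I would exploit that Definition~\ref{def:cv_sets_limit_shape} forces every $E_N$ to contain a set $\{ru : 0 \leq r \leq r_{u,N}\}$ with $r_{u,N} \geq 0$, hence to contain the origin — something Hausdorff convergence cannot see. Take the closed annuli $E_N := \{x \in \R^2 : \tfrac1N \leq \|x\| \leq 1\}$. Then $d_H\bigl(E_N, \overline{B(0,1)}\bigr) = \tfrac1N \to 0$, so $(E_N)$ converges in the Hausdorff metric to the set delimited by the unit circle; but $0 \notin E_N$ for any $N$, so no admissible choice of inner radial functions exists, and $(E_N)$ fails to converge — even in the non-uniform sense of Definition~\ref{def:cv_sets_limit_shape} — to any curve at all.

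I do not anticipate a genuine obstacle. The only point needing care is the bookkeeping for assertion~\ref{prop:link_cv_hausdorff:it:uniform}: checking that both one-sided Hausdorff estimates genuinely reduce to the single quantity $\varepsilon_N$, that the star-shaped core inclusion is used in the correct direction, and that directions with $r_u = 0$ cause no trouble. In assertions~2 and~3 the only thing to verify carefully is that the explicit sequences do (respectively, do not) satisfy the precise requirements of Definition~\ref{def:cv_sets_limit_shape} — in particular, in assertion~2, pointwise-but-not-uniform convergence of the radial bounds.
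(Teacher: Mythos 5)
Your proposal is correct and follows essentially the same route as the paper: the same two one-sided Hausdorff estimates controlled by $\sup_{u}|R_{u,N}-r_u|$ and $\sup_{u}|r_u-r_{u,N}|$ for assertion~1 (the paper even leaves implicit the same point you flag, namely that $r_{u,N}\geq 0$ so that $r_{u,N}u\in E_N$), and the same rotating-spike counterexample for assertion~2. For assertion~3 you use an annulus with the origin deleted where the paper deletes a thin angular sector of the disk, but both counterexamples exploit the identical mechanism --- Hausdorff distance is blind to the failure of the inner radial inclusion in some direction --- so this is only a cosmetic difference.
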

    
    \begin{proof}
    Let us assume the uniform convergence with the notations of Definition~\ref{def:cv_sets_limit_shape}.
    Let $x = ru \in E_N$ with $r\geq 0$ and $u \in \mathbb{S}^{d-1}$. If $r \leq r_u$, then $d(x,E) = 0$; whereas if $r \geq r_u$, then $r \leq R_{u,N}$, so \[d(x,E) \leq d(r u,r_u u) = |r-r_u| \leq R_{u,N} - r_u.\] Therefore, \[\sup_{x \in E_N}d(x,E) \leq \max \left( 0, \sup_{u \in \mathbb{S}^{d-1}}\bigl(R_{u,N} - r_u\bigr)\right) \leq \sup_{u \in \mathbb{S}^{d-1}} \left| R_{u,N} - r_u \right|.\]
     Similarly, let $y = r u \in E$, which means $u \in \mathbb{S}^{d-1}$ and $r \leq r_u$. If $r \leq r_{u,N}$, then $y \in E_N$ so $d(y,E_N) = 0$; whereas if $r > r_{u,N}$, then $d(y,E_N) \leq d(y, r_{u,N}u) = r- r_{u,N} \leq r_u - r_{u,N}$. Therefore, 
     \[\sup_{y \in E_N}d\left(y,E_N\right) \leq \max \left( 0, \sup_{u \in \mathbb{S}^{d-1}}\bigl(r_u - r_{u,N}\bigr)\right) \leq \sup_{u \in \mathbb{S}^{d-1}} \left|r_u - r_{u,N}\right| .\]
     In conclusion,\[d_H(E_N,E) \leq \max \left(\sup_{u \in \mathbb{S}^{d-1}} \left|R_{u,N} - r_u\right|, \sup_{u \in \mathbb{S}^{d-1}} \left|r_u - r_{u,N}\right| \right) \xrightarrow[N \to +\infty]{}0.\]
    
    We now give a counterexample in the non-uniform case. Let $\left(u_N\right)_{N\geq 0}$ be an injective sequence of $\mathbb{S}^{d-1}$ and \[E_N = \bar{B}(0,1) \cup [0,2u_N]\]
    the closed unit ball pricked with a toothpick at $u_N$. Then, with notations from Definition~\ref{def:cv_sets_limit_shape}, we can choose $R_{u,N} = r_{u,N} = 1$ if $u \neq u_N$ and $R_{u_N,N} = r_{u_N,N} = 2$. The sequence $\left(u_N\right)_{N\geq 0}$ being injective, for fixed $u$, we have $R_{u, N} = r_{u,N} = 1$ for $N$ large enough. Therefore, $\left(E_N\right)_{N\geq 0}$ converges (non-uniformly)\ to the unit ball $\bar{B}(0,1)$ in the sense of Definition~\ref{def:cv_sets_limit_shape}. However, $d_H\left(E_N,\bar{B}(0,1)\right) = 1$ for every $N \in \N$.
    
    We finally give an example of convergence with respect to the Hausdorff distance where there is no convergence in the sense of Definition~\ref{def:cv_sets_limit_shape}.
    Let $E_N$ be a circular sector as in Figure~\ref{fig:circular_sector} with $\theta_N = \frac{\pi}{N}$. Then it is easy to see that $d_H\left(E_N, \bar{B}(0,1) \right) \leq \frac{\pi}{2N}$, so $E_N$ converges to the unit ball for the Hausdorff distance, but for every $N \in \N$, the radius in direction $(1,0)$ is $0$ which does not converge to $1$.
    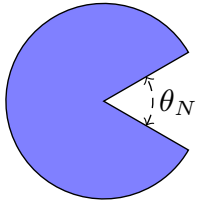
\begin{figure}
    \begin{center}
        \scalebox{1.3}{%
        \begin{tikzpicture}
        \draw[fill opacity=0.5,fill=blue] (0,0) -- (0.866,0.5) arc (30:330:1) --cycle ;
        \draw[densely dashed, <->] (0.433,-0.25) arc (-30:30:0.5) node[anchor=north west]{\footnotesize{$\theta_N$}} ;
        \end{tikzpicture}}
    \end{center}
        \caption{Sets that converge to the unit ball for the Hausdorff distance, but not in the sense of Definition~\ref{def:cv_sets_limit_shape}.}\label{fig:circular_sector}
    \end{figure}
    \end{proof}
    
    \subsection{Convergence of the sandpile to a limit shape}
    Let us fix $i_0, j \in \{1, \ldots, p\}$. In order to use some tools from classical analysis, we first extend continuously the function $x  \mapsto G\big( (0,i_0), (x,j) \big)$ from $\Z^d$ to $\R^d$,
    by saying that the value at $x\in\R^d$ is a convex combination of the values at neighboring lattice points.
    To do so, we define $\theta :[0,+\infty)\to [0,1]$ by
    \begin{equation*}
      \theta(s)=
      \begin{cases}
        1-s &\text{if $s\in[0,1]$},\\
        0&\text{if $s>1$},
      \end{cases}
    \end{equation*}
    and \[G\big( (0,i_0), (x,j) \big) = \dfrac{\sum_{y \in \Z^d}\theta\left( \|x-y\|_\infty \right) G\big( (0,i_0), (y,j) \big)}{\sum_{y \in \Z^d}\theta\left( \|x-y\|_\infty \right)}.\]
    We will denote $u \in \mathbb{S}^{d-1}$ and
    \begin{equation}
      \label{eq:green_fixed_direction}
      g_{u} : r \mapsto G \big( (0,i_0), (ru,j) \big)
    \end{equation}
    the Green function in direction $u$. Note that the function $(u,r) \in \mathbb{S}^{d-1} \times [0,+\infty) \mapsto g_u(r)$ is continuous.

    \begin{defi}
      We define radii $r_{N, u}$ and $R_{N, u}$ by:
      \begin{equation*}
        r_{N, u} = \inf \left\{r>0~|~g_u(r) \leq \frac{\alpha}{N} \right\}\quad\text{and}\quad
        R_{N, u} = \sup \left\{ r>0~|~ g_u(r) \geq \frac{\beta}{N} \right\}.
      \end{equation*}
      In other words, using the following generalized inverses
      \begin{equation*}
        g_{1,u}^{-1} : y \mapsto \inf \left\{ r>0 ~|~ g_u(r) \leq y \right\}\quad \text{and}\quad 
        g_{2, { u}}^{-1} : y \mapsto \sup \left\{ r>0~|~ g_u(r) \geq y \right\},
      \end{equation*} we have
      \begin{equation*}
        r_{N, { u}} = g_{1, { u}}^{-1}\left( \frac{\alpha}{N} \right) \quad \text{and}\quad 
        R_{N, { u}} = g_{2, { u}}^{-1}\left( \frac{\beta}{N} \right).
      \end{equation*}
    \end{defi}

    The generalized inverses of $g_{{ u}}$ are well defined, at least for $y>0$ small enough, because $g_{ { u}}(r)\to0$ as $r \to +\infty$. Besides, by continuity of $g_{ { u}}$, for $i \in \{1,2\}$,
    \begin{equation}\label{eq:composition_generalized_inverse}
        g_{{ u}}\left( g_{i, { u}}^{-1} (y) \right) = y.
    \end{equation}
    
The need for these definitions of the radii comes from the fact that $r \mapsto G\bigl((0,i_0),(ru,j)\bigr)$ is not necessarily decreasing (think of a model with long jumps and no short jumps), hence the use of generalized inverses. Besides, in order to use Proposition~\ref{prop:thresholds}, we cannot use the same generalized inverse for both radii.
    
    If $r < r_{N, { u}}$, then $G \big( (0,i_0), (ru,j) \big) > \frac{\alpha}{N}$ and if $r>R_{N, { u}}$, then $G \big( (0,i_0), (ru,j) \big) < \frac{\beta}{N}$. Therefore, Proposition~\ref{prop:thresholds} shows the following.
    \begin{prop}
        In direction $u$, for fixed $N$, the shape of the final configuration of the sandpile lies between radii $r_{N, { u}}$ and $R_{N, { u}}$.
    \end{prop}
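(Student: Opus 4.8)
The plan is to obtain the statement as an immediate translation of Proposition~\ref{prop:thresholds} into radial language, using the definitions of the two generalized inverses $g_{1,u}^{-1}$ and $g_{2,u}^{-1}$; no new idea is required beyond what has already been set up. I would fix a direction $u \in \mathbb{S}^{d-1}$ and unwind the definitions. Since $r_{N,u} = g_{1,u}^{-1}(\alpha/N) = \inf\{r>0 : g_u(r) \le \alpha/N\}$, any $r$ with $0 < r < r_{N,u}$ lies strictly below this infimum, hence does not belong to the set $\{r'>0 : g_u(r') \le \alpha/N\}$, so $g_u(r) = G\bigl((0,i_0),(ru,j)\bigr) > \alpha/N$. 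Symmetrically, since $R_{N,u} = g_{2,u}^{-1}(\beta/N) = \sup\{r>0 : g_u(r) \ge \beta/N\}$, any $r > R_{N,u}$ lies strictly above this supremum, so $g_u(r) = G\bigl((0,i_0),(ru,j)\bigr) < \beta/N$. I would emphasize that monotonicity of $g_u$ is never invoked here; this is exactly why two distinct generalized inverses were introduced in the first place.

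Next I would feed these inequalities into Proposition~\ref{prop:thresholds}, applied with the initial configuration $N\delta_{(0,i_0)}$ and with its free colour index taken to be $j$. For a lattice point $x \in \Z^d$ lying in direction $u$, i.e.\ $\widehat x = u$ with $r := \|x\|$: if $r < r_{N,u}$ then $G\bigl((0,i_0),(x,j)\bigr) > \alpha/N$, hence $(x,j)$ is in the shape of the final configuration; and if $r > R_{N,u}$ then $G\bigl((0,i_0),(x,j)\bigr) < \beta/N$, hence $(x,j)$ is not in the shape. Since the continuous extension of $G$ coincides with the original values at lattice points (because for $x,y\in\Z^d$ one has $\theta(\|x-y\|_\infty) = \mathds{1}_{x=y}$, so the normalizing convex combination collapses), there is no clash between the extended function $g_u$ and the discrete thresholds. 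This is precisely the assertion that, in direction $u$, the occupied region extends out to radius at least $r_{N,u}$ and is contained within radius at most $R_{N,u}$, matching the inclusions underlying Definition~\ref{def:cv_sets_limit_shape}.

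The only point requiring a moment's care — and it is not really an obstacle — is the behaviour at the endpoint radii $r = r_{N,u}$ and $r = R_{N,u}$, where by continuity (equation~\eqref{eq:composition_generalized_inverse}) $g_u$ equals the relevant threshold exactly, so Proposition~\ref{prop:thresholds} yields no information. This is harmless, as only the strict inequalities $r < r_{N,u}$ and $r > R_{N,u}$ are needed; and in the subsequent limit-shape theorem the $O(1)$-sized gap between $r_{N,u}$ and $R_{N,u}$ (together with the unit lattice spacing) becomes negligible after rescaling by $\log N$.
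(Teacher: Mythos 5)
Your proposal is correct and follows exactly the paper's argument: the paper likewise observes that $r<r_{N,u}$ forces $g_u(r)>\alpha/N$ and $r>R_{N,u}$ forces $g_u(r)<\beta/N$ by the very definition of the two generalized inverses, and then invokes Proposition~\ref{prop:thresholds}. Your additional remarks (that the continuous extension of $G$ agrees with the discrete values at lattice points, and that the endpoint radii are immaterial) are accurate but not needed beyond what the paper already does.
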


    To get a limit shape after normalization by $\log N$, we need to prove that $\frac{r_{N, { u}}}{\log N}$ and $\frac{R_{N, { u}}}{\log N}$ have a common limit, uniformly in $u$.
     According to Proposition~\ref{prop:asymptotics_green}, there exist constants $C_{u}, \gamma_{u}$ (where $C_{u}$ depends on $i_0$, $j$ and $u$; $\gamma_{u} = \Gamma^{-1}(u) \cdot u$ only depends on $u$, and both are continuous functions of $u$ and positive by Proposition~\ref{prop:vraiment_expo})\ such that, when $r$ goes to infinity,
    \begin{equation*}
    %\label{eq:dev_asympt_G_fixed_direction}
        g_u(r) = C_{u} e^{-\gamma_{u} r} r^{- \frac{d-1}{2}} (1 + o(1)).
    \end{equation*}
    Besides, these asymptotics are uniform in $u$, which means that there is a function $\varepsilon$ that does not depend on $u$ such that $\varepsilon(r) \xrightarrow[r \to +\infty]{}0$ and $|o(1)|\leq \varepsilon(r)$. We will exploit these asymptotics to get the asymptotics of $g_{u,i}^{-1}$, and thus of the radii $r_{N,u}$ and $R_{N,u}$.
    
    We will denote by $f_{u}$ the asymptotic equivalent of $g_u$, that is:
    \begin{equation*}
        f_{u} (r):= C_{u} e^{-\gamma_{u} r} r^{- \frac{d-1}{2}}, \quad r > 0.
    \end{equation*}
    Although the function $g_u$ is not necessarily invertible, the function $f_u$ is, and its inverse is easily expressed with the classical Lambert $W$ function, which satisfies $W(y)e^{W(y)}=y$.
    
    \begin{prop}\label{prop:asymptotics_of_inverse_equivalent}
    Writing $p=\frac{2}{d-1}>0$ and $f_{u} : r \mapsto C_{u} e^{-\gamma_{u} r} r^{- \frac{1}{p}}$, one has
    \begin{equation*}
        f_{u}^{-1}(y) = \frac{1}{p\gamma_{u}} W\left( p\gamma_{u} C_u^{p}y^{-p} \right).
    \end{equation*}
    \end{prop}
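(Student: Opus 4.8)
The plan is to solve the equation $y=f_u(r)=C_u e^{-\gamma_u r}r^{-1/p}$ explicitly for $r$ and to recognize the answer as a value of the Lambert $W$ function. First I would record that $f_u$ is a strictly decreasing bijection of $(0,+\infty)$ onto $(0,+\infty)$: both factors $e^{-\gamma_u r}$ and $r^{-1/p}$ are strictly decreasing for $r>0$ (recall $\gamma_u=\Gamma^{-1}(u)\cdot u>0$ by Proposition~\ref{prop:vraiment_expo} and $p=\frac{2}{d-1}>0$), with $f_u(r)\to+\infty$ as $r\to0^+$ and $f_u(r)\to0$ as $r\to+\infty$. Hence $f_u^{-1}$ is well defined on $(0,+\infty)$, and it suffices to verify that the claimed formula inverts $f_u$.

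Next I would carry out the algebraic reduction. Starting from $y=C_u e^{-\gamma_u r}r^{-1/p}$, raising both sides to the power $p>0$ gives $y^p=C_u^p e^{-p\gamma_u r}r^{-1}$, that is, $r\,e^{p\gamma_u r}=C_u^p y^{-p}$. Multiplying by $p\gamma_u$ puts this in the form
\[ (p\gamma_u r)\,e^{p\gamma_u r}=p\gamma_u C_u^p y^{-p}. \]
The right-hand side is strictly positive, so the equation $we^{w}=p\gamma_u C_u^p y^{-p}$ has a unique real solution, necessarily nonnegative, namely $w=W\!\left(p\gamma_u C_u^p y^{-p}\right)$; comparing with the left-hand side yields $p\gamma_u r=W\!\left(p\gamma_u C_u^p y^{-p}\right)$, which rearranges to $f_u^{-1}(y)=\frac{1}{p\gamma_u}W\!\left(p\gamma_u C_u^p y^{-p}\right)$, as announced.

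There is essentially no obstacle here: the statement reduces in one line to the defining functional equation $we^w=z$ of $W$. The only two points deserving a word of care are the monotonicity (hence invertibility) of $f_u$ on $(0,+\infty)$, and the observation that, the argument $p\gamma_u C_u^p y^{-p}$ being positive, the equation $we^w=z$ has a single real root, so no branch of $W$ needs to be specified.
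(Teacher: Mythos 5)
Your proof is correct and is exactly the computation the paper has in mind (the paper in fact states Proposition~\ref{prop:asymptotics_of_inverse_equivalent} without writing out a proof, treating it as a direct verification via the defining relation $W(z)e^{W(z)}=z$). Your additional remarks on the strict monotonicity of $f_u$ and on the uniqueness of the real root for positive argument are accurate and harmless.
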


    We remind the classical asymptotics of the function $W$ as $x \to +\infty$, see e.g.\ \cite[Eq.~(4.18)]{CoGoHaJeKn-96}:
    \begin{equation}\label{eq:asymptotics_W}
        W(x) = \log x - \log \log x + o(1).
    \end{equation}

%\textcolor{magenta}{Quick question: is $p=\frac{1}{\delta}$ always? I would suggest to mention clearly that $p>0$}
   \begin{prop}\label{prop:asympotics_inverse_equivalent_uniform}
We note $f_u^{-1}(y) = a_u W \left(b_u y^{-p}\right)$, where $a_u$ and $b_u$ are positive and continuous functions of $u$. Then, when $y \to 0$,
    \begin{equation*}
        f_u^{-1}(y) = a_u \left[ p \log \left( \frac{1}{y} \right) - \log \log \left( \frac{1}{y} \right) + \log\left( \frac{b_u}{p} \right) \right] + o(1)
    \end{equation*}
    and these asymptotics are uniform in $u$, that is, there exists a function $\varepsilon$ independent of $u$ such that $\varepsilon(y) \xrightarrow[y \to 0]{}0$ and $|o(1)| \leq \varepsilon(y)$.
\end{prop}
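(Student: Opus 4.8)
The plan is to insert the classical expansion \eqref{eq:asymptotics_W} of the Lambert function into the closed form $f_u^{-1}(y)=a_u W(b_u y^{-p})$ and then carefully track uniformity. First I would record the boundedness facts that make the compactness of the sphere do its job: the functions $u\mapsto a_u$ and $u\mapsto b_u$ are continuous and strictly positive on $\mathbb{S}^{d-1}$. Indeed, by Proposition~\ref{prop:asymptotics_of_inverse_equivalent} one has $a_u=\frac{1}{p\gamma_u}$ and $b_u=p\gamma_u C_u^{p}$, where $\gamma_u=\Gamma^{-1}(u)\cdot u$ is continuous (as $\Gamma$ is a homeomorphism, Proposition~\ref{prop:homeo}) and positive (Proposition~\ref{prop:vraiment_expo}), and $C_u=\chi_{i_0,j}(u)$ is continuous and positive (Proposition~\ref{prop:asymptotics_green}). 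Hence there exist constants with $0<b_-\leq b_u\leq b_+$ and $0<a_-\leq a_u\leq a_+$ for all $u$.

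Next I would set $x=x(u,y):=b_u y^{-p}$. Since $p>0$ and $x\geq b_- y^{-p}$, we have $x\to+\infty$ as $y\to 0$ \emph{uniformly} in $u$; so, writing $R(x):=W(x)-\log x+\log\log x$, \eqref{eq:asymptotics_W} gives $R(x(u,y))\to 0$ as $y\to 0$, uniformly in $u$. Then I would expand the two main terms: $\log x=p\log(1/y)+\log b_u$, and
\[
\log\log x=\log\bigl(p\log(1/y)+\log b_u\bigr)=\log p+\log\log(1/y)+\log\Bigl(1+\frac{\log b_u}{p\log(1/y)}\Bigr).
\]
Because $\log b_u\in[\log b_-,\log b_+]$ is bounded, the last term is $O\bigl(1/\log(1/y)\bigr)$ with a constant independent of $u$, hence a uniform $o(1)$ (here one uses $y<1$ so that $\log(1/y)>0$). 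Combining the pieces,
\[
W(x(u,y))=p\log(1/y)-\log\log(1/y)+\log\Bigl(\frac{b_u}{p}\Bigr)+o(1),
\]
uniformly in $u$.

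Finally, multiplying by $a_u$ and using $a_u\leq a_+$ keeps the error a uniform $o(1)$, yielding the claimed formula $f_u^{-1}(y)=a_u\bigl[p\log(1/y)-\log\log(1/y)+\log(b_u/p)\bigr]+o(1)$. The only delicate point — and the step I would be most careful about — is pure bookkeeping: ensuring that every occurrence of ``$o(1)$'' above is dominated by a single function $\varepsilon(y)\to 0$ that does not depend on $u$. This is precisely where compactness of $\mathbb{S}^{d-1}$ enters, through the two-sided bounds $b_-\leq b_u\leq b_+$ (which make $x(u,y)\to\infty$ uniformly and control $\log\log x$) and $a_u\leq a_+$ (which preserves the uniform remainder after the final multiplication); no other ingredient is needed.
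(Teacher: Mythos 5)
Your proof is correct and follows essentially the same route as the paper's: substitute the Lambert-$W$ expansion \eqref{eq:asymptotics_W} into $f_u^{-1}(y)=a_uW(b_uy^{-p})$, split $\log\log(b_uy^{-p})$ in the same way, and use continuity of $a_u,b_u$ plus compactness of $\mathbb{S}^{d-1}$ to get two-sided bounds that make every remainder uniformly $o(1)$. The paper merely writes out the dominating function $\varepsilon(y)$ explicitly at the end, which is the bookkeeping step you correctly identified as the only delicate point.
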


\begin{proof}
        Using \eqref{eq:asymptotics_W}, there exists a function $\epsilon$ such that $\epsilon(x) \xrightarrow[x \to +\infty]{}0$ and such that $W(x) = \log x - \log \log x + \epsilon(x)$.
        Therefore,
        \begin{align*}
            W\left(b_u y^{-p} \right) &= \log\left(b_u y^{-p} \right) - \log \log \left(b_u y^{-p} \right) + \epsilon\left(b_u y^{-p} \right)\\
            %&= \log (b_u) +p \log \left( y^{-1}\right) - \log \left( \log (b_u) +p \log\left( y^{-1} \right) \right) + \epsilon\left(b_u y^{-p} \right)\\
            &= p \log \left( y^{-1}\right) + \log(b_u) - \log\left( p \log\left( y^{-1} \right)   \left( 1+ \frac{\log(b_u)}{p \log\left( y^{-1} \right) } \right)\right) + \epsilon\left(b_u y^{-p} \right)\\
            &= p \log \left( y^{-1}\right) + \log(b_u) - \log(p) - \log \log\left( y^{-1} \right) - \log\left( 1+ \frac{\log(b_u)}{p \log\left( y^{-1} \right) } \right) +\epsilon\left(b_u y^{-p} \right) .
        \end{align*}
We now prove that $\log\left( 1+ \frac{\log(b_u)}{p \log\left( y^{-1} \right) } \right) +\epsilon\left(b_u y^{-p} \right)$ tends to $0$, uniformly in $u$.
        Since $b_u$ is a positive continuous function of $u$, and $u$ lies in the compact set $\mathbb{S}^{d-1}$, there exist constants $m,M >0$ such that $\forall u \in \mathbb{S}^{d-1}$, $m \leq b_u \leq M$. Therefore, $m y^{-p} \leq b_u y^{-p}$, hence \[\left|\epsilon\left( b_u y^{-p} \right)\right| \leq \sup_{x \geq my^{-p}} \epsilon\left(x \right) \xrightarrow[y \to 0]{}0,\]
        where the bound $ \sup_{x \geq my^{-p}} \epsilon\left(x \right)$ no longer depends on $u$.
        Moreover, $\log (m) \leq \log (b_u) \leq \log (M)$, so \[\left| \log\left( 1+ \frac{\log(b_u)}{p \log\left( y^{-1} \right) } \right) \right| \leq  \max \left( \left| \log\left( 1 + \frac{\log(m)}{p\log\left(y^{-1}\right)} \right) \right|, \left| \log\left( 1 + \frac{\log(M)}{p\log\left(y^{-1}\right)} \right) \right| \right) \xrightarrow[y \to 0]{} 0.\]
        In conclusion, if we define
        \[\varepsilon(y) = \left(\max_{u \in \mathbb{S}^{d-1}} a_u\right) \left[\sup_{x \geq my^{-p}} \epsilon\left( x \right) + \max \left( \left| \log\left( 1 + \frac{\log(m)}{p\log\left(y^{-1}\right)} \right) \right|, \left| \log\left( 1 + \frac{\log(M)}{p\log\left(y^{-1}\right)} \right) \right| \right)\right],\]
        we have $\varepsilon(y) \xrightarrow[y \to 0]{}0$ and \[\left|f_u^{-1}(y) - a_u \left[ p \log \left( \frac{1}{y} \right) - \log \log \left( \frac{1}{y} \right) + \log\left( \frac{b_u}{p} \right) \right]\right| \leq \varepsilon(y).\qedhere\]
    \end{proof}
    
    \begin{lemma}\label{lem:conv_infty_uniform}
    For $i \in \{1,2\}$, $g_{i,u}^{-1}(y) \xrightarrow[y \to 0]{}+\infty$ uniformly in $u$, that is: 
    \begin{equation}
        \label{eq:toprove_conv_infty_uniform}
        \forall A >0,\quad \exists y_0 >0,\quad\forall u \in \mathbb{S}^{d-1},\quad\forall y \in (0,y_0),\quad g_{i,u}^{-1}(y) \geq A.
    \end{equation}
\end{lemma}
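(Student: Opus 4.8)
The plan is to reduce the claim to the uniform asymptotics already established. Recall that for $i\in\{1,2\}$ and $y>0$ small, $g_{i,u}^{-1}(y)$ is a value $r>0$ with $g_u(r)=y$ (by \eqref{eq:composition_generalized_inverse}); in particular $g_{1,u}^{-1}(y)\leq g_{2,u}^{-1}(y)$, so it suffices to prove the statement for $i=1$, i.e.\ to show that the \emph{smallest} $r$ with $g_u(r)\leq y$ blows up uniformly as $y\to 0$. Equivalently, I want a lower bound: for all $r$ below some threshold $A$ (independent of $u$), one has $g_u(r)$ bounded below by a positive constant, so that it cannot yet have dropped to the small value $y$. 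Concretely, fix $A>0$; I will find $y_0>0$ such that $\inf_{u\in\mathbb S^{d-1}}\inf_{0<r\leq A} g_u(r) =: c(A) > 0$, and then take $y_0 = c(A)$: for $y<y_0$ no $r\leq A$ can satisfy $g_u(r)\leq y$, hence $g_{1,u}^{-1}(y)\geq A$, which is exactly \eqref{eq:toprove_conv_infty_uniform}.

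The positivity $c(A)>0$ is where the work sits, but it follows from compactness together with facts already in the excerpt. First, the function $(u,r)\in\mathbb S^{d-1}\times[0,+\infty)\mapsto g_u(r)$ is continuous (noted just after \eqref{eq:green_fixed_direction}), and $g_u(r)=G((0,i_0),(ru,j))$; this is a value of a Green function, hence strictly positive everywhere (by irreducibility, Assumption~\ref{assump:main_assumptions}, and the continuous interpolation, which is a convex combination of strictly positive lattice values with at least one nonzero weight). Therefore $g$ is a strictly positive continuous function on the compact set $\mathbb S^{d-1}\times[0,A]$, so it attains a positive minimum $c(A)>0$. That is all that is needed, and the argument does not even require the precise Ornstein--Zernike asymptotics of Proposition~\ref{prop:asymptotics_green}; continuity plus positivity plus compactness suffice.

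There is no real obstacle here; the only point to be careful about is that we should quantify over $i\in\{1,2\}$ simultaneously, which the inequality $g_{1,u}^{-1}\leq g_{2,u}^{-1}$ handles, and that the threshold $y_0=c(A)$ depends only on $A$ and not on $u$, which is guaranteed by taking the infimum over $u$ before choosing $y_0$. One could alternatively derive the statement from Proposition~\ref{prop:asympotics_inverse_equivalent_uniform} applied to the inverse $f_u^{-1}$ together with the forthcoming comparison between $g_u$ and its equivalent $f_u$, but the compactness argument is shorter and logically prior, so I would present that one.
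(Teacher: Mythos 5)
Your proof is correct, and it is a mild but genuine streamlining of the paper's argument. The paper also reduces to $i=1$ via $g_{1,u}^{-1}\leq g_{2,u}^{-1}$ and also proves, for a given $A$, a uniform positive lower bound on $g_u(r)$ for $r\leq A$; but it does so by splitting $[0,A]$ into $[0,r_0]$ and $[r_0,A]$, using continuity--compactness on the first piece and the uniform comparison $\tfrac12 f_u\leq g_u\leq 2f_u$ (from Proposition~\ref{prop:asymptotics_green}) together with the monotonicity of $f_u$ on the second, then taking $y_0=\min(y_1,y_2)$. You instead apply the continuity--compactness--positivity argument directly on all of $\mathbb{S}^{d-1}\times[0,A]$, which removes any reliance on the Ornstein--Zernike asymptotics for this lemma. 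The one ingredient your route makes explicit that the paper leaves implicit is the strict positivity of the interpolated Green function (needed so that $c(A)>0$); you justify it correctly via irreducibility plus the fact that the interpolation at any $x\in\R^d$ is a convex combination with at least one strictly positive weight on a lattice value. Note that the paper's proof needs exactly the same positivity to assert $y_1>0$ on $[0,r_0]\times\mathbb{S}^{d-1}$, so nothing is lost and your version is logically leaner. Your handling of the quantifiers (choosing $y_0=c(A)$ depending only on $A$, before $u$) matches \eqref{eq:toprove_conv_infty_uniform} exactly.
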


\begin{proof}
        Since $g_u = f_u \cdot ( 1+o(1))$ at $+\infty$, uniformly in $u$, there exists $r_0 >0$ such that for every $r \geq r_0$ and every $u \in \mathbb{S}^{d-1}$, $\frac{1}{2} f_u(r) \leq g_u(r) \leq 2 f_u(r)$.
        
        Let $A > r_0$. We define \[y_1 := \frac{1}{2}\min \left\{ g_u(r)~|~0 \leq r \leq r_0, u \in \mathbb{S}^{d-1} \right\}.\] Note that $y_1$ is well defined by continuity of $(r,u) \mapsto g_u(r)$ and compactness of $\bar{B}(0,r_0)  \times \mathbb{S}^{d-1}$. We also set \[y_2:= \frac{1}{2} \min_{u\in \mathbb{S}^{d-1}} f_u(A),\] which is well defined because $u \mapsto  f_u(A)$ is continuous and $\mathbb{S}^{d-1}$ is compact. We have $y_1,y_2 >0$.
        We define $y_0 := \min(y_1,y_2)$. Let $u \in \mathbb{S}^{d-1}$ and $y \in (0,y_0)$. To prove that $g^{-1}_{1,u}(y) \geq A$, we need to prove that for every $r \leq A$, $g_u(r) > y$. If $r\leq r_0$, then $g_u(r) > y_1 \geq y$. If $r \in [r_0,A]$, then \[g_u(r) \geq \frac{1}{2}f_u(r) \geq \frac{1}{2}f_u(A) \geq y_2 \geq y_0 > y.\]
        We thus have $g^{-1}_{1,u}(y) \geq A$. In conclusion, we proved \eqref{eq:toprove_conv_infty_uniform} for $i=1.$
  The result for $i=2$ follows by observing that $g_{2,u}^{-1} \geq g_{1,u}^{-1}$.
\end{proof}
    
    \begin{prop}\label{prop:asymptotic_generalized_inverse_uniform}
    The functions $g_{i,u}^{-1}$ for $i \in \{1,2\}$ have the same asymptotics as $f_u^{-1}$ in Proposition~\ref{prop:asympotics_inverse_equivalent_uniform}, that is, with $p=\frac{2}{d-1}$:
    \begin{equation*}
        g_{i,u}^{-1}(y) = a_u \left[ p \log \left( \frac{1}{y} \right) - \log \log \left( \frac{1}{y} \right) + \log\left( \frac{b_u}{p} \right) \right] + o(1)
    \end{equation*}
    uniformly in $u$.
    As a consequence,  uniformly in $u$,
    \begin{align*}
        r_{N,u} &= \frac{1}{\gamma_u} \log N - \frac{d-1}{2\gamma_u} \log \log N + \frac{1}{\gamma_u} \log \left( \frac{\gamma_u ^{\frac{d-1}{2}} C_u}{\alpha} \right)+ o(1),\\
        R_{N,u} &= \frac{1}{\gamma_u} \log N - \frac{d-1}{2\gamma_u} \log \log N + \frac{1}{\gamma_u} \log \left( \frac{\gamma_u ^{\frac{d-1}{2}} C_u}{\beta} \right)+ o(1).
    \end{align*}
    \end{prop}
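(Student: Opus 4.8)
The plan is to transfer the asymptotics of $f_u^{-1}$ obtained in Proposition~\ref{prop:asympotics_inverse_equivalent_uniform} to the generalized inverses $g_{i,u}^{-1}$, using the uniform equivalence $g_u=f_u\cdot(1+o(1))$ together with Lemma~\ref{lem:conv_infty_uniform}, and then to read off the expansions of $r_{N,u}$ and $R_{N,u}$ by substituting $y=\alpha/N$ and $y=\beta/N$. First I would reduce the statement to an honest inverse of $f_u$. Fix $i\in\{1,2\}$. By~\eqref{eq:composition_generalized_inverse} we have $g_u\bigl(g_{i,u}^{-1}(y)\bigr)=y$. Write $g_u(r)=f_u(r)\bigl(1+h_u(r)\bigr)$ with $\sup_u|h_u(r)|\to0$ as $r\to+\infty$ (the uniform Green function asymptotics); since $g_{i,u}^{-1}(y)\to+\infty$ uniformly in $u$ as $y\to0$ by Lemma~\ref{lem:conv_infty_uniform}, we get $h_u\bigl(g_{i,u}^{-1}(y)\bigr)\to0$ uniformly. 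As $f_u\colon(0,+\infty)\to(0,+\infty)$ is a strictly decreasing bijection (its logarithm $\log C_u-\gamma_u r-\tfrac1p\log r$ is strictly decreasing, and it runs from $+\infty$ to $0$), I can apply $f_u^{-1}$ to $f_u\bigl(g_{i,u}^{-1}(y)\bigr)=y/\bigl(1+h_u(g_{i,u}^{-1}(y))\bigr)$ and obtain
\[
g_{i,u}^{-1}(y)=f_u^{-1}\bigl(y(1+\eta_u(y))\bigr),\qquad \sup_{u\in\mathbb S^{d-1}}|\eta_u(y)|\xrightarrow[y\to0]{}0.
\]

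Next I would show that replacing the argument $y$ by $y(1+\eta_u(y))$ perturbs $f_u^{-1}$ only by a uniform $o(1)$. Apply Proposition~\ref{prop:asympotics_inverse_equivalent_uniform} with $z=y(1+\eta_u(y))$, which satisfies $y/2\le z\le 2y$ for $y$ small (uniformly in $u$), hence $z\to0$ uniformly. Using $\log(1/z)=\log(1/y)-\log(1+\eta_u(y))=\log(1/y)+o(1)$ and $\log\log(1/z)=\log\log(1/y)+o(1)$, both uniform in $u$ since $\eta_u\to0$ uniformly, and the fact that $a_u=\tfrac1{p\gamma_u}$ and $b_u=p\gamma_u C_u^{\,p}$ are bounded above and below on the compact sphere, the perturbation contributes only $a_u\cdot O(\eta_u(y))=o(1)$ uniformly. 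This gives
\[
g_{i,u}^{-1}(y)=a_u\Bigl[p\log\tfrac1y-\log\log\tfrac1y+\log\tfrac{b_u}{p}\Bigr]+o(1)
\]
uniformly in $u$, i.e.\ $g_{i,u}^{-1}$ has exactly the asymptotics of $f_u^{-1}$.

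Finally, since $r_{N,u}=g_{1,u}^{-1}(\alpha/N)$ and $R_{N,u}=g_{2,u}^{-1}(\beta/N)$, I would substitute $y=\alpha/N$ and $y=\beta/N$. With $p=\tfrac2{d-1}$, one has $a_up=\tfrac1{\gamma_u}$, $a_u=\tfrac{d-1}{2\gamma_u}$, and $\tfrac1p\log\tfrac{b_u}{p}=\tfrac1p\log(\gamma_uC_u^{\,p})=\tfrac{d-1}{2}\log\gamma_u+\log C_u=\log\bigl(\gamma_u^{(d-1)/2}C_u\bigr)$; together with $\log\log(N/\alpha)=\log\log N+o(1)$ this yields
\[
r_{N,u}=\frac1{\gamma_u}\log N-\frac{d-1}{2\gamma_u}\log\log N+\frac1{\gamma_u}\log\!\Bigl(\frac{\gamma_u^{(d-1)/2}C_u}{\alpha}\Bigr)+o(1),
\]
uniformly in $u$, and the identical computation with $\beta$ in place of $\alpha$ gives the claimed expansion of $R_{N,u}$.

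The hard part will be the second step: the error in the Green function asymptotics is a \emph{multiplicative} factor $1+o(1)$ in $r$, and one has to be careful that, after inverting, it produces only an \emph{additive} $o(1)$ — not merely an $O(1)$ — error in $y$. This works precisely because $f_u^{-1}$ is logarithmic in $1/y$, so $a_u\log(1+\eta_u)=o(1)$; the subtlety is to propagate $\eta_u\to0$ through the nested logarithm $\log\log(1/z)$ and, at each step, to keep every bound independent of $u$, which is exactly where the compactness of $\mathbb S^{d-1}$ and the positivity and continuity of $\gamma_u$ and $C_u$ are used.
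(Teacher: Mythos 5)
Your proposal is correct and follows essentially the same route as the paper: compose with $f_u^{-1}$ via the identity $g_u(g_{i,u}^{-1}(y))=y$, absorb the multiplicative $1+o(1)$ error into the argument of $f_u^{-1}$ using Lemma~\ref{lem:conv_infty_uniform} for uniformity, expand with Proposition~\ref{prop:asympotics_inverse_equivalent_uniform}, and then substitute $y=\alpha/N$, $y=\beta/N$. The only cosmetic difference is that you write the perturbed argument as $y(1+\eta_u(y))$ where the paper writes $y/\widetilde q_{i,u}(y)$, and you explicitly justify the invertibility of $f_u$, which the paper gets from the Lambert $W$ formula.
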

    
    \begin{proof}
        Let $y >0$ and $x = g_{i,u}^{-1}(y)$. By \eqref{eq:composition_generalized_inverse}, we have $g_{u}(x) = y$. Besides, since $g_{u} \underset{+\infty}{\sim} f_u$, uniformly in $u$, there exists a function $q_u$ that tends to $1$ at $+\infty$, uniformly in $u$, such that $g_u=q_uf_u$. Therefore
        \begin{equation*}
            g_{i,u}^{-1}(y)= x
                =f_u^{-1}(f_u(x))
                        = f_u^{-1}\left( \frac{g_{u}(x)}{q_u(x)} \right)= f_u^{-1}\left( \frac{y}{q_u\bigl( g_{i,u}^{-1}(y) \bigr)} \right).
        \end{equation*}
        We denote $\widetilde{q}_{i,u}(y) = q_{u}\left( g_{i,u}^{-1}(y) \right)$. Since $g_{i,u}^{-1}(y) \xrightarrow[y \to 0]{}  +\infty$ uniformly in $u$ according to Lemma~\ref{lem:conv_infty_uniform}, and $q_u(r) \xrightarrow[r \to +\infty]{}1$ uniformly in $u$, we have $\widetilde{q}_{i,u}(y) \xrightarrow[y \to 0]{}1$ uniformly in $u$. Therefore, Proposition~\ref{prop:asympotics_inverse_equivalent_uniform} leads to the existence of a function $\varepsilon$ that tends to $0$ and does not depend on $u$ such that
        \begin{align}
            g_{i,u}^{-1}(y)   &= f_u^{-1}\left( \frac{y} {\widetilde{q}_{i,u}(y)} \right)\nonumber\\
                        &= a_u \left[ p \log \left( y^{-1} \widetilde{q}_{i,u}(y) \right) - \log \log \left( y^{-1} \widetilde{q}_{i,u}(y) \right) + \log \left( \frac{b_u}{p} \right) \right] + \varepsilon\left( \frac{y}{\widetilde{q}_{i,u}(y)} \right)\nonumber\\
                        &= a_u \left[ p \log\left( y^{-1} \right) + p \log \left( \widetilde{q}_{i,u}(y) \right) - \log \left( \log \left( y^{-1}\right) + \log \left( \widetilde{q}_{i,u}(y) \right) \right) + \log\left( \frac{b_u}{p} \right) \right] + \varepsilon\left( \frac{y}{\widetilde{q}_{i,u}(y)} \right) \nonumber\\
                        &= a_u \left[ p \log\left( y^{-1} \right) + p \log \left( \widetilde{q}_{i,u}(y) \right) - \log \left( \log \left( y^{-1}\right) \left( 1 + \frac{\log \left( \widetilde{q}_{i,u}(y) \right)}{\log \left( y^{-1}\right)} \right) \right) + \log\left( \frac{b_u}{p} \right) \right] + \varepsilon\left( \frac{y}{\widetilde{q}_{i,u}(y)} \right)\nonumber\\
                        &= a_u \left[ p \log\left( y^{-1} \right) - \log  \log \left( y^{-1}  \right) + \log\left( \frac{b_u}{p} \right) \right] \nonumber\\&\qquad\qquad\qquad\qquad\qquad\qquad\qquad+ a_u p \log \left( \widetilde{q}_{i,u}(y) \right) - a_u \log\left( 1 + \frac{\log \left( \widetilde{q}_{i,u}(y) \right)}{\log \left( y^{-1}\right)} \right) + \varepsilon\left( \frac{y}{\widetilde{q}_{i,u}(y)} \right).\label{eq:three_last_terms}
        \end{align}
        We shall prove that the three terms appearing in \eqref{eq:three_last_terms} tend to $0$ uniformly in $u$ as $y \to 0$.
        It is clear for $a_u p \log \left( \widetilde{q}_{i,u}(y) \right)$, because $a_u$ is bounded (by a continuity plus compactness argument) and $\widetilde{q}_{i,u}(y)$ tends to $1$ uniformly in $u$. Similarly, $\frac{\log \left( \widetilde{q}_{i,u}(y) \right)}{\log \left( y^{-1}\right)}$ tends to $0$ uniformly in $u$ as $y \to 0$, so the same conclusion holds for the second term in \eqref{eq:three_last_terms}. Finally, $\frac{y}{\widetilde{q}_{i,u}(y)}$ tends to $0$ uniformly in $u$ as $y \to 0$. Therefore, $\varepsilon \bigl( \frac{y}{\widetilde{q}_{i,u}(y)} \bigr)$ tends to $0$ uniformly in $u$.
    \end{proof}
    
    We can summarize it into the following theorem.
    
\begin{thm}\label{thm:limit_shape_d_fixed}
    We have \[ \lim_{N \to +\infty}\frac{r_{N,u}}{\log N} = \lim_{N \to +\infty} \frac{R_{N,u}}{\log N} = \frac{1}{\gamma_u} = \frac{1}{\Gamma^{-1}(u) \cdot u}\] and these limits are uniform in $u$.
    This means that the limit shape of the LASM rescaled by $\log N$ is delimited by the curve 
    \begin{equation}\label{eq:limit_shape_in_spheric_coordinates}
        \mathcal C = \left\{ \frac{u}{\Gamma^{-1}(u) \cdot u}~|~u \in \mathbb{S}^{d-1}  \right\},
    \end{equation}
    and this limit is uniform in the sense of Definition~\ref{def:cv_sets_limit_shape}.
    
    Besides, since the asymptotics of $r_{N,u}$ and $R_{N,u}$ are the same until the constant term, there exists a constant $\tau$ such that \[ \sup_{N \in \N, u \in \mathbb{S}^{d-1}} \left(R_{N,u} - r_{N,u}\right) \leq \tau, \]
    hence \[ \sup_{ u \in \mathbb{S}^{d-1}} \frac{1}{\log N} \left(R_{N,u} - r_{N,u}\right) \leq \frac{\tau}{\log N}. \]
\end{thm}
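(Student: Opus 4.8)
The statement is essentially a repackaging of Proposition~\ref{prop:asymptotic_generalized_inverse_uniform}, so the plan is short. First I would divide the two asymptotic expansions of $r_{N,u}$ and $R_{N,u}$ supplied by Proposition~\ref{prop:asymptotic_generalized_inverse_uniform} by $\log N$. The leading term $\frac{1}{\gamma_u}\log N$ produces $\frac{1}{\gamma_u}$, while each of the three remaining contributions --- the $\log\log N$ term, the constant term, and the $o(1)$ --- becomes $o(1)$ after division. The only thing to verify is that this happens \emph{uniformly} in $u$. That uses three ingredients already in place: $\gamma_u=\Gamma^{-1}(u)\cdot u\geq\gamma>0$ by \eqref{eq:greenfct_minimal_decay}; $C_u$ is continuous and positive on the compact sphere, hence bounded above and below, so $\frac{1}{\gamma_u}\log\bigl(\gamma_u^{(d-1)/2}C_u/\alpha\bigr)$ is bounded uniformly in $u$; and the $o(1)$ in Proposition~\ref{prop:asymptotic_generalized_inverse_uniform} is itself uniform. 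This yields $\sup_u\bigl|\frac{r_{N,u}}{\log N}-\frac{1}{\gamma_u}\bigr|\to0$ and the analogous statement for $R_{N,u}$, which is the first displayed identity; the common limit $\frac{1}{\gamma_u}$ is well defined and positive by Proposition~\ref{prop:vraiment_expo}, and equals $\frac{1}{\Gamma^{-1}(u)\cdot u}$ by the definition of $\gamma_u$.

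Next I would convert this into the set-convergence statement. The proposition immediately preceding the theorem asserts that for every $N$ the shape of the final configuration lies between the radii $r_{N,u}$ and $R_{N,u}$; after rescaling by $\log N$, the inclusions demanded in Definition~\ref{def:cv_sets_limit_shape} hold with inner radius $r_{N,u}/\log N$ and outer radius $R_{N,u}/\log N$, and both were just shown to converge uniformly to $1/\gamma_u$. Hence the rescaled shape converges uniformly, in the sense of Definition~\ref{def:cv_sets_limit_shape}, to the curve $\mathcal C$ of \eqref{eq:limit_shape_in_spheric_coordinates}.

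For the constant-order deviation, I would subtract the two expansions of Proposition~\ref{prop:asymptotic_generalized_inverse_uniform}: the $\log N$ and $\log\log N$ terms cancel exactly, leaving
\[
R_{N,u}-r_{N,u}=\frac{1}{\gamma_u}\log\frac{\alpha}{\beta}+o(1)
\]
uniformly in $u$. Since $\gamma_u\geq\gamma>0$, the right-hand side is bounded uniformly in $u$ as soon as $N\geq N_0$ for some $N_0$. It then remains to bound $R_{N,u}-r_{N,u}$ for the finitely many $N<N_0$: for each such $N$ the uniform bound $g_u(r)\leq Ce^{-\gamma r}$ coming from Proposition~\ref{prop:asymptotics_green} and \eqref{eq:greenfct_minimal_decay} forces $R_{N,u}\leq\frac{1}{\gamma}\log(CN/\beta)$, so $R_{N,u}-r_{N,u}\leq R_{N,u}$ is bounded uniformly in $u$; taking the maximum over $N<N_0$ and combining with the large-$N$ bound gives the constant $\tau$. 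Dividing through by $\log N$ produces the last inequality.

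The proof has no genuine obstacle: it is a corollary of Proposition~\ref{prop:asymptotic_generalized_inverse_uniform}. The two points that require a little care are the propagation of uniformity through the division by $\log N$ --- handled by the compactness bounds on $\gamma_u$ and $C_u$ --- and the fact that $R_{N,u}-r_{N,u}\leq\tau$ is claimed for \emph{all} $N$, not just large $N$, which is what forces the separate, elementary treatment of small $N$ described above.
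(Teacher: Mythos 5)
Your proposal is correct and follows essentially the same route as the paper, which presents the theorem as a direct summary of Proposition~\ref{prop:asymptotic_generalized_inverse_uniform} combined with the sandwiching of the shape between the radii $r_{N,u}$ and $R_{N,u}$. Your extra care in bounding $R_{N,u}-r_{N,u}$ for the finitely many small values of $N$ (via the uniform exponential upper bound on the Green function) is a valid and slightly more explicit justification of the claim that $\tau$ works for \emph{all} $N$, a point the paper leaves implicit.
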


In the next result we characterise the limit shape \eqref{eq:limit_shape_in_spheric_coordinates} in terms of duality (see Appendix~\ref{app:convexity} for various reminders concerning duality in our context). 

\begin{prop}
\label{prop:dual_curves}
The limit shape of the sandpile defined by \eqref{eq:limit_shape_in_spheric_coordinates} and the set $\partial \mathcal{T} := \rho^{-1}(\{1\})$ introduced in Definition~\ref{def:rho_T} are dual curves, in the sense of Definition~\ref{def:dual_curve}.
\end{prop}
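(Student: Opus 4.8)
The plan is to recognize $\mathcal C$ as the polar-dual boundary of $\partial\mathcal T$, with the homeomorphism $\Gamma$ playing the role of the Gauss map. First I would record the elementary geometry of $\mathcal T$: by Proposition~\ref{prop:usual_prop_spectral_radius} the function $\rho$ is convex and norm-coercive, and $\rho(0)<1$, so $\mathcal T=\rho^{-1}([0,1])$ is a compact convex body with the origin in its interior and $\partial\mathcal T=\rho^{-1}(\{1\})$ is its boundary. By Proposition~\ref{prop:pos_def} the Hessian of $\rho$ is positive definite along $\partial\mathcal T$, so $\mathcal T$ is strictly convex with smooth boundary; in particular each $t\in\partial\mathcal T$ has a unique outer unit normal, and since $\rho$ increases transversally across the level set $\{\rho=1\}$ and $\nabla\rho(t)\ne 0$ there (Proposition~\ref{prop:doob_non-centered}), that outer unit normal is exactly $\nabla\rho(t)/\|\nabla\rho(t)\|=\Gamma(t)$. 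Hence the Gauss map of $\partial\mathcal T$ coincides with $\Gamma$, which by Proposition~\ref{prop:homeo} is a homeomorphism onto $\mathbb S^{d-1}$; equivalently, for each $u\in\mathbb S^{d-1}$ the unique point of $\partial\mathcal T$ whose outer normal is $u$ is $\Gamma^{-1}(u)$.

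Next I would unwind Definition~\ref{def:dual_curve}: the dual of a (smooth strictly convex) hypersurface $\partial K$ with $0\in\mathring K$ is the boundary of the polar body $K^\circ=\{x\in\R^d~|~x\cdot t\leq 1\text{ for all }t\in K\}$, which is star-shaped around the origin and admits the Gauss-map parametrization $t\mapsto n(t)/\big(n(t)\cdot t\big)$, where $n(t)$ is the outer unit normal at $t\in\partial K$ and $n(t)\cdot t=h_K(n(t))>0$ is the support-function value. Indeed the supporting hyperplane to $K$ with normal $n(t)$ is $\{x~|~x\cdot n(t)=n(t)\cdot t\}$, so the rescaled normal $n(t)/(n(t)\cdot t)$ satisfies $h_K\big(n(t)/(n(t)\cdot t)\big)=1$ and therefore lies on $\partial K^\circ$; conversely every direction of $\mathbb S^{d-1}$ is $n(t)$ for some $t$, so every point of $\partial K^\circ$ is reached this way, using strict convexity for uniqueness. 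Applying this with $K=\mathcal T$ and, for a given $u\in\mathbb S^{d-1}$, with $t=\Gamma^{-1}(u)$ (so $n(t)=u$ and $n(t)\cdot t=\Gamma^{-1}(u)\cdot u>0$ by Proposition~\ref{prop:vraiment_expo}), the corresponding point of $\partial\mathcal T^\circ$ is precisely $\dfrac{u}{\Gamma^{-1}(u)\cdot u}$, which is exactly the point of $\mathcal C$ in direction $u$ from~\eqref{eq:limit_shape_in_spheric_coordinates}. Letting $u$ range over $\mathbb S^{d-1}$ yields $\mathcal C=\partial\mathcal T^\circ$.

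Finally I would record the symmetry. Both $\mathcal T$ and the region bounded by $\mathcal C$ contain the origin in their interior — the latter because $\min_{u}\Gamma^{-1}(u)\cdot u>0$ by \eqref{eq:greenfct_minimal_decay}, so $\mathcal C$ stays a bounded distance from the origin — hence polar duality is involutive here and $\partial\mathcal T$ is in turn the dual of $\mathcal C$; the relation is thus genuinely symmetric, matching Definition~\ref{def:dual_curve}. The only real subtlety is bookkeeping: Appendix~\ref{app:convexity} may phrase ``dual curve'' via the polar body, via the support-function parametrization, or via the Legendre transform of the gauge functions, and one must check these agree and that the Gauss map of $\partial\mathcal T$ is literally $\Gamma$. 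Both checks are handled by the strict convexity and smoothness of $\mathcal T$ furnished by Propositions~\ref{prop:doob_non-centered} and~\ref{prop:pos_def}; everything else is a one-line substitution.
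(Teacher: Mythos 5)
Your proposal is correct and follows essentially the same route as the paper: identify the normal cone to $\mathcal T$ at $t\in\partial\mathcal T$ as the ray $\R_{\geq 0}\nabla\rho(t)=\R_{\geq 0}\Gamma(t)$ via the smooth-case description in Appendix~\ref{app:convexity}, so that the dual point is $\Gamma(t)/(\Gamma(t)\cdot t)$, and then use the homeomorphism of Proposition~\ref{prop:homeo} together with the positivity $\Gamma^{-1}(u)\cdot u>0$ to match this with the spherical parametrization \eqref{eq:limit_shape_in_spheric_coordinates}. The extra material on polar bodies and involutivity is consistent with, but not needed beyond, the paper's Definition~\ref{def:dual_curve}.
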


\begin{proof}
    Let $t_0 \in \partial \mathcal{T}$. The normalized gradient of $\rho$ at $t_0$ is, by definition, $\Gamma(t_0)$, so the normal cone to $\mathcal{T}$ at $t_0$ consists of the non-negative multiples of $\Gamma(t_0)$. By Definition~\ref{def:dual_curve}, the point $t_0^*$ associated with $t_0$ in the dual curve of $\partial \mathcal{T}$ is $t_0^* = \frac{\Gamma(t_0)}{\Gamma(t_0) \cdot t_0}$.
    Writing $t_0^* = r u$ in spherical coordinates, we get $u = \Gamma(t_0)$ and $r = \frac{1}{\Gamma(t_0) \cdot t_0} = \frac{1}{u \cdot \Gamma^{-1}(u)}$, hence $t_0^* = \frac{1}{u \cdot \Gamma^{-1}(u)} u$ is in the limit shape defined in \eqref{eq:limit_shape_in_spheric_coordinates}.
    
    Conversely, if $\frac{1}{\Gamma^{-1}(u) \cdot u}u$ is in the limit shape defined by \eqref{eq:limit_shape_in_spheric_coordinates}, then setting $t_0 := \Gamma^{-1}(u)$, the previous computations show that $\frac{1}{\Gamma^{-1}(u) \cdot u} u = t_0^*$ is in the dual of $\partial \mathcal{T}$.
\end{proof}

Since duality preserves convexity (see Proposition~\ref{prop:duality_preserves_convexity}), we obtain the following.
\begin{prop}
The limit curve $\mathcal{C}$ in \eqref{eq:limit_shape_in_spheric_coordinates} is the boundary of a convex set.
\end{prop}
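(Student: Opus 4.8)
The plan is to obtain this as an immediate corollary of Proposition~\ref{prop:dual_curves}, once we record that $\partial\mathcal{T}$ is itself the boundary of a convex body. First I would observe that $\mathcal{T} = \rho^{-1}([0,1])$ is convex: it is a sublevel set of the convex (indeed logarithmically convex) function $\rho$ from Proposition~\ref{prop:usual_prop_spectral_radius}. By the norm-coercivity also established in that proposition, $\mathcal{T}$ is bounded, hence compact, and since $\rho(0) < 1$ the origin lies in its interior. Therefore $\partial\mathcal{T} = \rho^{-1}(\{1\})$ is exactly the boundary of a compact convex set containing $0$ in its interior.

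Next, Proposition~\ref{prop:dual_curves} identifies the limit curve $\mathcal{C}$ of \eqref{eq:limit_shape_in_spheric_coordinates} with the dual curve of $\partial\mathcal{T}$ in the sense of Definition~\ref{def:dual_curve}. Applying Proposition~\ref{prop:duality_preserves_convexity}, which states that the dual of the boundary of a convex body with the origin in its interior is again the boundary of a convex body, we conclude that $\mathcal{C}$ bounds a convex set, as claimed.

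The only point that demands a little attention is verifying that $\partial\mathcal{T}$ meets the hypotheses of Proposition~\ref{prop:duality_preserves_convexity} — namely that $\mathcal{T}$ is a genuine convex body with $0$ in its interior — but this is precisely what the combination of convexity of $\rho$, its norm-coercivity, and the strict inequality $\rho(0) < 1$ delivers. I do not anticipate any genuine obstacle here beyond this bookkeeping; the substantive work has already been done in Propositions~\ref{prop:usual_prop_spectral_radius} and~\ref{prop:dual_curves}.
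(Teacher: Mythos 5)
Your proposal is correct and follows exactly the paper's route: the paper deduces the statement from Proposition~\ref{prop:dual_curves} together with Proposition~\ref{prop:duality_preserves_convexity}, just as you do. Your extra verification that $\mathcal{T}$ is a compact convex body with $0$ in its interior (via convexity and norm-coercivity of $\rho$ from Proposition~\ref{prop:usual_prop_spectral_radius} and the inequality $\rho(0)<1$) is left implicit in the paper but is a welcome piece of bookkeeping.
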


\section{Behavior of the limit curve when the leakiness parameter goes to zero or infinity}

We study the limit curve $\mathcal C$ given in \eqref{eq:limit_shape_in_spheric_coordinates} in two special regimes, first when the leakiness parameter goes to infinity, and second when it goes to zero (the massless case).
In both cases, the general strategy is to work on the level set $\partial \mathcal{T}$, which is the dual curve of the limit shape of the sandpile, according to Proposition~\ref{prop:dual_curves}, and to use duality to get the limit in the regimes studied. Useful properties of dual curves, especially Proposition~\ref{prop:convergence_duality} which links the convergence of convex sets with the convergence of their duals, are recalled in Appendix~\ref{app:convexity}.

\subsection{Infinite leakiness parameter case}
\label{subsec:infinite_mass_case}

Throughout Section~\ref{subsec:infinite_mass_case}, in addition to the running Assumption~\ref{assump:main_assumptions}, we will assume that the supports of the random walks are all bounded.

Let us first consider the uncolored case, i.e., $p=1$. In this simpler case, the limit shape of the sandpile as $\lp$ goes to infinity is easily understood using the support of the random walk.  The Laplace transform of the associated KRW is denoted by $L\mu(t)=\sum_{x\in\mathbb Z^d}e^{t\cdot x}\mu(x)$ ($\mu$ is $\mu_{1,1}$ in our notation~\eqref{eq:def_mu_i,j}). Let $\mathcal C$ be the curve that defines the boundary shape of the LASM, see \eqref{eq:limit_shape_in_spheric_coordinates}.

\begin{prop}\label{prop:cv_level_set_p=1}
We denote $\widetilde{\mu} = \lp \mu$, which is a probability measure and does not depend on $\lp$, according to notation~\eqref{eq:def_mu_i,j}.
The rescaled level set $\frac{1}{\log \lp} \left\{ t \in \R^d~|~L\mu(t) \leq 1 \right\}$ is between the polytopes
\begin{equation}\label{eq:limit_polytope}
    \bigcap_{x \in \supp{\mu}} \left\{ s \in \R^d~|~s \cdot x \leq 1 \right\}
\end{equation} 
and 
\[ \bigcap_{x \in \supp{\mu}} \left\{ s\in \R^d~|~s \cdot x \leq 1 - \frac{\log \widetilde{\mu}(x)}{ \log \lp} \right\}.\]
As a consequence, if the support of $\mu$ is finite, the rescaled level set converges uniformly to the polytope \eqref{eq:limit_polytope} in the sense of Definition~\ref{def:cv_sets_limit_shape} (and therefore also for the Hausdorff distance).
In that case, the limit polytope \eqref{eq:limit_polytope} is the dual of the convex hull of the support of $\mu$.
\end{prop}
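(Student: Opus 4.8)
The plan is to reduce the statement to an elementary computation with the real Laplace transform, using that for $p=1$ the spectral radius $\rho$ is just the scalar $L\mu$, and that $L\mu=\frac{1}{\lp}L\widetilde{\mu}$ where $\widetilde{\mu}=\lp\mu$ is a fixed probability measure not depending on $\lp$. Hence $\mathcal{T}=\{t:L\mu(t)\le 1\}=\{t:L\widetilde{\mu}(t)\le\lp\}$, and the rescaled level set is $\frac{1}{\log\lp}\mathcal{T}=\{s\in\R^d:L\widetilde{\mu}((\log\lp)s)\le\lp\}$.

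First I would prove the two inclusions directly. For the inner one: if $s\cdot x\le 1$ for every $x\in\supp\mu$, then with $t=(\log\lp)s$ one has $e^{t\cdot x}\le\lp$ on $\supp\mu$, so $L\widetilde{\mu}(t)=\sum_{x}e^{t\cdot x}\widetilde{\mu}(x)\le\lp\sum_{x}\widetilde{\mu}(x)=\lp$ --- here the fact that $\widetilde{\mu}$ is a probability measure is used --- i.e.\ $s\in\frac{1}{\log\lp}\mathcal{T}$. For the outer one: if $s\in\frac{1}{\log\lp}\mathcal{T}$ and $t=(\log\lp)s$, then for each fixed $x\in\supp\mu$ the one-term bound $e^{t\cdot x}\widetilde{\mu}(x)\le L\widetilde{\mu}(t)\le\lp$ rearranges into $s\cdot x\le 1-\frac{\log\widetilde{\mu}(x)}{\log\lp}$. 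Together these give the asserted sandwich; the second polytope contains the first because $\widetilde{\mu}(x)\le 1$.

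Next I would upgrade the sandwich to convergence in the sense of Definition~\ref{def:cv_sets_limit_shape} by comparing radial functions. By norm-coercivity of $\rho$ (Proposition~\ref{prop:usual_prop_spectral_radius}) the set $\mathcal{T}$ is bounded, hence so is $P_{\mathrm{in}}:=\bigcap_{x\in\supp\mu}\{s:s\cdot x\le 1\}$; since $\supp\mu$ is finite and $0\cdot x=0<1$, the origin is interior to $P_{\mathrm{in}}$, so its radial function $\rho_{\mathrm{in}}(u)=1/\max_{x\in\supp\mu}(u\cdot x)$ is continuous and bounded below by some $c>0$ on $\mathbb{S}^{d-1}$ (the denominator being the support function of $\conv(\supp\mu)$, positive on the compact sphere). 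Writing $\delta_\lp:=\max_{x\in\supp\mu}\frac{|\log\widetilde{\mu}(x)|}{\log\lp}\to 0$ (finiteness of $\supp\mu$ is used here), the defining constants of the outer polytope satisfy $1\le 1-\frac{\log\widetilde{\mu}(x)}{\log\lp}\le 1+\delta_\lp$, so its radial function $\rho_{\mathrm{out},\lp}$ obeys $\rho_{\mathrm{in}}\le\rho_{\mathrm{out},\lp}\le(1+\delta_\lp)\rho_{\mathrm{in}}$, whence $\sup_{u\in\mathbb{S}^{d-1}}|\rho_{\mathrm{out},\lp}(u)-\rho_{\mathrm{in}}(u)|\le\delta_\lp/c\to 0$. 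Taking $r_{u,\lp}=\rho_{\mathrm{in}}(u)$ and $R_{u,\lp}=\rho_{\mathrm{out},\lp}(u)$ in Definition~\ref{def:cv_sets_limit_shape} --- legitimate because a convex set containing $0$ is star-shaped, so set inclusion implies pointwise comparison of radial functions --- gives the uniform convergence, and therefore Hausdorff convergence via Proposition~\ref{prop:link_cv_hausdorff}. Finally, since $s\cdot x\le 1$ is affine in $x$, it holds for all $x\in\supp\mu$ if and only if it holds for all $x\in\conv(\supp\mu)$, so $P_{\mathrm{in}}$ is precisely the (polar) dual of $\conv(\supp\mu)$ in the sense of Definition~\ref{def:dual_curve} (cf.\ Appendix~\ref{app:convexity}).

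I do not expect a genuine obstacle here; the one point needing care is the \emph{uniformity in $u$} of the convergence of the radial functions, which is exactly where finiteness of $\supp\mu$ and the strict positivity of $\max_{x\in\supp\mu}(u\cdot x)$ over the sphere (equivalently, $0$ being interior to $P_{\mathrm{in}}$) enter.
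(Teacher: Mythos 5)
Your proposal is correct and follows essentially the same route as the paper: the same term-by-term bound on $L\mu(t)=\sum_x e^{(\log\lp)(s\cdot x-1)}\widetilde{\mu}(x)$ gives the two inclusions, the uniform convergence is obtained by comparing the radial functions of the two polytopes (you just spell out the uniformity via the positive lower bound on the support function $\max_{x\in\supp\mu}(u\cdot x)$, which the paper leaves implicit), and the duality claim is Proposition~\ref{prop:dual_polytope}.
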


See Figure~\ref{fig:step-set_example} for an illustration of Proposition~\ref{prop:cv_level_set_p=1}. Using general results on duality, the previous result implies that, when $p=1$, the curve $(\log \lp)\cdot \mathcal{C}$ converges to the convex hull of the support of the random walk as $m$ goes to infinity. Moreover, 
Proposition~\ref{prop:cv_level_set_p=1}  can be interpreted using first passage times, as will be done in Section~\ref{subsec:first_passage}.

\begin{proof}
Let $t \in \R^d$. Writing $s = \frac{t}{\log\lp}$, we have
\[L\mu(t) = \sum_{x \in \supp \mu} e^{(\log\lp) (s \cdot x -1)}\widetilde{\mu}(x).\]
If $s \cdot x \leq 1$ for every $x\in\supp\mu$, then $L\mu(t) \leq \sum_{x \in \supp \mu} \widetilde{\mu}(x) = 1$.
Conversely, if $L\mu(t) \leq 1$, then each term of the sum is no bigger than $1$, that is $s \cdot x \leq 1- \frac{\log \widetilde{\mu}(x)}{\log\lp}$.

Using the notations of Definition~\ref{def:cv_sets_limit_shape}, writing $s = r u$, we can define
\begin{equation*}
r_{N,u} = \frac{1}{u \cdot x}\quad\text{and}\quad
R_{N,u} = \frac{1}{u \cdot x} + \frac{1}{\log\lp} \max_{\substack{y \in \supp \mu \\ y \cdot u >0}} \frac{ - \log \widetilde{\mu}(y)}{ u \cdot y },
\end{equation*}
where $x$ is the element of the support that maximizes $u \cdot x$. It is then clear that $r_{N,u}$ and $R_{N,u}$ both converge uniformly to $\frac{1}{u \cdot x}$.

The fact that \eqref{eq:limit_polytope} is the dual of the convex hull of $\supp \mu$ follows directly from Proposition~\ref{prop:dual_polytope}.
%%To prove that \eqref{eq:limit_polytope} is the dual of the convex hull of $\supp \mu$, we first observe that \[\bigcap_{x \in \supp{\mu}} \left\{ s \in \R^d~|~s \cdot x \leq 1 \right\} = \bigcap_{x \in \mathrm{extr}(\supp{\mu})} \left\{ s \in \R^d~|~s \cdot x \leq 1 \right\}\] where $\mathrm{extr}(K)$ is the set of extreme points of $K$.
%%Thus the faces of the polytope \eqref{eq:limit_polytope} are the hyperplanes 
%%\begin{equation}\label{eq:faces_limit_polytope}
%%\left\{ s \in \R^d~|~s \cdot x = 1 \right\}
%%\end{equation}
%%for $x \in \mathrm{extr}(\supp \mu)$. These hyperplanes are their own tangent spaces, with normalised equation $s\cdot x = 1$, hence the dual point associated with the face \eqref{eq:faces_limit_polytope} is $x$.
\end{proof}

Let us now consider the general case in $p$. Our main objective is to state and prove Theorem~\ref{thm:limit_shape_as_convex_hull_of_returning_walks}. We first introduce useful notation and preliminary results. For $q\in\{1,\ldots,p\}$, define the polytope $\mathcal Q_\pm^{(q)}$ as follows. First, for $q=1$, we let
\begin{equation*}
    \mathcal Q_\pm^{(1)} = \bigcap_{1\leq i\leq p} \bigcap_{x \in \supp{\mu_{i,i}}} \left\{ s\in \R^d~|~s \cdot x \leq 1 \pm c^{(i)}_\lp \right\},
\end{equation*}
where $c^{(i)}_\lp$ are some constants.
For $q\geq 2$, we define
\begin{equation*}
    \mathcal Q_\pm^{(q)} = \bigcap_{i_1,i_2,\ldots,i_q}\bigcap_{\substack{x_{1,2}\in\supp{\mu_{i_1,i_2}}\\\cdots\\x_{q,1}\in\supp{\mu_{i_q,i_1}}}} \left\{ s\in \R^d~|~s \cdot (x_{1,2}+x_{2,3}+\cdots +x_{q,1}) \leq q\pm c^{(i_1,\ldots,i_q)}_\lp \right\},
\end{equation*}
where $c^{(i_1,\ldots,i_q)}_\lp$ are some constants. Then we introduce 
\begin{equation}
\label{eq:general_def_P_pm}
    \mathcal P_\pm = \bigcap_{1\leq q\leq p} \mathcal Q_\pm^{(q)}.
\end{equation}
The above polytopes obviously satisfy $\mathcal P_-\subset \mathcal P_+$. They both depend on a collection of constants
\begin{equation*}
    \bigl\{c_\lp^{(i_1,\ldots,i_q)}~\vert~1\leq i_1,\ldots,i_q\leq p,\ 1\leq q\leq p\bigr\}.
\end{equation*}
 For the sake of brevity, we will name these constants $c_\lp$. When they are all equal to zero, we have $\mathcal{P}_- = \mathcal{P}_+$, which we simply denote $\mathcal{P}$.

\begin{prop}
\label{prop:level_set_d_to_infinity}
Let $p\geq 1$. We assume that $\lp_1=\cdots = \lp_p=\lp\to \infty$. There exist constants $c_\lp$ going to $0$ such that the rescaled level set $\frac{1}{\log\lp}\{t\in\mathbb R^{d}~\vert~\rho(t)\leq 1\}$ lies between the two polytopes $\mathcal P_-$ and $\mathcal P_+$, as defined in \eqref{eq:general_def_P_pm}. In particular, it converges to the polytope $\mathcal{P}$.
\end{prop}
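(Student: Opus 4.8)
The plan is to compute the spectral radius $\rho(t)$ up to a multiplicative error by using the Perron–Frobenius characterization through traces of powers of $L\mu(t)$. Recall from the proof of Proposition~\ref{prop:usual_prop_spectral_radius} that primitivity of $L\mu(t)$ gives, for every fixed power $n$, two-sided bounds of the shape $\min_i\sum_j (L\mu(t)^n)_{i,j}\le \rho(t)^n\le \mathrm{const}\cdot\max_{i,j}(L\mu(t)^n)_{i,j}$ (the upper bound via $\rho(t)^n=\rho(L\mu(t)^n)\le \|L\mu(t)^n\|_\infty$, the lower bound from the Collatz–Wielandt formula). The key observation is that a diagonal entry $(L\mu(t)^q)_{i,i}$ expands as a sum over length-$q$ closed paths $i=i_1\to i_2\to\cdots\to i_q\to i_1$ of products $\sum_{x_{1,2}\in\supp\mu_{i_1,i_2}}\cdots\sum_{x_{q,1}\in\supp\mu_{i_q,i_1}} e^{t\cdot(x_{1,2}+\cdots+x_{q,1})}\prod \mu_{i_k,i_{k+1}}(x_{k,k+1})$. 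After the substitution $t=(\log\lp)\,s$ and $\widetilde\mu_{i,j}=\lp\,\mu_{i,j}$ (which does not depend on $\lp$), each such product reads $\lp^{-q}\sum_{\mathbf x} \lp^{\,s\cdot(x_{1,2}+\cdots+x_{q,1})}\prod\widetilde\mu_{i_k,i_{k+1}}(x_{k,k+1})$; the dominant contribution comes from the tuple of increments maximizing $s\cdot(x_{1,2}+\cdots+x_{q,1})$, which is finite because the supports are bounded.

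The steps, in order. First, fix $q$ ranging over $1,\ldots,p$; since $L\mu(t)$ is $p\times p$ and primitive, it suffices to control $\rho$ through closed paths of length at most $p$, because any closed path can be reduced (the eigenvalue is captured by $\max_q (\Tr(L\mu(t)^q))^{1/q}$ up to constants, and by Perron–Frobenius a shortest cycle through the irreducible support has length $\le p$). Second, using the path expansion above, show that $\rho((\log\lp)s)\le 1$ forces, for every $q$, every closed color sequence $i_1,\ldots,i_q$ and every choice of increments $x_{k,k+1}\in\supp\mu_{i_k,i_{k+1}}$, the inequality $\lp^{-q}\lp^{\,s\cdot(x_{1,2}+\cdots+x_{q,1})}\widetilde\mu_{i_1,i_2}(x_{1,2})\cdots\widetilde\mu_{i_q,i_1}(x_{q,1})\le C$ for an explicit constant $C$ independent of $\lp$ (one term of a nonnegative sum bounding $\rho^q$ up to a fixed combinatorial factor); taking logarithms and dividing by $\log\lp$ gives $s\cdot(x_{1,2}+\cdots+x_{q,1})\le q + c_\lp^{(i_1,\ldots,i_q)}$ with $c_\lp^{(i_1,\ldots,i_q)}=\frac{\log C-\sum_k\log\widetilde\mu_{i_k,i_{k+1}}(x_{k,k+1})}{\log\lp}\to 0$; this proves $\frac{1}{\log\lp}\{\rho\le 1\}\subset\mathcal P_+$. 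Third, for the reverse inclusion $\mathcal P_-\subset\frac{1}{\log\lp}\{\rho\le 1\}$: if $s\in\mathcal P_-$, then $s\cdot(x_{1,2}+\cdots+x_{q,1})\le q-c_\lp^{(i_1,\ldots,i_q)}$ for all admissible data, so each diagonal entry $(L\mu((\log\lp)s)^q)_{i,i}$ is bounded by $(\#\text{paths})\cdot(\max\prod\widetilde\mu)\cdot\lp^{-c_\lp^{(i,\ldots)}}$, hence $\Tr(L\mu((\log\lp)s)^q)\le \lp^{-c'_\lp}$ for a suitable $c'_\lp>0$ chosen so that these quantities stay below a threshold making $\rho\le 1$; concretely, using $\rho(L\mu(t))^{q}\le \mathrm{const}\cdot\max_i (L\mu(t)^{q})_{i,i}$ (valid for primitive nonnegative matrices, up to a $p$-dependent constant absorbed into the $c_\lp$) we conclude $\rho((\log\lp)s)\le 1$ once $\lp$ is large. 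Fourth, assemble: choosing $c_\lp$ as the maximum over all the finitely many needed constants (the supports and the index set are finite), $c_\lp\to 0$, and $\mathcal P_-\subset\frac1{\log\lp}\{\rho\le1\}\subset\mathcal P_+$, with $\mathcal P_-,\mathcal P_+\to\mathcal P$ as $\lp\to\infty$; uniform convergence in the sense of Definition~\ref{def:cv_sets_limit_shape} follows exactly as in the last paragraph of the proof of Proposition~\ref{prop:cv_level_set_p=1}, writing $s=ru$ and reading off the radii of the nested polytopes.

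The main obstacle is the passage from traces of powers of $L\mu(t)$ back to $\rho(t)$ with errors that remain $o(\log\lp)$ uniformly: one must be careful that the combinatorial prefactors (number of closed paths, number of increment tuples) and the fixed matrix-norm-versus-spectral-radius constants are all $\lp$-independent, so that after dividing by $\log\lp$ they contribute only to $c_\lp\to 0$ and not to the leading polytope. A secondary point requiring care is justifying that cycles of length $>p$ need not be included — this is where primitivity and the fact that the Perron root of a $p\times p$ matrix is governed by its cycles of length $\le p$ (equivalently, that $\max_{1\le q\le p}(\Tr L\mu(t)^q)^{1/q}$ is comparable to $\rho(t)$ up to constants) must be invoked cleanly.
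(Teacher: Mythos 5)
Your plan follows the paper's proof essentially verbatim: substitute $t=(\log \lp)\,s$, expand $\Tr\bigl((L\mu)^q\bigr)$ over closed colour paths for $q=1,\dots,p$, read off one linear inequality per term (using that each nonnegative term is at most $\Tr\bigl((L\mu)^q\bigr)\le p\rho^q\le p$) to get the inclusion into $\mathcal P_+$, and for the reverse inclusion make all these traces small and convert that into $\rho\le 1$. One caveat on the reverse inclusion: the inequality $\rho(M)^q\le \mathrm{const}\cdot\max_i (M^q)_{i,i}$ that you invoke for a \emph{single} fixed $q$ is false for primitive nonnegative matrices with a constant depending only on $p$ (take $M=\left(\begin{smallmatrix}\varepsilon&1\\1&\varepsilon\end{smallmatrix}\right)$ and $q=1$, where $\rho=1+\varepsilon$ but the diagonal entries are $\varepsilon$). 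What is true, and what you correctly state in your closing paragraph, is the combined bound $\rho(M)\le C_p\max_{1\le q\le p}\bigl(\Tr M^q\bigr)^{1/q}$, obtained via Newton's identities relating the power sums to the coefficients of the characteristic polynomial; this is precisely the content of the paper's Lemma~\ref{lem:control_spectral_gap}, and it is the step your argument must route through rather than the per-$q$ diagonal-entry inequality.
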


\begin{proof}
Performing the change of variable $s=\frac{t}{\log\lp}$, we first prove that the level set $\frac{1}{\log\lp}\{t\in\mathbb R^{d}~\vert~\rho(t)\leq 1\}$ is included in some $\mathcal P_+$, for suitable constants $c_\lp$ going to zero when $\lp\to\infty$.

If $\rho(t)\leq 1$ then for all $q\in\{1,\ldots,p\}$, one has $\Tr \bigl( (L \mu)^q\bigr)\leq p$. For $q=1$, we obtain that 
\begin{equation*}
    \sum_{1\leq i\leq p} \sum_{x\in\supp \mu_{i,i}} e^{\log\lp (s\cdot x-1)}\widetilde \mu_{i,i}(x) \leq p,
\end{equation*}
using our notation $\widetilde{\mu}_{i,i}=\lp\mu_{i,i}$ ($\widetilde{\mu}_{i,i}$ is a sub-probability measure). Accordingly, for all $1\leq i\leq p$ and all $x\in\supp \mu_{i,i}$, we should have 
\begin{equation*}
    s\cdot x\leq 1 + \frac{\log\frac{p}{\widetilde\mu_{i,i}(x)}}{\log\lp}.
\end{equation*}
This immediately gives that $s$ should belong to $\mathcal Q_+^{(1)}$, with some explicit constants 
\begin{equation*}
   c_\lp^{(i)} =  \sup_{x\in\supp \mu_{i,i}}  \frac{\log\frac{p}{\widetilde\mu_{i,i}(x)}}{\log\lp},
\end{equation*}
which obviously go to zero when $\lp\to\infty$.

We now look at the condition $\Tr \bigl( (L \mu)^q\bigr)\leq p$ for some $q\geq 2$. Using the definition of the trace, we get
\begin{equation}
\label{eq:expansion_trace}
    \Tr \bigl( (L\mu)^q\bigr) =\sum_{i_1,i_2,\ldots,i_q} \sum_{\substack{x_{1,2}\in\supp{\mu_{i_1,i_2}}\\\cdots\\x_{q,1}\in\supp{\mu_{i_q,i_1}}}} e^{\log\lp (s\cdot (x_{1,2}+x_{2,3}+\cdots+x_{q,1})-q)}\widetilde \mu_{i_1,i_2}(x_{1,2})\cdots \widetilde \mu_{i_q,i_1}(x_{q,1}) \leq p.
\end{equation}
By the same argument as above, we deduce that $s\in \mathcal Q_+^{(q)}$, with the constants
\begin{equation*}
    c_\lp^{(i_1,\ldots,i_q)} =  \sup_{\substack{x_{1,2}\in\supp{\mu_{i_1,i_2}}\\\cdots\\x_{q,1}\in\supp{\mu_{i_q,i_1}}}} \frac{\log\frac{p}{\widetilde \mu_{i_1,i_2}(x_{1,2})\cdots \widetilde \mu_{i_q,i_1}(x_{q,1})}}{\log\lp}.
\end{equation*}
We immediately conclude that $s\in\mathcal P_+$, with constants $c_\lp$ all going to $0$ (due to the boundedness of the supports).

We now assume that $s\in \mathcal P_-$ for some constants $c_\lp$ going to zero and prove that $\rho(t)\leq 1$. Using \eqref{eq:expansion_trace} and the definition of $\mathcal P_-$, we obtain
\begin{equation*}
    \Tr \bigl( (L \mu)^q\bigr) \leq \sum_{i_1,i_2,\ldots,i_q} \sum_{\substack{x_{1,2}\in\supp{\mu_{i_1,i_2}}\\\cdots\\x_{q,1}\in\supp{\mu_{i_q,i_1}}}} e^{-(\log\lp) c_\lp^{(i_1,\ldots,i_q)}}\widetilde \mu_{i_1,i_2}(x_{1,2})\cdots \widetilde \mu_{i_q,i_1}(x_{q,1})\leq \sum_{i_1,i_2,\ldots,i_q}e^{-(\log\lp) c_\lp^{(i_1,\ldots,i_q)}}.
\end{equation*}
We see that adjusting the constants $c_\lp$ (for instance $\frac{1}{\sqrt{\log\lp}}$), all the traces $\Tr \bigl( (L \mu)^q\bigr)$ can be made as small as wanted. Using then Lemma~\ref{lem:control_spectral_gap}, one can also control the spectrum $\rho(t)$ and make it less than one. The proof is complete. 
\end{proof}

\begin{lemma}
\label{lem:control_spectral_gap}
    One has 
    \[\sup\left\{ \rho(M)~|~ M \in \mathcal{M}_p(\R),~ \left|\Tr(M)\right| \leq \alpha,\ldots,  \left|\Tr\left(M^p\right)\right| \leq \alpha \right\} \xrightarrow[\alpha\to 0 ]{}0.\]
\end{lemma}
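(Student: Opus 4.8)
The plan is to argue by expressing the spectral radius of a $p\times p$ matrix in terms of the power sums $\Tr(M^k)$ for $k=1,\ldots,p$, which determine (via Newton's identities) the coefficients of the characteristic polynomial, and hence all the eigenvalues. First I would recall that for $M\in\mathcal M_p(\R)$ with eigenvalues $\lambda_1,\ldots,\lambda_p$ (with multiplicity, in $\mathbb C$), the elementary symmetric functions $e_1,\ldots,e_p$ of the $\lambda_i$ are polynomials in $p_k:=\Tr(M^k)=\sum_i\lambda_i^k$, $k=1,\ldots,p$, with no constant term: indeed Newton's identities give $e_1=p_1$, $2e_2=e_1p_1-p_2$, and in general $k\,e_k=\sum_{j=1}^{k}(-1)^{j-1}e_{k-j}p_j$, so by induction each $e_k$ is a polynomial in $p_1,\ldots,p_k$ vanishing when all the $p_j$ vanish. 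Consequently the characteristic polynomial $\chi_M(X)=X^p-e_1X^{p-1}+\cdots+(-1)^pe_p$ has all its coefficients (other than the leading one) going to $0$ as $\alpha\to 0$, uniformly over the set $\{M:|\Tr(M^k)|\leq\alpha,\ 1\leq k\leq p\}$.

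The next step is a continuity-of-roots argument: the roots of a monic polynomial depend continuously on its coefficients, and when all the lower coefficients are $0$ the polynomial is $X^p$, whose only root is $0$. More concretely, I would use the elementary bound that any root $\lambda$ of $X^p+a_{p-1}X^{p-1}+\cdots+a_0$ satisfies $|\lambda|\leq \max\bigl(1,\sum_{k=0}^{p-1}|a_k|\bigr)$, or even better $|\lambda|\leq 2\max_k|a_k|^{1/(p-k)}$ (the standard Cauchy/Fujiwara bound): if $|\lambda|\geq 1$ then $|\lambda|^p\leq\sum_{k=0}^{p-1}|a_k|\,|\lambda|^k\leq\bigl(\sum_k|a_k|\bigr)|\lambda|^{p-1}$, so $|\lambda|\leq\sum_k|a_k|$. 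Since each $|a_k|=|e_{p-k}|$ is bounded by a polynomial in $\alpha$ with no constant term, there is a function $\eta(\alpha)\to 0$ as $\alpha\to 0$, depending only on $p$, with $\rho(M)\leq\eta(\alpha)$ for every $M$ in the given set. Taking the supremum over such $M$ and letting $\alpha\to 0$ yields the claim.

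I do not expect a genuine obstacle here; the only mild point to be careful about is that the bound must be \emph{uniform} over the (unbounded!) set of matrices $M$ with $|\Tr(M^k)|\leq\alpha$ — note this set is indeed unbounded, e.g. nilpotent matrices with huge entries all lie in it, which is exactly why one cannot argue by compactness of the matrix set and must instead pass through the characteristic polynomial, whose coefficients \emph{are} controlled. Once one observes that $\rho(M)$ depends on $M$ only through $\chi_M$, and that the relevant coefficients of $\chi_M$ are controlled by $\alpha$, the estimate is purely a statement about monic polynomials of degree $p$ and is elementary. One then simply records $\eta(\alpha):=2\sum_{k=1}^{p}\sup\{|e_k|:|p_j|\leq\alpha,\ j\leq k\}$ (a finite quantity, since each $e_k$ is a fixed polynomial) as the desired bound, with $\eta(\alpha)\to 0$.
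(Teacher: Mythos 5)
Your proposal is correct and follows essentially the same route as the paper: Newton's identities show that the non-leading coefficients of the characteristic polynomial are polynomials without constant term in the traces $\Tr(M^k)$, and then continuity of the roots of monic polynomials (which you make explicit via the Cauchy bound) gives the conclusion. The explicit root bound and the remark that one cannot argue by compactness of the matrix set are nice additions, but the argument is the same.
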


\begin{proof}
Let us write $a_i(M)$ for the coefficients of the characteristic polynomial of $M$, so that 
\begin{equation*}
    \chi_M = X^p + a_{p-1}(M) X^{p-1} + \cdots + a_1(M) X + a_0(M).
\end{equation*}
By continuity of the roots of monic polynomials with fixed degree, if we write $\rho(P)$ for the largest root of $P$ in modulus, the function \[(a_{p-1}, \ldots, a_1, a_0) \in \R^p \mapsto \rho\left( X^p + a_{p-1}X^{p-1} + \ldots + a_1 X +a_0 \right)\] is continuous at $(0, \ldots,0)$, where it takes the value $0$. Therefore, it is sufficient to prove that for all $i \in \{0, \ldots, p-1\}$,
\begin{equation}\label{eq:condition_suffisante_coefficients}
\sup\left\{ \left| a_i(M) \right|~|~ M \in \mathcal{M}_p(\R),~ \left|\Tr(M) \right|\leq \alpha, \ldots,  \left|\Tr\left(M^p \right) \right|\leq \alpha \right\} \xrightarrow[\alpha \to 0]{}0.
\end{equation}
To do so, we use the Newton identities to write the elementary symmetric polynomials in terms of Newton sums. With Vieta's formulas, it leads to an expression of the non-leading coefficients of a monic polynomial $P$ of degree $p$ in terms of the $\sum_{i=1}^p \lambda_i^k$ for $k \in \{1, \ldots, p\}$, where the $\lambda_i$ are the roots of $P$. Applying this to the characteristic polynomial of a matrix $M$, it means that all the $a_i(M)$ have a polynomial expression in terms of the $\Tr\left(M^k\right)$ for $k \in \{1, \ldots, p\}$, and it is easy to see that this polynomial expression has no constant term. Therefore, the convergences mentioned in \eqref{eq:condition_suffisante_coefficients} become obvious.
\end{proof}

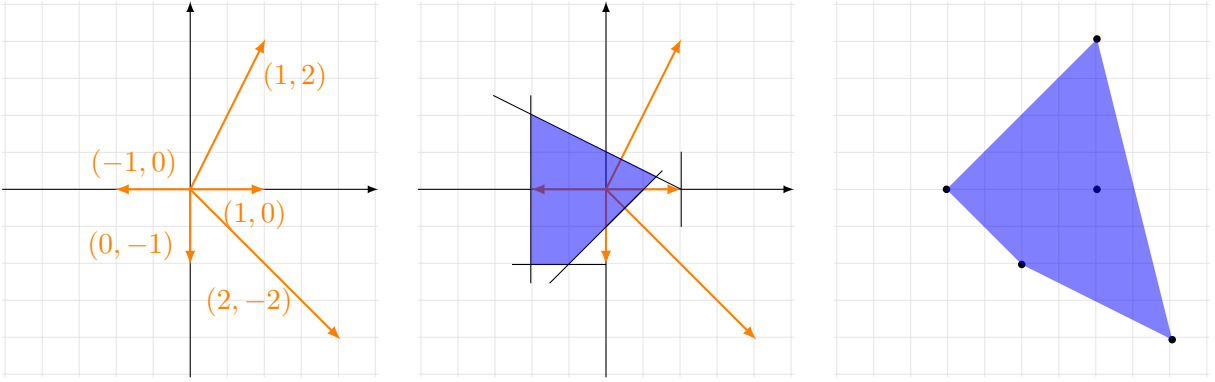
\begin{figure}
\begin{center}
\begin{tikzpicture}[line/.style={>=latex}] 
\draw[step=14pt, color=black!10] (-2.5, -2.5) grid (2.5, 2.5);
\draw[->, line] (-2.5, 0) -- node [below, very near end] {} (2.5, 0);
\draw[->, line] (0, -2.5) -- node [right, very near start] {} (0, 2.5);
\draw[->, line, color=orange, thick] (0, 0) -- node [right=2pt, near end]    {$(1,2)$} (1,2);
\draw[->, line, color=orange, thick] (0, 0) -- node [right=10pt, below]    {$(1,0)$} (1,0);
\draw[->, line, color=orange, thick] (0, 0) -- node [left, near end]    {$(2,-2)$} (2,-2);
\draw[->, line, color=orange, thick] (0, 0) -- node [left=2pt, near end]    {$(0,-1)$} (0,-1);
\draw[->, line, color=orange, thick] (0, 0) -- node [left, above, near end]    {$(-1,0)$} (-1,0);
\end{tikzpicture}
\quad 
\begin{tikzpicture}[line/.style={>=latex}] 
\draw[step=14pt, color=black!10] (-2.5, -2.5) grid (2.5, 2.5);
\draw[->, line] (-2.5, 0) -- node [below, very near end] {} (2.5, 0);
\draw[->, line] (0, -2.5) -- node [right, very near start] {} (0, 2.5);
\draw[->, line, color=orange, thick] (0, 0) -- node [right=2pt, near end]    {} (1,2);
\draw[->, line, color=orange, thick] (0, 0) -- node [right=10pt, below]    {} (1,0);
\draw[->, line, color=orange, thick] (0, 0) -- node [left, near end]    {} (2,-2);
\draw[->, line, color=orange, thick] (0, 0) -- node [left=2pt, near end]{} (0,-1);
\draw[->, line, color=orange, thick] (0, 0) -- node [left, above, near end]    {} (-1,0);
\draw[-, line] (1,0) -- node [right, very near start] {} (-1.5,1.25);
\draw[-, line] (-1,1.25) -- node [right, very near start] {} (-1,-1.25);
\draw[-, line] (-1.25,-1) -- node [right, very near start] {} (0,-1);
\draw[-, line] (0.5,0) -- node [right, very near start] {} (0,-0.5);
\draw[-, line] (0.75,0.25) -- node [right, very near start] {} (-0.75,-1.25);
\draw[-, line] (1,0.5) -- node [right, very near start] {} (1,-0.5);
\fill [opacity=0.5,blue]
  (-0.5,-1) -- (-1,-1) -- (-1,1)  -- (0.675,0.175) -- cycle;
\end{tikzpicture}
\quad
\begin{tikzpicture}[line/.style={>=latex}] 
\draw[step=14pt, color=black!10] (-2.5, -2.5) grid (2.5, 2.5);
\fill (1,2) circle [radius=.05];
\fill (1,0) circle [radius=.05];
\fill (2,-2) circle [radius=.05];
\fill (0,-1) circle [radius=.05];
\fill (-1,0) circle [radius=.05];
\fill [opacity=0.5,blue]
  (1,2) -- (2,-2) -- (0,-1) -- (-1,0) -- cycle;
\end{tikzpicture}
\end{center}
\caption{Left picture: the step set $\{(1,0),(2,-2),(0,-1),(-1,0),(1,2)\}$. Middle picture: the limit level set. Right picture: the Newton polytope associated with the step set}
\label{fig:step-set_example}
\end{figure}

We now use general results on duality to prove that the rescaled curve $\mathcal{C}$ converges to the dual polytope of $\mathcal{P}$. In order to apply Appendix~\ref{app:convexity}, we need $\mathcal{P}$ to be compact, hence the following lemma.
\begin{lemma}
The limit polytope $\mathcal{P}$ is bounded.
\end{lemma}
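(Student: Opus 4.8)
The plan is to deduce boundedness from the triviality of the recession cone. Write $\mathcal P=\bigcap_k\{s\in\R^d~|~s\cdot a_k\leq b_k\}$ as the intersection of the half-spaces appearing in \eqref{eq:general_def_P_pm}; since all supports $\supp\mu_{i,j}$ are bounded (hence finite) and there are finitely many color tuples $(i_1,\ldots,i_q)$ with $1\leq q\leq p$, this is a \emph{finite} intersection, and it contains $0$ because every right-hand side ($1$ for $q=1$, $q$ for $q\geq 2$) is nonnegative. For a nonempty closed convex set, boundedness is equivalent to its recession cone being $\{0\}$, so it suffices to show that the only $v\in\R^d$ with $v\cdot a_k\leq 0$ for every constraint normal $a_k$ is $v=0$; equivalently, that the family of normals
\[ \mathcal S=\Bigl(\bigcup_{1\leq i\leq p}\supp\mu_{i,i}\Bigr)\ \cup\ \bigcup_{2\leq q\leq p}\ \bigcup_{i_1,\ldots,i_q}\ \bigl\{x_{1,2}+\cdots+x_{q,1}~|~x_{k,k+1}\in\supp\mu_{i_k,i_{k+1}}\bigr\} \]
positively spans $\R^d$.

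The core of the argument is a cycle-decomposition using irreducibility. Fix $v$ with $v\cdot w\leq 0$ for all $w\in\mathcal S$, and take an arbitrary $x\in\Z^d$. By Assumption~\ref{assump:main_assumptions}, irreducibility provides a path in $\Z^d\times\{1,\ldots,p\}$ from $(0,1)$ to $(x,1)$; reading off the colors it visits yields a closed walk $1=c_0\to c_1\to\cdots\to c_n=1$ in the $p$-vertex ``color graph'', whose successive spatial increments $y_1,\ldots,y_n$ (each with $\mu_{c_{j-1},c_j}(y_j)>0$) sum to $x$. The edge multiset of a closed walk in a directed graph decomposes into edge-disjoint simple cycles; each simple cycle has length $q\in\{1,\ldots,p\}$, its increments form a sub-multiset of $\{y_1,\ldots,y_n\}$, and its total displacement $\Delta^{(\ell)}$ therefore lies in $\mathcal S$ --- in $\supp\mu_{c,c}$ when $q=1$, and in the $q$-cycle family otherwise. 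Hence $x=\sum_\ell\Delta^{(\ell)}$ with $\Delta^{(\ell)}\in\mathcal S$, so $v\cdot x=\sum_\ell v\cdot\Delta^{(\ell)}\leq 0$. Applying the same to $-x$ (again reachable from $(0,1)$ by irreducibility) gives $v\cdot x=0$ for every $x\in\Z^d$, whence $v=0$.

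Once the recession cone is shown to be $\{0\}$, the lemma follows: a nonempty closed convex set with trivial recession cone is bounded (a sequence $(s_n)\subset\mathcal P$ with $\|s_n\|\to\infty$ would have an accumulation point of $s_n/\|s_n\|$ furnishing a nonzero recession direction). I expect the only delicate point to be the claim that the constraints with $q\leq p$ already suffice --- that is, that the arbitrarily long return paths produced by irreducibility can always be broken into simple cycles of length at most $p$ --- which is precisely the combinatorial decomposition above; note also that this is where the layers $\mathcal Q^{(q)}$ with $q\geq 2$ are genuinely needed, since $\mathcal Q^{(1)}$ alone would be all of $\R^d$ whenever the walk has no self-loops.
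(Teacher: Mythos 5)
Your proof is correct, and it reaches the conclusion by a genuinely different route than the paper, though both rest on the same pigeonhole on the $p$ colors. The paper fixes a direction $u \in \mathbb{S}^{d-1}$ and shows that \emph{some single} constraint normal $x_{1,2}+\cdots+x_{q,1}$ with $q \leq p$ satisfies $u\cdot(x_{1,2}+\cdots+x_{q,1})>0$, which already excludes the ray $[0,+\infty)u$ from $\mathcal{P}$; the reduction from the long color-cycle supplied by irreducibility to one of length at most $p$ is done by an extremal argument (take the minimal $q$ for which such a positively-reaching cycle exists; if $q>p$, excise a repeated-color subcycle, which by minimality contributes non-positively, and obtain a shorter positively-reaching cycle, a contradiction). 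You instead phrase boundedness via the recession cone and prove the stronger intermediate statement that \emph{every} $x\in\Z^d$ is a finite sum of displacements of simple color-cycles of length at most $p$, using the all-at-once Eulerian decomposition of a closed walk into edge-disjoint simple cycles rather than one-subcycle-at-a-time removal; pairing $x$ with $-x$ then forces any recession direction to vanish on all of $\Z^d$. What your route buys is precisely this generation statement (short-cycle displacements additively generate $\Z^d$), which is close in spirit to what the paper only establishes later in the proof of Proposition~\ref{prop:convexHull_vs_firstPassage}; what the paper's route buys is that it needs only one good cycle per direction. Two points worth making explicit in a final write-up: the constraints defining $\mathcal{Q}^{(q)}_{\pm}$ range over arbitrary, not necessarily distinct, color tuples, so the simple cycles produced by your decomposition are indeed among the constraint normals of $\mathcal{P}$; and the identity expressing the recession cone of $\mathcal{P}$ as $\bigcap_k\{v\in\R^d~|~v\cdot a_k\leq 0\}$ is legitimate here because $0\in\mathcal{P}$, so the intersection of half-spaces is nonempty.
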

\begin{proof}
The polytope $\mathcal{P}$ is convex. Therefore, to prove the lemma, we only need to show that $\mathcal{P}$ does not contain a half-line starting from $0$.

Let $u \in \mathbb S^{d-1}$.
We write $H(q)$ for the following assertion: there exist $i_1, \ldots, i_q \in \{1, \ldots, p\}$ and $x_{1,2} \in \supp \mu_{i_1,i_2}, \ldots, x_{q,1} \in \supp \mu_{i_q, i_1}$ such that $u \cdot \left(x_{1,2}+ \cdots + x_{q,1}\right) >0$. We want to prove that $H(q)$ is true for some $q \in \{1, \ldots, p\}$, so that the condition $s\cdot \left(x_{1,2}+ \cdots + x_{q,1}\right) \leq q$ prevents $\mathcal{P}$ from containing the half-line $[0,+\infty) u$. 

By the assumption of irreducibility, $H(q)$ is true for some $q \in \N$. We choose the minimal $q$. Assume by contradiction that $q > p$. Then, in the path $i_1 \to \cdots \to i_q \to i_1$, there is a cycle $i_{k} \to \cdots \to i_{\ell} = i_k$ with $1 \leq \ell-k \leq p < q$. Since $H(\ell-k)$ is false, $u \cdot (x_{k, k+1} + \cdots + x_{\ell-1,\ell}) \leq 0$. Therefore, we have 
\begin{align*}
u \cdot \left(x_{1,2} + \cdots + x_{k-1,k} + x_{\ell, \ell+1} + \cdots + x_{q,1} \right) &= u \cdot \left(x_{1,2} + \cdots + x_{q,1} \right) - u \cdot (x_{k, k+1} + \cdots + x_{\ell-1,\ell})\\
&\geq u \cdot \left(x_{1,2} + \cdots + x_{q,1} \right) >0,
\end{align*}
where \[x_{1,2} \in \supp \mu_{i_1, i_2}, \ldots, x_{k-1,k} \in \supp \mu_{i_{k-1}, i_k}, x_{\ell, \ell+1} \in \supp \mu_{i_\ell, i_{\ell+1}} = \supp \mu_{i_k, i_{\ell+1}}, \ldots, x_{q,1} \in \supp \mu_{i_q,i_1}.\]
This proves $H(q-(\ell-k))$, which contradicts the minimality of $q$.
\end{proof}

\begin{prop}\label{prop:polytopeBase}
The limit polytope $\mathcal{P}$ is the dual of the convex hull of the set 
\[ \mathcal{X} = \bigcup_{q\in \{1, \ldots, p\}} \bigcup_{i_1, \ldots, i_q \in \{1, \ldots, p\}} \left\{ \frac{x_{1,2} + \cdots + x_{q,1}}{q}~|~x_{1,2} \in \supp \mu_{i_1,i_2}, \ldots, x_{q,1} \in \supp \mu_{i_q, i_1} \right\} .\]
\end{prop}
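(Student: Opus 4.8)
The plan is to unwind the definitions of $\mathcal{P}$ and of the polar (dual) of a set, and observe that they literally coincide once one normalises the defining half-spaces of $\mathcal{P}$.

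First, recall that $\mathcal{P}=\bigcap_{1\leq q\leq p}\mathcal{Q}^{(q)}$ with all constants $c_\lp$ set to $0$, so that
\[
\mathcal{P}=\bigcap_{1\leq q\leq p}\ \bigcap_{i_1,\ldots,i_q}\ \bigcap_{\substack{x_{1,2}\in\supp\mu_{i_1,i_2}\\ \cdots\\ x_{q,1}\in\supp\mu_{i_q,i_1}}}\bigl\{s\in\R^d~|~s\cdot(x_{1,2}+\cdots+x_{q,1})\leq q\bigr\}.
\]
For $q\geq 1$ and a fixed admissible choice of colors and increments, the inequality $s\cdot(x_{1,2}+\cdots+x_{q,1})\leq q$ is equivalent, after dividing by $q>0$, to $s\cdot v\leq 1$ with $v=\frac{x_{1,2}+\cdots+x_{q,1}}{q}$. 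As $q$ runs over $\{1,\ldots,p\}$, the colors $i_1,\ldots,i_q$ over $\{1,\ldots,p\}$, and each $x_{k,k+1}$ over $\supp\mu_{i_k,i_{k+1}}$ (indices taken cyclically, so that $x_{q,1}\in\supp\mu_{i_q,i_1}$), the vector $v$ ranges exactly over the set $\mathcal{X}$ of Proposition~\ref{prop:polytopeBase}. Hence
\[
\mathcal{P}=\bigcap_{v\in\mathcal{X}}\bigl\{s\in\R^d~|~s\cdot v\leq 1\bigr\},
\]
which is, by definition, the polar (dual) set $\mathcal{X}^{*}$.

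It remains to pass from $\mathcal{X}$ to its convex hull. Since $s\mapsto s\cdot v$ is linear, the condition ``$s\cdot v\leq 1$ for every $v\in\mathcal{X}$'' holds if and only if ``$s\cdot v\leq 1$ for every $v\in\conv\mathcal{X}$''; thus $\mathcal{X}^{*}=(\conv\mathcal{X})^{*}$. This is the elementary fact that the dual of a set depends only on its convex hull, already used in the proof of Proposition~\ref{prop:cv_level_set_p=1}; see Proposition~\ref{prop:dual_polytope} and Appendix~\ref{app:convexity}. Combining the two displays, $\mathcal{P}=(\conv\mathcal{X})^{*}$, i.e.\ $\mathcal{P}$ is the dual of $\conv\mathcal{X}$, as claimed.

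There is no genuine obstacle here; the only points requiring a moment of care are notational: checking that the cyclic closure of the color sequence (the index returning from $i_q$ to $i_1$) in the definition of $\mathcal{Q}^{(q)}$ matches the cyclic sum defining $\mathcal{X}$, and recalling that $\mathcal{X}$ is a \emph{finite} set (the supports being bounded), so that $\conv\mathcal{X}$ is a genuine polytope and $\mathcal{P}$ is a bona fide dual polytope. One may additionally observe, using the previous lemma that $\mathcal{P}$ is bounded, that $0$ lies in the interior of $\conv\mathcal{X}$, which ensures that the duality is involutive and hence $\conv\mathcal{X}=\mathcal{P}^{*}$ as well; but this refinement is not needed for the statement itself.
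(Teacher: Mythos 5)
Your proof is correct and follows essentially the same route as the paper, whose entire proof is the single sentence ``It is exactly Proposition~\ref{prop:dual_polytope}''; you have simply made explicit the rewriting of the defining inequalities of $\mathcal{P}$ as $s\cdot v\leq 1$ for $v\in\mathcal{X}$. The only point worth flagging is that the paper defines the dual via normal cones rather than as the polar set, so the identification $\mathcal{P}=(\conv\mathcal{X})^{*}$ formally rests on Proposition~\ref{prop:dual_polytope} together with the involutivity of duality, both of which you do invoke.
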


\begin{proof}
It is exactly Proposition~\ref{prop:dual_polytope}.
\end{proof}

\begin{thm} \label{thm:limit_shape_as_convex_hull_of_returning_walks}
    If the support of $\mu$ is finite, the rescaled shape of the sandpile, $(\log \lp)\cdot \mathcal{C}$, where $\mathcal{C}$ is defined in \eqref{eq:limit_shape_in_spheric_coordinates}, converges to the convex hull of $\mathcal{X}$ as $\lp$ goes to $+\infty$. The convergence is uniform in the sense of Definition~\ref{def:cv_sets_limit_shape} and consequently holds for the Hausdorff distance.
    
    In particular, in the uncolored case $p=1$ the limit shape of the sandpile when $\lp\to\infty$ is the convex hull of the support of the measure $\mu$. 
\end{thm}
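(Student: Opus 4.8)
The plan is to combine the duality description of the limit shape (Proposition~\ref{prop:dual_curves}) with the convergence of rescaled level sets of $\rho$ (Proposition~\ref{prop:level_set_d_to_infinity}), and then to push that convergence through polar duality. More precisely, by Proposition~\ref{prop:dual_curves} the compact convex body bounded by $\mathcal{C}$ is the polar dual of $\mathcal{T}=\rho^{-1}([0,1])$. Polar duality is reciprocal under homothety, $(\lambda K)^\circ=\lambda^{-1}K^\circ$ for $\lambda>0$, so the body bounded by $(\log\lp)\cdot\mathcal{C}$ is the polar dual of the rescaled level set $\frac{1}{\log\lp}\mathcal{T}=\frac{1}{\log\lp}\{t\in\R^d:\rho(t)\leq 1\}$. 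Hence it suffices to identify the limit of this rescaled level set and then to dualize.

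Next I would invoke Proposition~\ref{prop:level_set_d_to_infinity}, which produces constants $c_\lp\to 0$ with $\mathcal{P}_-\subset\frac{1}{\log\lp}\mathcal{T}\subset\mathcal{P}_+$ and $\mathcal{P}_\pm\to\mathcal{P}$ in the sense of Definition~\ref{def:cv_sets_limit_shape}. From this sandwich I would extract the two facts needed for the duality step: for $\lp$ large the sets $\frac{1}{\log\lp}\mathcal{T}$ are uniformly bounded (since $\mathcal{P}_+$ is then close to the bounded polytope $\mathcal{P}$, bounded by the lemma preceding Proposition~\ref{prop:polytopeBase}), and $0$ lies in their interior with a margin uniform in $\lp$ (since $\rho(0)<1$ gives $0\in\mathring{\mathcal{T}}$, and the limit $\mathcal{P}$ has $0$ in its interior). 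With these two facts in hand, Proposition~\ref{prop:convergence_duality} applies to the family $\frac{1}{\log\lp}\mathcal{T}\to\mathcal{P}$ and shows that the polar duals converge, uniformly in the sense of Definition~\ref{def:cv_sets_limit_shape}; that is, $(\log\lp)\cdot\mathcal{C}\to\mathcal{P}^\circ$.

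Finally, by Proposition~\ref{prop:polytopeBase} the polytope $\mathcal{P}$ is the polar dual of $\conv\mathcal{X}$; since $\conv\mathcal{X}$ is a compact polytope with $0$ in its interior (equivalently, $\mathcal{P}$ is bounded), the bipolar theorem gives $\mathcal{P}^\circ=(\conv\mathcal{X})^{\circ\circ}=\conv\mathcal{X}$. Therefore $(\log\lp)\cdot\mathcal{C}$ converges to $\conv\mathcal{X}$ uniformly in the sense of Definition~\ref{def:cv_sets_limit_shape}, and by Proposition~\ref{prop:link_cv_hausdorff} also for the Hausdorff distance. The uncolored statement is the specialization $p=1$: then $q$ can only equal $1$ and $\mathcal{X}=\supp\mu_{1,1}=\supp\mu$, so the limit shape is $\conv(\supp\mu)$, consistently with Proposition~\ref{prop:cv_level_set_p=1}.

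The main obstacle is not a computation but the bookkeeping in the duality step: one must verify that the hypotheses of Proposition~\ref{prop:convergence_duality} — compactness and containment of $0$ in the interior — hold \emph{uniformly} along the family $\frac{1}{\log\lp}\mathcal{T}$, so that passing to polars is a continuous operation here; granting that, the theorem is a direct assembly of results already established above.
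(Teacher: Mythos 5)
Your proposal is correct and follows essentially the same route as the paper: duality of $\mathcal{C}$ with $\partial\mathcal{T}$ (Proposition~\ref{prop:dual_curves}), compatibility of duality with homotheties (Proposition~\ref{prop:dual_homotheties_rotations}), the sandwich from Proposition~\ref{prop:level_set_d_to_infinity} rewritten as $(1-\varepsilon_\lp)\mathcal{P}\subset\frac{1}{\log\lp}\mathcal{T}\subset(1+\varepsilon_\lp)\mathcal{P}$, then Proposition~\ref{prop:convergence_duality} and the identification $\mathcal{P}^*=\conv\mathcal{X}$ via Proposition~\ref{prop:polytopeBase} and the involutivity of duality. The only bookkeeping the paper makes explicit that you leave implicit is the choice $\varepsilon_\lp=\max c_\lp^{(i_1,\ldots,i_q)}/q$ converting the $\mathcal{P}_\pm$ inclusions into the homothetic form required by Proposition~\ref{prop:convergence_duality}.
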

\begin{proof}
Using Propositions~\ref{prop:dual_curves}~and~\ref{prop:dual_homotheties_rotations}, $(\log\lp)\cdot \mathcal{C}$ is the dual curve of $\frac{1}{\log \lp}\left\{ \rho(t)\leq 1  \right\}$. The inclusions of Proposition~\ref{prop:level_set_d_to_infinity} imply that 
\[\left(1 - \varepsilon_\lp\right) \mathcal{P} \subset \frac{1}{\log\lp}\left\{ \rho(t)\leq 1  \right\} \subset \left(1 + \varepsilon_\lp\right) \mathcal{P}, \]
where $\varepsilon_\lp = \max \frac{c_\lp^{(i_1, \ldots, i_q)}}{q} \xrightarrow[\lp \to + \infty]{}0$. Therefore, according to Proposition~\ref{prop:convergence_duality}, $(\log\lp)\cdot \mathcal{C}$ converges to $\mathcal{P}^*$, which is the convex hull of $\mathcal{X}$.
\end{proof}

\subsection{First passage time interpretation of the limiting polytope in the infinite leakiness parameter region}
\label{subsec:first_passage}

Our main result in this section is the following:
\begin{thm}\label{thm:limit_shape_as_first_passage_front}
If the support of $\mu$ is finite, as $\lp$ goes to $+\infty$ the rescaled shape of the sandpile converges to the region of points that can be reached by $n$ steps of the random walk, rescaled by $n$.
\end{thm}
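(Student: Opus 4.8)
The plan is to make the statement precise and reduce it to Theorem~\ref{thm:limit_shape_as_convex_hull_of_returning_walks}. Write $R_n\subseteq\Z^d$ for the set of $x$ such that $(x,j)$ is reachable from $(0,i_0)$ in $n$ steps of the underlying (un-killed) walk for some colour $j$; I will show that $\tfrac1n R_n$ converges, for the Hausdorff distance, to the polytope $\conv(\mathcal X)$ of Proposition~\ref{prop:polytopeBase}. Since Theorem~\ref{thm:limit_shape_as_convex_hull_of_returning_walks} already identifies $\lim_{\lp\to\infty}(\log\lp)\cdot\mathcal C$, with $\mathcal C$ the curve of \eqref{eq:limit_shape_in_spheric_coordinates}, with $\conv(\mathcal X)$, the two limit shapes coincide, which is the assertion. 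It is convenient to work instead with the first-passage time $T(x)=\min\{n : x\in R_n\}$, finite by irreducibility, and its sublevel sets $\widetilde R_n=\{x : T(x)\leq n\}$: since aperiodicity (Assumption~\ref{assump:main_assumptions}) produces closed walks of coprime lengths at $(0,i_0)$, any walk reaching $x$ can be padded with short zero-displacement loops, so $\tfrac1n R_n$ and $\tfrac1n\widetilde R_n$ have the same Hausdorff limit and it suffices to treat $\widetilde R_n$.

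First I would record that $0\in\conv(\mathcal X)$, and in fact $0$ is interior to it. For the first point, a closed walk at $(0,i_0)$ has zero spatial displacement, and decomposing its colour sequence into cycles exactly as in the proof that the limit polytope $\mathcal P$ is bounded---repeatedly extracting a colour-cycle, then splitting any cycle of length $>p$ into cycles of length $\leq p$ by the pigeonhole principle---exhibits $0$ as a convex combination of elements of $\mathcal X$. For the interior property, if $0$ lay on $\partial\conv(\mathcal X)$ there would be $v\neq 0$ with $\xi\cdot v\geq 0$ for all $\xi\in\mathcal X$, hence every colour-cycle, and then (up to a bounded transient simple colour-path, see below) every walk from $(0,i_0)$, would have spatial displacement with inner product against $v$ bounded below, contradicting the fact that by irreducibility \emph{every} point of $\Z^d$ is reached. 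Next, the inclusion ``$\tfrac1n\widetilde R_n$ sits in an $o(1)$-neighbourhood of $\conv(\mathcal X)$'' uses the same decomposition: given a walk of length $m\leq n$ from $(0,i_0)$ to $(x,j)$, extract colour-cycles until a simple colour-path of length $\leq p-1$ is left, so that $x=w+\sum_\ell q_\ell\xi_\ell$ with $\xi_\ell\in\mathcal X$, $q_\ell\leq p$, $L:=\sum_\ell q_\ell\leq m\leq n$, and $\|w\|\leq (p-1)M$ where $M:=\max_{i,j}\max_{y\in\supp\mu_{i,j}}\|y\|$; setting $\eta:=L^{-1}\sum_\ell q_\ell\xi_\ell\in\conv(\mathcal X)$ and using $0\in\conv(\mathcal X)$ together with $L\leq n$ gives $\tfrac Ln\eta\in\conv(\mathcal X)$, whence $d\bigl(\tfrac xn,\conv(\mathcal X)\bigr)\leq\|w\|/n\leq (p-1)M/n$.

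For the reverse inclusion, fix $\eta\in\conv(\mathcal X)$ and write $\eta=\sum_{\ell=1}^k\lambda_\ell\xi_\ell$ with $k\leq d+1$ by Carath\'eodory, each $\xi_\ell$ the normalised displacement of a colour-cycle $C_\ell$ of length $q_\ell\leq p$; by irreducibility fix, once and for all, walks of length at most a constant $L_0$ joining $(0,i_0)$ to the base point of $C_1$ and the base point of $C_\ell$ to that of $C_{\ell+1}$. Concatenating these connectors with $\lfloor(n-L_1)\lambda_\ell/q_\ell\rfloor$ traversals of each $C_\ell$, where $L_1$ bounds the total connector length, produces a walk of length $\leq n$ whose displacement $x$ satisfies $\|x-n\eta\|=O(1)$ uniformly in $n$, since each block $q_\ell\lfloor(n-L_1)\lambda_\ell/q_\ell\rfloor\xi_\ell$ is within $O(1)$ of $n\lambda_\ell\xi_\ell$ and there are only $k\leq d+1$ such blocks plus bounded connectors; hence $\eta$ lies within $O(1/n)$ of $\tfrac1n\widetilde R_n$. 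Combining the two estimates yields $d_H\bigl(\tfrac1n\widetilde R_n,\conv(\mathcal X)\bigr)\to0$; because $\conv(\mathcal X)$ is a convex body with $0$ in its interior, this Hausdorff convergence is equivalent to the uniform convergence of Definition~\ref{def:cv_sets_limit_shape}, and Theorem~\ref{thm:limit_shape_as_convex_hull_of_returning_walks} then gives the claimed equality of limit shapes. Moreover the same construction shows $T$ is subadditive up to a bounded error, so that the asymptotic first-passage-time $\mathsf t(x):=\lim_k k^{-1}T(\lfloor kx\rfloor)$ exists and is the gauge of $\conv(\mathcal X)$; in particular $\{\mathsf t\leq 1\}=\conv(\mathcal X)$, which makes the ``first-passage'' wording literal.

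The step I expect to be the main obstacle is the reverse inclusion: one must simultaneously keep the total walk length below $n$ and force the spatial displacement to stay within a \emph{bounded} (not merely $o(n)$) distance of $n\eta$, which requires careful joint bookkeeping of the floor functions, of the finitely many cycles supplied by Carath\'eodory, and of the bounded connecting paths. The remaining ingredients---the cycle decomposition (already available from the boundedness of $\mathcal P$), the elementary convex geometry used to pass from Hausdorff to spherical convergence, and Theorem~\ref{thm:limit_shape_as_convex_hull_of_returning_walks} itself---are either in place or routine.
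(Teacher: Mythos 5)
Your proposal is correct and follows the same overall architecture as the paper: both reduce the theorem to Theorem~\ref{thm:limit_shape_as_convex_hull_of_returning_walks} together with the statement that the rescaled first-passage sets $\frac1n A_n$ converge in Hausdorff distance to $\conv(\mathcal X)$ (the paper's Proposition~\ref{prop:convexHull_vs_firstPassage}). Where you genuinely diverge is in the proof of that convergence. The paper first treats $p=1$ (lower inclusion by taking a lattice point of $n\cdot\conv(\mathcal X)$, splitting off integer parts of the barycentric coefficients, and absorbing the fractional remainder into a bounded number $h$ of extra steps), and then reduces general $p$ to this case: the upper inclusion is done by induction on $n$, closing the walk into a colour-cycle with a connector $\bar x_{i,1}$ and peeling off one repeated-colour cycle at a time, and the lower inclusion separates multiples of $p!$ so each term $\frac{x_{1,2}+\cdots+x_{q,1}}{q}$ appears with a coefficient divisible by $q$, then rearranges the cyclic blocks with at most $p-1$ connectors. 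You instead give a one-shot argument: for the upper inclusion, a direct extraction of simple colour-cycles leaving a transient colour-path of length at most $p-1$, so the error is a single bounded vector $w$ (no induction, no closing connector); for the lower inclusion, Carath\'eodory plus explicit concatenation of $\lfloor(n-L_1)\lambda_\ell/q_\ell\rfloor$ traversals of at most $d+1$ cycles with bounded connectors, with the floor bookkeeping giving a uniform $O(1)$ spatial error while keeping the length below $n$. Both routes are valid; yours avoids the paper's induction and the $p!$-divisibility device at the cost of invoking Carath\'eodory, and it makes the $O(1)$ error bounds more explicit. Your closing observation that $T$ is subadditive up to a bounded error, so that $\conv(\mathcal X)$ is literally the unit ball of the asymptotic first-passage gauge, is a nice addition not present in the paper.

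One small flaw to correct: you assert that because $\conv(\mathcal X)$ is a convex body with $0$ in its interior, Hausdorff convergence of $\frac1n\widetilde R_n$ is \emph{equivalent} to the uniform convergence of Definition~\ref{def:cv_sets_limit_shape}. The reverse implication is false in general --- the circular-sector counterexample in item~3 of Proposition~\ref{prop:link_cv_hausdorff} converges in Hausdorff distance to a convex body containing $0$ in its interior yet fails Definition~\ref{def:cv_sets_limit_shape} --- and $\widetilde R_n$ is a discrete set, so no star-shapedness argument rescues it immediately. Fortunately this claim is superfluous: the theorem only requires identifying the two limits, and Hausdorff convergence of $\frac1n\widetilde R_n$ to $\conv(\mathcal X)$, combined with Theorem~\ref{thm:limit_shape_as_convex_hull_of_returning_walks}, already does that (the paper itself only proves Hausdorff convergence for $\frac1n A_n$). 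Simply delete the equivalence claim.
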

\begin{proof}
The theorem is immediate from Theorem~\ref{thm:limit_shape_as_convex_hull_of_returning_walks} and Proposition~\ref{prop:convexHull_vs_firstPassage} below.
\end{proof}

Let $\mathcal{X}$ be as in Proposition~\ref{prop:polytopeBase}. Given $(x,i)\in\mathbb{Z}^d\times\{1,\dots,p\}$, let $n_{x,i}$ be the smallest positive integer such that $(x,i)$ can be reached in $n_{x,i}$ steps by our killed random walk. Denote by $A_n$ the set of points for which this first passage time is at most $n$, i.e., 
\begin{equation*}
    A_n=\bigl\{(x,i)\in\mathbb{Z}^d\times\{1,\dots,p\}:n_{x,i}\leq n\bigr\}.
\end{equation*}

\begin{prop}
\label{prop:convexHull_vs_firstPassage}
Suppose the support of $\mu$ is finite. As $n$ converges to $\infty$, the set $A_n$, scaled by $n$, converges to the convex hull of $\mathcal{X}$ in the Hausdorff distance, i.e.
$$\lim_{n\to\infty}d_H\left(\conv(\mathcal{X}),\frac 1n\cdot A_n\right)=0.$$
\end{prop}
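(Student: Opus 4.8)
The plan is to show a two-sided containment: for every $\varepsilon > 0$ and all large $n$,
\[
(1-\varepsilon)\conv(\mathcal{X}) \subset \tfrac 1n \cdot \bar A_n \subset (1+\varepsilon)\conv(\mathcal{X}),
\]
where $\bar A_n$ denotes the projection of $A_n$ onto the $\Z^d$ coordinate (the $p$ colors only contribute a bounded perturbation, which is why Hausdorff convergence is unaffected). Since $\conv(\mathcal{X})$ is a compact convex body containing $0$ in its interior (this is exactly the content of the boundedness lemma preceding Proposition~\ref{prop:polytopeBase}, together with the fact that $0\in\mathcal X$ using a returning loop, noting $0$ can be written as an average by irreducibility-via-aperiodicity), such a sandwich immediately gives $d_H(\conv(\mathcal X),\tfrac1n\bar A_n)\to 0$ by a standard argument on convex bodies with $0$ in the interior.

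First I would establish the \emph{outer} inclusion $\tfrac 1n\bar A_n\subset(1+\varepsilon)\conv(\mathcal X)$ for large $n$, which is the easy direction. If $(x,i)$ is reached from $(0,i_0)$ in exactly $m\le n$ steps, decompose the path: it visits colors $i_0,i_1,\dots,i_m=i$ and makes increments $y_1,\dots,y_m$ with $y_k\in\supp\mu_{i_{k-1},i_k}$ and $x=y_1+\cdots+y_m$. Among the colors along the path, by pigeonhole (since $p$ colors), one can extract disjoint color-cycles of length $\le p$ covering all but a bounded-length remainder $\le p$; each such cycle of length $q$ contributes a vector $q\cdot z$ with $z\in\mathcal X$ (by the very definition of $\mathcal X$). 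Hence $x = m' \bar z + (\text{bounded remainder})$ where $\bar z$ is a convex combination of points of $\mathcal X$ (weighted by cycle lengths) and $m'\le m\le n$; the bounded remainder has norm $\le p\cdot\max_{i,j,x\in\supp\mu_{i,j}}\|x\| =: K$. Dividing by $n$ gives $\tfrac xn \in \conv(\mathcal X) + B(0,K/n)$, and absorbing the small ball into $(1+\varepsilon)\conv(\mathcal X)$ (possible since $0$ is interior to $\conv(\mathcal X)$) finishes this direction.

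Next the \emph{inner} inclusion $(1-\varepsilon)\conv(\mathcal X)\subset\tfrac 1n\bar A_n$, which is the main obstacle. It suffices to realize each vertex of $\conv(\mathcal X)$, i.e.\ each point $z^\star = \tfrac{1}{q}(x_{1,2}+\cdots+x_{q,1})$ arising from a color-cycle $i_1\to\cdots\to i_q\to i_1$, as (approximately) $\tfrac 1n$ times an actual reachable point: concatenating the loop $\lfloor n/q\rfloor$ times gives a path from $(0,i_1)$ to $(\lfloor n/q\rfloor(x_{1,2}+\cdots+x_{q,1}),\,i_1)$ in $q\lfloor n/q\rfloor\le n$ steps, so $\lfloor n/q\rfloor q\cdot z^\star\in\bar A_n$ up to starting/ending in $i_0$ (handled by irreducibility with a bounded-length bridge, contributing another $O(1)$ error and $O(1)$ extra steps). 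Thus every vertex of $\conv(\mathcal X)$ is within $O(1/n)$ of a point of $\tfrac 1n\bar A_n$. To get the full convex body I would then take convex combinations: a rational point $\sum_k \lambda_k z_k^\star$ with $\sum\lambda_k=1$, $\lambda_k=a_k/n$ (for large $n$, with integer $a_k$ summing to $n$), is realized by concatenating the $k$-th cycle $a_k/q_k$ times (rounding, $O(1)$ error per term, but only boundedly many vertices $z_k^\star$, so total error $O(1)$) — this produces a reachable point within $O(1/n)$ of $n\sum_k\lambda_k z_k^\star$; density of such combinations in $\conv(\mathcal X)$ together with the $O(1/n)$ slack yields $(1-\varepsilon)\conv(\mathcal X)\subset\tfrac1n\bar A_n$. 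The delicate points here are (i) keeping the total number of steps $\le n$ while concatenating several loops and bridges, which forces a careful bookkeeping of the $O(1)$ corrections against the budget $n$, and (ii) ensuring that the finitely many vertices of $\conv(\mathcal X)$ really do generate the whole polytope, which is automatic since $\conv(\mathcal X)$ is by definition the convex hull of the finite set $\mathcal X$ whose extreme points are among the $z^\star$. Assembling (i) and (ii) gives both inclusions and hence the claimed Hausdorff convergence.
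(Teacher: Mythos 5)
Your plan is correct, and while the outer inclusion coincides with the paper's argument (both decompose a color-respecting path into color-cycles of length at most $p$, each contributing a multiple of a point of $\mathcal X$, plus an $O(1)$ remainder -- the paper phrases the extraction as an induction on removing the first repeated-color loop, you phrase it as pigeonhole, same content), your inner inclusion takes a genuinely different route. The paper approximates $ny$ by a lattice point $x\in n\cdot\conv(\mathcal X)\cap\Z^d$, splits off integer parts (multiples of $p!$ in the colored case) of the barycentric coefficients, and absorbs the fractional leftover into a uniformly bounded number $h$ of extra steps, where $h$ is a maximal first-passage time over $s\cdot\conv(\mathcal X)\cap\Z^d$; this requires knowing that all such lattice points are reachable, and then a rearrangement of the loops by starting color with at most $p-1$ bridges. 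You instead bypass lattice points entirely: Carath\'eodory gives $y=\sum_{k}\lambda_k z_k^\star$ with at most $d+1$ cycle-averages $z_k^\star$, you realize each with $\lfloor\lambda_k(n-B)/q_k\rfloor$ repetitions of its loop and join them by $O(1)$ bridges, yielding a reachable point within a uniform $O(1)$ of $ny$ using at most $n$ steps. This is arguably cleaner for general $p$ (no auxiliary constant $h$, no appeal to reachability of arbitrary lattice points, no comparison of $A_n$ with $A_{n+h}$), at the price of the bookkeeping you flag in (i), which does close: reserve a fixed budget $B$ of steps for the at most $d+2$ bridges and accept a further $O(1/n)$ multiplicative loss. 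One small correction: $0$ need not belong to $\mathcal X$ itself (even for $p=1$ the support need not contain $0$); what your argument actually uses, in both inclusions, is that $0$ lies in the interior of $\conv(\mathcal X)$, which is exactly what the boundedness of $\mathcal P$ (the lemma preceding Proposition~\ref{prop:polytopeBase}) together with Proposition~\ref{prop:dual_polytope} provides -- so the conclusion stands, but the parenthetical justification should be rephrased accordingly.
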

\begin{proof}
First, consider the case $p=1$. We will ignore the color index. Let $s=|\chi|$ and write $\mathcal{X}=\{x_1,x_2,\dots,x_s\}$ be the support of $\mu$, which is a finite subset of $\mathbb{Z}^d$ such that the $\mathbb{N}^d$-span of $\mathcal{X}$ is all of $\mathbb{Z}^d$. Without loss of generality we can assume $0\in \mathcal X$.

If $x\in A_n$, then there exist points $x_{i_1},\dots,x_{i_n}\in \mathcal{X}$ such that $x=\sum_{j=1}^nx_{i_j}$. Thus $$\frac 1n x=\sum_{j=1}^n\frac 1n x_{i_j}\in\conv(\mathcal{X}),$$ so $\frac 1n\cdot A_n\subset\conv(\mathcal{X})$.

To complete the argument we need to show that for any $y\in\conv(\mathcal{X})$, its distance from $\frac 1n\cdot A_n$ converges to $0$ uniformly in $y$ as $n\to\infty$. If $h$ is any fixed integer, then the Hausdorff distance between $\frac 1n\cdot A_n$ and $\frac{1}{n}\cdot A_{n+h}$ converges to $0$ as $n\to\infty$ since the support of $\mu$ is finite. Thus, it is enough to show that the distance of $y$ from $\frac 1{n}\cdot A_{n+h}$ converges to $0$ uniformly in $y$ as $n\to\infty$. For the constant $h$ we take the largest number of steps the random walk needs to reach a point in $s\cdot\conv(\mathcal{X})$, i.e. $h:=\max\{n_x:x\in s\cdot\conv(\mathcal{X})\cap\mathbb{Z}^d\}$. 

Given $y\in\conv(\mathcal{X})$, there is a point $x\in n\cdot\conv(\mathcal{X})\cap\mathbb{Z}^d$ such that $\Vert y-x/n\Vert\leq \frac 1n$. Since $x\in n\cdot\conv(\mathcal{X})\cap\mathbb{Z}^d$, there exist non-negative real numbers $\alpha_1,\dots,\alpha_s$, which sum to $1$, such that $x=\sum_{j=1}^s n\alpha_i x_i$. We can write
\begin{equation}
\label{eq:separateFractionalPart}
x=\sum_{i=1}^s\lfloor n\alpha_i\rfloor x_i+\sum_{i=1}^s\{ n\alpha_i\} x_i,
\end{equation}
where $\lfloor t\rfloor$ stands for the integer part of $t$ and $\{t\}$ for $t-\lfloor t\rfloor$. Since both $x$ and the first sum on the right-hand side are from $\mathbb{Z}^d$, so is the second sum. Since each coefficient $\{n\alpha_i\}\in[0,1)$, we have that $\sum_{i=1}^s\{ n\alpha_i\}\in[0,s)$. Thus $\sum_{i=1}^s\{ n\alpha_i\} x_i\in s\cdot\conv(\mathcal{X})$, so it can be written as the sum of at most $h$ elements of $\mathcal{X}$. Since $\sum_{i=1}^s\lfloor n\alpha_i\rfloor\leq n$, we see that $x$ can be written as the sum of at most $n+h$ elements from $\mathcal{X}$, hence $x\in A_{n+h}$. It follows that $d\left(y,\frac{1}{n}\cdot A_{n+h}\right)\leq \frac 1{n}$, completing the argument in the case $p=1$.

We now move to the case of a general $p$. First, let us introduce some notation. Given colors $i$ and $j$, let $k_{i,j}$ be the minimum number of steps the random walk requires to move from color $i$ to color $j$, and let $\bar{x}_{i,j}$ be the total displacement of one such minimum walk. Let $k=\max\{k_{i,1}:i\in\{1,\dots,p\}\}$. 

Now, suppose $(x,i)\in A_n\backslash A_{n-1}$. Thus, there are steps $x_{0,1},\dots,x_{{n-1},n}$, with $x_{\ell,{\ell+1}}\in\supp\mu_{i_\ell,i_{\ell+1}}$ for some colors $i_0=1,i_1,\dots,i_{n-1},i_n=i$, such that $x=\sum_{j=1}^nx_{j-1,j}$. By adding $k_{i,1}$ more steps $x_{n,n+1},\dots,x_{n+k_i-1,n+k_i}$ with $x_{n+\ell,n+\ell+1}\in\mu_{i_{n+\ell},i_{n+\ell+1}}$, $i_{n+k_i}=1$ and $$x_{n,n+1}+\dots+x_{n+k_i-1,n+k_i}=\bar{x}_{i,1},$$ we can ensure our walk starts and ends with the first color $1$. We will show that $x+\bar{x}_{i,1}\in (n+k)\cdot\conv(\mathcal{X})$ by induction on $n$. If no colors in $\{i_0,\dots,i_{n+k}\}$ repeat, we are done. If there are repeating colors, there exist $0\leq j_1<j_1+t\leq n+k$ such that the colors $i_{j_1},i_{j_1+1},\dots,i_{j_1+t}$ are all distinct except the first and the last one. Removing the portion $x_{i_{j_1},i_{j_1+1}},\dots,x_{i_{j_1+t-1},i_{j_1+t}}$ from the walk, we get a walk of length $n-t+k_i$ which respects colors and starts and ends with color $1$, so by induction, 
$$x+\bar{x}_{i,1}-\sum_{\ell=1}^t x_{i_1+\ell-1,i_1+\ell}\in (n+k-t)\cdot\conv(\mathcal{X}).$$ Since $\sum_{\ell=1}^t x_{i_1+\ell-1,i_1+\ell}\in t\cdot\conv(\mathcal{X})$, we get $x+\bar{x}_{i,1}\in (n+k)\cdot\conv(\mathcal{X})$ which implies
\begin{equation}
\label{eq:A_n_in_conv}
\frac{1}{n+k}(x+\bar{x}_{i,1})\in \conv(\mathcal{X}).
\end{equation}

Now, suppose $x\in n\cdot\conv(\mathcal{X})$. If we ignore colors, by the case $p=1$, there exists $h\in\mathbb{N}$ such that $x$ can be written as the sum of at most $n+h$ elements of $\mathcal{X}$. In the proof of the case $p=1$, in Equation~\eqref{eq:separateFractionalPart}, instead of separating the integer part, for each term of the form $\frac{x_{1,2}+\dots +x_{q,1}}q\in\mathcal{X}$, we can separate a multiple of $p!$, thus ensuring that the coefficient of $\frac{x_{1,2}+\dots +x_{q,1}}q\in\mathcal{X}$ will be a multiple of $q$. We can thus write $x$ as a non-negative integer linear combination of at most $n+h$ terms of the form $x_{1,2}+\dots+x_{q,1}$. Each of these terms corresponds to a walk that starts and ends at the same color. We can arrange this sum, so that all the walks that start with the same color are next to each other. For example we can start with all our walks that start and end in color $1$, followed by all the walks that start and end in color $2$, etc. This will not correspond to a color respecting random walk from $A_{n+h}$, but by adding at most $p-1$ walks of the type $\bar{x}_{i,j}$ between these, each of which has length at most $k$, we get $x+\sum \bar{x}_{i,j}\in A_{n+h+k(p-1)}$, so
$$\frac{1}{n}\left(x+\sum \bar{x}_{i,j}\right)\in \frac 1n\cdot A_{n+h+k(p-1)}.$$
Combining this with \eqref{eq:A_n_in_conv} gives the desired result since $h,k$ and $\bar{x}_{i,j}$ are independent of $x$ and $n$.
\end{proof}

\subsection{Zero leakiness parameter case}
\label{subsec:zero_mass}

In this section, we no longer assume that the supports of the random walks are  bounded.
We suppose that $\lp_1 = \cdots = \lp_p = \lp \to 1$. We denote $\widetilde{\mu}_{i,j} = \lp \mu_{i,j}$, which no longer depends on $\lp$, and $\widetilde \rho = \rho\left( L \widetilde\mu \right)$. With these notations, by Proposition~\ref{prop:dual_curves} the shape of the sandpile when $N \to +\infty$ for fixed $\lp$ is the dual of the set \[\left\{\widetilde \rho = \lp\right\}.\] We assume that $\nabla \widetilde\rho(0) = 0$. It means that the process with jumps $\widetilde{\mu}$ is centered (see \cite[Prop.~2.16]{Ba-24}). If the process is not centered, then the set $\left\{\widetilde \rho  = 1\right\}$ was studied in \cite{Ba-24} and does not require any scaling.

Under these assumptions, $\widetilde\rho$ is convex, has a strict, global minimum at $0$, and we remind \cite[Prop.~2.14]{Ba-24} the asymptotics of $\widetilde{\rho}$ at $0$:
\begin{equation}
\label{eq:asymptotics_rho}
    \widetilde{\rho}(t) = 1 + \frac{1}{2} t^\intercal \sigma t + o\left( \|t\|^2 \right),
\end{equation}
where $\sigma$ is the \emph{energy matrix} from \cite{Ba-24}. When $p=1$, the energy matrix is simply the covariance matrix of $\widetilde\mu$.

\begin{prop}
\label{prop:level_set_d_to_1}
Let $\delta \in (0,1)$. There exists $\lp_0>1$ such that for all $\lp \in (1,\lp_0)$, we have 
\[ \left\{ s \in \R^d~|~ \frac{1}{2}s^\intercal \sigma s \leq 1 - \delta \right\} \subset \frac{1}{\sqrt{\lp-1}}\left\{ t \in \R^d ~|~ \widetilde{\rho}(t) \leq  \lp  \right\} \subset \left\{ s \in \R^d~|~ \frac{1}{2}s^\intercal \sigma s \leq 1 + \delta \right\}.\]
As a consequence, the rescaled set $\frac{1}{\sqrt{\lp-1}}\left\{ \widetilde{\rho} \leq  \lp  \right\}$ converges uniformly to the ellipsoid $\left\{s \in \R^d~|~ \frac{1}{2} s^\intercal \sigma s \leq 1 \right\}$ as $\lp\to 1$.
\end{prop}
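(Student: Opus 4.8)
The plan is to reduce everything to the second-order expansion \eqref{eq:asymptotics_rho} after the substitution $t=\sqrt{\lp-1}\,s$. Write $\widetilde\rho(t)=1+\tfrac12 t^\intercal\sigma t+\|t\|^2 R(t)$ with $R(0)=0$ and $R(t)\to 0$ as $t\to 0$. Since $0$ is the global minimum of $\widetilde\rho$ with $\widetilde\rho(0)=1$, the matrix $\sigma$ is the Hessian there and is positive definite (equivalently, invoke \cite[Prop.~2.16]{Ba-24}), so $\tfrac12 a^\intercal\sigma a\geq \tfrac12\lambda_{\min}(\sigma)\|a\|^2$ for every $a$, and the sets $\{\tfrac12 s^\intercal\sigma s\leq 1\pm\delta\}$ are genuine bounded ellipsoids. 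Substituting gives the exact identity
\[
\widetilde\rho(\sqrt{\lp-1}\,s)=1+(\lp-1)\Bigl(\tfrac12 s^\intercal\sigma s+\|s\|^2 R(\sqrt{\lp-1}\,s)\Bigr),
\]
so that $\widetilde\rho(\sqrt{\lp-1}\,s)\leq\lp$ is equivalent to $\tfrac12 s^\intercal\sigma s+\|s\|^2 R(\sqrt{\lp-1}\,s)\leq 1$. The entire task is to show that the remainder term is uniformly negligible (for the relevant $s$) once $\lp$ is close to $1$.

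\emph{Inner inclusion.} If $\tfrac12 s^\intercal\sigma s\leq 1-\delta$, then $\|s\|\leq M_0:=\sqrt{2/\lambda_{\min}(\sigma)}$, hence $\|\sqrt{\lp-1}\,s\|\leq M_0\sqrt{\lp-1}$. Because $R(t)\to 0$ as $t\to 0$, one may choose $\lp_0>1$ so small that $\sup_{\|t\|\leq M_0\sqrt{\lp_0-1}}|R(t)|\leq \delta/M_0^2$; then for $\lp\in(1,\lp_0)$ the left-hand side above is at most $(1-\delta)+M_0^2\cdot(\delta/M_0^2)=1$, so $\sqrt{\lp-1}\,s$ lies in the level set.

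\emph{Outer inclusion — the main obstacle.} The difficulty is that a priori $\{\widetilde\rho\leq\lp\}$ could contain points $t$ much larger than $\sqrt{\lp-1}$, where \eqref{eq:asymptotics_rho} says nothing, and after dividing by $\sqrt{\lp-1}$ such points could escape to infinity. The remedy is convexity of $\widetilde\rho$ (Proposition~\ref{prop:usual_prop_spectral_radius}, applied to $\widetilde\mu$): since $\widetilde\rho(0)=1$, the slope map $\lambda\mapsto(\widetilde\rho(\lambda t)-1)/\lambda$ is non-decreasing on $(0,1]$. Given $s$ with $\tfrac12 s^\intercal\sigma s>1+\delta$, put $\lambda=\bigl((1+\delta/2)/(\tfrac12 s^\intercal\sigma s)\bigr)^{1/2}\in(0,1)$ and $s'=\lambda s$, so $\tfrac12(s')^\intercal\sigma s'=1+\delta/2$ and hence $\|s'\|\leq M_1:=\sqrt{3/\lambda_{\min}(\sigma)}$, uniformly in $s$ (using $\delta<1$). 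Applying the identity to $s'$ and shrinking $\lp_0$ so that $\sup_{\|t\|\leq M_1\sqrt{\lp_0-1}}|R(t)|\leq\delta/(4M_1^2)$ yields $\widetilde\rho(\sqrt{\lp-1}\,s')\geq 1+(\lp-1)(1+\delta/4)$. Then, by the monotone-slope inequality applied with $t=\sqrt{\lp-1}\,s$ and this $\lambda$ (noting $\lambda\sqrt{\lp-1}\,s=\sqrt{\lp-1}\,s'$),
\[
\widetilde\rho(\sqrt{\lp-1}\,s)-1\;\geq\;\frac{\widetilde\rho(\sqrt{\lp-1}\,s')-1}{\lambda}\;\geq\;\frac{(\lp-1)(1+\delta/4)}{\lambda}\;>\;\lp-1,
\]
since $\lambda<1$; thus $\widetilde\rho(\sqrt{\lp-1}\,s)>\lp$ and $\sqrt{\lp-1}\,s$ lies outside the level set. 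Taking $\lp_0$ to be the smaller of the two thresholds proves both inclusions.

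\emph{From the inclusions to uniform convergence.} The rescaled level set, the two $\delta$-ellipsoids and the limit ellipsoid are all convex and star-shaped with respect to $0$, hence each is described in spherical coordinates by a single radius in every direction $u\in\mathbb{S}^{d-1}$. The inclusions give, for $\lp\in(1,\lp_0(\delta))$ and all $u$, that the radius $r_{u,\lp}$ of the rescaled level set satisfies $\sqrt{2(1-\delta)/(u^\intercal\sigma u)}\leq r_{u,\lp}\leq \sqrt{2(1+\delta)/(u^\intercal\sigma u)}$, whence $\bigl|r_{u,\lp}-\sqrt{2/(u^\intercal\sigma u)}\bigr|\leq M_0(\sqrt{1+\delta}-\sqrt{1-\delta})$, a bound independent of $u$ and tending to $0$ as $\delta\to 0$. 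Letting $\delta\to 0$ (and correspondingly $\lp\to 1$) gives uniform convergence of $\frac{1}{\sqrt{\lp-1}}\{\widetilde\rho\leq\lp\}$ to $\{\tfrac12 s^\intercal\sigma s\leq 1\}$ in the sense of Definition~\ref{def:cv_sets_limit_shape}, and hence convergence in Hausdorff distance by Proposition~\ref{prop:link_cv_hausdorff}.
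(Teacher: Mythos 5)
Your proof is correct, and its overall skeleton (substitute $t=\sqrt{\lp-1}\,s$, use the second-order expansion \eqref{eq:asymptotics_rho}, control the remainder uniformly on a ball of radius $O(\sqrt{\lp-1})$) matches the paper's. The one place where you take a genuinely different route is the outer inclusion. The paper argues directly: it notes that, because $\widetilde\rho$ is convex with a strict global minimum at $0$, the whole level set $\{\widetilde\rho\leq\lp\}$ is contained in the neighbourhood $V$ where the Taylor remainder is controlled once $\lp$ is close enough to $1$ (a compactness/strict-minimum fact it dispatches in one sentence), and then runs the estimate $\tfrac12 s^\intercal\sigma s\leq 1+\tfrac{\varepsilon}{2}\|s\|^2$ followed by a self-improving bound on $\|s\|$. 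You instead prove the contrapositive via the monotone-secant-slope property of the convex function $\lambda\mapsto(\widetilde\rho(\lambda t)-1)/\lambda$, rescaling any point with $\tfrac12 s^\intercal\sigma s>1+\delta$ onto the fixed ellipsoid $\tfrac12 (s')^\intercal\sigma s'=1+\delta/2$ where the expansion applies. Your version has the advantage of never needing the a priori containment of the level set in $V$, making the ``no escaping points'' step fully explicit; the paper's version is shorter because it absorbs that step into one sentence. Your concluding passage from the two inclusions to uniform convergence in the sense of Definition~\ref{def:cv_sets_limit_shape} is also correct and is more detailed than what the paper writes (the paper leaves the ``as a consequence'' implicit and only exploits the inclusions later via duality in Theorem~\ref{thm:limit_shape_ellipsoid}). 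The only trivial discrepancy is a citation: positive-definiteness of $\sigma$ is quoted in the paper from \cite[Prop.~2.8]{Ba-24}, not Prop.~2.16.
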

\begin{proof}
Since $\sigma$ is positive-definite by \cite[Prop.~2.8]{Ba-24}, $\alpha :=  \displaystyle\min_{u \in \mathbb{S}^{d-1}} u^\intercal \sigma u >0$. Let $\varepsilon \in \left(0,\frac{\alpha}{2} \right)$. The Taylor expansion \eqref{eq:asymptotics_rho} ensures that there exists a neighbourhood $V$ of $0$ such that for $t \in V$, 
\begin{equation}
\label{eq:control_little_o_epsilon}
    \left|\widetilde{\rho}(t) - 1 - \frac{1}{2} t^\intercal \sigma t\right| \leq \frac{\varepsilon}{2} \|t\|^2.
\end{equation}

Let $t \in \R^d$ such that $\widetilde\rho(t) \leq \lp$. Since $\widetilde \rho$ is convex and has a global strict minimum at $0$, if $\lp$ is close enough to $1$, then $t \in V$, which allows to use \eqref{eq:control_little_o_epsilon}. We set $s = \frac{t}{\sqrt{\lp-1}}$. Then 
\begin{equation}
\label{eq:outer_ellipsoid}
\frac{1}{2}s^\intercal \sigma s  = \frac{1}{2(\lp-1)} t^\intercal \sigma t \leq \frac{1}{\lp-1} \left( \widetilde\rho(t) - 1 + \frac{\varepsilon}{2} \|t\|^2 \right) \leq 1 + \frac{\varepsilon}{2} \|s\|^2.
\end{equation}
Writing $s = \|s\| u$ with $u \in \mathbb S^{d-1}$, \eqref{eq:outer_ellipsoid} becomes $\frac{1}{2} \|s\|^2 u^\intercal \sigma u \leq 1 + \frac{\varepsilon}{2}\|s\|^2$, so
\[\|s\|^2 \leq \frac{2}{u^\intercal \sigma u - \varepsilon} \leq \frac{4}{\alpha}, \]
and \eqref{eq:outer_ellipsoid} finally becomes 
\begin{equation*}
    \frac{1}{2}s^\intercal \sigma s \leq 1 + \frac{2}{\alpha} \varepsilon.
\end{equation*}

Conversely, let $\eta = \frac{1}{1 + \frac{\varepsilon}{\alpha}}$. Assume that $\frac{1}{2} s^\intercal \sigma s \leq \eta$. Then, writing $s = \|s\| u$ for $u \in \mathbb{S}^{d-1}$, we get $\|s\|^2 \leq \frac{2\eta}{u^\intercal \sigma u} \leq \frac{2\eta}{\alpha}$. Therefore, $\|t\| \leq \sqrt{\frac{2 \eta(\lp-1)}{\alpha}} \leq \sqrt{\frac{2(\lp-1)}{\alpha}}$. This ensures that if $\lp$ is close enough to $1$, we have $t \in V$, which allows us to use \eqref{eq:control_little_o_epsilon}. It yields
\begin{equation*}
\widetilde{\rho}(t) \leq 1 + \frac{1}{2}t^\intercal \sigma t + \frac{\varepsilon}{2} \|t\|^2 \leq 1 + (\lp-1) \eta + \varepsilon \frac{(\lp-1)\eta}{\alpha}= \lp.
\end{equation*}

To summarize, we have proved that for $\lp$ close enough to $1$, 
\[ \left\{ s \in \R^d~|~ \frac{1}{2}s^\intercal \sigma s \leq \frac{1}{1 + \frac{\varepsilon}{\alpha}} \right\} \subset \frac{1}{\sqrt{\lp-1}}\left\{ t \in \R^d ~|~ \widetilde{\rho}(t) \leq  \lp  \right\} \subset \left\{ s \in \R^d~|~ \frac{1}{2}s^\intercal \sigma s \leq 1 + \frac{2}{\alpha} \varepsilon \right\}. \]
Choosing $\varepsilon >0$ small enough so that $\frac{2}{\alpha} \varepsilon \leq \delta$ and $\frac{1}{1+ \frac{\varepsilon}{\alpha}} \geq 1 - \delta$, we obtain the announced inclusions.
\end{proof}

\begin{thm}\label{thm:limit_shape_ellipsoid}
    If $\nabla \widetilde{\rho}(0) = 0$, then the rescaled shape of the sandpile, $\sqrt{\lp-1}\cdot \mathcal{C}$, where $\mathcal{C}$ is defined in \eqref{eq:limit_shape_in_spheric_coordinates}, converges to the ellipsoid \[ \left\{ s \in \R^d ~|~ 2 s^\intercal \sigma^{-1} s \leq 1 \right\} \] as $\lp$ tends to $1$. The convergence is uniform in the sense of Definition~\ref{def:cv_sets_limit_shape} and consequently holds for the Hausdorff distance.
\end{thm}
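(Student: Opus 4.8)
The plan is to argue exactly as in the proof of Theorem~\ref{thm:limit_shape_as_convex_hull_of_returning_walks}, but with the infinite-leakiness level-set estimate replaced by Proposition~\ref{prop:level_set_d_to_1} and the conclusion transported through duality. First I would record that, since $\widetilde\mu_{i,j}=\lp\,\mu_{i,j}$ and the real Laplace transform is linear in the measure, $L\widetilde\mu(t)=\lp\,L\mu(t)$ for every $t\in\R^d$, hence $\widetilde\rho(t)=\lp\,\rho(t)$ by homogeneity of the spectral radius. Therefore $\{\rho\le 1\}=\{t\in\R^d~|~\widetilde\rho(t)\le\lp\}$ and $\partial\mathcal T=\{\widetilde\rho=\lp\}$, so by Proposition~\ref{prop:dual_curves} the curve $\mathcal C$ is the dual curve of $\{\widetilde\rho\le\lp\}$, and by Proposition~\ref{prop:dual_homotheties_rotations} the rescaled curve $\sqrt{\lp-1}\cdot\mathcal C$ is the dual curve of $\frac{1}{\sqrt{\lp-1}}\{t\in\R^d~|~\widetilde\rho(t)\le\lp\}$.

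Next I would invoke Proposition~\ref{prop:level_set_d_to_1}: under the hypothesis $\nabla\widetilde\rho(0)=0$, the rescaled level set $\frac{1}{\sqrt{\lp-1}}\{\widetilde\rho\le\lp\}$ converges, as $\lp\to 1$, to the ellipsoid $\mathcal E:=\{s\in\R^d~|~\tfrac12 s^\intercal\sigma s\le 1\}$, and is in fact squeezed between the dilates $\sqrt{1-\delta}\,\mathcal E$ and $\sqrt{1+\delta}\,\mathcal E$ for every $\delta\in(0,1)$ as soon as $\lp$ is close enough to $1$. Since $\sigma$ is positive-definite by \cite[Prop.~2.8]{Ba-24}, $\mathcal E$ is a genuine compact ellipsoid with the origin in its interior, so Proposition~\ref{prop:convergence_duality} applies and yields that the duals converge as well: $\sqrt{\lp-1}\cdot\mathcal C$ converges to $\mathcal E^{*}$, uniformly in the sense of Definition~\ref{def:cv_sets_limit_shape} (the squeezing sets being dilates of a fixed convex body, their duals squeeze $\sqrt{\lp-1}\cdot\mathcal C$ between dilates of $\mathcal E^{*}$, giving the required uniform control on the radii). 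By Proposition~\ref{prop:link_cv_hausdorff} this also gives convergence for the Hausdorff distance.

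It then remains to identify $\mathcal E^{*}$. Writing $\mathcal E=\{s\in\R^d~|~s^\intercal A s\le 1\}$ with $A=\tfrac12\sigma$ positive-definite, the support function of $\mathcal E$ is $t\mapsto\sqrt{t^\intercal A^{-1}t}$, so its polar dual is $\mathcal E^{*}=\{t\in\R^d~|~t^\intercal A^{-1}t\le 1\}=\{t\in\R^d~|~2\,t^\intercal\sigma^{-1}t\le 1\}$, which is precisely the ellipsoid in the statement. This completes the argument.

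I expect that the only delicate points are bookkeeping ones: verifying that the origin lies in the interior of each set involved (which holds here since $\rho(0)<1$ and $\sigma$ is positive-definite), that $\mathcal E$ is compact so that the duality-convergence result recalled in the appendix genuinely applies, and that the inclusions of Proposition~\ref{prop:level_set_d_to_1} dualize to inclusions between dilates of $\mathcal E^{*}$ — which they do, polarity reversing inclusions and sending $\lambda K$ to $\lambda^{-1}K^{*}$ — so that the resulting convergence is uniform and not merely Hausdorff. The dual-of-an-ellipsoid identity is a routine computation.
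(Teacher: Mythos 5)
Your proposal is correct and follows essentially the same route as the paper: express $\sqrt{\lp-1}\cdot\mathcal{C}$ as the dual of the rescaled level set via Propositions~\ref{prop:dual_curves} and~\ref{prop:dual_homotheties_rotations}, squeeze that level set between dilates of the ellipsoid using Proposition~\ref{prop:level_set_d_to_1}, pass to duals with Proposition~\ref{prop:convergence_duality}, and identify the dual ellipsoid (the paper cites Proposition~\ref{prop:dual_ellipsoid} where you compute the polar via the support function, an equivalent routine step). No gaps.
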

\begin{proof}
Using Propositions~\ref{prop:dual_curves}~and~\ref{prop:dual_homotheties_rotations}, $\sqrt{\lp-1}\cdot \mathcal{C}$ is the dual curve of $\frac{1}{\sqrt{\lp-1}}\left\{ \widetilde{\rho} \leq  \lp  \right\}$. The inclusions of Proposition~\ref{prop:level_set_d_to_1} 
can be written as 
\[ \sqrt{1 - \delta}\left\{ s \in \R^d~|~ \frac{1}{2}s^\intercal \sigma s \leq 1  \right\} \subset \frac{1}{\sqrt{\lp-1}}\left\{ t \in \R^d ~|~ \widetilde{\rho}(t) \leq  \lp  \right\} \subset \sqrt{1 + \delta}\left\{ s \in \R^d~|~ \frac{1}{2}s^\intercal \sigma s \leq 1 \right\}.\]
Therefore, Proposition~\ref{prop:convergence_duality}, shows that $\sqrt{\lp-1}\cdot \mathcal{C}$ converges to the set \[\left\{ s \in \R^d ~|~ \frac{1}{2} s^\intercal \sigma s \leq 1 \right\}^* = \left\{ s \in \R^d ~|~ 2 s^\intercal \sigma^{-1} s \leq 1 \right\}, \] according to the computation of duals of ellipsoids in Proposition~\ref{prop:dual_ellipsoid}.
\end{proof}

\appendix{}
\section{Dual curves and dual convex sets}
\label{app:convexity}

In this appendix, we remind the definition of dual curves and give the properties that are used through the article. We denote by $K$ a compact, convex subset of $\R^d$ and $\mathcal{C} = \partial K$. Keep in mind that the cases we are interested in are the case where $K$ is a polytope and the smooth case, that is the case when $K$ has the form \[K = \left\{ x \in \R^d~|~f(x) \leq 1 \right\}\] for some smooth, convex function $f$ that takes some values strictly larger than $1$.

\begin{defi}
Let $x \in \mathcal C$. The normal cone to $K$ at $x$ is the set 
\begin{equation*}
    N_{K}(x) = \left\{ n \in \R^d~|~\forall y \in K,\ n \cdot (y-x) \leq 0 \right\}.
\end{equation*}
\end{defi}

\begin{prop}
In the smooth case, the normal cone at a point $x \in \mathcal C$ consists of the non-negative multiples of $\nabla f (x)$.
\end{prop}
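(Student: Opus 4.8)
The plan is to verify the two inclusions $\{\lambda\nabla f(x):\lambda\ge 0\}\subseteq N_K(x)$ and $N_K(x)\subseteq\{\lambda\nabla f(x):\lambda\ge 0\}$, after first recording two elementary facts about a boundary point $x\in\mathcal C$: that $f(x)=1$ and that $\nabla f(x)\neq 0$. The equality $f(x)=1$ follows from continuity of $f$, since the open set $\{f<1\}$ is contained in $\mathring K$ while the open set $\{f>1\}$ is disjoint from $K$, so $\partial K\subseteq\{f=1\}$. That $\nabla f(x)\neq 0$ is part of the non-degeneracy built into the smooth case (it holds in all the applications made in the paper, where the Hessian of $f$ is positive definite on $\partial K$); granting it, the point $z:=x-t_0\nabla f(x)$ satisfies $f(z)=1-t_0\|\nabla f(x)\|^2+o(t_0)<1$ for $t_0>0$ small, hence $z\in K$ and $\nabla f(x)\cdot(z-x)=-t_0\|\nabla f(x)\|^2<0$; this strict inequality is what will fix the sign of $\lambda$ at the very end.

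For the inclusion $\supseteq$, I would take $\lambda\ge 0$ and $y\in K$, and combine the first-order convexity inequality $f(y)\ge f(x)+\nabla f(x)\cdot(y-x)$ with $f(y)\le 1=f(x)$ to get $\nabla f(x)\cdot(y-x)\le 0$, hence $\lambda\nabla f(x)\cdot(y-x)\le 0$; since $y\in K$ was arbitrary this says $\lambda\nabla f(x)\in N_K(x)$.

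For the reverse inclusion — the heart of the argument — let $n\in N_K(x)$ and set $H:=\{v\in\R^d:\nabla f(x)\cdot v=0\}$, which is a hyperplane because $\nabla f(x)\neq 0$. The plan is to show $n\perp H$. Fix $v\in H$ and apply the implicit function theorem to $\Phi(t,s):=f(x+tv+s\nabla f(x))$, using $\Phi(0,0)=1$ and $\partial_s\Phi(0,0)=\|\nabla f(x)\|^2\neq 0$: one obtains a $C^1$ function $s(\cdot)$ near $0$ with $s(0)=0$ and $\Phi(t,s(t))=1$, and, since $\nabla f(x)\cdot v=0$, also $s'(0)=0$. Then $c(t):=x+tv+s(t)\nabla f(x)$ is a $C^1$ curve with $c(0)=x$, $c'(0)=v$, lying in $\{f=1\}\subseteq K$. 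Substituting $c(t)$ into the inequality $n\cdot(c(t)-x)\le 0$, dividing by $t$, and letting $t\to 0^{+}$ and then $t\to 0^{-}$ (using $s(t)/t\to s'(0)=0$) forces $n\cdot v=0$. As $v\in H$ was arbitrary and $H^{\perp}=\spn(\nabla f(x))$, we conclude $n=\lambda\nabla f(x)$ for some $\lambda\in\R$; finally $\lambda\,\nabla f(x)\cdot(z-x)=n\cdot(z-x)\le 0$ combined with $\nabla f(x)\cdot(z-x)<0$ yields $\lambda\ge 0$.

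The only genuine obstacle is producing, for each tangent direction $v\in H$, a curve through $x$ that stays in $K$ and has velocity exactly $v$; the implicit function theorem does this cleanly precisely because $\nabla f(x)\neq 0$, which is why that fact is isolated at the start. (Equivalently, one could note that $\partial K$ coincides near $x$ with the smooth hypersurface $\{f=1\}$ and invoke the standard fact that the normal cone at a $C^1$ boundary point is the ray spanned by the outward normal, but the curve argument keeps the appendix self-contained.) Everything else is the routine supporting-hyperplane bookkeeping for convex sets.
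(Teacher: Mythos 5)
Your proof is correct and follows the same skeleton as the paper's (gradient is normal; every normal is a scalar multiple of the gradient; the scalar is non-negative), with three local differences worth recording. For the easy inclusion you use the convexity gradient inequality $f(y)\geq f(x)+\nabla f(x)\cdot(y-x)$ directly, whereas the paper argues by contradiction with a Taylor expansion along the segment $[x,y]$; yours is slightly cleaner. For the key step the paper simply cites the Lagrange multiplier theorem for the maximization of $y\mapsto n\cdot y$ over the level set $\{f=1\}$, while you re-prove that statement by building, for each $v\in\nabla f(x)^{\perp}$, a $C^1$ curve in $\{f=1\}\subset K$ with velocity $v$ via the implicit function theorem and then testing the normal-cone inequality along it from both sides; this is more work but keeps the argument self-contained, and it is sound (the point $s'(0)=0$, hence $s(t)/t\to 0$, is exactly what is needed). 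For the sign of $\lambda$ you evaluate the normal-cone inequality at the explicit interior point $z=x-t_0\nabla f(x)$, whereas the paper derives a contradiction with $K$ having non-empty interior; both work. The one place your write-up is genuinely weaker is the claim $\nabla f(x)\neq 0$, which you take as an unproved ``non-degeneracy'' hypothesis: the paper derives it in one line from convexity, namely a critical point of a convex $f$ would be a global minimum, which is impossible since $f(x)=1$ while $f$ takes values strictly below $1$ on the interior of $K$; you should close your argument with that observation rather than leave it as an assumption.
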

\begin{proof}
We first prove that $\nabla f(x) \in N_K(x)$. Let $y \in K$. If $\nabla f(x) \cdot (y-x) >0$, then a Taylor expansion shows that for $\varepsilon >0$ small enough, $f(x + \varepsilon(y-x)) > f(x) = 1$, therefore $x + \varepsilon(y-x) \notin K$, which contradicts the convexity of $K$. Therefore, $\nabla f(x) \cdot (y-x) \leq 0$, i.e., $\nabla f(x) \in N_K(x)$, so its non-negative multiples also belong to the cone.

Conversely, let $n \in N_K(x) \setminus \{0\}$. By definition of the normal cone and of $K = f^{-1}((-\infty, 1])$, we have \[g(x) = \max_{f(y) = 1} g(y),\] where $g(y) = n \cdot y$. Besides, $\nabla f(x) \neq 0$ because $f$ is convex and we assumed that $1$ is not the minimum of $f$. Therefore, the Lagrange multiplier theorem ensures that $\nabla g(x) = n$ is a multiple of $\nabla f(x)$, which we write $n = \lambda \nabla f(x)$. It remains to prove that $\lambda > 0$. We know that $\nabla f(x) \in N_K(x)$, so if $\lambda \leq 0$, we would have $n \cdot (y-x) = \lambda \nabla f(x) \cdot (y-x) \geq 0$ and $n \cdot (y-x) \leq 0$ for all $y \in K$, so $K \subset x + n^\perp$. But we assumed that $f$ takes values strictly smaller than $1$, so $K$ has a non-empty interior and cannot be a subset of a hyperplane.
\end{proof}

\begin{lemma}
\label{lem:normal_cones}
\begin{enumerate}
    \item Normal cones are closed subsets of $\R^d$.
    \item For every $x \in \mathcal{C}$, $N_K(x) \neq \{0\}$.
    \item \label{it:lem:normal_cones_3} Let $u \in \R^d \setminus \{0\}$. There exists $x \in \mathcal{C}$ such that $u \in N_K(x)$.
    \item For every $t >0$ and $x \in \mathcal{C}$, $N_{tK}(tx) = N_{K}(x)$.
    \item For every $U \in O_d(\R)$ and $x \in \mathcal{C}$, $N_{UK}(Ux) = U N_{K}(x)$.
\end{enumerate}

\end{lemma}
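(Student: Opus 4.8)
The plan is to prove the five items one at a time, each being a short consequence of the convexity and compactness of $K$. For item~1, I would simply write $N_K(x)=\bigcap_{y\in K}\{n\in\R^d~|~n\cdot(y-x)\leq 0\}$, an intersection of closed half-spaces indexed by $y\in K$, hence a closed (in fact convex) cone. For item~2, the key input is the supporting hyperplane theorem: since $x$ lies on the boundary $\mathcal C=\partial K$ of the convex body $K$ (which, as recalled in the paragraph opening this appendix, has non-empty interior), there is a nonzero $n$ with $n\cdot(y-x)\leq 0$ for all $y\in K$, and such $n$ belongs to $N_K(x)\setminus\{0\}$. For item~3, given $u\neq 0$ I would use compactness of $K$ and continuity of $y\mapsto u\cdot y$ to choose a maximizer $x$ of this linear functional on $K$; then $u\cdot(y-x)\leq 0$ for every $y\in K$, so $u\in N_K(x)$, and $x$ must lie on $\mathcal C$ since otherwise $x+\varepsilon u\in K$ for small $\varepsilon>0$ would contradict maximality (as $u\cdot(x+\varepsilon u)=u\cdot x+\varepsilon\|u\|^2>u\cdot x$).

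Items~4 and~5 are purely a matter of unwinding the definition. For a scaling $t>0$, writing a generic point of $tK$ as $ty$ with $y\in K$, the inequality $n\cdot(ty-tx)\leq 0$ is equivalent to $n\cdot(y-x)\leq 0$ because $t>0$; ranging over all $y\in K$ gives $N_{tK}(tx)=N_K(x)$. For $U\in O_d(\R)$, writing a generic point of $UK$ as $Uy$ with $y\in K$, one has $n\cdot(Uy-Ux)=(U^\intercal n)\cdot(y-x)$, so $n\in N_{UK}(Ux)$ if and only if $U^\intercal n\in N_K(x)$; since $U^\intercal=U^{-1}$ for $U$ orthogonal, this says exactly $n\in U\,N_K(x)$.

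I do not expect a genuine obstacle here: the only step that is not a one-line manipulation is item~2, which rests on the supporting hyperplane theorem (equivalently, a Hahn--Banach separation of $x$ from the interior of $K$), and item~3's need for compactness to guarantee the maximizer exists; everything else is bookkeeping with the defining inequality of the normal cone.
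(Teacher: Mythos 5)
Your proposal is correct, and items~1, 3, 4 and~5 follow the paper's proof essentially verbatim: the same intersection-of-closed-half-spaces argument for closedness, the same maximizer-of-$y\mapsto u\cdot y$ argument (including the same perturbation $x+\varepsilon u$ to show the maximizer is a boundary point), and the same unwinding of the definition for the scaling and rotation identities. The one place you diverge is item~2: you invoke the supporting hyperplane theorem as a black box, whereas the paper proves the existence of a nonzero normal from scratch via the Hilbert projection theorem --- it takes a sequence $z_k\in\R^d\setminus K$ converging to $x$, forms the unit vectors $u_k=\frac{z_k-p_K(z_k)}{\|z_k-p_K(z_k)\|}$, uses the variational characterization of the projection to get $u_k\cdot(y-p_K(z_k))\leq 0$ for all $y\in K$, and passes to a subsequential limit. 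The two routes are equivalent (the paper's argument is in effect a proof of the supporting hyperplane theorem in finite dimensions), so the choice is purely one of self-containedness versus brevity. One small remark: you justify the applicability of the supporting hyperplane theorem by the non-emptiness of $\mathring{K}$, but this is not needed --- the theorem holds at any boundary point of a convex set in $\R^d$ --- and the lemma as stated does not assume $0\in\mathring{K}$ (that hypothesis is only imposed later in the appendix, from Definition~\ref{def:dual_curve} onward), so it is cleaner not to lean on it.
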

\begin{proof}
Let $x \in \mathcal{C}$. Then \[N_K(x) = \bigcap_{y \in K} g_y^{-1}\left( (-\infty, 0] \right),\] where $g_y : n \in \R^d \mapsto n \cdot (y-x)$ is continuous. This shows that $N_K(x)$ is the intersection of closed subsets of $\R^d$, and thus it is a closed subset of $\R^d$.

We denote by $p_K$ the projection onto the closed convex set $K$, which is well defined and continuous according to the Hilbert projection theorem. The point $x$ belongs to $\mathcal{C} = \partial K$, so there exists a sequence $\left( z_k \right)_{k\geq 0}$ of $\R^d \setminus K$ that tends to $x$. Let $u_k = \frac{z_k - p_K(z_k)}{\left\|z_k - p_K(z_k)\right\|}$. The Hilbert projection theorem ensures that
\begin{equation}
\label{eq:normal_cone_not_singleton}
u_k \cdot (y-p_K(z_k)) \leq 0 \text{ for every $y \in K$}.
\end{equation}
By compactness of the unit sphere, there is a subsequence of $\left(u_k\right)_{k\geq 0}$ that converges to a unit vector $u$. Besides, $p_K(z_k)$ tends to $p_K(x) = x$ because $p_K$ is continuous. Therefore, taking the subsequential limit of \eqref{eq:normal_cone_not_singleton}, we get $u \cdot (y-x) \leq 0$ for all $y \in K$, hence $u \in N_K(x) \setminus \{0\}$.

Let $x \in K$ such that $u \cdot x = \max_{y \in K} u \cdot y$. Then $x \in \mathcal{C} = \partial K$, because if $x \in \mathring{K}$, then for $t >0$ small enough, $x+tu \in K$ and $u \cdot (x+tu) = u \cdot x + t > u\cdot x$, which contradicts the maximality of $u \cdot x$. By maximality of $u \cdot x$, we have $u \in N_K(x)$.

We have 
\begin{align*}
    n \in N_{tK}(tx) & \Longleftrightarrow \forall w \in tK,\quad n \cdot(w - tx)\leq 0\\
    & \Longleftrightarrow \forall y \in K,\quad n \cdot(ty - tx)\leq 0\\
    &\Longleftrightarrow \forall y \in K,\quad n \cdot(y - x)\leq 0\\
    & \Longleftrightarrow n \in N_K(x).
\end{align*}
Similarly 
\begin{align*}
    n \in N_{UK}(Ux) & \Longleftrightarrow \forall w \in UK,\quad n \cdot(w - Ux)\leq 0\\
    & \Longleftrightarrow \forall y \in K,\quad n \cdot(Uy - Ux)\leq 0\\
    &\Longleftrightarrow \forall y \in K,\quad U^{-1}n \cdot(y - x)\leq 0\\
    & \Longleftrightarrow U^{-1}n \in N_K(x)\\
    & \Longleftrightarrow n \in U N_K(x).\qedhere
\end{align*}
\end{proof}

\begin{defi}
Let $x \in \mathcal C$. The tangent hyperplanes to $K$ at $x$ are the hyperplanes
\[H_n := \left\{ y \in \R^d ~|~n \cdot y = n \cdot x  \right\}\]
for $n \in N_K(x) \setminus \{0\}$. If $n \cdot x > 0$, the \emph{normalized} equation of $H_n$ is  $\frac{n}{n \cdot x} \cdot y = 1$.
\end{defi}
In the smooth case, we recover the usual definition of the tangent space.

\begin{lemma}
\label{lem:existence_normalized_equation}
If $0$ lies in the interior of $K$, then every tangent hyperplane has a normalized equation, that is:
\[\forall x \in \mathcal C,\quad\forall n \in N_K(x) \setminus \{0\},\quad n \cdot x > 0.\]
\end{lemma}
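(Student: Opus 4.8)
The plan is to extract the needed strict inequality from the defining property of the normal cone, using \emph{two} pieces of information: that $0\in K$ (which already gives a non-strict inequality) and that $0$ is in fact an \emph{interior} point of $K$ (which is what upgrades it to a strict one).

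First I would fix $x\in\mathcal C$ and $n\in N_K(x)\setminus\{0\}$. By the definition of $N_K(x)$, $n\cdot(y-x)\leq 0$ for every $y\in K$. Applying this with $y=0\in K$ yields $-n\cdot x\leq 0$, i.e.\ $n\cdot x\geq 0$, so the only thing left is to rule out equality. I would then argue by contradiction: assume $n\cdot x=0$. The inequality above then reads $n\cdot y\leq n\cdot x=0$ for all $y\in K$, i.e.\ $K$ is contained in the closed half-space $\{y\in\R^d~|~n\cdot y\leq 0\}$. Since $0\in\mathring K$, there is $\varepsilon>0$ with $\bar B(0,\varepsilon)\subset K$; taking $y=\varepsilon\, n/\|n\|\in K$ gives $n\cdot y=\varepsilon\|n\|>0$, a contradiction. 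Hence $n\cdot x>0$, which is the claim, and the normalized equation $\frac{n}{n\cdot x}\cdot y=1$ of $H_n$ is well defined.

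The argument is entirely elementary, so there is no real obstacle; the only point that deserves attention is exactly the place where the interior hypothesis is used, namely the final contradiction step — this is precisely the assumption the statement is designed to isolate, and it fails as soon as $0\in\partial K$. As an alternative one could invoke Lemma~\ref{lem:normal_cones} and phrase the contradiction in terms of a supporting hyperplane of $K$ passing through both $x$ and $0$, which would force $K$ into a hyperplane and contradict $\mathring K\neq\varnothing$; but the direct computation above is the most economical route and is the one I would include.
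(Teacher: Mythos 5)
Your proof is correct and uses the same key idea as the paper, namely testing the normal-cone inequality against a small positive multiple of $n$, which lies in $K$ because $0\in\mathring K$. The paper just does this in one step (plugging $y=tn$ into $n\cdot(y-x)\leq 0$ gives $n\cdot x\geq t\|n\|^2>0$ directly), whereas you phrase it as a contradiction; the substance is identical.
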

\begin{proof}
Let $x \in \mathcal C$ and $n \in N_K(x) \setminus \{0\}$.
For $t>0$ small enough, $tn \in K$ because $0 \in \mathring{K}$, so we have $n \cdot (tn -x) \leq 0$, that is, $n \cdot x \geq t \|n\|^2 >0$.
\end{proof}

\begin{defi}
\label{def:dual_curve}
We assume that $0 \in \mathring{K}$. Let $x \in C$. The dual points to $K$ at $x$ are the elements of 
\begin{equation}
\label{eq:dual_points}
   x^* = \left\{ \frac{n}{n \cdot x} ~|~ n \in N_K(x)\setminus\{0\} \right\}. 
\end{equation}
In other words, it is the set of $n$ such that the normalized equations of tangent hyperplanes to $K$ at $x$ are the $\{n \cdot y = 1\}$, that is 
\begin{equation*}
%\label{eq:dual_points_bis}
x^* = \left\{ z \in N_K(x)~|~ z \cdot x = 1 \right\}.
\end{equation*}
The set $\mathcal{C}^* = \bigcup_{x \in C} x^*$ is the dual curve of $\mathcal{C}$.
\end{defi}

Let us do two remarks on Definition~\ref{def:dual_curve}. First, in the definition \eqref{eq:dual_points} of $x^*$, one can consider only \emph{unit} vectors $n$ of the normal cone, which we will often do in the proofs. Second, one can get rid of the condition $0 \in \mathring{K}$ to define the dual curve, but the dual curve is then defined in a projective space, which we do not need here.

From now on, we will always assume that $0 \in \mathring{K}$.

\begin{lemma}
\label{prop:compactness_dual}
The dual curve $\mathcal{C}^*$ is compact.
\end{lemma}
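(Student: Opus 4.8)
The plan is to prove that $\mathcal{C}^*\subset\R^d$ is closed and bounded, and then invoke Heine--Borel. Both properties come essentially for free from the hypothesis $0\in\mathring K$ together with the compactness of $K$, so I expect this to be a short argument with no real obstacle; the one point requiring a little care is the subsequence extraction in the closedness part and checking that the limit automatically lies in the correct dual set.

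\emph{Boundedness.} Since $0\in\mathring K$, there is $\varepsilon>0$ with $\bar B(0,\varepsilon)\subset K$. Let $x\in\mathcal C$ and let $n\in N_K(x)$ be a unit vector. Then $\varepsilon n\in\bar B(0,\varepsilon)\subset K$, so the defining inequality of the normal cone gives $n\cdot(\varepsilon n-x)\leq 0$, i.e.\ $n\cdot x\geq \varepsilon\|n\|^2=\varepsilon$. Hence every element of $x^*$ of the form $z=\frac{n}{n\cdot x}$ with $n$ a unit vector satisfies $\|z\|=\frac{1}{n\cdot x}\leq\frac1\varepsilon$. By the remark following Definition~\ref{def:dual_curve}, it suffices to use unit normal vectors, so $\mathcal{C}^*\subset\bar B\bigl(0,\tfrac1\varepsilon\bigr)$.

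\emph{Closedness.} Let $(z_k)_{k\geq0}$ be a sequence in $\mathcal{C}^*$ with $z_k\to z$ in $\R^d$; I want $z\in\mathcal{C}^*$. For each $k$ pick $x_k\in\mathcal C$ with $z_k\in x_k^*$, that is, $z_k\in N_K(x_k)$ and $z_k\cdot x_k=1$. Since $K$ is compact, so is $\mathcal C=\partial K$, and after passing to a subsequence we may assume $x_k\to x\in\mathcal C$. Passing to the limit in $z_k\cdot x_k=1$ (the dot product being continuous and both factors converging) yields $z\cdot x=1$; in particular $z\neq0$. Moreover, for every fixed $y\in K$ we have $z_k\cdot(y-x_k)\leq0$ for all $k$, so letting $k\to\infty$ gives $z\cdot(y-x)\leq0$, i.e.\ $z\in N_K(x)$. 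Therefore $z\in\{w\in N_K(x)\mid w\cdot x=1\}=x^*\subset\mathcal{C}^*$, proving $\mathcal{C}^*$ is closed.

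\emph{Conclusion.} Being closed and bounded in $\R^d$, the dual curve $\mathcal{C}^*$ is compact.
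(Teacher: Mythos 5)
Your proof is correct and follows essentially the same route as the paper: boundedness via $n\cdot x\geq\varepsilon$ from a small ball around the origin inside $K$, closedness by a compactness/limit-passing argument, then Heine--Borel. The only (harmless) difference is in the closedness step, where you extract a subsequence of the $x_k$ alone and use the characterization $x^*=\{z\in N_K(x)\mid z\cdot x=1\}$ directly, whereas the paper also extracts a convergent subsequence of unit normals $u_k\in N_K(x_k)\cap\mathbb{S}^{d-1}$; your version is marginally leaner but not a different argument.
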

\begin{proof}
Let $r>0$ such that $\bar{B}(0,r) \subset K$. Let $z = \frac{u}{u \cdot x} \in \mathcal{C}^*$, where $x \in \mathcal{C}$ and $u \in N_K(x) \cap \mathbb S^{d-1}$. Then \[u \cdot x = u \cdot (ru + x-ru) = r + u \cdot (x-ru).\] But $ru \in K$ and $u \in N_K(x)$, so  $u \cdot (x - ru) \geq 0$, hence $u \cdot x \geq r$, i.e., $\left\| z \right\| \leq \frac{1}{r}$. We proved that $\mathcal{C}^* \subset \bar{B}\left(0, \frac{1}{r} \right)$, so $\mathcal{C}^*$ is bounded.

Let $z \in \overline{\mathcal{C}^*}$ and $\left( z_k \right)_{k\geq 0}$ a sequence of $\mathcal{C}^*$ that tends to $z$. For all $k\geq 0$, there exist $x_k \in \mathcal{C}$ and $u_k \in N_K(x_k) \cap \mathbb{S}^{d-1}$ such that $z_k = \frac{u_k}{u_k \cdot x_k}$. The sequence $\left(x_k, u_k\right)_{k\geq 0}$ lies in the compact set $\mathcal{C} \times \mathbb{S}^{d-1}$, so it has a subsequential limit $(x,u) \in \mathcal{C} \times \mathbb S^{d-1}$ and $z = \frac{u}{u\cdot x}$. Finally, taking the subsequential limit of the inequalities $u_k \cdot (y-x_k) \leq 0$ for $y \in K$ leads to $u \cdot (y-x) \leq 0$, hence $u \in N_K(x)$, and thus $z \in x^*$. Therefore, $z \in \mathcal{C}^*$, so $\mathcal{C}^*$ is closed.

In conclusion, $\mathcal{C}^*$ is a bounded, closed subset of $\R^d$, so it is compact.
\end{proof}

\begin{lemma}\label{lem:frontiere_convexe_interieur_non_vide}
Let $\mathsf{K}$ be a compact, convex subset of $\R^d$ such that $0 \in \mathring{\mathsf{K}}$. Let $z \in \partial \mathsf{K}$. Then for all $t >1$, $tz \notin \mathsf K$.
\end{lemma}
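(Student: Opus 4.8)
The plan is to argue by contradiction, exploiting the fact that a convex combination of a point of $\mathsf{K}$ with an interior point of $\mathsf{K}$ is again interior, as soon as the interior point carries positive weight. Suppose there is $t>1$ with $tz\in\mathsf K$. Write
\[
z=\frac1t\,(tz)+\Bigl(1-\frac1t\Bigr)\cdot 0,
\]
so $z$ is a convex combination of $tz\in\mathsf K$ and $0\in\mathring{\mathsf K}$, with the positive weight $1-\frac1t$ on the interior point. I claim this forces $z\in\mathring{\mathsf K}$, contradicting $z\in\partial\mathsf K$.

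To make the claim precise, I would first use $0\in\mathring{\mathsf K}$ to fix $r>0$ with $\bar B(0,r)\subset\mathsf K$, and set $r':=\frac{t-1}{t}\,r>0$. Then for any $w$ with $\|w-z\|<r'$, define $v:=\frac{t}{t-1}(w-z)$, so that $\|v\|<r$, hence $v\in\bar B(0,r)\subset\mathsf K$, and by construction
\[
w=z+\Bigl(1-\frac1t\Bigr)v=\frac1t\,(tz)+\Bigl(1-\frac1t\Bigr)v
\]
is a convex combination of the two points $tz\in\mathsf K$ and $v\in\mathsf K$. Convexity of $\mathsf K$ gives $w\in\mathsf K$. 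Since $w$ was an arbitrary point of $B(z,r')$, this shows $B(z,r')\subset\mathsf K$, i.e.\ $z\in\mathring{\mathsf K}$, which is the desired contradiction. Therefore $tz\notin\mathsf K$ for every $t>1$.

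I do not expect any genuine obstacle here: the statement is a standard convexity fact, and the only point requiring a line of care is turning the heuristic ``interior point in a convex combination $\Rightarrow$ the combination is interior'' into the explicit radius $r'=\frac{t-1}{t}r$ above. (Compactness of $\mathsf K$ is not even needed for this lemma, only convexity and $0\in\mathring{\mathsf K}$.)
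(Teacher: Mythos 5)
Your proof is correct and follows essentially the same route as the paper's: both fix $r>0$ with $B(0,r)\subset\mathsf K$ and show that $tz\in\mathsf K$ would force $B\bigl(z,(1-\tfrac1t)r\bigr)\subset\mathsf K$ by writing each nearby point as a convex combination of $tz$ and a point of $B(0,r)$, contradicting $z\in\partial\mathsf K$. Your side remark that compactness is not needed is also accurate.
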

\begin{proof}
Let $r >0$ such that $B(0,r) \subset \mathsf K$.
We claim that if there exists $t >1$ such that $tz \in \mathsf K$, then $B\left(z,\left( 1-\frac{1}{t} \right) r\right) \subset \mathsf K$. Indeed, let $u \in \mathbb{S}^{d-1}$ and $0 \leq \varepsilon < \left( 1-\frac{1}{t} \right) r$. We have $z + \varepsilon u = \frac{1}{t} t z + \left( 1-\frac{1}{t} \right) \frac{\varepsilon}{1-\frac{1}{t}} u$, which is a convex combination of $tz$ and $\frac{\varepsilon}{1 - \frac{1}{t}}u$, with $tz \in \mathsf K$ and $\frac{\varepsilon}{1 - \frac{1}{t}}u \in \mathsf K$, so $z + \varepsilon u \in \mathsf K$, which proves $B\left(z,\left( 1-\frac{1}{t} \right) r\right) \subset \mathsf K$. This contradicts $z \in \partial \mathsf K$. 
\end{proof}

\begin{prop}
\label{prop:duality_preserves_convexity}
Duality preserves convexity, that is, there exists a compact convex subset $K^*$ of $\R^d$ such that  $0 \in \mathring{K^*}$ and $\mathcal{C}^* = \partial (K^*)$.
The set $K^*$ is called the dual set of $K$.
\end{prop}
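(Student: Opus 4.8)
The plan is to take $K^*$ to be the polar dual of $K$, namely
\[
K^* := \bigl\{ y \in \R^d ~|~ x \cdot y \leq 1 \text{ for all } x \in K \bigr\},
\]
and to verify that it is compact and convex with $0\in\mathring{K^*}$, and that its boundary is exactly $\mathcal{C}^*$. Convexity is immediate since $K^*$ is an intersection of closed half-spaces. For the topological properties I would use that $0\in\mathring{K}$, so there are radii $0<r\leq R$ with $\bar{B}(0,r)\subset K\subset \bar{B}(0,R)$: testing the defining inequality of $K^*$ against $x=r\,y/\|y\|\in K$ (for $y\neq 0$) forces $\|y\|\leq 1/r$, so $K^*$ is bounded, hence compact (it is clearly closed); and $K\subset \bar{B}(0,R)$ gives $\bar{B}(0,1/R)\subset K^*$ by Cauchy--Schwarz, so $0\in\mathring{K^*}$.

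It then remains to prove $\partial(K^*)=\mathcal{C}^*$, which I would do by a double inclusion, recalling from Definition~\ref{def:dual_curve} that $\mathcal{C}^*=\bigcup_{x\in\mathcal{C}}\{z\in N_K(x)~|~z\cdot x=1\}$. For $\mathcal{C}^*\subset\partial(K^*)$: if $z\in N_K(x)$ with $z\cdot x=1$ for some $x\in\mathcal{C}$, then $z\cdot y=z\cdot x+z\cdot(y-x)\leq 1$ for every $y\in K$, so $z\in K^*$; and $z$ cannot lie in $\mathring{K^*}$, since otherwise $z+\varepsilon x/\|x\|\in K^*$ for small $\varepsilon>0$ (here $x\neq 0$ because $0\in\mathring{K}$), contradicting $(z+\varepsilon x/\|x\|)\cdot x=1+\varepsilon\|x\|\leq 1$. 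For the reverse inclusion: let $z\in\partial(K^*)$, so $z\neq 0$ and $z\cdot x\leq 1$ on $K$. If $\sup_{x\in K}z\cdot x$ were some $c<1$, then $z/c\in K^*$ with $z/c=(1/c)z$ and $1/c>1$, contradicting Lemma~\ref{lem:frontiere_convexe_interieur_non_vide} applied to $\mathsf{K}=K^*$; hence $\sup_{x\in K}z\cdot x=1$, and by compactness of $K$ this supremum is attained at some $x_0\in K$. This $x_0$ lies on $\mathcal{C}$ (if it were interior, $x_0+\varepsilon z\in K$ for small $\varepsilon$, yet $z\cdot(x_0+\varepsilon z)=1+\varepsilon\|z\|^2>1$), and $z\cdot(y-x_0)\leq 0$ for all $y\in K$, i.e.\ $z\in N_K(x_0)$ with $z\cdot x_0=1$; thus $z\in\mathcal{C}^*$.

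I do not expect a serious obstacle: this is the classical theory of polar duality, and all the geometric facts needed (normal cones are nonempty, a boundary point of a compact convex body with $0$ in its interior cannot be dilated back into the body) are already available as Lemmas~\ref{lem:normal_cones} and~\ref{lem:frontiere_convexe_interieur_non_vide}. The only point requiring genuine care is the equality of boundaries $\partial(K^*)=\mathcal{C}^*$, and more precisely the argument that the linear functional $y\mapsto z\cdot y$ attains the value $1$ on $K$ (rather than something strictly smaller) exactly when $z$ sits on $\partial(K^*)$ — this is where Lemma~\ref{lem:frontiere_convexe_interieur_non_vide} together with the compactness of $K$ does the work. One could also record, as a byproduct, that this $K^*$ coincides with the set whose boundary was shown to be compact in Lemma~\ref{prop:compactness_dual}.
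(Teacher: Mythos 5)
Your proof is correct, but it takes a genuinely different route from the paper's. The paper defines $K^*$ as the convex hull of $\mathcal{C}^*$, gets compactness from Lemma~\ref{prop:compactness_dual}, gets $0\in\mathring{K^*}$ from item~\ref{it:lem:normal_cones_3} of Lemma~\ref{lem:normal_cones} applied to the directions $\pm e_i$, and then proves $\partial(K^*)=\mathcal{C}^*$ by decomposing a boundary point as a convex combination of dual points and estimating $z\cdot x$ term by term. You instead start from the polar body $\{y~|~x\cdot y\leq 1\ \forall x\in K\}$, for which convexity, closedness, boundedness and $0\in\mathring{K^*}$ are immediate from the two sandwiching balls, and then identify its boundary with $\mathcal{C}^*$ by showing that $z\in\partial(K^*)$ exactly when the functional $y\mapsto z\cdot y$ attains the value $1$ on $K$ — which is the same place where the paper invokes Lemma~\ref{lem:frontiere_convexe_interieur_non_vide}, and your use of it there is the right one. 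Your version buys a cleaner start (no need for the compactness of $\mathcal{C}^*$ or for Carath\'eodory-type convex combinations) and an explicit half-space description of $K^*$ that is consistent with how duals are computed later (Propositions~\ref{prop:dual_ellipsoid} and~\ref{prop:dual_polytope}); the paper's version stays entirely inside its normal-cone formalism and produces $K^*$ directly as the filled-in $\mathcal{C}^*$. The two constructions agree, as you note: a compact convex set with nonempty interior is determined by its boundary, so your polar body equals the convex hull of $\mathcal{C}^*$. The only detail you leave implicit in the reverse inclusion is that $\sup_{x\in K}z\cdot x>0$ (needed so that dividing by $c$ is a dilation by a factor greater than $1$); this follows at once from $\bar{B}(0,r)\subset K$ and $z\neq 0$.
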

\begin{proof}
We define $K^*$ as the convex hull of $\mathcal{C}^*$. Since $\mathcal{C}^*$ is a compact subset of $\R^d$ according to Proposition~\ref{prop:compactness_dual}, its convex hull $K^*$ is compact. 

Let $u \in \mathbb{S}^{d-1}$. By item~\ref{it:lem:normal_cones_3} of Lemma~\ref{lem:normal_cones}, there exists $x \in \mathcal{C}$ such that $u \in N_K(x)$. Therefore, $\mathcal{C}^*$ contains a positive multiple $r_u u$ of $u$. Applying this to the vectors $\left(\pm e_i\right)_{1 \leq i \leq d }$, where $(e_1, \ldots, e_d)$ is the canonical basis of $\R^d$, we obtain that $K^*$ contains the convex hull of $\{\pm r_{\pm e_i} e_i\}$, which contains the ball centered in $0$ of radius $\min_{i}r_{\pm e_i}$ for the norm $\|\cdot\|_1$. Therefore, $0$ lies in the interior of $K^*$.

Let $z \in \partial(K^*)$. We prove that $z \in \mathcal{C}^*$. The preceding result shows that $z \neq 0$. Therefore, according to item~\ref{it:lem:normal_cones_3} of Lemma~\ref{lem:normal_cones}, there exists $x\in \mathcal{C}$ such that $z \in N_K(x)$. To get $z \in \mathcal{C}^*$, all that remains to be proved is that $z \cdot x = 1$. We start by proving that $z \cdot x \leq 1$.
By definition of $K^*$, we can write $z = \sum_{i=0}^{d} \lambda_i z_i$ where $\lambda_0, \ldots, \lambda_{d} \geq 0$, $\sum_{i=0}^d \lambda_i = 1$ and $z_0, \ldots, z_d \in \mathcal{C}^*$. Writing each of the $z_i$ as $z_i = \frac{u_i}{u_i \cdot x_i}$ where $x_i \in \mathcal{C}$ and $u_i \in N_{K}(x_i)$, we get \[z = \sum_{i=0}^{d} \lambda_i \frac{u_i}{u_i \cdot x_i}.\]
But $u_i \in N_K(x_i)$ and $x \in K$, so $u_i \cdot (x-x_i) \leq 0$, i.e., $\frac{u_i \cdot x}{u_i \cdot x_i} \leq  1$. Summing up, we get
\[z \cdot x = \sum_{i=0}^d \lambda_i \frac{u_i \cdot x}{u_i \cdot x_i} \leq \sum_{i=0}^d \lambda_i = 1.\]
Let us move to the second inequality, $z \cdot x \geq 1$. Assume by contradiction that $z \cdot x < 1$. Then $\frac{z}{z\cdot x} \in \mathcal{C}^* \subset K^*$. This contradicts Lemma~\ref{lem:frontiere_convexe_interieur_non_vide} with $\mathsf K = K^*$ and $t = \frac{1}{z \cdot x}$. In conclusion, $z \cdot x = 1$ so $z \in \mathcal{C}^*$, which proves $\partial (K^*) \subset \mathcal{C}^*$.

Conversely, let $z \in \mathcal{C}^*$. It means that there exists $x \in \mathcal{C}$ such that $z \in N_K(x)$ and 
\begin{equation}
\label{eq:duality_preserves_convexity}
    z \cdot x = 1.
\end{equation}
We know that $z \in K^*$. If $z \notin \partial \left(K^*\right)$, then there exists $t>1$ such that $t z \in \partial \left(K^*\right)$. Using the first inclusion, we get $tz \in \mathcal{C^*}$, which means that there exists $\widetilde{x} \in \mathcal{C}$ such that $tz \in N_K\left( \widetilde{x}\right)$ and $tz \cdot \widetilde{x} = 1$. Because $x \in K$ and $tz \in N_K\left(\widetilde{x}\right)$, we have $tz\cdot \left( x - \widetilde{x} \right) \leq 0$, that is $tz \cdot x \leq tz \cdot \widetilde{x} = 1$. Therefore, $z \cdot x \leq \frac{1}{t} < 1$, which contradicts \eqref{eq:duality_preserves_convexity}. In conclusion, $z \in \partial(K^*)$ and the proof is complete.
\end{proof}

The dual set $K^*$ satisfies the same assumptions as $K$, so $\mathcal{C}^*$ has a dual curve $\left(\mathcal{C}^*\right)^*$, which we will simply write $\mathcal{C}^{**}$. 

\begin{prop}
Duality is an involution, i.e.,  $\mathcal{C}^{**} = \mathcal{C}$.
\end{prop}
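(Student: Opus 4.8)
The plan is to prove the stronger statement $K^{**}=K$, from which $\mathcal{C}^{**}=\partial(K^{**})=\partial K=\mathcal{C}$ follows, since by Proposition~\ref{prop:duality_preserves_convexity} applied to $K^*$ the set $K^{**}$ is again a compact convex set with $0$ in its interior (so $\mathcal{C}^{**}=\partial(K^{**})$ makes sense and the identification of boundaries is enough). The route I would take is to first establish the one-sided inclusion $\mathcal{C}\subseteq\mathcal{C}^{**}$ and then promote it to an equality of sets by a radial rigidity argument.

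For the inclusion, I would start from a point $x\in\mathcal{C}$, pick a unit normal $u\in N_K(x)$ (nonempty by Lemma~\ref{lem:normal_cones}, and satisfying $u\cdot x>0$ by Lemma~\ref{lem:existence_normalized_equation}), and form the associated dual point $z=\frac{u}{u\cdot x}\in\mathcal{C}^*=\partial(K^*)$. The claim to check is that $x$ is itself a dual point of $K^*$ at $z$, i.e.\ $x\in N_{K^*}(z)$ and $x\cdot z=1$; the second equality is immediate. For the first, since $K^*=\conv(\mathcal{C}^*)$ and $w\mapsto x\cdot w$ is linear, it suffices to verify $x\cdot w\leq 1$ for $w\in\mathcal{C}^*$. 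Writing such a $w$ as $w=\frac{v}{v\cdot y}$ with $y\in\mathcal{C}$ and $v\in N_K(y)$, this reduces to $v\cdot x\leq v\cdot y$, which is exactly the defining inequality of $N_K(y)$ evaluated at the point $x\in K$. Hence $x\in z^*\subseteq\mathcal{C}^{**}$, which gives $\mathcal{C}\subseteq\mathcal{C}^{**}$.

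To pass from $\mathcal{C}\subseteq\mathcal{C}^{**}$ to $\mathcal{C}=\mathcal{C}^{**}$, I would use that a compact convex body $L$ with $0\in\mathring L$ is determined by its radial function: every ray from the origin meets $\partial L$ exactly once, at a point $\rho_L(u)\,u$ with $\rho_L(u)>0$ (uniqueness on the ray is Lemma~\ref{lem:frontiere_convexe_interieur_non_vide}, and points strictly between $0$ and $\rho_L(u)u$ are interior). Applying this to $K$ and to $K^{**}$: for each $u\in\mathbb{S}^{d-1}$ the point $\rho_K(u)\,u$ lies in $\mathcal{C}\subseteq\mathcal{C}^{**}=\partial(K^{**})$ and is a nonnegative multiple of $u$, so it must coincide with $\rho_{K^{**}}(u)\,u$; thus $\rho_K=\rho_{K^{**}}$ and $K=K^{**}$.

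All the computations here are short. The only two points that deserve care are, in the inclusion step, the reduction allowing one to test the condition $x\in N_{K^*}(z)$ against the generating set $\mathcal{C}^*$ rather than against all of $K^*$ (legitimate because $K^*$ is the convex hull of $\mathcal{C}^*$ and the functional is linear), and, in the final step, the observation that the single inclusion $\mathcal{C}\subseteq\mathcal{C}^{**}$ already forces equality once combined with the radial description of convex bodies — so no separate symmetric argument for the reverse inclusion is needed. I expect this last radial rigidity step to be the conceptual crux, even though it is technically elementary.
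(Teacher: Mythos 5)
Your proposal is correct and follows essentially the same route as the paper: you prove the inclusion $\mathcal{C}\subseteq\mathcal{C}^{**}$ by testing $x\in N_{K^*}(z)$ against the generating set $\mathcal{C}^*$ of the convex hull $K^*$ and reducing to the normal-cone inequality $v\cdot x\leq v\cdot y$, and then you upgrade the inclusion to an equality via the radial uniqueness of boundary points of a convex body containing $0$ in its interior (the paper's Lemma~\ref{lem:frontiere_convexe_interieur_non_vide}). Both steps match the paper's argument, so nothing further is needed.
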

\begin{proof}
Since $K$ is a compact, convex set such that $0 \in \mathring{K}$, Lemma~\ref{lem:frontiere_convexe_interieur_non_vide} ensures that for every $u \in \mathbb{S}^{d-1}$, there is a unique $r_u >0$ such that $r_u u \in \partial K = \mathcal{C}$, and $\mathcal{C} = \left\{r_u u~|~u \in \mathbb{S}^{d-1} \right\}$. Similarly, for every $u \in \mathbb{S}^{d-1}$, there is a unique $r_u^{**} >0$ such that $r_u^{**} u \in \mathcal{C}^{**}$, and $\mathcal{C}^{**} = \left\{r_u^{**} u~|~u \in \mathbb{S}^{d-1} \right\}$. Therefore, if we prove an inclusion between $\mathcal{C}^{**}$ and $\mathcal{C}$, we will have $r_u = r^{**}_u$ for every $u \in \mathbb{S}^{d-1}$ and thus equality between the two curves.

Let $x \in \mathcal{C}$. Let $z \in \mathcal{C}^*$ such that $z \in N_K(x)$ and $z \cdot x = 1$ (such a $z$ exists: we can chose $w \in N_K(x) \setminus \{0\}$ thanks to Lemma~\ref{lem:normal_cones} and since $w \cdot x >0$ according to Lemma~\ref{lem:existence_normalized_equation}, we only need to set $z = \frac{w}{w\cdot x}$). To get $x \in \mathcal{C}^{**}$, it remains to show that $x \in N_{K^*}(z)$, that is 
\begin{equation}
\label{eq:duality_involution}
\forall \widetilde{z} \in K^*,\quad x \cdot \widetilde{z} \leq x \cdot z = 1.
\end{equation}
Every $\widetilde{z} \in K^*$ is a convex combination of elements of $\mathcal{C}^*$, so it is enough to prove \eqref{eq:duality_involution} when $\widetilde{z} \in \mathcal{C}^*$. In that case, we can write $\widetilde{z} = \frac{u}{u \cdot y}$ with $y \in \mathcal{C}$ and $u \in N_K(y) \setminus \{0\}$, so $x \cdot \widetilde{z} = \frac{u \cdot x}{u \cdot y}$. But $x \in K$ and $u \in N_K(y)$, so $u \cdot x \leq u \cdot y$, i.e., $\frac{u \cdot x}{u \cdot y} \leq 1$, i.e., $x \cdot \widetilde{z} \leq 1$. In conclusion, we showed \eqref{eq:duality_involution}, hence $x \in \mathcal{C}^{**}$. As mentioned at the beginning of the proof, the inclusion $\mathcal{C} \subset \mathcal{C}^{**}$ is enough to conclude.
\end{proof}

\begin{prop}
\label{prop:duality_exchange_inclusion}
Let $K_1, K_2$ be two compact, convex subsets of $\R^d$, with $0 \in \mathring{K}_1 \cap \mathring{K}_2$, such that $K_1 \subset K_2$. Then $K_2^* \subset K_1^*$.
\end{prop}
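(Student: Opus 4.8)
The plan is to reduce the statement to a single clean identification: the dual set $K^*$ coincides with the \emph{polar} $K^\circ := \{z\in\R^d\mid z\cdot y\leq 1\text{ for all }y\in K\}$. Once this is established, inclusion-reversal is a one-line consequence of the definition of the polar. Note that $K^\circ$ is convex (an intersection of the closed half-spaces $\{z\cdot y\leq 1\}$, $y\in K$), so this fact is the only real content; everything else is formal.

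First I would prove $K^*\subseteq K^\circ$. By the construction in the proof of Proposition~\ref{prop:duality_preserves_convexity}, $K^*=\conv(\mathcal C^*)$, so by convexity of $K^\circ$ it suffices to check $\mathcal C^*\subseteq K^\circ$. Any $z\in\mathcal C^*$ can be written $z=\frac{n}{n\cdot x}$ with $x\in\mathcal C$ and $n\in N_K(x)\setminus\{0\}$ (here $n\cdot x>0$ by Lemma~\ref{lem:existence_normalized_equation}, using $0\in\mathring K$). For every $y\in K$ the defining inequality of the normal cone gives $n\cdot(y-x)\leq 0$, hence $z\cdot y=\frac{n\cdot y}{n\cdot x}\leq 1$, i.e.\ $z\in K^\circ$.

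Next I would prove the reverse inclusion $K^\circ\subseteq K^*$. Let $z\in K^\circ$; we may assume $z\neq 0$ and set $u=\frac{z}{\|z\|}\in\mathbb S^{d-1}$. By item~\ref{it:lem:normal_cones_3} of Lemma~\ref{lem:normal_cones} there exists $x\in\mathcal C$ with $u\in N_K(x)$, and Lemma~\ref{lem:existence_normalized_equation} yields $u\cdot x>0$, so $w:=\frac{u}{u\cdot x}\in x^*\subseteq\mathcal C^*\subseteq K^*$. Since $z\in K^\circ$ and $x\in K$, we have $\|z\|\,(u\cdot x)=z\cdot x\leq 1$, that is $\|z\|\leq\frac{1}{u\cdot x}=\|w\|$. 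Therefore $z$ lies on the segment $[0,w]$, which is contained in $K^*$ because $0\in\mathring{K^*}$ (from Proposition~\ref{prop:duality_preserves_convexity}) and $w\in K^*$; hence $z\in K^*$. Combining the two inclusions gives $K^*=K^\circ$.

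Finally, the proposition itself follows formally: if $z\in K_2^*=K_2^\circ$, then $z\cdot y\leq 1$ for all $y\in K_2$, a fortiori for all $y\in K_1$ since $K_1\subseteq K_2$, so $z\in K_1^\circ=K_1^*$. The step I expect to be the only nontrivial one is $K^\circ\subseteq K^*$: it is precisely where the normal-cone machinery is needed, in particular the surjectivity statement (every direction $u$ is a normal at some boundary point) in item~\ref{it:lem:normal_cones_3} of Lemma~\ref{lem:normal_cones}, together with the positivity $u\cdot x>0$ coming from $0\in\mathring K$ via Lemma~\ref{lem:existence_normalized_equation}. The remaining inclusions and the final deduction are bookkeeping.
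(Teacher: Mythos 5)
Your proof is correct, but it takes a genuinely different route from the paper's. You first establish the identification $K^*=K^\circ$ with the classical polar set $\{z~|~z\cdot y\leq 1 \text{ for all } y\in K\}$, after which the inclusion-reversal is immediate from the definition of the polar. The paper instead argues pointwise: given $z_2=\frac{u_2}{u_2\cdot x_2}\in\mathcal C_2^*$, it invokes item~\ref{it:lem:normal_cones_3} of Lemma~\ref{lem:normal_cones} to find $x_1\in\mathcal C_1$ with $u_2\in N_{K_1}(x_1)$, compares the normalizing factors via $u_2\cdot x_1\leq u_2\cdot x_2$ (using $x_1\in K_1\subset K_2$ and $u_2\in N_{K_2}(x_2)$), and concludes $z_2\in\bigl[0,\frac{u_2}{u_2\cdot x_1}\bigr]\subset K_1^*$. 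The two arguments rely on the same machinery (surjectivity of the normal direction map and positivity of $n\cdot x$ from Lemmas~\ref{lem:normal_cones} and~\ref{lem:existence_normalized_equation}) and both finish with the ``segment from the origin'' trick, but yours is front-loaded: the nontrivial inclusion $K^\circ\subseteq K^*$ is proved once for a single convex body, and the proposition itself becomes pure bookkeeping. What your approach buys is a cleaner conceptual statement that would also streamline other results in the appendix (for instance the involution $\mathcal C^{**}=\mathcal C$ and Proposition~\ref{prop:dual_polytope} become standard facts about polars); what the paper's approach buys is economy, proving only what is needed for this one statement without the full identification.
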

\begin{proof}
It is enough to prove that $\mathcal{C}_2^* \subset K_1^*$. Let $z_2 = \frac{u_2}{u_2 \cdot x_2} \in \mathcal{C}_2^*$, where $x_2 \in \mathcal{C}_2$ and $u_2 \in N_{K_2}(x_2) \cap \mathbb{S}^{d-1}$. By item~\ref{it:lem:normal_cones_3} of Lemma~\ref{lem:normal_cones}, there exists $x_1 \in \mathcal{C}_1$ such that $u_2 \in N_{K_1}(x_1)$, hence $\frac{u_2}{u_2 \cdot x_1} \in K_1^*$. But $x_1 \in K_1 \subset K_2$ and $u_2 \in N_{K_2}(x_2)$, so $u_2 \cdot (x_1 - x_2) \leq 0$, i.e., $u_2 \cdot x_1 \leq u_2 \cdot x_2$. Therefore \[ z_2 = \frac{u_2}{u_2 \cdot x_2} \in \left[ 0, \frac{u_2}{u_2 \cdot x_1} \right] \subset K_1^*.\qedhere\]
\end{proof}

\begin{prop}\label{prop:dual_homotheties_rotations}
Let $K$ be a compact, convex subset of $\R^d$ such that $0 \in \mathring{K}$, $t >0$ and $U \in O_d(\R)$. Then $(tK)^* = \frac{1}{t} K ^*$ and $(UK)^* = U(K^*)$.
\end{prop}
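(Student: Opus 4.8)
The plan is to reduce everything to the behaviour of normal cones under the two operations, which is already recorded in items~4 and~5 of Lemma~\ref{lem:normal_cones}, and then to pass from dual curves to dual sets using Proposition~\ref{prop:duality_preserves_convexity}, which identifies $K^*$ with the convex hull of $\mathcal{C}^*$.

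First I would observe that $tK$ and $UK$ are again compact convex sets with $0$ in their interior (since $x\mapsto tx$ and $x\mapsto Ux$ are homeomorphisms of $\R^d$ fixing $0$ and preserving convexity), so their dual sets are defined, and moreover $\partial(tK)=t\,\mathcal{C}$ and $\partial(UK)=U\,\mathcal{C}$; in particular every boundary point of $tK$ is of the form $tx$ with $x\in\mathcal{C}$, and similarly for $UK$. Next, for the homothety, fix $x\in\mathcal{C}$. By item~4 of Lemma~\ref{lem:normal_cones}, $N_{tK}(tx)=N_K(x)$, hence
\[
(tx)^*=\left\{\frac{n}{n\cdot(tx)}~\Big|~n\in N_{tK}(tx)\setminus\{0\}\right\}=\left\{\frac{1}{t}\,\frac{n}{n\cdot x}~\Big|~n\in N_K(x)\setminus\{0\}\right\}=\tfrac1t\,x^*,
\]
so $(t\mathcal{C})^*=\tfrac1t\,\mathcal{C}^*$. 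Taking convex hulls and using $\conv(\tfrac1t A)=\tfrac1t\conv(A)$, Proposition~\ref{prop:duality_preserves_convexity} gives $(tK)^*=\conv\big((t\mathcal{C})^*\big)=\tfrac1t\conv(\mathcal{C}^*)=\tfrac1t K^*$.

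For the rotation, fix $x\in\mathcal{C}$; by item~5 of Lemma~\ref{lem:normal_cones}, $N_{UK}(Ux)=U\,N_K(x)$. Writing a nonzero element of this cone as $Uv$ with $v\in N_K(x)\setminus\{0\}$ and using $(Uv)\cdot(Ux)=v\cdot x$ because $U$ is orthogonal, I obtain
\[
(Ux)^*=\left\{\frac{Uv}{(Uv)\cdot(Ux)}~\Big|~v\in N_K(x)\setminus\{0\}\right\}=U\left\{\frac{v}{v\cdot x}~\Big|~v\in N_K(x)\setminus\{0\}\right\}=U\,x^*,
\]
hence $(U\mathcal{C})^*=U\,\mathcal{C}^*$, and since the linear map $U$ commutes with taking convex hulls, $(UK)^*=\conv\big(U\,\mathcal{C}^*\big)=U\,\conv(\mathcal{C}^*)=U\,K^*$. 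I do not expect any genuine obstacle: the only points needing a word of care are that scaling and orthogonal transformations send $\partial K$ onto the boundary of the image (so that the union defining the dual curve is taken over the right set of points) and that both operations commute with the convex-hull operation used in Proposition~\ref{prop:duality_preserves_convexity}; both facts are elementary.
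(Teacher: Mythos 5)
Your proof is correct and follows essentially the same route as the paper's: both arguments reduce to the identities $N_{tK}(tx)=N_K(x)$ and $N_{UK}(Ux)=UN_K(x)$ from Lemma~\ref{lem:normal_cones} and then compute the dual points directly. Your extra step of passing from the dual curves to the dual sets via convex hulls is a point the paper's proof glosses over, so it is a welcome (if minor) addition rather than a divergence.
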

\begin{proof}
Let $z \in \R^d$.
We have 
\begin{align*}
    z \in (tK)^* &\Longleftrightarrow \exists x \in \mathcal{C},\quad \exists u\in N_{tK}(tx),\quad  z = \frac{u}{u \cdot (tx)}\\
    &\Longleftrightarrow \exists x \in  \mathcal{C},\quad \exists u\in N_{K}(x),\quad  z = \frac{u}{u \cdot (tx)} = \frac{1}{t} \frac{u}{u \cdot x}\\
    & \Longleftrightarrow z \in \frac{1}{t}K^*,
\end{align*}
where the second equivalence used the equality $N_{tK}(tx) = N_K(x)$ of Lemma~\ref{lem:normal_cones}.
Similarly,
\begin{align*}
    z \in (UK)^* &\Longleftrightarrow \exists x \in \mathcal{C},\quad \exists u\in N_{UK}(Ux),\quad  z = \frac{u}{u \cdot Ux}\\
    &\Longleftrightarrow \exists x \in  \mathcal{C},\quad \exists 
    v \in N_{K}(x),\quad  z = \frac{Uv}{Uv \cdot Ux} = U \frac{v}{v \cdot x}\\
    & \Longleftrightarrow z \in U(K^*), 
\end{align*}
where the second equivalence uses the equality $N_{UK}(Ux) = UN_K(x)$ of Lemma~\ref{lem:normal_cones}.
\end{proof}

\begin{prop}
\label{prop:convergence_duality}
Let $K$ and $\left( K_k \right)_{k\geq 0}$ be compact, convex subsets of $\R^d$ such that $0 \in \mathring{K}$ and $0 \in \mathring{K}_k$ for all $k\geq 0$. We assume that there exist sequences $\left(\varepsilon^-_k\right)_{k\geq 0}$ and $\left(\varepsilon^+_k\right)_{k\geq 0}$ of positive numbers that tend to $0$ such that for every $k\geq 0$, \[\left(1 - \varepsilon^-_k\right) K \subset K_k \subset \left(1+ \varepsilon^+_k\right) K.\]
Then, uniformly in the sense of Definition~\ref{def:cv_sets_limit_shape}, and consequently for the Hausdorff distance,
\[K_k \xrightarrow[k \to + \infty]{}K \quad \text{and}\quad K_k^* \xrightarrow[k \to + \infty]{}K^*.\]
\end{prop}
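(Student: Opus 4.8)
The plan is to isolate an elementary ``radial sandwich'' principle and then apply it twice: once to the sets $K_k$ directly, and once to their duals $K_k^*$ after dualizing the given inclusions. The final step is to invoke Proposition~\ref{prop:link_cv_hausdorff} to pass from uniform convergence in the sense of Definition~\ref{def:cv_sets_limit_shape} to Hausdorff convergence.

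First I would prove the sandwich principle: if $L\subset\R^d$ is compact and convex with $0\in\mathring L$, and $(L_k)_{k\ge0}$ are subsets satisfying $a_kL\subset L_k\subset b_kL$ for positive scalars $a_k,b_k\to1$, then $L_k\to L$ uniformly in the sense of Definition~\ref{def:cv_sets_limit_shape}. By Lemma~\ref{lem:frontiere_convexe_interieur_non_vide}, the set $L$ has a radial description $L=\{\rho u~|~u\in\mathbb{S}^{d-1},\ 0\le\rho\le r_u\}$ with each $r_u>0$, and $r_u$ is bounded above, say by $D$, since $L$ is compact. The inclusion $a_kL\subset L_k$ gives $\{\rho u~|~\rho\le a_kr_u\}\subset L_k$, while $L_k\subset b_kL$ forces every $x\in L_k$ to satisfy $\|x\|\le b_kr_{\widehat x}$. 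Taking $r_{u,k}:=a_kr_u$ and $R_{u,k}:=b_kr_u$ as the inner and outer radii of Definition~\ref{def:cv_sets_limit_shape}, one gets $\sup_u|r_{u,k}-r_u|\le|a_k-1|\,D\to0$ and $\sup_u|R_{u,k}-r_u|\le|b_k-1|\,D\to0$, which is exactly the asserted uniform convergence. Applying this with $L=K$, $a_k=1-\varepsilon^-_k$, $b_k=1+\varepsilon^+_k$ settles $K_k\to K$.

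For the duals, I would apply Proposition~\ref{prop:duality_exchange_inclusion} (duality reverses inclusions among compact convex sets containing $0$ in their interior) together with the homothety rule $(tK)^*=\frac1t K^*$ of Proposition~\ref{prop:dual_homotheties_rotations} to the chain $(1-\varepsilon^-_k)K\subset K_k\subset(1+\varepsilon^+_k)K$. This gives, for all $k$ large enough that $\varepsilon^-_k<1$,
\[\frac{1}{1+\varepsilon^+_k}\,K^*\ \subset\ K_k^*\ \subset\ \frac{1}{1-\varepsilon^-_k}\,K^*.\]
Since $K^*$ is compact and convex with $0\in\mathring{K^*}$ (Proposition~\ref{prop:duality_preserves_convexity}) and both factors $\frac{1}{1+\varepsilon^+_k}$ and $\frac{1}{1-\varepsilon^-_k}$ tend to $1$, the sandwich principle of the previous paragraph applies with $L=K^*$ and yields $K_k^*\to K^*$ uniformly. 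Finally, item~\ref{prop:link_cv_hausdorff:it:uniform} of Proposition~\ref{prop:link_cv_hausdorff} upgrades both uniform convergences to convergence for the Hausdorff distance.

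I do not expect any serious obstacle. The two points needing a little care are that the radial function $u\mapsto r_u$ is well defined and bounded — which is precisely what $0\in\mathring L$ and compactness guarantee via Lemma~\ref{lem:frontiere_convexe_interieur_non_vide} — and that $1-\varepsilon^-_k$ stays positive, which holds for all large $k$. The structural reason the conclusion comes out \emph{uniform} rather than merely pointwise is that the sandwiching scalar factors do not depend on the direction $u$, so the error bounds above are themselves direction-independent.
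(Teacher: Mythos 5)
Your proposal is correct and follows essentially the same route as the paper: the same radial sandwich argument with $r_{u,k}=(1-\varepsilon^-_k)r_u$ and $R_{u,k}=(1+\varepsilon^+_k)r_u$, and the same dualization of the inclusions via Propositions~\ref{prop:duality_exchange_inclusion} and~\ref{prop:dual_homotheties_rotations} followed by a second application of the same sandwich argument to $K^*$. Packaging the first step as a reusable lemma and noting that $1-\varepsilon^-_k$ must be positive are harmless refinements of the paper's argument.
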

\begin{proof}
Let us first prove the result for $K_k$. The assumptions on the convex sets imply that there is a bounded family $\left( r_u \right)_{u \in \mathbb{S}^{d-1}}$ such that $K = \left\{ r u~|~ u \in \mathbb{S}^{d-1},0 \leq r \leq r_u \right\}$. Therefore, using the notations of Definition~\ref{def:cv_sets_limit_shape}, the inclusions $\left(1 - \varepsilon^-_k\right) K \subset K_k \subset \left(1+ \varepsilon^+_k\right) K$ allow us to set \[r_{u,k} = \left(1- \varepsilon^-_k\right) r_u\quad \text{and}\quad R_{u,k} = \left(1 + \varepsilon^+_k\right)r_u.\] Therefore, \[ \|r_{u,k} - r_u\|_\infty =   \varepsilon^-_k \sup_{u \in \mathbb{S}^{d-1}}r_u \xrightarrow[k \to +\infty]{}0 \quad\text{and}\quad\|R_{u,k} - r_u\|_\infty =   \varepsilon^+_k \sup_{u \in \mathbb{S}^{d-1}}r_u \xrightarrow[k \to +\infty]{}0,\]
hence the uniform convergence of $K_k$.

For the duals, we use Propositions~\ref{prop:duality_exchange_inclusion}~and~\ref{prop:dual_homotheties_rotations} to obtain \[ \frac{1}{1+\varepsilon^+_k} K^* \subset K_k^* \subset \frac{1}{1-\varepsilon^-_k} K^* \] and use the first case with $\widetilde{\varepsilon}_k^- = 1 - \frac{1}{1+\varepsilon^+_k}$ and $\widetilde{\varepsilon}_k^+ = \frac{1}{1-\varepsilon^-_k} - 1$.
\end{proof}

\begin{prop}[Ellipsoid]\label{prop:dual_ellipsoid}
Let $\sigma$ be a symmetric, positive-definite matrix and \[K_\sigma = \left\{ x \in \R^d ~|~x^\intercal \sigma x \leq 1 \right\}.\] Then $K_\sigma^* = K_{\sigma^{-1}}$.
\end{prop}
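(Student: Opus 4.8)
The plan is to exploit the smooth-case description of the dual curve, since $K_\sigma = \{x \in \R^d : f(x) \leq 1\}$ with $f(x) = x^\intercal \sigma x$. First I would check that $K_\sigma$ satisfies the standing hypotheses of this appendix: $\sigma$ being symmetric positive-definite, $f$ is a smooth convex function with $f(0) = 0 < 1$ (so $0 \in \mathring{K_\sigma}$), $f$ is coercive so it takes values strictly larger than $1$, and $K_\sigma = f^{-1}((-\infty,1])$ is compact and convex. Write $\mathcal{C}_\sigma = \partial K_\sigma = \{x : x^\intercal \sigma x = 1\}$.

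Next, for $x \in \mathcal{C}_\sigma$ we have $\nabla f(x) = 2\sigma x \neq 0$ (since $x \neq 0$ on $\mathcal{C}_\sigma$ and $\sigma$ is invertible), so the proposition on normal cones in the smooth case gives $N_{K_\sigma}(x) = \R_{\geq 0}\cdot 2\sigma x$. By Definition~\ref{def:dual_curve}, the dual point at $x$ is then the single vector
\[ x^* = \frac{2\sigma x}{(2\sigma x)\cdot x} = \frac{\sigma x}{x^\intercal \sigma x} = \sigma x, \]
using $x^\intercal \sigma x = 1$. Hence $\mathcal{C}_\sigma^* = \{\sigma x : x^\intercal \sigma x = 1\}$. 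To identify this set, put $z = \sigma x$, i.e. $x = \sigma^{-1} z$; then $z^\intercal \sigma^{-1} z = (\sigma x)^\intercal \sigma^{-1}(\sigma x) = x^\intercal \sigma x$, so $x^\intercal \sigma x = 1 \iff z^\intercal \sigma^{-1} z = 1$. Since $x \mapsto \sigma x$ is a bijection of $\R^d$, this shows $\mathcal{C}_\sigma^* = \{z \in \R^d : z^\intercal \sigma^{-1} z = 1\} = \partial K_{\sigma^{-1}}$.

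Finally I would conclude: by Proposition~\ref{prop:duality_preserves_convexity}, $K_\sigma^*$ is the unique compact convex set with $0$ in its interior whose boundary is $\mathcal{C}_\sigma^*$; but $K_{\sigma^{-1}}$ is exactly such a set (the previous verification applies verbatim to $\sigma^{-1}$, which is again symmetric positive-definite) with $\partial K_{\sigma^{-1}} = \mathcal{C}_\sigma^*$. Therefore $K_\sigma^* = K_{\sigma^{-1}}$. The whole argument reduces to the gradient computation $x^* = \sigma x$; the only points needing a little care are verifying the smooth-case hypotheses and noting that $\nabla f$ does not vanish on $\mathcal{C}_\sigma$, so that the normal-cone description applies and the dual point is genuinely a single point.
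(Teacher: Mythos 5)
Your proof is correct, but it takes a genuinely different route from the paper. The paper reduces to the diagonal case (``a simple computation'') and then handles a general symmetric positive-definite $\sigma$ by writing $\sigma = U^\intercal D U$ and invoking Proposition~\ref{prop:dual_homotheties_rotations} to transport the diagonal result through the rotation $U$. You instead compute the dual curve directly from the smooth-case description of the normal cone: $\nabla f(x) = 2\sigma x$ never vanishes on $\partial K_\sigma$, so each $x^*$ is the single point $\sigma x$, and the change of variable $z = \sigma x$ identifies $\mathcal{C}_\sigma^*$ with $\partial K_{\sigma^{-1}}$. Your verification of the standing hypotheses and of the nonvanishing of the gradient is exactly the care the argument needs. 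What your approach buys is self-containment and uniformity: it treats all $\sigma$ at once, never needs the diagonalization, and makes the geometric content (the dual map is $x \mapsto \sigma x$ on the boundary) explicit; what the paper's approach buys is that it reuses the already-proved invariance of duality under orthogonal maps and leaves only a coordinatewise computation. One small remark on your last step: Proposition~\ref{prop:duality_preserves_convexity} states existence rather than uniqueness, but since $K_\sigma^*$ is by construction the convex hull of $\mathcal{C}_\sigma^* = \partial K_{\sigma^{-1}}$, and a compact convex body with nonempty interior is the convex hull of its boundary, the conclusion $K_\sigma^* = K_{\sigma^{-1}}$ follows directly; it is worth saying this one sentence explicitly rather than appealing to uniqueness.
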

\begin{proof}
When $\sigma$ is diagonal, it is a simple computation.
In the general case, we write $\sigma = U^\intercal D U $ where $D$ is diagonal and $U \in O_d(\R)$. Then 
\begin{equation}
\label{eq:dual_ellipsoid}
    K_\sigma = \left\{ x \in \R^d ~|~(Ux)^\intercal D (Ux) \leq 1 \right\} = U^{-1}K_D,
\end{equation}
so Proposition~\ref{prop:dual_homotheties_rotations} yields
\[K_\sigma^* = U^{-1}(K_D^*).\]
Using the diagonal case and \eqref{eq:dual_ellipsoid} with $\sigma^{-1} = U^T D^{-1}U$ instead of $\sigma$, we finally get 
\[K_\sigma^* = U^{-1}(K_D^*) = U^{-1}K_{D^{-1}} = K_{\sigma^{-1}}.\qedhere\]
\end{proof}

\begin{prop}[Polytope]
\label{prop:dual_polytope}
Let $x_1, \ldots, x_n \in \R^d$ and \[K = \bigcap_{i \in \{1, \ldots, n\}} \left\{ s \in \R^d~|~ s \cdot x_i \leq 1 \right\}.\]
The set $K$ is compact if and only if $0$ lies in the interior of the convex hull of $\{x_1, \ldots, x_n\}$. In that case, this convex hull is the dual convex $K^*$ of $K$.
\end{prop}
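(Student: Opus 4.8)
The plan is to treat the two assertions separately: first the compactness criterion by an elementary recession-cone/separation argument, and then the identification of the dual via a lemma matching the appendix's geometric dual set with the classical polar, after which the involution property finishes things.

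For the compactness part, I would first note that $0\cdot x_i=0\le 1$ for every $i$, and that for $\|v\|$ small one has $v\cdot x_i\le\|v\|\max_k\|x_k\|<1$, so $0\in\mathring K$ unconditionally; hence $K$, being closed and convex, is compact exactly when it is bounded, i.e.\ when its recession cone $\{v\in\R^d:v\cdot x_i\le 0\text{ for all }i\}$ is trivial. I would then show this is equivalent to $0\in\mathring{\conv\{x_1,\dots,x_n\}}$: a nonzero $v$ with $v\cdot x_i\le 0$ for all $i$ confines $\conv\{x_1,\dots,x_n\}$ to the halfspace $\{v\cdot x\le 0\}$, which contains no neighbourhood of $0$; conversely, if $0\notin\mathring{\conv\{x_1,\dots,x_n\}}$, then either $0$ lies outside the compact convex set $\conv\{x_1,\dots,x_n\}$ and strict separation produces such a $v$, or $0\in\partial\,\conv\{x_1,\dots,x_n\}$ and a supporting hyperplane at $0$ produces one.

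For the second assertion I would first prove a lemma: for any compact convex $\mathsf K$ with $0\in\mathring{\mathsf K}$, the dual set $\mathsf K^*$ of Proposition~\ref{prop:duality_preserves_convexity} equals the polar $\mathsf K^\circ:=\{z\in\R^d:z\cdot x\le 1\text{ for all }x\in\mathsf K\}$. The inclusion $\mathsf K^*\subset\mathsf K^\circ$ is essentially the computation already made inside the proof of Proposition~\ref{prop:duality_preserves_convexity}: every $z\in\mathsf K^*=\conv(\mathcal C^*)$ is a convex combination of points $u_j/(u_j\cdot y_j)$ with $y_j\in\partial\mathsf K$ and $u_j\in N_{\mathsf K}(y_j)\setminus\{0\}$, and $u_j\cdot(x-y_j)\le 0$ for $x\in\mathsf K$ forces $z\cdot x\le 1$. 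For the reverse inclusion I would take $z\in\mathsf K^\circ\setminus\{0\}$, write $z=\|z\|u$ with $u\in\mathbb S^{d-1}$, and choose the unique $r>0$ with $ru\in\partial\mathsf K^*=\mathcal C^*$ (radial description of $\partial\mathsf K^*$ via Lemma~\ref{lem:frontiere_convexe_interieur_non_vide} applied to $\mathsf K^*$); then $ru\in N_{\mathsf K}(x_0)$ and $(ru)\cdot x_0=1$ for some $x_0\in\partial\mathsf K$, so $z\cdot x_0=\|z\|/r$, and since $z\in\mathsf K^\circ$, $x_0\in\mathsf K$ this gives $\|z\|\le r$, whence $z\in[0,ru]\subset\mathsf K^*$.

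With the lemma in hand, assume $K$ is compact and put $P=\conv\{x_1,\dots,x_n\}$; a linear functional on $P$ is maximised at some $x_i$, so $P^\circ=\{s:s\cdot x_i\le 1\text{ for all }i\}=K$. The compactness step gives $0\in\mathring P$, so the lemma applies to $P$ and yields $P^*=P^\circ=K$; since duality is an involution ($\mathcal C^{**}=\mathcal C$, hence $\mathsf K^{**}=\mathsf K$ for the associated bodies), we conclude $K^*=(P^*)^*=P^{**}=P=\conv\{x_1,\dots,x_n\}$. I expect the main obstacle to be precisely the lemma identifying the appendix's geometric dual $\mathsf K^*$ with the polar $\mathsf K^\circ$ — in particular its reverse inclusion, which relies on combining the radial parametrisation of $\partial\mathsf K^*$ with the normalisation $z\cdot x_0=1$ built into the definition of dual points; once this bridge is in place, the rest is routine.
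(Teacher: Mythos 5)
Your proof is correct, but it takes a genuinely different route from the paper's. For the compactness criterion, the paper argues the direction ``$0\in\mathring{\conv}\{x_1,\ldots,x_n\}\Rightarrow K$ bounded'' by writing $\varepsilon u$ as a convex combination of the $x_i$, and then \emph{defers} the converse to the dual computation (invoking $0\in\mathring{K^*}$ from Proposition~\ref{prop:duality_preserves_convexity}); your recession-cone/separation argument handles both directions directly and is more self-contained. For the identification of $K^*$, the paper works on the polytope $K$ itself: it reduces to extreme points among the $x_i$, computes the normal cone at a point in the relative interior of a face ($\R_{\geq 0}x$) and at intersections of faces (conic combinations of the active $x_i$), and sandwiches $K^*$ between $\mathrm{extr}(\{x_1,\ldots,x_n\})$ and its convex hull. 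You instead prove the bridge lemma $\mathsf K^*=\mathsf K^\circ$ (geometric dual equals classical polar) for an arbitrary compact convex body with $0$ in its interior, compute the elementary polarity $P^\circ=K$ for $P=\conv\{x_1,\ldots,x_n\}$, and conclude by the involution $K^*=P^{**}=P$. What your approach buys is that it bypasses the face/normal-cone analysis of the polytope (which in the paper is stated somewhat informally) and yields as a by-product the general identity between the appendix's dual set and the standard polar, which would also give Proposition~\ref{prop:dual_ellipsoid} for free; what it costs is the extra lemma, whose reverse inclusion correctly combines the radial parametrisation of $\partial(\mathsf K^*)$ with the normalisation $z\cdot x_0=1$ and is sound. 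Both arguments are complete and rest only on facts already established in the appendix.
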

\begin{proof}
The convex $K$ is clearly a closed subset of $\R^d$, so it is compact if and only if it is bounded, which comes down to saying that for all $u \in \mathbb S^{d-1}$, there exists $i \in \{1, \ldots, n\}$ such that $u \cdot x_i >0$. If $0$ lies in the interior of the convex hull of $\{x_1, \ldots, x_n\}$, then for $\varepsilon >0$ small enough, $\varepsilon u$ can be written as a convex combination of $\{x_1, \ldots, x_n\}$. Since $u \cdot \varepsilon u >0$, there has to be at least one $x_i$ such that $u \cdot x_i >0$, so $K$ is bounded. Conversely, if $K$ is compact, we will obtain that $0$ lies in the interior of the convex hull of $\{x_1, \ldots, x_n\}$ as a consequence of the computation of $K^*$ and of Proposition~\ref{prop:duality_preserves_convexity}, which ensures that $0$ lies in the interior of $K^*$.

We assume that $K$ is compact. To compute $K^*$, we first note that \[K = \bigcap_{x \in \mathrm{extr}(\{x_1, \ldots, x_n\})} \left\{ s \in \R^d~|~s \cdot x \leq 1 \right\},\]because if one of the $x_i$ is a convex combination of the others, then the condition $s\cdot x_i\leq 1$ is redundant. The faces of the polytope $K$ are thus delimited by the hyperplanes $H_x = \left\{ s \in \R^d~|~s \cdot x = 1 \right\}$ for $x \in \mathrm{extr}(\{x_1, \ldots, x_n\})$. At a point $y$ that lies in the interior of the face delimited by $H_x$, the normal cone is $\R_{\geq 0}x$, so the dual point associated with $y$ is $x$. In particular, $K^*$ contains $\mathrm{extr}(\{x_1, \ldots, x_n\})$. Similarly, if $y$ is at the intersection of the hyperplanes $H_{x_i}$, $i\in I$, then the normal cone at $y$ consists of convex combinations of the $x_i$, $i \in I$, so $K^*$ is a subset of the convex hull of $\mathrm{extr}(\{x_1, \ldots, x_n\})$. In conclusion, $K^*$ is a subset of the convex hull of $\mathrm{extr}(\{x_1, \ldots, x_n\})$ and it contains $\mathrm{extr}(\{x_1, \ldots, x_n\})$. By convexity of $K^*$, it means that $K^*$ is the convex hull of $\mathrm{extr}(\{x_1, \ldots, x_n\})$, that is the convex hull of $\{x_1, \ldots, x_n\}$.
\end{proof}

\section{An interpretation in terms of amoebas}
\label{app:amoebas}

In this appendix, we assume that $\mu$ has finite support.
We give a geometric interpretation of the Doob transform and
the set $\partial\mathcal{T}$ with tools from real algebraic geometry.

A quantity directly related to the real Laplace transform $L\mu$ is the Fourier
transform of $\mu$, denoted by $\mathcal{F}\mu\colon(\mathbb{C}^*)^d\rightarrow
\mathbb{C}^*$, defined by:
\begin{equation*}
  \forall
  z=(z_1,\ldots,z_d)\in(\mathbb{C}^*)^d,\quad
  \mathcal{F}\mu(z):=
  \sum_{x\in\mathbb{Z}^d} z^x \mu_{i,j}(x),
\end{equation*}
where $z^x$ stands for $\prod_{i=1}^d z_i^{x_i}$.
With the convention that $e^t = (e^{t_1},\ldots,e^{t_d})$ for
$t\in\mathbb{R}^d$,
we have the following relation:
\begin{equation*}
  \mathcal{F}\mu(e^t) = L\mu(t).
\end{equation*}
The Fourier transform $\mathcal{F}\mu$ is a $d\times d$ matrix, whose entries are Laurent polynomials
in $z_1,\ldots,z_d$. The \emph{characteristic polynomial} of the random walk is
\begin{equation*}
P(z) = \det\bigl(I_d - \mathcal{F}\mu(z)\bigr),
\end{equation*}
and its \emph{spectral hypersurface} is the algebraic variety in
$(\mathbb{C}^*)^d$ consisting of the zero set of $P$:
\begin{equation*}
  \mathscr{C} = \{ z\in(\mathbb{C}^*)^d \ |\ P(z)=0\}.
\end{equation*}
In the special case when $d=2$, the spectral hypersurface is an algebraic curve,
and we rather use the term \emph{spectral curve}. This terminology is directly
borrowed from dimer models~\cite{KOS} and spanning trees~\cite{Kenyon:forests},
but has been used for much longer in exactly solvable models in statistical
mechanics, and integrable systems.

An interesting object to apprehend the geometry of this spectral hypersurface is
its so-colled \emph{amoeba}~$A(\mathscr{C})$, defined as:
\begin{equation*}
  A(\mathscr{C}) = \{\operatorname{Log}(z)\ ;\ z\in \mathscr{C}\}
  \subset \mathbb{R}^d,
\end{equation*}
where $\operatorname{Log}(z)=(\log|z_1|,\ldots,\log|z_d|)$. See
for example~\cite[Chap.~6]{Gelfand1994} for more details.

Gershgorin's theorem guarantees that when
$|z|=(|z_1|,\ldots,|z_d|)=(1,\dots,1)$, $\mathcal{F}\mu(z)$ has no
eigenvalue near 1. This implies that the point $(0,\ldots, 0)$ is in the
complement of $A(\mathscr{C})$.

When $d=2$, and the graph of allowed transitions between vertices for the
(killed)\ walk on $\mathbb{Z}^2\times\{1,\ldots,p\}$ can be realized as a planar,
biperiodic graph, then the spectral curve is a \emph{Harnack
curve}~\cite{Mikhalkin,Kenyon:forests}. In particular, the boundary of the
amoeba is given by the image by $\operatorname{Log}$ of the real locus of $\mathscr{C}$, that is, the
points $z=(z_1,\ldots,z_d)$ of $\mathscr{C}$ for which all $z_i$ are real.
This is not true in general. However, the Perron-Frobenius theorem implies the
following result:

\begin{lemma}
  The boundary of the connected component of the complement of $A(\mathscr{C})$
  containing the origin is the image by $\operatorname{Log}$ of a connected component
  of $\mathscr{C}^+$, the real positive locus of $\mathscr{C}$:
  \begin{equation*}
    \mathscr{C}^+ = \{z \in (\mathbb{R}_{>0})^d\ ;\ P(z)=0\}.
  \end{equation*}
\end{lemma}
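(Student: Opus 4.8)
The plan is to show that the connected component $\Omega$ of $\mathbb R^d\setminus A(\mathscr C)$ containing the origin coincides with the open convex set $\mathring{\mathcal T}=\{t\in\mathbb R^d:\rho(t)<1\}$, and then that its boundary $\partial\mathcal T=\rho^{-1}(\{1\})$ is the image under $\operatorname{Log}$ of a connected component of $\mathscr C^+$. The bridge between the amoeba picture and the spectral radius $\rho$ is the identity $\mathcal F\mu(e^t)=L\mu(t)$, which gives, for $t\in\mathbb R^d$, the factorization $P(e^t)=\det\bigl(I-L\mu(t)\bigr)=\prod_i\bigl(1-\lambda_i(t)\bigr)$, where $\lambda_i(t)$ runs over the eigenvalues of $L\mu(t)$; thus $P(e^t)=0$ if and only if $1$ is an eigenvalue of $L\mu(t)$. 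Recall that $L\mu(t)$ is primitive by Assumption~\ref{assump:main_assumptions}, so $\rho(t)$ is a simple, strictly dominant eigenvalue.

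First I would record the Perron--Frobenius comparison that forbids points of $\mathscr C$ above the interior of $\mathcal T$. If $z\in(\mathbb C^*)^d$ satisfies $\operatorname{Log}(z)=t$, then $|z^x|=e^{t\cdot x}$ and hence $\bigl|\mathcal F\mu(z)_{i,j}\bigr|\leq\sum_x e^{t\cdot x}\mu_{i,j}(x)=\bigl(L\mu(t)\bigr)_{i,j}$; iterating gives $\bigl|\mathcal F\mu(z)^n\bigr|\leq L\mu(t)^n$ entrywise for all $n$, so by Gelfand's formula $\rho\bigl(\mathcal F\mu(z)\bigr)\leq\rho(t)$. Consequently, if $\rho(t)<1$ then $1$ is not an eigenvalue of $\mathcal F\mu(z)$ for any $z$ above $t$, i.e.\ $t\notin A(\mathscr C)$; this yields $\mathring{\mathcal T}\subseteq\mathbb R^d\setminus A(\mathscr C)$. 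Conversely, if $\rho(t)=1$ then $1$ is the Perron eigenvalue of $\mathcal F\mu(e^t)$, so $P(e^t)=0$ and $t\in A(\mathscr C)$; thus $\partial\mathcal T\subseteq A(\mathscr C)$. Since $\mathcal T=\rho^{-1}([0,1])$ is compact and convex with $0$ in its interior (using $\rho(0)<1$, convexity and norm-coercivity of $\rho$), $\mathcal T=\overline{\mathring{\mathcal T}}$ and $\mathring{\mathcal T}$ is connected and contains the origin. As $\partial\mathcal T\subseteq A(\mathscr C)$ is disjoint from $\Omega$ while $\mathring{\mathcal T}\subseteq\Omega$, the connected set $\Omega$ splits as the disjoint union of the open sets $\Omega\cap\mathring{\mathcal T}=\mathring{\mathcal T}$ and $\Omega\setminus\mathcal T$; connectedness forces $\Omega=\mathring{\mathcal T}$, hence $\partial\Omega=\partial\mathcal T$.

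It then remains to identify $\partial\mathcal T$ with $\operatorname{Log}$ of a connected component of $\mathscr C^+$. The exponential map $t\mapsto e^t$ is a homeomorphism of $\mathbb R^d$ onto $(\mathbb R_{>0})^d$ taking $Z:=\{t\in\mathbb R^d:P(e^t)=0\}$ bijectively onto $\mathscr C^+$, so it carries connected components to connected components, and it suffices to prove that $\partial\mathcal T$ is a connected component of $Z$. It is connected, being homeomorphic to $\mathbb S^{d-1}$ by Proposition~\ref{prop:homeo}, and it is closed in $Z$ by continuity of $\rho$. For openness in $Z$, fix $t_0\in\partial\mathcal T$: the dominant eigenvalue $\rho(t_0)=1$ of the primitive matrix $L\mu(t_0)$ is simple, and every other eigenvalue of $L\mu(t_0)$ has modulus strictly below $1$; by continuity of the eigenvalues of $L\mu(t)$ in $t$, there is a neighbourhood $U$ of $t_0$ on which the non-dominant eigenvalues keep modulus $<1$, so that for $t\in U$ the factorization of $P(e^t)$ shows $P(e^t)=0\iff\rho(t)=1\iff t\in\partial\mathcal T$. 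Hence $U\cap Z=U\cap\partial\mathcal T$, so $\partial\mathcal T$ is open in $Z$, thus a connected component. Therefore $\exp(\partial\mathcal T)$ is a connected component of $\mathscr C^+$ with $\operatorname{Log}\bigl(\exp(\partial\mathcal T)\bigr)=\partial\mathcal T=\partial\Omega$, which is exactly the claim.

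The step I expect to be most delicate is the openness of $\partial\mathcal T$ in $Z$: one must rule out that, arbitrarily close to a boundary point $t_0$, some non-Perron eigenvalue of $L\mu(t)$ equals $1$ while $\rho(t)\neq1$. This is precisely where primitivity (irreducibility and aperiodicity, Assumption~\ref{assump:main_assumptions}) enters, through the \emph{strict} dominance of the Perron eigenvalue at $t_0$, together with continuity of the spectrum of $L\mu(t)$. The other non-formal input is the Wielandt-type inequality $\rho\bigl(\mathcal F\mu(z)\bigr)\leq\rho\bigl(L\mu(\operatorname{Log} z)\bigr)$ above; its equality case is not needed here.
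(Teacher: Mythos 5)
Your proof is correct, and it takes a genuinely different route from the paper's. The paper's argument invokes Mikhalkin's result that the connected components of the complement of an amoeba are convex, picks a boundary point $r$ of the component containing the origin, and runs Perron--Frobenius along the segment $[0,r]$: a maximum-principle step localizes the zero of $P$ on the torus $\{|z_j|=e^{r_j}\}$ to the positive real point $e^{r}$, and the intermediate value theorem forces the eigenvalue $1$ of $\mathcal{F}\mu(e^{r})$ to be the Perron root. You instead identify the component $\Omega$ containing the origin outright with $\mathring{\mathcal{T}}$, using the entrywise domination $|\mathcal{F}\mu(z)|\leq L\mu(\operatorname{Log} z)$ (hence $\rho(\mathcal{F}\mu(z))\leq\rho(\operatorname{Log} z)$ by Gelfand's formula) together with the paper's own facts that $\rho$ is convex, norm-coercive and $<1$ at the origin, followed by a connectedness argument; the domination inequality plays the role of the paper's maximum principle. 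Your approach buys two things the paper's sketch leaves implicit: the exact identification $\Omega=\mathring{\mathcal{T}}$ (so $\partial\Omega=\partial\mathcal{T}$), and an actual proof that $\partial\mathcal{T}$ is a full connected component of the positive real locus, via your open-closed argument relying on the simplicity and strict dominance of the Perron eigenvalue of the primitive matrix $L\mu(t_0)$. The paper's route is shorter because it outsources the convexity of the complement component to the general theory of amoebas, whereas yours reuses the machinery already developed for $\rho$ and $\mathcal{T}$ and needs only the (standard, but worth stating) fact that the amoeba of a Laurent polynomial with finite support is closed, so that its complement's components are open.
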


\begin{proof}
  By general results of real algebraic geometry~\cite{Mikhalkin}, each connected
  component of the complement of the amoeba are convex.
  Let $r=(r_1,\ldots,r_d)$ a point on the boundary of that connected component.
  Then the whole interval $[0,r)$ is contained in the connected component of the
  complement.
  Define $M(s)=\mathcal{F}\mu(e^{sr_1},\ldots,e^{sr_d})$ for $s\in[0,1]$. This
  matrix has non-negative entries, so Perron-Frobenius theorem applies to
  $M(s)$, which should have spectral radius given by its largest positive
  eigenvalue $\lambda(s)$. The point $s=1$ is the first point where
  $\det(\operatorname{I}_d-\mathcal{F}\mu(z))=0$, for $|z_j| = e^{s r_j}$, and
  the maximum principle implies that the only $z$ with this constraint
  satisfying this equation is $z=e^{s r}$. The eigenvalue which is then equal to
  one must be the largest one, otherwise, by the intermediate value theorem, the
  Perron-Frobenius eigenvalue would have reached one for some $s<1$, which would
  be in contradiction with the definition of $r$.
\end{proof}

This allows one to make a connection between the geometry of the amoeba
$A(\mathscr{C})$ and objects introduced earlier.
\begin{itemize}
  \item The closure of the connected component of the complement of $A(\mathscr{C})$
    containing the origin is the set $\mathcal{T}$ introduced in Definition~\ref{def:rho_T}.
  \item The fact that starting from the origin, every direction $u\in\mathbb{S}^{d-1}$ corresponds bijectively to a point of $\mathcal{T}$ is
    a reformulation of the Ney-Spitzer homeomorphism in this context (see \cite{BoRa-22} for related ideas).
  \item For every $t\in\partial\mathcal{T}$, the eigenvector $\varphi_t$ is a non-zero vector
    in the kernel of the matrix $\operatorname{I}_d-\mathcal{F}\mu(z)$ for $z=(e^{t_1},\ldots,e^{t_d})=e^t$.
  \item Performing the Doob transform with $\varphi_t$ to obtain $\mu_t$ implies that for any $w\in (\mathscr{C}^*)^d$,
    \begin{equation*}
        \mathcal{F}\mu_t (w) = \Phi_t^{-1}\cdot \mathcal{F}\mu_t(e^{t}w) \cdot \Phi_t,
    \end{equation*}
    where $\Phi_t$ is the diagonal matrix whose diagonal entries are the values of $\varphi_t$, and the product $e^t w$ is understood coordinatewise.
  \item In particular the last point implies that the amoeba $A(\mathscr{C}_t)$ for $\mu_t$ is obtained from the one for $\mu$ by a translation of vector $-t$, which means that the origin is now on the boundary of $A(\mathscr{C}_t)$.
  \item The normal to $\mathcal{T}$ at $t$, which is parallel to $\nabla\rho(t)$, can be interpreted as the direction of the drift for the conditioned walk, as noted in~\cite[Def.~1.3, Prop.~2.16]{Ba-24}.
\end{itemize}

\section*{Acknowledgments}
TB, CB and KR are supported by the project DIMERS (\href{https://anr.fr/Project-ANR-18-CE40-0033}{ANR-18-CE40-0033}). TB and KR are supported by the project RAWABRANCH (\href{https://anr.fr/Project-ANR-23-CE40-0008}{ANR-23-CE40-0008}), funded by the French National Research Agency, and by the France 2030 program Centre Henri Lebesgue  (\href{https://anr.fr/ProjetIA-11-LABX-0020}{ANR-11-LABX-0020-01}). SM was partly supported by Simons Foundation Grant No. 422190. Part of this research was performed while CB was visiting the Institute for Pure and Applied Mathematics (IPAM), which is supported by the National Science Foundation (Grant No.\ \href{https://www.nsf.gov/awardsearch/showAward?AWD_ID=1925919}{DMS-1925919}). 
TB and KR thank
the VIASM (Hanoï, Vietnam)\ for their hospitality and wonderful working conditions. For part of this research SM was visiting Yerevan State University through the support of the Fulbright Scholar program, and would like to thank them for their hospitality.  
The authors would like to thank Ian Alevy and Sanjay Ramassamy for very interesting discussions at an early stage of the project.

\bibliographystyle{plain} % We choose the "plain" reference style

\begin{thebibliography}{10}

\bibitem{as2019}
Hayk Aleksanyan and Henrik Shahgholian.
\newblock Discrete {B}alayage and boundary sandpile.
\newblock {\em J. Anal. Math.}, 138(1):361--403, 2019.

\bibitem{AlMk-22}
Ian Alevy and Sevak Mkrtchyan.
\newblock The limit shape of the leaky abelian sandpile model.
\newblock {\em Int. Math. Res. Not.}, 2022(16):12767--12802, 2022.

\bibitem{AoOtVe-24}
Yacine Aoun, S\'ebastien Ott, and Yvan Velenik.
\newblock Ornstein-{Z}ernike behavior for {I}sing models with infinite-range
  interactions.
\newblock {\em Ann. Inst. Henri Poincar\'e{} Probab. Stat.}, 60(1):167--207,
  2024.

\bibitem{As-03}
S{\o}ren Asmussen.
\newblock {\em Applied probability and queues.}, volume~51 of {\em Appl. Math.
  (N. Y.)}.
\newblock New York, NY: Springer, 2nd revised and extended ed. edition, 2003.

\bibitem{BTW-ASM1987}
Per Bak, Chao Tang, and Kurt Wiesenfeld.
\newblock Self-organized criticality: An explanation of the 1/f noise.
\newblock {\em Phys. Rev. Lett.}, 59:381--384, Jul 1987.

\bibitem{Ba-24}
Th{\'e}o Ballu.
\newblock Asymptotics of {Green} functions for {Markov}-additive processes: an
  approach via dyadic splitting of integrals.
\newblock Preprint, {arXiv}:2407.10685 [math.{PR}] (2024), 2024.

\bibitem{BoMo-96}
A.~A. Borovkov and A.~A. Mogul'skij.
\newblock The second rate function and the asymptotic problems of renewal and
  hitting the boundary for multidimensional random walks.
\newblock {\em Sib. Math. J.}, 37(4):647--682, 1996.

\bibitem{BoRa-22}
C{\'e}dric Boutillier and Kilian Raschel.
\newblock Martin boundary of killed random walks on isoradial graphs.
\newblock {\em Potential Anal.}, 57(2):201--226, 2022.

\bibitem{Ce-06}
Rapha{\"e}l Cerf.
\newblock {\em The {Wulff} crystal in {Ising} and percolation models. {\'E}cole
  d'{\'E}t{\'e} de {Probabilit{\'e}s} de {Saint}-{Flour} {XXXIV} -- 2004},
  volume 1878 of {\em Lect. Notes Math.}
\newblock Berlin: Springer, 2006.

\bibitem{ChChJi-24}
Qipin Chen, Shane Chern, and Lin Jiu.
\newblock Multi-headed lattices and {Green} functions.
\newblock {\em J. Phys. A, Math. Theor.}, 57(46):21, 2024.
\newblock Id/No 465204.

\bibitem{CoGoHaJeKn-96}
R.~M. Corless, G.~H. Gonnet, D.~E.~G. Hare, D.~J. Jeffrey, and D.~E. Knuth.
\newblock On the {Lambert} {{\(w\)}} function.
\newblock {\em Adv. Comput. Math.}, 5(4):329--359, 1996.

\bibitem{Do-66}
R.~A. Doney.
\newblock An analogue of the renewal theorem in higher dimensions.
\newblock {\em Proc. Lond. Math. Soc. (3)}, 16:669--684, 1966.

\bibitem{Du-20}
Matthieu Dussaule.
\newblock The {Martin} boundary of a free product of abelian groups.
\newblock {\em Ann. Inst. Fourier}, 70(1):313--373, 2020.

\bibitem{Gelfand1994}
Israel~M. Gelfand, Mikhail~M. Kapranov, and Andrei~V. Zelevinsky.
\newblock {\em Discriminants, Resultants, and Multidimensional Determinants}.
\newblock Birkh\"{a}user Boston, 1994.

\bibitem{Gu-93}
A.~J. Guttmann and T.~Prellberg.
\newblock Staircase polygons, elliptic integrals, heun functions, and lattice
  green functions.
\newblock {\em Phys. Rev. E}, 47:R2233--R2236, Apr 1993.

\bibitem{Gu-10}
Anthony~J. Guttmann.
\newblock Lattice {Green}'s functions in all dimensions.
\newblock {\em J. Phys. A, Math. Theor.}, 43(30):26, 2010.
\newblock Id/No 305205.

\bibitem{HaKoMaZe-16}
S.~Hassani, Ch. Koutschan, J.-M. Maillard, and N.~Zenine.
\newblock Lattice {Green} functions: the {{\(d\)}}-dimensional face-centered
  cubic lattice, {{\(d = 8, 9, 10, 11, 12\)}}.
\newblock {\em J. Phys. A, Math. Theor.}, 49(16):30, 2016.
\newblock Id/No 164003.

\bibitem{He-63}
P-L. Hennequin.
\newblock Processus de {Markoff} en cascade.
\newblock {\em Ann. Inst. Henri Poincar{\'e}}, 18:109--193, 1963.

\bibitem{Jo-73}
G.~S. Joyce.
\newblock On the simple cubic lattice {Green} function.
\newblock {\em Philos. Trans. R. Soc. Lond., Ser. A}, 273:583--610, 1973.

\bibitem{Kenyon:forests}
Richard Kenyon.
\newblock Spanning forests and the vector bundle {L}aplacian.
\newblock {\em Ann. Probab.}, 39(5):1983--2017, 2011.

\bibitem{KOS}
Richard Kenyon, Andrei Okounkov, and Scott Sheffield.
\newblock Dimers and amoebae.
\newblock {\em Ann. of Math. (2)}, 163(3):1019--1056, 2006.

\bibitem{Ki-61}
J.~F.~C. Kingman.
\newblock A convexity property of positive matrices.
\newblock {\em Quart. J. Math. Oxford Ser. (2)}, 12:283--284, 1961.

\bibitem{lps2016}
Lionel Levine, Wesley Pegden, and Charles~K. Smart.
\newblock Apollonian structure in the {A}belian sandpile.
\newblock {\em Geom. Funct. Anal.}, 26(1):306--336, 2016.

\bibitem{lp2009}
Lionel Levine and Yuval Peres.
\newblock Strong spherical asymptotics for rotor-router aggregation and the
  divisible sandpile.
\newblock {\em Potential Anal.}, 30(1):1--27, 2009.

\bibitem{Me-06}
R.~J. Messikh.
\newblock The surface tension near criticality of the 2d-{Ising} model.
\newblock Preprint, {arXiv}:math/0610636 [math.{PR}] (2006), 2006.

\bibitem{MiSl-22}
Emmanuel Michta and Gordon Slade.
\newblock Asymptotic behaviour of the lattice {G}reen function.
\newblock {\em ALEA Lat. Am. J. Probab. Math. Stat.}, 19(1):957--981, 2022.

\bibitem{Mikhalkin}
G.~Mikhalkin.
\newblock Real algebraic curves, the moment map and amoebas.
\newblock {\em Annals of Mathematics-Second Series}, 151(1):309--326, 2000.

\bibitem{NeSp-66}
P.~Ney and F.~Spitzer.
\newblock The {Martin} boundary for random walk.
\newblock {\em Trans. Am. Math. Soc.}, 121:116--132, 1966.

\bibitem{ps2013}
Wesley Pegden and Charles~K. Smart.
\newblock Convergence of the {A}belian sandpile.
\newblock {\em Duke Math. J.}, 162(4):627--642, 2013.

\bibitem{ps2020}
Wesley Pegden and Charles~K. Smart.
\newblock Stability of patterns in the {A}belian sandpile.
\newblock {\em Ann. Henri Poincar\'{e}}, 21(4):1383--1399, 2020.

\bibitem{PeWi-13}
Robin Pemantle and Mark~C. Wilson.
\newblock {\em Analytic combinatorics in several variables.}, volume 140 of
  {\em Camb. Stud. Adv. Math.}
\newblock Cambridge: Cambridge University Press, 2013.

\bibitem{Re-24}
Lucas Rey.
\newblock The {Doob} transform and the tree behind the forest, with application
  to near-critical dimers.
\newblock Preprint, {arXiv}:2401.13599 [math.{PR}] (2024), 2024.

\bibitem{Se-06}
E.~Seneta.
\newblock {\em Non-negative matrices and {M}arkov chains}.
\newblock Springer Series in Statistics. Springer, New York, 2006.
\newblock Revised reprint of the second (1981) edition [Springer-Verlag, New
  York; MR0719544].

\bibitem{Wo-00}
Wolfgang Woess.
\newblock {\em Random walks on infinite graphs and groups}, volume 138 of {\em
  Camb. Tracts Math.}
\newblock Cambridge: Cambridge University Press, 2000.

\end{thebibliography}

\end{document}